\documentclass[dvipdfmx]{article}
\usepackage{amsmath, amssymb}
\usepackage{amsthm}
\usepackage{multirow,bigdelim}
\usepackage{ascmac}
\usepackage{fancybox}
\usepackage{color}
\usepackage{nccmath}

\usepackage{multicol}

\usepackage[top=30truemm,bottom=30truemm,left=25truemm,right=25truemm]{geometry}
\usepackage{graphicx}
\usepackage{here}
\usepackage{amscd}

\usepackage{tcolorbox}
\tcbuselibrary{breakable}
\usepackage[all]{xy} 
\usepackage{fancyhdr}

 \pagestyle{fancy}
  \rhead{\leftmark}
  \lhead{\rightmark}

\theoremstyle{definition}
\newtheorem{thm}{Theorem}[section]
\newtheorem{defi}[thm]{Definition}
\newtheorem{lemm}[thm]{Lemma}
\newtheorem{prop}[thm]{Proposition}
\newtheorem{example}[thm]{Example}
\newtheorem{remark}[thm]{Remark}

\begin{document}

\title{Antipodal sets of exceptional symmetric spaces}

\author{Yuuki Sasaki}

\date{}

\maketitle

\begin{abstract}
In this paper, we study maximal antipodal sets in simply connected exceptional compact symmetric spaces.
Combining our results with the existing literature, we obtain a complete classification of maximal antipodal sets in all such spaces.
Moreover, through the description of antipodal sets, we disscus the inclusion relations among exceptional symmetric spaces.

\end{abstract}


\section{Introduction}

The study of antipodal sets goes back to the work of Borel and Serre in 1953 \cite{Borel-Serre}.
They investigated an invariant of connected compact Lie groups called the \emph{2-rank}.
For a connected compact Lie group $G$, the 2-rank is defined as the maximal rank of an elementary abelian $2$-subgroup of $G$.
In their paper, they determined the 2-ranks of several connected compact Lie groups ($SO(n), U(n), Sp(n), G_{2}, F_{4}$), established estimates for the 2-rank, and studied its relation to the $2$-torsion of the homology groups.
Motivated by this work, many mathematicians have subsequently studied the 2-rank and maximal elementary abelian $2$-subgroups in other connected compact Lie groups.
For example, Adams used methods from algebraic topology to classify the conjugacy classes of maximal elementary abelian $2$-subgroups in the exceptional compact Lie group $E_{8}$ \cite{Adams}, and Wood investigated maximal elementary abelian $2$-subgroups of spinor groups from a combinatorial theory \cite{Wood}.

In 1988, Chen and Nagano interpreted the 2-rank and maximal elementary abelian $2$-subgroups through the structure of symmetric spaces of compact Lie groups, and arrived at the notion of an antipodal set \cite{Chen-nagano2}.
A symmetric space $M$ is a Riemannian manifold endowed with a geodesic symmetry $s_{x}$ at each point $x \in M$.
If we equip a compact Lie group $G$ with a bi-invariant Riemannian metric, then the map
\[
s_{g} : G \longrightarrow G,\ ;\ h \longmapsto g h^{-1} g
\]
defines the geodesic symmetry at $g$, so that $G$ becomes a symmetric space.
A subset $A \subset M$ is called an \emph{antipodal set} if
\[
s_{x}(y) = y \qquad \text{for all } x,y \in A .
\]
Chen and Nagano observed that the maximal elementary abelian $2$-subgroups of a compact Lie group $G$ are precisely the maximal antipodal sets containing the identity element.
They also showed that the 2-rank of $G$ corresponds to the maximum cardinality of an antipodal set, i.e.\ the cardinality of a \emph{great antipodal set}.
Motivated by this, the maximal cardinality of an antipodal set in a compact symmetric space $M$ is called the \emph{2-number} of $M$ and is denoted by $\#_{2}M$.
Denoting by $r_{2}(G)$ the $2$-rank of $G$, then $2^{r_{2}(G)} = \#_{2}G$.
In the same paper, Chen and Nagano developed the foundamental theory of antipodal sets and determined the 2-numbers of almost all irreducible symmetric spaces of compact type, with only a few exceptions.
They also pointed out connections with topology, showing in particular that every compact symmetric space $M$ satisfies
\[
\chi(M) \leq \#_{2}M .
\]

Subsequently, a number of authors have investigated more deeply the relation between antipodal sets and topology.
Takeuchi proved that for a special class of compact symmetric spaces called \emph{symmetric $R$-spaces} (which include, for instance, real, complex, and quaternionic Grassmannians) one has
\[
\#_{2}M = \dim H_{*}(M;\mathbb{Z}_{2})
\]
in \cite{Takeuchi}.
For symmetric $R$-spaces, it is known that there exists a Morse function whose critical point set is a great antipodal set, and the existence of such a Morse function plays an essential role in the proof.
Berndt-Console-Fino and S\'{a}nchez extended Takeuchi's result to more general K\"{a}hler $C$-spaces and $R$-spaces \cite{Berndt-Console-Fino, Sanchez1, Sanchez2}.
For compact symmetric spaces that are not symmetric $R$-spaces, the situation is more subtle.
Before the notion of antipodal sets was introduced, Kamiya had already studied Morse functions on $G_{2}$, and his results imply that essentially the euqality also holds for $G_{2}$ \cite{Kamiya}.
More recently, the present author studied Morse functions on $G_{2}/SO(4$) and proved that the equality holds for $G_{2}/SO(4$) as well \cite{Sasaki-G}.
On the other hand, Amann used methods from algebraic topology to establish the general inequality
\[
\#_{2}M \leq \dim H_{*}(M;\mathbb{Z}_{2})
\]
for every compact symmetric space $M$ (\cite{Amann}).
The equality case, however, is still not understood in full generality.
In fact, there are known examples where the equality fails; for instance, equality does not hold for the compact Lie groups $E_{6}, E_{7}, E_{8}$.
Thus, while the relation between antipodal sets and topology has been extensively studied, many aspects remain open.

Parallel to these developments, there has also been substantial work on antipodal sets themselves.
Although Chen and Nagano determined the 2-number for many compact symmetric spaces, they did not in general give explicit descriptions of great or maximal antipodal sets.
To remedy this, Tasaki and Tanaka constructed explicit maximal antipodal sets in various compact symmetric spaces \cite{Tanaka-Tasaki, Tanaka-Tasaki2, Tanaka-Tasaki3}.
In particular, they explicitly construced maximal antipodal sets of symmetric $R$-spaces and prove that any maximal antipodal sets are great and congrunet to each other \cite{Tanaka-Tasaki}.
Hence, in the symmetric $R$-space, a great antipodal set and a maximal antipodal set are the same notion.
However, they also showed that, in the bottom space of some classical compact Lie group, there exist non-congruent maximal antipodal sets, thereby demonstrating that great antipodal sets and maximal antipodal sets are essentially different notions in general cases \cite{Tanaka-Tasaki2}.
Their work led to a complete classification of maximal antipodal sets in almost all classical compact symmetric spaces.
In the exceptional cases, Adams's results for $E_{8}$ were available, but no classification of maximal antipodal sets in the other exceptional symmetric spaces was known except for symmetric $R$-spaces.
Recently, Tanaka-Tasaki-Yasukura completed the classification of maximal antipodal sets in $G_{2}$ and $G_{2}/SO(4$) \cite{Tanaka-Tasaki-Yasukura}, and building on this, the present author carried out the classification for $F_{4}, E_{6}$ and their associated symmetric spaces like $FI, EI$, and so on \cite{Sasaki-F, Sasaki-E}.
In another direction, it has been shown that antipodal sets arise as intersections of two real forms in Hermitian symmetric spaces, with applications to computations in Floer homology \cite{Iriyeh-Sakai-Tasaki}.
Kurihara and Okuda pointed out that antipodal sets also play an important role in design theory, and applied them to the study of designs in $U(n$) and complex Grassmannians \cite{Kurihara, Kurihara-Okuda}.
Moreover, Chen extended the Borsuk-Ulam theorem to compact symmetric spaces by focusing on antipodal sets \cite{Chen}.
Sakai, Ohno, Eschenburg, and Quast have studied maximal antipodal sets in $\Gamma$-symmetric spaces, which generalize symmetric spaces \cite{Eschenburg-Sakai, Ohno-Sakai, Quast-Sakai}.

Since their introduction by Chen and Nagano, antipodal sets have thus attracted considerable attention from many geometers and have been studied from a variety of perspectives.
Within this broad context, the aim of the present work is to complete the classification of maximal antipodal sets in all simply connected exceptional compact symmetric spaces.
Throughout the paper, we use Cartan's notations for irreducible symmetric pairs and, by abuse of notation, denote by the same symbols the corresponding simply connected compact symmetric spaces, namely
\[
\begin{array}{lllllllllllll}
FI = F_{4}/Sp(1) \cdot Sp(3), & FII = F_{4}/Spin(9), \\
EI = E_{6}/Sp(4)/(\mathbb{Z}_{2}), & EII = E_{6}/SU(2) \cdot SU(6), & EIII = E_{6}/(U(1) \times Spin(10))/\mathbb{Z}_{4}, & EIV = E_{6}/F_{4}, \\
EV = E_{7}/(SU(8)/\mathbb{Z}_{2}), & EVI = E_{7}/SU(2) \cdot Spin(12), & EVII = E_{7}/(U(1) \times E_{6})/\mathbb{Z}_{3}, \\
EVIII = E_{8}/Ss(16), & EIX = E_{8}/SU(2) \cdot E_{7}.
\end{array}
\]
As for the classification of maximal antipodal sets in exceptional symmetric spaces, previous results are as follows:
the symmetric $R$-spaces $FII, EIII, EIV$ were treated by Tanaka-Tasaki \cite{Tanaka-Tasaki};
the symmetric spaces $G_{2}$ and $G_{2}/SO(4$) were studied by Tanaka-Tasaki-Yasukura \cite{Tanaka-Tasaki-Yasukura};
the group $E_{8}$ was investigated by Adams;
and the spaces $F_{4}, FI, E_{6}, EI, EII, EIV$ were classified by the present author \cite{Sasaki-F, Sasaki-E}.
In this paper we complete the remaining cases, namely $E_{7}, EV, EVI, EVIII, EIX$.
Our method is based on Adams's classification of maximal antipodal sets in $E_{8}$ and on the $(M^{+}, M^{-}$)-method of Chen-Nagano.
Combining our results with the previous work, we obtain the complete classifications of maximal antipodal sets in all simply connected exceptional compact symmetric spaces.
The numbers of congruence classes and the cardinalities of maximal antipodal sets are summarized in Table \ref{intro-1}.
In Table \ref{intro-1}, $M$ denotes the symmetric space, $C_{M}$ denotes the number of congruence classes of maximal antipodal sets in $M$, and ``card.'' stands for the cardinality of a maximal antipodal set.

\ 

\begin{table}[htbp]
\def\arraystretch{1.2}
\arraycolsep=-1pt
\begin{center}
\begin{tabular}{ccc||ccc||cccccccccccccccccccccccccccc} \hline
$M$ & $C_{M}$ & card. & $M$ & $C_{M}$ & card. &$M$ & $C_{M}$ & card. \\ \hline
$G_{2}$ & 1 & 8 & $G$ & 1 & 7  & $EIV$ & 1 & 4 \\
$F_{4}$ & 1 & 32 & $FI$ & 1 & 28 & $EV$ & 3 & 56, 72, 128 \\
$E_{6}$ & 2 & 32, 64 & $FII$ & 1 & 3 & $EVI$ & 2 & 31, 63 \\
$E_{7}$ & 2 & 64, 128 & $EI$ & 2 & 28, 64 & $EVII$ & 1 & 56 \\
$E_{8}$ & 2 & 256, 512 & $EII$ & 2 & 28, 36 & $EVIII$ & 2 & 199, 391 \\
 & & & $EIII$ & 1 & 27 & $EIX$ & 1 & 56, 120 \\ \hline
\end{tabular}
\caption{The number of congruent classes and the cardinalities of maximal antipodal sets}
\label{intro-1}
\end{center}
\end{table}

Every simply connected exceptional compact symmetric space can be realized as a totally geodesic submanifold of $E_{8}$.
In this paper, we describe these realizations explicitly and give a uniform description of maximal antipodal sets in all simply connected exceptional compact symmetric spaces using maximal antipodal sets in $E_{8}$.
Moreover, we consider the orbit of the conjugation action of $F_{4}, E_{6}, E_{7}, E_{8}$ on $E_{8}$ through each element of a maximal antipodal set $A$ of $E_{8}$.
In almost cases, these orbits $L$ become totally geodesic and $A \cap L$ is an antipodal set of $L$.
In general, $A \cap L$ is not necessaliry maximal in $L$.
We study when $A \cap L$ becomes a maximal antipodal set of $L$.
By this study, we obtain the following inclusion relations:
\[
\xymatrix@C=6pt@R=13pt
{
 & & FII \ar[d] & & FII \ar[d] & & \\
 & & EIII \ar[d] & & EIII \ar[d] & & \\
FI \ar[r] & EII \ar[r] & EVI \ar[dr] & & EVII \ar[dl] & EIII \ar[l] & FII \ar[l] \\
 & & & EIX & & & \\
FII \ar[r] & EIV \ar[r] & EVII \ar[ur] & & EVII \ar[ul] & EIV \ar[l] & FII \ar[l] \\
 & & EIV \ar[u] & & EIV \ar[u] & & \\
 & & FII \ar[u] & & FII \ar[u] & & \\
}
\ \ 
\xymatrix@C=6pt@R=13pt
{
 & & FI \ar[d] & & FI \ar[d] & & \\
 & & EII \ar[d] & & EII \ar[d] & & \\
FII \ar[r] & EIII \ar[r] & EVI \ar[dr] & & EV \ar[dl] & EII \ar[l] & FI \ar[l] \\
 & & & EVIII & & & \\
FI \ar[r] & EI \ar[r] & EV \ar[ur] & & EV \ar[ul] & EI \ar[l] & FI \ar[l] \\
 & & EI \ar[u] & & EI \ar[u] & & \\
 & & FI \ar[u] & & FI \ar[u] & & \\
}
\]
From these inclusion relations, we obtain the inclusion relations among exceptional symmetric spaces:
\[
\xymatrix@C=22pt@R=20pt
{
 & & & E_{8} & & & \\
 & & EVIII \ar[ur]_{\subset} & & EIX \ar[ul]^{\supset} & &  \\
 & \ EV \ar[ur]_{\subset} & & EVI \ar[ul]^{\supset} \ar[ur]_{\subset} & & EVII \ar[ul]^{\supset} & \\
 \ EI \ \ar[ur]_{\subset} & & EII \ar[ul]^{\supset} \ar[ur]_{\subset} & & EIII \ar[ul]^{\supset} \ar[ur]_{\subset} & & EIV \ar[ul]^{\supset} \\
 & FI \ar[ul]^{\supset} \ar[ur]_{\subset} & & & & FII \ar[ul]^{\supset} \ar[ur]_{\subset} & \\
}
\]
Note that this inclusion diagram was essencially discovered by Chen-Nagano in \cite{Chen-Nagano1}, and Nagano introduced this diagram in \cite{Nagano} obtained by the different method from the present paper. 
However, they barely discussed this diagram.
It is interesting that there exists the left-right symmetry in this inclusion diagram.
It seems that the left-right symmetry is not merely combinatorial.
It may reflects some geometric properties of exceptional symmetric spaces.
For example, in the above two inclusion relations, every inclusion and the number of orbits match under the left-right symmetry.
Although we do not know the reason for this left-right symmetry, we discuss some phenomenons with respect to the left-right symmetry in the final section of the present paper.

The structure of the paper is as follows.
Section 2 collects the preliminaries needed later.
In Subsection 2.1, we recall the notion of polars, which are important submanifolds in the $(M^{+},M^{-}$)-method for compact symmetric spaces, and we review basic properties of antipodal sets.
Subsection 2.2 is a brief review of root systems.
We also recall some involutive automorphisms arising from roots and consider the properties of the root system of type $E_{8}$.
In Subsection 2.3, we recall the construction of spin groups via Clifford algebras, describe several subgroups of spin groups, and review the basic facts about spin representations.
Subsection 2.4 is devoted to octonions: we define $G_{2}$ using the octonions and recall the $SO(8)$-triality principle, together with the construction of $Spin(8$) and its spin representations via triality.
Much of this material follows \cite{Yokota-g-r}; for the sake of completeness we reproduce the proofs of several propositions from that reference.
Moreover, we recall maximal antipodal sets of $G_{2}$ and $Spin(8)$ in this subsection.
In Subsection 2.5, we recall the construction of the exceptional group $E_{8}$ and its subgroups $E_{7}, E_{6}, F_{4}$ from \cite{Adams-L}, and we review Adams's classification of maximal antipodal sets in $E_{8}$ \cite{Adams}.
We will make explicit descriptions of the Riemannian symmetric pairs corresponding to all simply connected exceptional compact symmetric spaces except for $G_{2}/SO(4)$ and realize them as orbits of conjugation action of $H = F_{4}, E_{6}, E_{7}, E_{8}$ on $E_{8}$.
Subsection 2.6 summarizes the known classification results for maximal antipodal sets in simply connected exceptional compact symmetric spaces and describes each of them uniformly in terms of maximal antipodal sets in $E_{8}$.
We also recall classification results for maximal antipodal sets in certain classical symmetric spaces that will be used later.

In Section 3, we carry out the classification of maximal antipodal sets in the remaining cases, that is, $E_{7}, EV, EVI, EVIII, EIX$.
Subsection 3.1 explains the general strategy, in particular the relation between maximal antipodal sets of a compact symmetric space $M$ and of its polar $M^{+}$.
The cases $EVIII$ and $EIX$ are treated in Subsection 3.2, the case $EVI$ in Subsection 3.3, the case $E_{7}$ in Subsection 3.4, and the case $EV$ in Subsection 3.5.
In each case, we describe the maximal antipodal sets explicitly using a maximal antipodal set of $E_{8}$.
Together with the previous results, this yields a complete classification of maximal antipodal sets in all simply connected exceptional compact symmetric spaces.
At the end of the section, we summarize the all classifications (Table \ref{classification}).

In Section 4, we study the orbits of the conjugation action of $F_{4}, E_{6}, E_{7}, E_{8}$ on $E_{8}$ through each element of a maximal antipodal set of $E_{8}$.
Moreover, we derive the above three inclusion relations in Subsection 4.1-4.4.
In Subsection 4.5, we discuss the geometric properties of the above inclusion relations.
Moreover, we consider some phenomenons with respect to the left-right symmetry.

\section{Preliminaries}

First, we summarize the notations used in the present article.
Let $G$ be a Lie group.
The identity element of $G$ is denoted by $1$ or $e$.
The identity component of $G$ is denoted by $G_{o}$.
The center of $G$ is denoted by $C(G)$.
Let $W$ be a subset of $G$.
The centralizer of $W$ in $G$ is denoted by $C(W,G)$ or $C(W)$, that is, $C(W,G) = \{ g \in G \ ;\ gw = wg \ (w \in W) \}$.
The normalizer of $W$ in $G$ is denoted by $N(W,G)$ or $N(W)$, that is, $N(W,G) = \{ g \in G \ ;\ gwg^{-1} \in W \ (w \in W) \}$.
Let $\frak{g}$ be the Lie algebra of $G$ and $\frak{w}$ be a subset of $\frak{g}$.
Then, the cenralizer of $\frak{w}$ of $G$ and $\frak{g}$ are denoted by $C(\frak{w}, G)$ and $C(\frak{w}, \frak{g})$ respectively.
The identity components of $C(W, G), C(\frak{w}, G), N(W,G), N(\frak{w}, G)$ are denoted by $C_{o}(W,G), C_{o}(\frak{w}, G), N_{o}(W,G), N_{o}(\frak{w}, G)$ respectively.
Let $f : G \rightarrow G$ be an automorphism of a Lie group $G$.
The induced automorphism of $\frak{g}$ induced by $f$ is denoted by the same symbol $f$.
Set $F(f,G) = \{ g \in G \ ;\ f(g) = g \}$.
Let $H$ be a subrgoup of $G$.
If $f(H) \subset H$, the automorphism $f|_{H}$ of $H$ is denoted by the same symbol $f$ simply.
If $f$ is involutive, then we set
\[
F^{+}(f, \frak{g}) = F(f, \frak{g}) = \{ X \in \frak{g} \ ;\ f(X) = X \}, \quad
F^{-}(f, \frak{g}) = \{ X \in \frak{g} \ ;\ f(X) = -X \}.
\]
Moreover, Let $V$ be a vector space and $\rho : G \rightarrow GL(V)$ be a representation.
The induced representation of $\frak{g}$ is denoted by the same symbol.
For any involutive element $g \in G$, we often denote $F^{\pm}(\rho(g), V)$ by $F^{\pm}(g,V)$ simply.
Let $G'$ be a Lie group, $\frak{g}'$ be the Lie algebra of $G'$, and $h:G \rightarrow G'$ be a homomorphism.
Then, the indeuced homomorphims from $\frak{g}$ to $\frak{g}'$ is denoted by the sam symbol $f$.
The restricted homomorphim $f|_{H} : H \rightarrow G'$ is denoted by the same symbol $f$.
If $f:H \rightarrow G$ is one-to-one, then $f(H)$ is often denoted by $H$ simply.
For any real vector space $V$, we denote the complexification of $V$ by $V^{\mathbb{C}}$.
For any 1-from $\alpha$ of $V$ and any subspace $U \subset V$, we denoted $\alpha|_{U}$ by the same symbol $\alpha$.
For any automorphism $g$ of the real Lie algbera $\frak{g}$, we denote the extension of $g$ to $\frak{g}^{\mathbb{C}}$ is denoted by the same symbol.
For any $g \in G$, the conjugation by $g$ is denoted by $\theta_{g}$, that is, $\theta_{g} : G \rightarrow G \ ;\ h \mapsto ghg^{-1}$.
Let $k \leq n$.
The set of all $k$-dimensional subspaces of $\mathbb{R}^{n}$ is denoted by $G_{k}(\mathbb{R}^{n})$.
Moreover, the set of oriented $k$-dimensional subspaces of $\mathbb{R}^{n}$ is denoted by $\tilde{G}_{k}(\mathbb{R}^{n})$.


\subsection{Antipodal sets}

First, we recall the polars introduced by Chen-Nagano \cite{Chen-Nagano1}.
Let $M$ be a connected compact symmetric space.
The symmetry at $p \in M$ is denoted by $s_{p}$.
Fix $o \in M$.

\begin{defi}
Each connected component of $F(s_{o}, M)$ is called a {\it polar} of $o$.
If a polar is a one-point set, then the polar is called a {\it pole}.
By the definition, $\{ o \}$ is a pole.
Hence, $\{ o \}$ is called the trivial pole.
\end{defi}

Since a polar is a connected component of an involutive isometry, any polar is a totally geodesic submanifold and becomes a compact symmetric space.
In each compact symmetric space, Chen and Nagano already classified polars \cite{Chen-Nagano1}.
If the number of polars is $k+1$, then each polar of $o$ is denoted by $(M_{o}^{+})_{i}$ or $M^{+}_{i} \ (0 \leq i \leq k)$.
Then,
\[
F(s_{o}, M) = \bigsqcup_{0 \leq i \leq k} (M_{o}^{+})_{i}.
\]
In the following, we assume that $(M_{o}^{+})_{0}$ is the trivial pole.
Let $G$ be the identity component of the isometry group of $M$ and $K_{o} = \{ g \in G \ ;\ g(o) = o \}$.
Then, it is well known that $K_{o}$ acts on each polar and this action is transitive.
Fix $0 \leq i \leq k$ and set $Z_{i} := \{ k \in K_{o} \ ;\ k|_{M_{i}^{+}} = \mathrm{id}_{M_{i}^{+}} \}$.
Then, $K_{o}/Z_{i}$ is isomorphic to the identity component of the isometry group of $M_{i}^{+}$.
If $M$ is a connceted compact Lie group $H$.
Then, the symmetry at $g \in H$ is given by $s_{g} : H \rightarrow H \ ;\ h \mapsto gh^{-1}g$.
Therefore, 
\[
F(s_{e}, H) = \{ h \in H \ ;\ h^{2} = e \}.
\]
In this case, the action of $K_{o}$ is obtained by the all conjugations.
Hence, any polar $M^{+}$ of the identity element $e$ is the conjugate orbit through an involutive element, that is,
\[
M^{+} = \bigcup_{h \in H}h x h^{-1},
\]
where $x$ is an involutive element of $H$.
Next, we recall antipodal sets of $M$.

\begin{defi}
Let $A \subset M$.
If $s_{p}(q) = q$ for any $q \in A$, then $A$ is called an {\it antipodal set} of $M$.
If an antipodal set $A$ is maximal with respect to the inclusion relation among antipodal sets, then $A$ is called a {\it maximal antipodal set} of $M$.
The supremum of the cardinalities of antipodal sets is called the $2$-number of $M$ and denoted by $\#_{2}M$.
An antipodal set whose cardinality is equal to $\#_{2}M$ is called a great antipodal set of $M$.

\end{defi}

The maximal antipodal set of the compact Lie group containing the identity element is a subrgoup.
Moreover, this subgroup is a maximal elementary abelian $2$-subrgoup and called a maximal antipodal subgroup.
Let $A$ and $B$ be subsets of $M$.
If there exist an element $g$ of the identity component of the isometry group of $M$ such that $g(A) = B$, then we say that $A$ and $B$ are congruent to each other.
Remark that, in the case that $M$ is a compact Lie group, to classiy the congruent classes of maximal antipodal sets is reduce to classify the conjugate classes of maximal antipodal subrgoups.
The purpose of the present paper is to decide the congruent classes of maximal antipodal sets for simply connected compact exceptional symmetic spaces.
By the result by Tanaka-Tasaki, since $FII, EIII, EVI$ are symmetric $R$ space, the classification is complete \cite{Tanaka-Tasaki}.
Tanaka-Tasaki-Yasukura classified the maximal antipodal set for $G_{2}$ and type $G$ \cite{Tanaka-Tasaki-Yasukura}.
Adams classified the maximal antipodal sets of $E_{8}$ by the method of the algebraic topology \cite{Adams}.
Moreover, the author classfied maximal antipodal sets for $F_{4}, E_{6}, EI, EI, EII, EIV$.
Therefore, the remaining cases are $E_{7}, EV, EVI, EVIII, EIX$ and we consider these cases in the present paper.


\newpage

\subsection{Root system}\label{Rs}

In this subsection, we recall fundamental properties about the root system.
Let $G$ be a simply connected simple compact Lie group and $\frak{g}$ be the Lie algebra of $G$.
Let $T$ be a maximal torus of $G$ and the Lie algebra of $T$ is denoted by $\frak{t}$.
The complex conjugation of $\frak{g}^{\mathbb{C}}$ corresponding to $\frak{g}$ is denoted by $\overline{\ \cdot\ }$.
Let $(\ ,\ )$ be the Killing form of $\frak{g}^{\mathbb{C}}$.
Define $\langle \ ,\ \rangle = -(\ ,\ )|_{\frak{g} \times \frak{g}}$.
Then, $\langle \ ,\ \rangle$ is an inavriant inner product of $\frak{g}$.
We define $\Sigma(\frak{g}, \frak{t})$ as the set of all roots of $\frak{g}^{\mathbb{C}}$ with respect to $\frak{t}^{\mathbb{C}}$.
We often denote $\Sigma(\frak{g}, \frak{t})$ by $\Sigma$ simply.
For any $\alpha \in \Sigma$, we define
\[
\frak{r}_{\alpha}(\frak{g}^{\mathbb{C}}, \frak{t}^{\mathbb{C}}) := \{ X \in \frak{g}^{\mathbb{C}} \ ;\ [H, X] = \alpha(H)X \ (H \in \frak{t}^{\mathbb{C}}) \}.
\]
The root space $\frak{r}_{\alpha}(\frak{g}^{\mathbb{C}}, \frak{t}^{\mathbb{C}})$ is often denoted by $\frak{r}^{\mathbb{C}}_{\alpha}$.
Moreover, we define $H_{\alpha} \in i\frak{t}$ such that $(H, H_{\alpha}) = \alpha(H)$ for any $H \in i\frak{t}$.
For any $\alpha, \beta \in \Sigma$, we denote $(H_{\alpha}, H_{\beta})$ by $(\alpha, \beta)$.
Then, we define the coroot $A_{\alpha} = (2/(\alpha, \alpha))H_{\alpha}$.
Let $X_{\alpha} \in \frak{r}_{\alpha}^{\mathbb{C}}\ (X_{\alpha} \not= 0 )$.
We take a linear order of $\frak{t}$ and the set of all positive roots is denoted by $\Sigma^{+}(\frak{g}, \frak{t})$ or $\Sigma^{+}$.
For any subset $B \subset \Sigma$, we set $B^{+} = B \cap \Sigma^{+}$.
Since $\overline{\frak{r}^{\mathbb{C}}_{\alpha}} \subset \frak{r}^{\mathbb{C}}_{-\alpha}$ for any $\alpha \in \Sigma$, we set
\[
\frak{r}_{\alpha}(\frak{g}, \frak{t}) = \frak{g} \cap ( \frak{r}_{\alpha}^{\mathbb{C}} + \frak{r}_{-\alpha}^{\mathbb{C}} )
\]
for any $\alpha \in \Sigma$.
The subspace $\frak{r}_{\alpha}(\frak{g}, \frak{t})$ is often denoted by $\frak{r}_{\alpha}$.
Then, 
\[
\frak{g} = \frak{t} + \sum_{\alpha \in \Sigma^{+}}\frak{r}_{\alpha}.
\]
For each $\alpha \in \Sigma$, we set
\[
\frak{su}^{\alpha}(2) = \mathbb{R}(iA_{\alpha}) + \frak{r}_{\alpha}.
\]
The subspace $\frak{su}^{\alpha}(2)$ is a subalgebra of $\frak{g}$ and isomorphic to $\frak{su}(2)$.
Moreover, let $\alpha, \beta \in \Sigma$ satisfy $(\alpha, \alpha) = (\beta, \beta)$ and $2(\alpha, \beta)/(\alpha, \alpha) = \pm1$.
Then, $\{ \pm \alpha, \pm \beta, \pm (\alpha \mp \beta) \}$ is a root system of type $A_{2}$.
We set
\[
\frak{su}^{\alpha, \beta}(3) = \mathbb{R}(iA_{\alpha}) + \mathbb{R}(iA_{\beta}) + \sum_{\gamma \in \{ \pm \alpha, \pm \beta, \pm (\alpha \mp \beta) \} \cap \Sigma^{+}}\frak{r}_{\gamma}. 
\]
The subspace $\frak{su}^{\alpha, \beta}(3)$ is a subalgebra of $\frak{g}$ and isomorphic to the Lie algebra $\frak{su}(3)$.
We defined the Weyl group $W(T) := N(T)/T$, where $N(T)$ is the normalizer of $T$ in $G$.

\begin{lemm} \cite{Adams}
If there exists $x \in N(T)$ such that $\mathrm{Ad}(x)(H) = -H$ for any $H \in \frak{t}$, then the following (i), (ii), and (iii) are true.

(i)\ The element $x$ is unique up to conjugacy in $G$ fixing $T$.
Any other choice has the form $xu$ for some $u \in T$.

(ii)\ The order of $x$ is $4$ at most.

(iii)\ The square $x^{2}$ is independent of the choice of $x$ and lies in the center of $G$.

\end{lemm}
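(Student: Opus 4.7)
The plan is to address the three assertions in sequence, relying throughout on the standard fact that in a compact connected Lie group the centralizer of a maximal torus coincides with the torus itself, i.e.\ $C(T,G)=T$.

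For (i), given any second element $x'\in N(T)$ with $\mathrm{Ad}(x')|_{\frak{t}}=-\mathrm{id}$, I would first observe that $\mathrm{Ad}(x^{-1}x')|_{\frak{t}}=\mathrm{id}$, so $x^{-1}x'\in C(T,G)=T$; setting $u=x^{-1}x'$ gives the second assertion $x'=xu$. To upgrade this to uniqueness up to $N(T)$-conjugacy, the key identity is $xg^{-1}x^{-1}=g$ for $g\in T$ (since $x$ inverts $T$), which yields
\[
gxg^{-1}=g\cdot g\cdot x=g^{2}x .
\]
Because squaring is surjective on the torus, $\{gxg^{-1}\mid g\in T\}=Tx$; combined with $xu=u^{-1}x\in Tx$ this shows every admissible $x'$ is already a $T$-conjugate of $x$.

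Part (ii) is then immediate: $\mathrm{Ad}(x^{2})|_{\frak{t}}=\mathrm{id}$ places $x^{2}$ in $T$, and conjugating $x^{2}$ by $x$ produces both $x^{2}$ (trivially) and $(x^{2})^{-1}$ (because $x$ inverts $T$), forcing $x^{4}=e$.

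For (iii), independence of the choice of $x$ is a one-line check using $xu=u^{-1}x$ from (i):
\[
(xu)^{2}=xu\cdot u^{-1}x=x^{2} .
\]
The substantive statement is $x^{2}\in C(G)$. Since $G$ is connected, it suffices to prove $\mathrm{Ad}(x^{2})=\mathrm{id}_{\frak{g}}$. This already holds on $\frak{t}$; on each one-dimensional root space $\frak{r}_{\alpha}^{\mathbb{C}}$, the relation $\mathrm{Ad}(x)\alpha=-\alpha$ forces $\mathrm{Ad}(x)X_{\alpha}=c_{\alpha}X_{-\alpha}$ for some scalar $c_{\alpha}$, whence $\mathrm{Ad}(x^{2})X_{\alpha}=c_{\alpha}c_{-\alpha}X_{\alpha}$. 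I would then apply $\mathrm{Ad}(x)$ to the bracket relation $[X_{\alpha},X_{-\alpha}]=\kappa_{\alpha}H_{\alpha}$ (with $\kappa_{\alpha}\neq 0$) and compare the result with $\mathrm{Ad}(x)H_{\alpha}=-H_{\alpha}$; this forces $c_{\alpha}c_{-\alpha}=1$, so $\mathrm{Ad}(x^{2})$ acts as the identity on every root space as well. I expect this final bracket calculation to be the main---though still short---obstacle, as it is the single step where the root-system structure of $\frak{g}$ is used in an essential way; everything else is formal manipulation of the fact that $x$ normalizes and inverts $T$.
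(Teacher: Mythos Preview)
The paper does not supply its own proof of this lemma; it is quoted verbatim from Adams \cite{Adams} and used as a black box (the text immediately continues ``In the following, we assume that such $x\in N(T)$ exists and the order of $x$ is $2$''). So there is nothing to compare against, and the relevant question is simply whether your argument is sound.

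It is. Your proof of (i) is the standard one: $x^{-1}x'\in C(T,G)=T$ gives $x'=xu$, and the identity $gxg^{-1}=g^{2}x$ together with surjectivity of squaring on the torus shows that every such $xu$ is already $T$-conjugate to $x$ (hence a fortiori $N(T)$-conjugate). Part (ii) is immediate from $x^{2}\in T$ and $x$-inversion on $T$. In (iii), the independence check $(xu)^{2}=(xu)(u^{-1}x)=x^{2}$ is clean, and your root-space computation is correct: from $\mathrm{Ad}(x)X_{\alpha}=c_{\alpha}X_{-\alpha}$ and $[X_{\alpha},X_{-\alpha}]=\kappa_{\alpha}H_{\alpha}$ you get $-c_{\alpha}c_{-\alpha}\kappa_{\alpha}H_{\alpha}=\mathrm{Ad}(x)(\kappa_{\alpha}H_{\alpha})=-\kappa_{\alpha}H_{\alpha}$, forcing $c_{\alpha}c_{-\alpha}=1$ and hence $\mathrm{Ad}(x^{2})=\mathrm{id}_{\frak{g}}$. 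This uses only that root spaces are one-dimensional, which holds in the paper's setting of a simple compact $G$. Nothing is missing.
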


In the following, we assume that such $x \in N(T)$ exists and the order of $x$ is $2$.
For example, if $G = Spin(8n + \epsilon) \ (n \in \mathbb{N}, \epsilon = 0, \pm1 )$, then such $x$ exists.
We fix such $x \in N(T)$.
The conjugation $\theta_{x}$ is involutive automorphism of $G$.
Then, for any $\alpha \in \Sigma^{+}$ and $X_{\alpha} \in \frak{r}_{\alpha}^{\mathbb{C}}$,
\[
[H, \theta_{x}(X_{\alpha})] = \theta_{x}([\theta_{x}(H), X_{\alpha}]) = \theta_{x}([-H, X_{\alpha}]) = \theta_{x}(-\alpha(H)X_{\alpha}) = -\alpha(H)\theta_{x}(X_{\alpha}).
\]
Hence, $\theta_{x}(\frak{r}_{\alpha}^{\mathbb{C}}) \subset \frak{r}_{-\alpha}^{\mathbb{C}}$.
In particular, $\theta_{x}(\frak{su}_{\alpha}(2)) \subset \frak{su}_{\alpha}(2)$.
Since $\theta_{x}(iA_{\alpha}) = -iA_{\alpha}$, we see $\theta_{x}|_{\frak{su}_{\alpha}(2)} \not= \mathrm{id}_{\frak{su}_{2}(\alpha)}$ and the eigenvalues of $\theta_{x}|_{\frak{su}_{\alpha}(2)}$ are $-1,-1,$ and $1$.
Moreover, $\theta_{x}( \frak{r}_{\alpha} ) \subset \frak{r}_{\alpha}$.
We set
\[
\frak{r}_{\alpha}^{\pm}(\frak{g}, \frak{t}) = \{ X \in \frak{g} \cap (\frak{r}_{\alpha}^{\mathbb{C}} + \frak{r}_{-\alpha}^{\mathbb{C}}) \ ;\ \theta_{x}(X) = \pm X \}.
\]
often denoted by $\frak{r}_{\alpha}^{\pm}$ simply.
Then, $\frak{r}_{\alpha} = \frak{r}_{\alpha}^{+} + \frak{r}_{\alpha}^{-}$.
Moreover, $\dim \frak{r}_{\alpha}^{+} = \dim \frak{r}_{\alpha}^{-} = 1$ and $[\frak{r}_{\alpha}^{+}, \frak{r}_{\alpha}^{-}] \subset \mathbb{R}(iA_{\alpha})$.
We denote $F(\theta_{x}, G)$ by $K_{x}$.
Then, $K_{x}$ is connected since $G$ is simply connected.
Define subspaces $\frak{k}_{x}$ and $\frak{m}_{x}$ as follows:
\[
\frak{k}_{x} = F^{+}(\theta_{x}, \frak{g}) = \{ X \in \frak{g} \ ;\ \theta_{x}(X) = X \}, \quad
\frak{m}_{x} = F^{-}(\theta_{x}, \frak{g}) = \{ X \in \frak{g} \ ;\ \theta_{x}(X) = -X \}.
\]
The subspace $\frak{k}_{x}$ is a Lie algebra of $K_{x}$.
By the definition of $\frak{r}_{\alpha}^{\pm} \ (\alpha \in \Sigma^{+})$, we obtain
\[
\frak{k}_{x} = \frak{t} + \sum_{\alpha \in \Sigma^{+}}\frak{r}_{\alpha}^{+}, \quad
\frak{m}_{x} = \sum_{\alpha \in \Sigma^{+}} \frak{r}_{\alpha}^{-}.
\]
In particular, the symmetric pair $(\frak{g}, \frak{k}_{x})$ is a normal form, that is, $\mathrm{rank}\, G = \mathrm{rank}\, G/K_{x}$.

For any $\alpha \in \Sigma$ and $n \in \mathbb{Z}$, we define
\[
\Sigma_{\alpha, n}(\frak{g}, \frak{t}) = \left\{ \beta \in \Sigma \ ;\ \frac{2(\alpha, \beta)}{(\alpha, \alpha)} = n \right\}.
\]
often denoted by $\Sigma_{\alpha, n}$.
If $\alpha$ is a longer root,
\[
\Sigma = \Sigma_{\alpha, -2} \sqcup \Sigma_{\alpha, -1} \sqcup \Sigma_{\alpha, 0} \sqcup \Sigma_{\alpha, 1} \sqcup \Sigma_{\alpha, 2}.
\]
In particular, $\Sigma_{\alpha, \pm2} = \{ \pm \alpha \}$.
Set $\tau_{\alpha} = \mathrm{exp}\pi(iA_{\alpha})$.
Then, $\tau_{\alpha}$ is an involutive element of $G$.
Therefore, $\theta_{\tau_{\alpha}}$ is an involutive automorphism of $G$.
In the following, $\theta_{\tau_{\alpha}}$ is often denoted by $\theta_{\alpha}$ simply.
We denote $F(\theta_{\alpha}, G)$ by $K_{\alpha}$.
Then, $K_{\alpha}$ is connected.
Define
\[
\frak{k}_{\alpha} = F^{+}(\theta_{\alpha}, \frak{g}) = \{X \in \frak{g} \ ;\ \theta_{\alpha}(X) = X \}, \quad
\frak{m}_{\alpha} = F^{-}(\theta_{\alpha}, \frak{g}) = \{X \in \frak{g} \ ;\ \theta_{\alpha}(X) = -X \}.
\]
Then, the Lie algebra of $K_{\alpha}$ is $\frak{k}_{\alpha}$ and the following decompositions are true:
\[
\frak{k}_{\alpha} = \frak{t} + \frak{r}_{\alpha} + \sum_{\beta \in \Sigma_{\alpha, 0} \cap \Sigma^{+}} \frak{r}_{\beta}, \quad\quad
\frak{m}_{\alpha} = \sum_{\beta \in \Sigma_{\alpha, \pm1} \cap \Sigma^{+}} \frak{r}_{\beta}.
\]
In particular, note that $\frak{su}^{\alpha}(2) \subset \frak{k}_{\alpha}$.
The orthogonal complement of $\mathbb{R}(iA_{\alpha})$ in $\frak{t}$ is denoted by $\frak{t}^{\alpha}$.
Then,
\[
\frak{k}_{\alpha} = \frak{su}^{\alpha}(2) + C(\frak{su}^{\alpha}(2), \frak{k}_{\alpha}), \quad
C(\frak{su}^{\alpha}(2), \frak{k}_{\alpha}) = \frak{t}_{\alpha} + \sum_{\beta \in \Sigma_{\alpha, 0} \cap \Sigma^{+}} \frak{r}_{\beta}^{+}.
\]
The Lie algebra of $C(\frak{su}_{\alpha}(2), G)$ is $C(\frak{su}^{\alpha}(2), \frak{k}_{\alpha})$.
For any root $\alpha, \beta \ (\beta \not= \pm \alpha)$, we set $\frak{t}^{\alpha, \beta}$ as the orthogonal complement of $\mathbb{R}(iA_{\alpha}) + \mathbb{R}(iA_{\beta})$ in $\frak{t}$.

Next, we recall the root system of $E_{8}$ type.
Let $(\ ,\ )_{8}$ be the standard inner product of $\mathbb{R}^{8}$ and $u_{1}, \cdots, u_{8}$ be the standard orthonormal basis of $\mathbb{R}^{8}$.
Then, 
\[
\Sigma_{E_{8}} = \{ \pm u_i \pm u_j \ ;\ 1 \leq i < j \leq 8 \} \cup \left\{ \frac{1}{2}( \epsilon_{1}u_{1} + \cdots \epsilon_{8}u_{8}) \ ;\ \epsilon_{i} = \pm 1 \ (1 \leq i \leq 8), \  \prod_{i=1}^{8}\epsilon_{i} = 1 \right\}
\]
is a root system of type $E_{8}$ and denoted by $\Sigma$ simply in this subsection.
We take a linear order such that the set of all positive roots is
\[
\Sigma^{+} = \{ u_{i} \pm u_{j} \ ;\ 1 \leq i < j \leq 8 \} \cup \left\{ \frac{1}{2}(u_{1} + \epsilon_{2}u_{2} + \cdots + \epsilon_{8}u_{8}) \ ;\ \epsilon_{i} = \pm1 \ (2 \leq i \leq 8),\ \prod_{i=2}^{8}\epsilon_{i} = 1 \right\}.
\]
We consider the lattice $\Gamma = \mathrm{span}_{\mathbb{Z}}\{ 2\gamma \ ;\ \gamma \in \Sigma_{E_{8}} \}$.
Set $\alpha = u_{7} - u_{8}$ and $\beta = u_{6} - u_{7}$.
Then, set
\[
\Lambda_{\alpha, \beta} 
:= \big\{ \alpha + \gamma \ ;\ \gamma \in (\Sigma_{8})^{+}_{\alpha, 0} \big\} 
\cup \big\{ \beta + \delta, \alpha + \beta + \delta \ ;\ \delta \in (\Sigma_{E_{8}})^{+}_{\alpha, 0} \cap \Sigma_{\beta, 0} \big\}.
\]

\begin{lemm} \label{Lattice}
For any $\mu, \nu \in \Lambda_{\alpha, \beta} \ (\mu \not= \nu)$, it is true that $\mu \not\equiv \nu \mod \Gamma$.

\end{lemm}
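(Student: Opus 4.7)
The plan is to exploit the identification $\Gamma = 2L$, where $L$ denotes the $E_{8}$ root lattice, together with the fact that $L$ is an even integral lattice. Concretely, every $v \in L$ satisfies $(v,v) \in 2\mathbb{Z}$, and for every fixed $r \in L$ the functional $(r,\cdot): L \to \mathbb{Z}$ descends modulo $2$ to a well-defined map $L/2L \to \mathbb{F}_{2}$. I would split $\Lambda_{\alpha,\beta}$ into three types --- (i) $\alpha+\gamma$, (ii) $\beta+\delta$, (iii) $\alpha+\beta+\delta$ --- and prove separately that (a) different types lie in different $\Gamma$-cosets, and (b) within each type, distinct choices of the root parameter give different $\Gamma$-cosets.

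For (a), I would pair each element with $\alpha$ and with $\beta$ and reduce modulo $2$. Since $\gamma$ is $\alpha$-orthogonal and $\delta$ is orthogonal to both $\alpha$ and $\beta$, a direct computation gives $(\alpha,\alpha+\gamma)=2 \equiv 0$, $(\alpha,\beta+\delta)=(\alpha,\beta)=-1 \equiv 1$, and $(\alpha,\alpha+\beta+\delta)=2-1 \equiv 1 \pmod{2}$, which separates Type (i) from Types (ii) and (iii). Analogously $(\beta,\beta+\delta)=2 \equiv 0$ and $(\beta,\alpha+\beta+\delta)=-1+2 \equiv 1 \pmod{2}$, which separates Types (ii) and (iii) from each other.

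For (b), I would prove the stronger statement that whenever $r_{1},r_{2}$ are positive $E_{8}$-roots with $r_{1}-r_{2} \in 2L$, one has $r_{1}=r_{2}$. Writing $r_{1}-r_{2}=2w$ with $w \in L$ and using $|w|^{2} \in 2\mathbb{Z}_{\geq 0}$ yields $|r_{1}-r_{2}|^{2} \in 8\mathbb{Z}_{\geq 0}$. On the other hand $|r_{1}-r_{2}|^{2}=4-2(r_{1},r_{2}) \in \{0,2,4,6,8\}$, with the value $8$ attained only when $r_{2}=-r_{1}$, which is excluded because both are positive. Hence $|r_{1}-r_{2}|^{2}=0$, so $r_{1}=r_{2}$. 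Applying this to the pairs $(\gamma_{1},\gamma_{2})$ or $(\delta_{1},\delta_{2})$ that arise as differences within a single type completes step (b).

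I do not foresee a substantive obstacle: the whole argument reduces to two parity computations and a single norm estimate on a difference of two $E_{8}$-roots. The only delicate point is that the upper bound $|r_{1}-r_{2}|^{2} \leq 8$ is attained precisely by $r_{2}=-r_{1}$, which is why the restriction to \emph{positive} roots in the definition of $\Lambda_{\alpha,\beta}$ is essential throughout.
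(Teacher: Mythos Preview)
Your proof is correct and takes a genuinely different route from the paper. The paper observes the same underlying fact---that $v\in\Gamma$ forces $(\lambda,v)\in 2\mathbb{Z}$ for every root $\lambda$---but then proceeds by a coordinate computation: writing $\mu-\nu=\sum_i c_i u_i$, it claims (``by direct calculations'') that one can always locate indices $i\neq j$ with $(c_i,c_j)$ in a short explicit list such as $(\pm1,0)$, $(\pm3,0)$, $(\pm\tfrac12,\pm\tfrac12)$, and then picks the witness root $u_i\pm u_j$. Your argument replaces this case check by two structural observations: first, pairing with $\alpha$ and $\beta$ modulo $2$ already separates the three \emph{types} in $\Lambda_{\alpha,\beta}$; second, within a single type the difference is $r_1-r_2$ for two positive roots, and the evenness of the $E_8$ lattice forces $|r_1-r_2|^2\in 8\mathbb{Z}$, while a direct norm computation gives $|r_1-r_2|^2\le 8$ with equality only when $r_2=-r_1$. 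What your approach buys is a completely case-free proof that makes transparent \emph{why} positivity of the root parameters is needed (to rule out $r_2=-r_1$); what the paper's approach buys is that it stays entirely within the ``find a root with odd pairing'' paradigm and does not invoke the evenness of the lattice, at the cost of a somewhat opaque coordinate verification.
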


\begin{proof}
Let $v \in \mathbb{R}^{8}$.
If $v \in \Gamma$, then 
\[
\frac{2(2\lambda, v)}{(2\lambda, 2\lambda)} = \frac{1}{2} \frac{2(\lambda, v)}{(\lambda, \lambda)} = \frac{1}{2}(\lambda, v) \in \mathbb{Z}
\]
for any $\lambda \in \Sigma$ since $\{ 2\lambda \ ;\ \lambda \in \Sigma\}$ is also a root system of $E_{8}$ type.
Hence, if there exist $\lambda \in \Sigma$ such that $(\lambda, v) \not\in 2\mathbb{Z}$, then $v \not\in \Gamma$.
Let $\mu - \nu = \sum_{i=1}^{8}c_{i}u_{i} \ (c_{i} \in \mathbb{R})$.
Then, by direct calculations, we see that there exist $1 \leq i \not= j \leq 8$ such that $(c_{i}, c_{j}) = (\pm1, 0), (\pm3, 0), (\pm\frac{1}{2}, \pm\frac{1}{2})$.
Hence, one of $u_{i} \pm u_{j}$ satisfies $(u_{i} \pm u_{j}, \mu - \nu) \not\in 2\mathbb{Z}$.
Therefore, $\mu - \nu \not\in \Gamma$.
\end{proof}

The Weyl group of $\Sigma$ is denoted by $W(\Sigma)$.
Fix $\alpha \in \Sigma$.
Then, $\Sigma_{\alpha, 0}$ is a root system of $E_{7}$ and the Weyl group of $\Sigma_{\alpha, 0}$ is denoted by $W(\Sigma_{\alpha,0})$.
We can easily verify that $W(\Sigma_{\alpha, 0})$ acts on $\Sigma_{\alpha, 0}, \Sigma_{\alpha, 1}$, and $\Sigma_{\alpha, -1}$ respectively, and these actions are transitive.
Let $\beta \in \Sigma_{\alpha, \pm1}$.
We can assume $\beta \in \Sigma_{\alpha, -1}$ by replacing $\beta$ by $\alpha - \beta$ if necessary.
Then, $\Sigma_{\alpha, 0} \cap \Sigma_{\beta}$ is a root system of type $E_{6}$ and the Weyl group is denoted by $W(\Sigma_{\alpha,0} \cap \Sigma_{\beta,0})$.
We can also verify that $W(\Sigma_{\alpha,0} \cap \Sigma_{\beta,0})$ acts on $\Sigma_{\alpha, 0} \cap \Sigma_{\beta, 0}, \Sigma_{\alpha, 0} \cap \Sigma_{\beta, 1}, \Sigma_{\alpha, 1} \cap \Sigma_{\beta, 0}$, and $\Sigma_{\alpha, 1} \cap \Sigma_{\beta, -1}$, and these actions are transitively.
Note that $\Sigma_{\alpha, 1} \cap \Sigma_{\beta, 1} = \{ \alpha + \beta \}$ and $\Sigma_{\alpha, -1} \cap \Sigma_{\beta, -1} = \{ - \alpha - \beta \}$.


\newpage

\subsection{Spin group}\label{Sg}

In this subsection, we recall the Clifford algebra and $Spin(n)$.
Let $V$ be a non-zero finite dimensional vector space over $\mathbb{R}$.
The tensor algebra over $V$ is denoted by $T(V)$.
Let $(\ ,\ )$ be symmetric bilinear form of $V$.
Set a two-sided ideal $J = \langle v \otimes v + (v,v) 1 \ ;\ v \in V \rangle$ of $T(V)$.
Put $Cl(V) = T(V)/J$ and $Cl(V)$ is called the {\it Clifford algebra} over $V$ with a symmetric bilinear form $(\ ,\ )$.
In $Cl(V)$, the product between $u, v \in Cl(V)$ is denoted by $uv$.
If $V = \mathbb{R}^{n}$ and $(\ ,\ )$ is a standard inner product, then $Cl(V)$ is denoted by $Cl(\mathbb{R}^{n})$.
Let $e_{1}, \cdots, e_{n}$ be the standard orthonormal basis of $\mathbb{R}^{n}$.
Then, for any $1 \leq i, j \leq n$,
\[
e_{i}e_{j} + e_{j}e_{i} = -2\delta_{ij} = -2(e_{i}, e_{j}).
\]
The complexification of $Cl(\mathbb{R}^{n})$ is denoted by $Cl(\mathbb{C}^{n})$.
Let $\pi : T(V) \rightarrow Cl(V)$ be a natural projection.

\begin{prop}
Let $A$ be the algebra over $\mathbb{R}$.
If the homomorphism $j : V \rightarrow A$ satisfies $(j(v))^{2} = -(v,v)1$, there exists unique homomorphims $\theta : Cl(V) \rightarrow A$ such that $\theta|_{V} = j$.
\end{prop}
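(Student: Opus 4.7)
The plan is to invoke the universal property of the tensor algebra and then pass to the quotient. First I would apply the universal property of $T(V)$: since $j : V \to A$ is a linear map from a vector space into an associative algebra, there exists a unique algebra homomorphism $\tilde{j} : T(V) \to A$ whose restriction to $V$ coincides with $j$. Explicitly, $\tilde{j}$ sends a pure tensor $v_{1} \otimes \cdots \otimes v_{k}$ to the product $j(v_{1}) \cdots j(v_{k}) \in A$, and is extended linearly to the whole tensor algebra.

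Next, I would check that $\tilde{j}$ vanishes on the generators of the two-sided ideal $J = \langle v \otimes v + (v,v)1 \rangle$. Using the hypothesis $j(v)^{2} = -(v,v)1_{A}$, we compute
\[
\tilde{j}\bigl(v \otimes v + (v,v) 1\bigr) = j(v)^{2} + (v,v) 1_{A} = -(v,v) 1_{A} + (v,v) 1_{A} = 0.
\]
Because $\tilde{j}$ is an algebra homomorphism, its kernel is a two-sided ideal, and therefore it contains the whole ideal $J$ generated by these elements. Consequently $\tilde{j}$ descends through the quotient map $\pi : T(V) \to Cl(V) = T(V)/J$ to yield a well-defined homomorphism $\theta : Cl(V) \to A$ satisfying $\theta \circ \pi = \tilde{j}$; in particular $\theta|_{V} = \tilde{j}|_{V} = j$.

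For uniqueness, suppose $\theta' : Cl(V) \to A$ is another homomorphism with $\theta'|_{V} = j$. Then $\theta$ and $\theta'$ agree on $\pi(V) \subset Cl(V)$, and since $\pi(V)$ generates $Cl(V)$ as an algebra (the images of pure tensors span $Cl(V)$), they must coincide everywhere. No serious obstacle is expected: the entire argument is the standard universal-property diagram chase for a quotient of the tensor algebra, and the only substantive verification is the one-line computation showing that $\tilde{j}$ respects the Clifford relation.
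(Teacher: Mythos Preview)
Your argument is correct and is exactly the standard proof of the universal property of the Clifford algebra. The paper itself states this proposition without proof, treating it as a well-known fact, so there is nothing to compare against; your write-up would serve perfectly well as the omitted justification.
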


Define
\[
T(V)^{0} = \bigoplus_{n \geq 0} \otimes^{2n}V, \quad \quad
T(V)^{1} = \bigoplus_{n \geq 0} \otimes^{2n+1}V,
\]
and $Cl(V)^{i} = \pi(T(V)^{i}) \ (i = 1,2)$.
It is known that $Cl(V)$ is a $\mathbb{Z}_{2}$-graded algebra, that is, 
\[
Cl(V) = Cl(V)^{0} \oplus Cl(V)^{1}, \quad \quad
Cl(V)^{i}Cl(V)^{j} \subset Cl(V)^{i+j},
\]
where the indices are $\mathrm{mod}\,2$.
We denote the complexification of $Cl(\mathbb{R}^{n})^{i}$ by $Cl(\mathbb{C}^{n})^{i}$ for $i = 0,1$.
It is well known that 
\[
Cl(\mathbb{R}^{n})^{0} \cong Cl(\mathbb{R}^{n-1}), \quad \quad
Cl(\mathbb{C}^{n})^{0} \cong Cl(\mathbb{C}^{n-1})
\]
as algebras.
For $Cl(\mathbb{C}^{n})$, the following isomorphisms are well known:
\[
\begin{split}
& Cl(\mathbb{C}^{2n}) = M(2^{n}, \mathbb{C}), \\
& Cl(\mathbb{C}^{2n+1}) = M(2^{n}, \mathbb{C}) \oplus M(2^{n}, \mathbb{C}), \\
& Cl(\mathbb{C}^{2n+1})^{0} = \{ (A,A) \ ;\ A \in M(2^{n}, \mathbb{C}) \} = M(2^{n}, \mathbb{C}), \\
& Cl(\mathbb{C}^{2n+1})^{1} = \{ (A, -A) \ ;\ A \in M(2^{n}, \mathbb{C}) \}.
\end{split}
\]
For $Cl(\mathbb{C}^{2m})$,
\[
\rho_{2m} : Cl(\mathbb{C}^{2m}) = M(2^{m}, \mathbb{C}) \ni \phi \mapsto \phi \in M(2^{m}, \mathbb{C}) = \mathrm{End}(\mathbb{C}^{2^{m}})
\]
is a irreducible representation of $Cl(\mathbb{C}^{2m})$ and any irreducible representation is isomorphic to $\rho$.
Moreover, for $Cl(\mathbb{C}^{2m+1})$,
\[
\begin{split}
& \rho^{+}_{2m+1} : Cl(\mathbb{C}^{2m+1}) = M(2^{m}, \mathbb{C}) \oplus M(2^{m}, \mathbb{C}) \ni (\phi, \psi) \mapsto \phi \in M(2^{m}, \mathbb{C}) = \mathrm{End}(\mathbb{C}^{2^{m}}), \\
& \rho^{-}_{2m+1} : Cl(\mathbb{C}^{2m+1}) = M(2^{m}, \mathbb{C}) \oplus M(2^{m}, \mathbb{C}) \ni (\phi, \psi) \mapsto \psi \in M(2^{m}, \mathbb{C}) = \mathrm{End}(\mathbb{C}^{2^{m}})
\end{split}
\]
are irreducible representations of $Cl(\mathbb{C}^{2m+1})$.
Moreover, $\rho^{+}_{2m+1} $ and $\rho^{-}_{2m+1}$ are not isomorphic to each other and any irreducible representation of $Cl(\mathbb{C}^{2m+1})$ is isomorphic to either $\rho^{+}_{2m+1} $ or $\rho^{-}_{2m+1}$.
Since $Cl(\mathbb{C}^{2m})^{0} = Cl(\mathbb{C}^{2m-1})$, the representations of $Cl(\mathbb{C}^{2m})^{0}$ induced by $\rho^{\pm}_{2m-1}$ are denoted by $(\Delta_{2m}^{\pm})^{\mathbb{C}}$.
Similarly, since $Cl(\mathbb{C}^{2m+1})^{0} = Cl(\mathbb{C}^{2m})$, the representation of $Cl(\mathbb{C}^{2m+1})^{0}$ induced by $\rho_{2m}$ is denoted by $(\Delta_{2m+1})^{\mathbb{C}}$.


Next, we recall the spinor group $Spin(n)$ and $Pin(n)$.
For any $u \in \mathbb{R}^{n}$, we denote the element $v$ such that $uv = vu = 1$ by $u^{-1}$.
If $u \not= 0$, then $u^{-1} = (-1/||u||^{2})u$.
In particular, if $||u|| = 1$, then $u^{-1} = -u$.
Let $v \in \mathbb{R}$ be $||v|| = 1$.
Then,
\[
vxv^{-1} = vx(-v) = (-xv - 2(v,x))(-v) = -x + \frac{2(v,x)}{(v,v)}v.
\]
Hence, $\mathbb{R}^{n} \ni x \mapsto vxv^{-1} \in \mathbb{R}^{n}$ is a reflection withe respect to $v$ mutiplicated by $-1$.
In $Cl(\mathbb{R}^{n})$, we define $Pin(n)$ as follows:
\[
Pin(n) := \{ v_{1}v_{2} \cdots v_{p} \ ;\ v_{i} \in \mathbb{R}^{n}, ||v_{i}|| = 1 \ (i = 1,2 \cdots, p) \} \subset Cl(\mathbb{R}^{n}).
\]
Then, $Pin(n)$ is a compact Lie group.
We set a representation $\pi$ of $Pin(n)$ as follws:
For any $g = v_{1} \cdots v_{p}  \in Pin(n)$, 
\[
\pi(g) : \mathbb{R}^{n} \rightarrow \mathbb{R}^{n} \ ;\ u \mapsto gug^{-1} = (v_{1} \cdots v_{p})u((-v_{p}) \cdots (-v_{1})).
\]
Therefore, $\pi(Pin(n))$ is generated by all reflections in $\mathbb{R}^{n}$ and $\pi(Pin(n)) = O(n)$.
In particular, $\pi:Pin(n) \rightarrow O(n)$ is a double covering homomorphism.
The covering transformation of $Pin(n)$ is obtained by $Pin(n) \ni g \mapsto -g \in Pin(n)$.
We define
\[
Spin(n) := \{ v_{1} \cdots v_{p} \in Pin(n) \ ;\ p = 2m \ (m \in \mathbb{N}) \} \subset Cl(\mathbb{R}^{n})^{0}.
\]
Then, $Spin(n)$ is the identity component of $Pin(n)$.
Moreover, $\pi(Spin(n)) = SO(n)$ and $\pi:Spin(n) \rightarrow SO(n)$ is a double covering homomorphism.
Then, the center $C(Spin(n))$ is given by
\[
C(Spin(n)) = 
\begin{cases}
\{ \pm 1, \pm e_{1}e_{2} \cdots e_{n} \} = \mathbb{Z}_{2} \times \mathbb{Z}_{2} & (n = 4m), \\
\{ \pm 1, \pm e_{1}e_{2} \cdots e_{n} \} = \mathbb{Z}_{4} & (n = 4m+2), \\
\{ \pm 1 \} = \mathbb{Z}_{2} & (n = 2m+1). \\
\end{cases}
\]
Let $p,q \in \mathbb{N}$ be $p + q = n$.
Let $Spin(p)$ be defined by $Cl(\mathrm{span}_{\mathbb{R}}\{ e_{1}, \cdots, e_{p} \})$ and $Spin(q)$ be defined by $Cl(\mathrm{span}_{\mathbb{R}}\{ e_{p+1}, \cdots, e_{n} \})$ respectively.
We consider the homomorphim
\[
i_{p,q}\ :\ Spin(p) \times Spin(q) \rightarrow Spin(n) \ ;\ (g,h) \mapsto gh.
\]
Then, $\mathrm{Ker}i_{p,q} = \{ (1,1), (-1,-1) \}$ and $i_{p,q}(Spin(p) \times Spin(q)) = (Spin(p) \times Spin(q)/\{ (1,1), (-1,-1) \}$.
we denote $i_{p,q}(Spin(p) \times Spin(q))$ by $Spin(p) \cdot Spin(q)$.

Next, we consider the Lie algebra $\frak{spin}(n)$ of $Spin(n)$.
The Lie algebra $\frak{spin}(n)$ is given by
\[
\frak{spin}(n) = \left\{ \sum_{1 \leq i < j \leq n}c_{ij}e_{i}e_{j} \in Cl(\mathbb{R}^{n})^{0} \ ;\ c_{ij} \in \mathbb{R} \right\}.
\]
The Lie blacket $[\ ,\ ]$ is given by $[X,Y] = XY - YX$ for any $X,Y \in \frak{spin}(n)$, where $XY$ is product between $X$ and $Y$ in $Cl(\mathbb{R}^{n})$.
For the induced homomorphism $\pi : \frak{spin}(n) \rightarrow \frak{so}(n)$,
\[
\pi(e_{i}e_{j})(e_{k}) = (e_{i}e_{j})e_{k} - e_{k}(e_{i}e_{j}) = 
\begin{cases}
0 & ( k \not= i,j ), \\
2e_{j} & ( k = i ) , \\
-2e_{i} & ( k = j ).
\end{cases}
\]
Let $E_{ij} \in M(n,\mathbb{R}) \ (i < j)$ be the endomorphism of $\mathbb{R}^{n}$ such that
\[
E_{ij}(e_{k}) = 
\begin{cases}
0 & ( k \not= i,j ), \\
e_{j} & ( k = i ) , \\
-e_{i} & ( k = j ).
\end{cases}
\]
Then, $\pi(e_{i}e_{j}) = 2E_{ij}$ and $\pi (\sum_{1 \leq i < j \leq n}c_{ij}e_{i}e_{j}) = \sum_{1 \leq i < j \leq n}2c_{ij}E_{ij}$.
The adjoint representation of $Spin(n)$ is given by
\[
\mathrm{Ad} : Spin(n) \times \frak{spin}(n) \ ;\ (g, X) \mapsto gXg^{-1}.
\]
Let $m = \lfloor n \rfloor$.
For any $\theta_{1}, \cdots, \theta_{m} \in \mathbb{R}$, define 
\[
T_{n}(\theta_{1}, \cdots, \theta_{m}) = \prod_{i=}^{m} \left (\cos \frac{\theta_{i}}{2} + \sin \frac{\theta_{i}}{2} e_{2i-1}e_{2i} \right).
\]
Then, $T_{n} = \{ T_{n}(\theta_{1}, \cdots, \theta_{m}) \ ;\ \theta_{1}, \cdots, \theta_{m} \in \mathbb{R} \}$ is a maximal torus of $\frak{spin}(n)$.
Set 
\[
T'_{n}(\theta_{1}, \cdots, \theta_{m}) = \Big( \mathrm{exp}\theta_{1}E_{12} \Big) \circ \cdots \circ \Big( \mathrm{exp}\theta_{m}E_{2m-1,2m} \Big)
\]
and $T'_{n} = \{ T'_{n}(\theta_{1}, \cdots, \theta_{m}) \ ;\ \theta_{i} \in \mathbb{R} \ (i = 1, \cdots, m) \}$.
Then, $T'_{n}$ is a maximal torus of $SO(n)$ and $\pi(T_{n}(\theta_{1}, \cdots, \theta_{m})) = T'_{n}(\theta_{1}, \cdots, \theta_{m})$ and $\pi(T_{n}) = T'_{n}$.
Let $\frak{t}_{n}$ be the Lie algebra of $T_{n}$.
Set 
\[
t_{n}(c_{1}, \cdots, c_{m}) = \sum_{i=1}^{m}\frac{c_{i}}{2}e_{2i-1}e_{2i} \ (c_i \in \mathbb{R}).
\]
Then, $\frak{t}_{n} = \left\{ t_{n}(c_{1}, \cdots, c_{m}) \ ;\ c_{i} \in \mathbb{R} \ (i = 1, \cdots, m) \right\}$.
Let $x_{i}$ be the one form of $\frak{t}_{n}^{\mathbb{C}}$ such that $x_{i}( t_{n}(c_{1}, \cdots, c_{m}) ) = ic_{i}$.
Then,
\[
\begin{split}
& \Sigma(\frak{spin}(2m), \frak{t}_{2m}) = \{ \pm x_{i} \pm x_{j} \ ;\ 1 \leq i < j \leq m \}, \\
& \Sigma(\frak{spin}(2m+1), \frak{t}_{2m+1}) = \{ \pm x_{i} \pm x_{j} \ ;\ 1 \leq i < j \leq m \} \cup \{ \pm x_{i} \ ;\ 1 \leq i \leq m \}. \\
\end{split}
\]
For example, since
\[
\begin{split}
& \mathrm{ad}(t_{n}(c_{1}, \cdots, c_{m})) \Big( (e_{2i-1}e_{2j-1} + e_{2i}e_{2j}) - i(e_{2i}e_{2j-1} - e_{2i-1}e_{2j}) \Big) \\
& \hspace{40mm} = i(c_{i}- c_{j})\Big( (e_{2i-1}e_{2j-1} + e_{2i}e_{2j}) - i(e_{2i}e_{2j-1} - e_{2i-1}e_{2j}) \Big), \\
& \mathrm{ad}(t_{n}(c_{1}, \cdots, c_{m})) \Big( (e_{2i-1}e_{2j-1} - e_{2i}e_{2j}) - i(e_{2i}e_{2j-1} + e_{2i-1}e_{2j}) \Big) \\
& \hspace{40mm} = i(c_{i} + c_{j})\Big( (e_{2i-1}e_{2j-1} - e_{2i}e_{2j}) - i(e_{2i}e_{2j-1} + e_{2i-1}e_{2j}) \Big), \\
\end{split}
\]
we obtain 
\[
\begin{split}
\frak{r}_{\pm(x_{i}-x_{j})}^{\mathbb{C}} &= \mathbb{C}( (e_{2i-1}e_{2j-1} + e_{2i}e_{2j}) \mp i(e_{2i}e_{2j-1} - e_{2i-1}e_{2j}) ), \\
\frak{r}_{\pm(x_{i}+x_{j})}^{\mathbb{C}} &= \mathbb{C}( (e_{2i-1}e_{2j-1} - e_{2i}e_{2j}) \mp i(e_{2i}e_{2j-1} + e_{2i-1}e_{2j}) ).
\end{split}
\]
The Killing form $\langle \ ,\ \rangle$ of $\frak{spin}(n)$ is given by $\langle X, Y \rangle = (n-2)\mathrm{tr}(\pi(X)\pi(Y))$ for any $X,Y \in \frak{spin}(n)$.
Then, $iA_{x_{i} \pm x_{j}} = (1/2)(e_{2i-1}e_{2i} \pm e_{2j-1}e_{2j})$ for any $1 \leq i < j \leq m$.
We assume that $n = 2m = 4k$.
We take a linear order of $i\frak{t}$ such that $\Sigma^{+} = \{ x_{i} \pm x_{j} \ ;\ 1 \leq i < j \leq m \}$.
Set $x = e_{1}e_{3} \cdots e_{2m-1}$.
Then, $\mathrm{Ad}(x)(\frak{t}_{n}) \subset \frak{t}_{n}$ and $\mathrm{Ad}(x)(X) = -X$ for any $X \in \frak{t}_{n}$.
Moreover, $x^{2} = 1$.
By the definition of $\frak{r}_{x_{i} \pm x_{j}}^{\pm}$,
we obtain
\[
\frak{r}_{x_{i} \pm x_{j}}^{+} = \mathbb{R} \Big( e_{2i-1}e_{2j-1} \mp e_{2i}e_{2j} \Big), \quad
\frak{r}_{x_{i} \pm x_{j}}^{-} = \mathbb{R} \Big( e_{2i}e_{2j-1} \pm e_{2i-1}e_{2j} \Big).
\]


We consider some subgroups of $Spin(n)$.
For each $\alpha \in \Sigma^{+}$, the connected subgroup of $Spin(n)$ whose Lie algebra is $\frak{su}^{\alpha}(2)$ is isomorphic to $SU(2) \cong Sp(1) \cong Spin(3)$.
Therefore, this subgroup is denoted by $SU^{\alpha}(2)$.
For each $\alpha = x_{i} \pm x_{j} \ (1 \leq i < j \leq m)$, we set $\bar{\alpha} = x_{i} \mp x_{j}$.
We consider the centralizer $C(SU^{\alpha}(2), Spin(n))$.
Then,
\[
C(\frak{su}^{\alpha}(2), \frak{spin}(n)) = \frak{t}^{\alpha} + \sum_{\beta \in \Sigma^{+}_{\alpha,0}} \frak{r}_{\beta}.
\]
We see that $C(\frak{su}^{\alpha}(2), \frak{spin}(n))$ is isomorphic to $\frak{su}(2) + \frak{spin}(n-4)$.
The irreducible component isomorphic to $\frak{su}(2)$ is $\frak{su}^{\bar{\alpha}}(2)$ and the other irreducible component is $\frak{spin}(n-4)$ defined by the Clliford algebra over the orthogonal complement of the subspace $\mathrm{span}_{\mathbb{R}}\{ e_{2i-1}, e_{2i}, e_{2j-1}, e_{2j} \}$.
The irreducible component isomorphic to $\frak{spin}(n-4)$ is denoted by $\frak{spin}^{\alpha}(n-4)$.
The connected subgroup whose Lie algebra is $\frak{spin}^{\alpha}(n-4)$ is denoted by $Spin^{\alpha}(n-4)$.
For example, assume that $\alpha = x_{1} - x_{2}$.
Then,
\[
\begin{split}
& C(\frak{su}^{\alpha}(2), \frak{spin}(n)) = \{ t(c_{1}, c_{1}, c_{2}, \cdots, c_{n-1}) \ ;\ c_{i} \in \mathbb{R} \} + \frak{r}_{x_{1} + x_{2}} + \sum_{3 \leq i < j \leq m}\frak{r}_{x_{i} \pm x_{j}}, \\
& \frak{spin}^{\alpha}(n-4) = \{ t(0,0,c_{2}, \cdots, c_{n-1}) \ ;\ c_{i} \in \mathbb{R} \} + \sum_{3 \leq i < j \leq m}\frak{r}_{x_{i} \pm x_{j}}. \\
\end{split}
\]
Hence, $Spin^{\alpha}(n-4)$ is defined by the Clliford algebra over the orthogonal complement of $\mathrm{span}_{\mathbb{R}}\{ e_{1}, e_{2}, e_{3}, e_{4} \}$.
The idenitity component of $C(SU^{\alpha}(2), Spin(n))$ is 
\[
\begin{split}
& \{ gh \ ;\ g \in SU^{\bar{\alpha}}(2), h \in Spin^{\alpha}(n-4) \} = SU^{\bar{\alpha}}(2) \times Spin^{\alpha}(n-4).
\end{split}
\]

Next, let $\alpha = x_{i} \pm x_{j} \ (1 \leq i < j \leq m)$.
Then, $SU^{\alpha}(2) \times SU^{\bar{\alpha}}(2) = Spin(4)$, where $Spin(4)$ is defined by the Clliford algebra $Cl(\mathrm{span}_{\mathbb{R}}\{ e_{2i-1}, e_{2i}, e_{2j-1}, e_{2j} \})$.
We denote $SU^{\alpha}(2) \times SU^{\bar{\alpha}}(2)$ by $Spin(4)^{\alpha}$ or $Spin^{\bar{\alpha}}(4)$.
We consider the centralizer $C(Spin^{\alpha}(4), Spin(n))$.
Then,
\[
C(\frak{spin}^{\alpha}(4), \frak{spin}(n)) = \frak{t}^{\alpha, \bar{\alpha}} + \sum_{\gamma \in \Sigma^{+}_{\alpha,0} \cap \Sigma_{\bar{\alpha},0}} \frak{r}_{\gamma} = \frak{spin}^{\alpha}(n-4)
\]
and the identity component of $C(Spin^{\alpha}(4), Spin^{\alpha}(n))$ is $Spin^{\alpha}(n-4)$.
The subgroup 
\[
\{ gh \ ;\ g \in Spin^{\alpha}(4), h \in Spin^{\alpha}(n-4) \} = Spin^{\alpha}(4) \times Spin^{\alpha}(n-4)/\{ (1,1), (-1,-1) \}
\]
is denoted by $Spin^{\alpha}(4) \cdot Spin^{\alpha}(n-4)$.
We consider the homomorphism
\[
i_{\alpha} : Spin^{\alpha}(4) \times Spin^{\alpha}(12) \rightarrow Spin(16) \ ;\ (g,h) \mapsto gh.
\]
Then, $i_{\alpha}(Spin^{\alpha}(4) \times Spin^{\alpha}(12)) = Spin^{\alpha}(4) \cdot Spin^{\alpha}(12)$ and $\mathrm{Ker}i_{\alpha} = \{(1,1), (-1,-1)\}$.
We consider the conjugation $\theta_{\tau_{\alpha}}$.
Then, $\theta_{\tau_{\alpha}}$ is involutive and $F(\theta_{\tau{\alpha}}, Spin(n)) = Spin^{\alpha}(4) \cdot Spin^{\alpha}(12)$.

Moreover, let $\beta \in \Sigma^{+}$ be $2(\alpha, \beta)/(\alpha, \alpha) = \pm 1$.
Then, $\alpha \mp \beta \in \Sigma^{+}$.
We consider the connected subgroup of $Spin(n)$ whose Lie algebra is $\frak{su}^{\alpha, \beta}(2)$.
Then, this subgroup is isomorphic to $SU(3)$ and denoted by $SU^{\alpha, \beta}(3)$.
We consider the centralizer $C(SU^{\alpha,\beta}(3), Spin(n))$.
Then,
\[
C(\frak{su}^{\alpha,\beta}(3), \frak{spin}(n)) = \frak{t}^{\alpha, \beta} + \sum_{\gamma \in \Sigma^{+}_{\alpha, 0} \cap \Sigma_{\beta, 0}}\frak{r}_{\gamma}.
\]
We see that $C(\frak{su}^{\alpha,\beta}(3), \frak{spin}(n))$ is isomorphic to $\mathbb{R} + \frak{spin}(n-6)$.
Then, the irreducible component of the subalgbera $C(\frak{su}^{\alpha, \beta}(3), \frak{spin}(n))$ isomorphic to $\frak{spin}(n-6)$ is denoted by $\frak{spin}^{\alpha, \beta}(n-6)$.
Moreover, the connected subgroup of $Spin(n)$ whose Lie sublagebra is $\frak{spin}^{\alpha,\beta}(n-6)$ is denoted by $Spin^{\alpha,\beta}(n-6)$.
For example, assume $\alpha = x_{1} - x_{2}$ and $\beta = x_{2} - x_{3}$.
Then,
\[
\begin{split}
& C(\frak{su}^{\alpha,\beta}(3), \frak{spin}(n)) = \{ t(c_{1}, c_{1}, c_{1}, c_{2}, \cdots, c_{n-2}) \ ;\ c_{i} \in \mathbb{R} \} + \sum_{4 \leq i < j \leq m}\frak{r}_{x_{i} \pm x_{j}}, \\
& \frak{spin}^{\alpha, \beta}(n-6) = \{ t(0,0,0, c_{2}, \cdots, c_{n-2}) \ ;\ c_{i} \in \mathbb{R} \} + \sum_{4 \leq i < j \leq m}\frak{r}_{x_{i} \pm x_{j}}. \\
\end{split}
\]
Hence, $Spin^{\alpha, \beta}(n-4)$ is $Spin(n-4)$ defined by the Clliford algebra over the orthogonal complement of $\mathrm{span}_{\mathbb{R}}\{ e_{1}, \cdots, e_{6}\}$.
Then, the identity component of the centralizer $C(SU^{\alpha,\beta}(3), Spin(n))$ is given by
\[
\begin{split}
& \left\{ \left( \cos \frac{\theta}{2} + \sin \frac{\theta}{2}e_{1}e_{2} \right) \left( \cos \frac{\theta}{2} + \sin \frac{\theta}{2}e_{3}e_{4} \right) \left( \cos \frac{\theta}{2} + \sin \frac{\theta}{2}e_{5}e_{6} \right) h \ ;\ \theta \in \mathbb{R}, h \in Spin^{\alpha,\beta}(n-6) \right\} \\
& = (U(1) \times Spin(n-6))/\{ (1,1), (-1,-1) \} \cong U(1) \cdot Spin(n-6).
\end{split}
\]

Next, we consider some subgroups of $Spin(4k)$.
Define 
\[
U(2k) = \{ A \in M(2k,\mathbb{C}) \ ;\ {}^{t}\overline{A} = A^{-1} \}, \quad
SU(2k) = \{ A \in U(2k) \ ;\ \det A = 1\}.
\]
Set
\[
J_{2k} = \sum_{i=0}^{k-1} \Big( E_{2i+1, 2i+2} - E_{2i+2, 2i+1} \Big) \in SO(2k).
\]
Then, 
\[
\sigma : SU(2k) \rightarrow SU(2k) \ ;\ X \mapsto \overline{X}, \quad
\theta_{J_{2k}}\sigma : SU(2k) \rightarrow SU(2k) \ ;\ X \mapsto J_{m} \overline{X} J_{m}^{-1}
\]
are involutive automorphisms respectively and 
\[
F(\sigma, SU(2k)) = SO(2k), \quad
F(\theta_{J_{2k}}\sigma, SU(2k)) = Sp(k).
\]
Set 
\[
J' = \sum_{i=0}^{2k-1} \Big( E_{2i+2, 2i+1} - E_{2i+1. 2i+2} \Big) \in SO(4k).
\]
Then, by the restriction of the map
\[
c:
M(2k, \mathbb{C}) \ni 
\begin{pmatrix}
a_{11} + ib_{11} & \cdots & a_{12k} + ib_{i2k} \\
\vdots & \ddots & \vdots \\
a_{2k1} + ib_{2k1} & \cdots & a_{2k2k} + ib_{2k2k} \\
\end{pmatrix} 
\mapsto 
\begin{pmatrix}
a_{11} & b_{11} & \cdots & a_{12k} & b_{12k} \\
-b_{11} & a_{11} &  & -b_{12k} & a_{12k} \\
\vdots & & & & \\
a_{2k1} & b_{2k1} & & a_{2k2k} & b_{2k2k} \\
-b_{2k1} & a_{2k1} & & -b_{2k2k} & a_{2k2k} \\
\end{pmatrix}
\in M(4k,\mathbb{R})
\]
to $U(2k)$, we see that $U(2k)$ is isomorphic to $F(\theta_{J'}, SO(4k))$.
Hence, 
\[ 
U'(2k) := F(\theta_{J}, SO(4k)).
\] 
The center $C(U'(2k))$ is $\{ \mathrm{exp}\theta J' \ ;\ \theta \in \mathbb{R} \}$.
We denote $c(SU(2k))$ by $SU'(2k)$.
Set $I = \mathrm{d}(1,-1,1,-1, \cdots, 1, -1) \in SO(4k)$.
Then, for any $g \in SU(2k)$, we obtain that $c \circ \sigma (g) = \theta_{I} \circ c (g)$.
Moreover, set
\[
J'_{2k} = \sum_{i=0}^{k-1} \Big( (E_{4i+3, 4i+1} + E_{4i+4, 4i+2}) - (E_{4i+1,4i+3} + E_{4i+2, 4i+4}) \Big) \in SO(4k).
\]
Then, $c(J_{2k}) = J'_{2k}$ and $c \circ \theta_{J_{2k}} (g) = \theta_{J'_{2k}} \circ c (g)$ for any $g \in SU(n)$.
Set $J'' \in Spin(4k)$ as follows:
\[
J'' = \prod_{i=0}^{k-1} \Big( \cos \frac{\pi}{4} + \sin \frac{\pi}{4}e_{2i+1}e_{2i+2} \Big).
\]
Then, $F(\theta_{J''}, Spin(4k))$ is isomorphic to $U(2k)$ and we denote $F(\theta_{J''}, Spin(4k))$ by $U''(2k)$.
Then, $\pi(U''(2k)) = U'(2k)$ and $\pi : U''(2k) \rightarrow U'(2k)$ is an isomorophism.
We set $SU''(2n) = (\pi|_{U''(2k)})^{-1}(SU'(2k))$.
Then, we see $\pi(x) = I$ and $\theta_{I} \circ \pi = \pi \circ \theta_{x}$, where $x = e_{2} e_{4} \cdots e_{4k-2}e_{4k}$.
Set
\[
J''_{2k} = \prod_{i=0}^{k-1} \Big( \cos \frac{\pi}{4} + \sin \frac{\pi}{4} e_{4i}e_{4i+2} \Big) \Big( \cos \frac{\pi}{4} + \sin \frac{\pi}{4} e_{4i+1}e_{4i+3} \Big).
\]
Then, $\pi(J''_{2k}) = J'_{2k}$ and $\theta_{J'_{2k}} \circ \pi (g) = \pi \circ \theta_{J''_{2k}} (g)$ for any $g \in SU''(2k)$.
Therefore, the diagram
\[
\xymatrix{
SU(2k) \ar[r]^{\sigma} \ar[d]^{c} & SU(2k) \ar[r]^{\theta_{J_{2k}}} \ar[d]^{c} & SU(2k) \ar[d]^{c} \\
SU'(2k) \ar[r]^{\theta_{I}} & SU'(2k) \ar[r]^{\theta_{J'_{2k}}} & SU'(2k) \\
SU''(2k) \ar[r]^{\theta_{x}} \ar[u]^{\pi} & SU''(2k) \ar[r]^{\theta_{J''_{2k}}} \ar[u]^{\pi} & SU''(2k) \ar[u]^{\pi} \\
}
\]
is commutative.
Hence,
\[
F \Big( \theta_{J''_{2k}}\theta_{x}, SU''(2k) \Big) \cong F \Big( \theta_{J'_{2k}}\theta_{x}, SU'(2k) \Big) \cong F \Big( \theta_{J_{2k}} \sigma, SU(2k) \Big) = Sp(k).
\]
The Lie algebra of $U'(n), U''(n), Sp'(n), Sp''(n)$ are denoted by $\frak{u}'(n), \frak{u}''(n), \frak{sp}'(n), \frak{sp}''(n)$ resplectively.


Next, we recall the spin representation from \cite{Adams-L}.
By the definition of $Spin(n)$, we see $Spin(n) \subset Cl(\mathbb{C}^{n})^{0}$.
Since, $Cl(\mathbb{C})^{0} = Cl(\mathbb{C}^{n-1})$, we obtain the complex representation of $Spin(n)$ by the irreducible representation of $Cl(\mathbb{C}^{n})^{0}$.
The complex representation of $Spin(2m+1)$ induced by the irreducible representation $\Delta_{2m+1}^{\mathbb{C}}$ of $Cl(\mathbb{C}^{2m})$ is denoted by the same symbol $\Delta_{2m+1}^{\mathbb{C}}$.
The representation space of $\Delta_{2m+1}^{\mathbb{C}}$ is denoted by $V_{2m+1}^{\mathbb{C}}$.
Moreover, the complex representations of $Spin(2m)$ induced by the irreducible representations $(\Delta_{2m}^{\pm})^{\mathbb{C}}$ are denoted by the same symbols $(\Delta_{2m}^{\pm})^{\mathbb{C}}$.
The representation of $(\Delta_{2m}^{\pm})^{\mathbb{C}}$ is denoted by $(V_{2m}^{\pm})^{\mathbb{C}}$.
These representations $\Delta_{2m+1}^{\mathbb{C}}$ and $(\Delta_{2m}^{\pm})^{\mathbb{C}}$ are called the {\it spin representation}.
It is known that $\Delta_{2m+1}^{\mathbb{C}}$ and $(\Delta_{2m}^{\pm})^{\mathbb{C}}$ are irreducible representations.
In $Spin(2m+1)$, the set of all weights of $\Delta_{2m+1}^{\mathbb{C}}$ with respect to $\frak{t}_{2m+1}^{\mathbb{C}}$ is denoted by $\Pi_{2m+1}$.
Then, 
\[
\Pi_{2m+1} = \left\{ \frac{1}{2} \Big( \epsilon_{1}x_{1} + \cdots + \epsilon_{m}x_{m} \Big) \ ;\ \epsilon_{i} = \pm1 \ (i = 1, \cdots, m) \right\}.
\]
In $Spin(2m)$, the set of all weights of $(\Delta_{2m}^{\pm})^{\mathbb{C}}$ with respect to $\frak{t}_{2m}^{\mathbb{C}}$ is denoted by $\Pi_{2m}^{\pm}$.
Then, 
\[
\Pi_{2m}^{\pm} = \left\{ \frac{1}{2} \Big( \epsilon_{1}x_{1} + \cdots + \epsilon_{m}x_{m} \Big) \ ;\ \epsilon_{i} = \pm1 \ (i = 1, \cdots, m), \epsilon_{1} \cdots \epsilon_{m} = \pm 1 \right\}.
\]
Let $p,q \in \mathbb{N}$ such that $n = p + q$.
For any irreduicible representation $\rho$ of $Spin(n)$, the restriction of $\rho$ to $Spin(p) \cdot Spin(q)$ is decomposed into the direct sum of irreducible representations of $Spin(p) \cdot Spin(q)$.
For any $s, r \in \mathbb{N}$, 
\[
\begin{array}{llll}
(\Delta_{2r+ 2s}^{+})^{\mathbb{C}} = (\Delta_{2r}^{+})^{\mathbb{C}} \otimes (\Delta_{2s}^{+})^{\mathbb{C}} + (\Delta^{-}_{2r})^{\mathbb{C}} \otimes (\Delta^{-}_{2s})^{\mathbb{C}}, &
(\Delta^{+}_{2r+2s+2})^{\mathbb{C}} = \Delta_{2r+1}^{\mathbb{C}} \otimes \Delta_{2s+1}^{\mathbb{C}}, \\
(\Delta_{2r+ 2s}^{-})^{\mathbb{C}} = (\Delta_{2r}^{+})^{\mathbb{C}} \otimes (\Delta_{2s}^{-})^{\mathbb{C}} + (\Delta^{+}_{2r})^{\mathbb{C}} \otimes (\Delta^{-}_{2s})^{\mathbb{C}}, &
(\Delta^{-}_{2r+2s+2})^{\mathbb{C}} = \Delta_{2r+1}^{\mathbb{C}} \otimes \Delta_{2s+1}^{\mathbb{C}}, \\
\Delta_{2r + 2s + 1}^{\mathbb{C}} = (\Delta^{+}_{2r})^{\mathbb{C}} \otimes \Delta_{2s + 1}^{\mathbb{C}} + (\Delta^{-}_{2r})^{\mathbb{C}} \otimes \Delta_{2s + 1}^{\mathbb{C}}.
\end{array}
\]
It is known that the complex representations $\Delta_{m}^{\mathbb{C}}$ has the real representation $\Delta_{m}$ if and only if $m \equiv 1, 7 \mod 8$.
The representation space is denoted by $V_{m}$.
Moreover, the representation $(\Delta_{m}^{\pm})^{\mathbb{C}}$ has the real representation $\Delta_{m}^{\pm}$ if and only if $m \equiv 0 \mod 8$.
Similarly, the representation space is denoted by $V_{m}^{\pm}$.


\newpage

\subsection{Octonions}\label{O}

In this subsection, we recall the octonions, $G_{2}$, and $Spin(8)$.
Let $\mathbb{O}= \bigoplus_{i=0}^{7}\mathbb{R}e_{i}$ be the octonions, where $e_{0}, \cdots, e_{7}$ form a basis.
The multiplication between two octonions is defined as follows.
$e_{0}$ is the identity element and denoted by $1$.
For any $1 \leq i \not= j \leq 7$, we have $e_{i}^{2} = -1$ and $e_{i}e_{j} = -e_{j}e_{i}$.
Assume that this multiplication satisfies the distributive law.
In Figure $1$, the multiplication among $e_{1},e_{2},e_{3}$ is defined as $e_{1}e_{2} = e_{3}, \ e_{2}e_{3}=e_{1}, \ e_{3}e_{1}=e_{2}$.
Similarly, the multiplication among any three elements on each of the other lines and the circle is defined in the same manner. 
Note that the associative law does not follow in $\mathbb{O}$.

For any $x = x_{0} + \sum_{i=1}^{7}x_{i}e_{i} \in \mathbb{O} \ (x_{i} \in \mathbb{R})$, the element $\bar{x} = x_{0} - \sum_{i=1}^{7}x_{i}e_{i}$ is referred to as the conjugate of $x$.
Define $\mathrm{Im}\mathbb{O} = \{ x \in \mathbb{O} \ ;\ \bar{x} = -x \}$ and elements of $\mathrm{Im}\mathbb{O}$ are called pure octonions.
For any $x = \sum_{i=0}^{7}x_{i}e_{i}$ and $y = \sum_{i=0}^{7}y_{i}e_{i} \in \mathbb{O}$, set $(x,y) = \sum_{i=0}^{7}x_{i}y_{i}$.
Then, $(\ ,\ )$ is an inner product of $\mathbb{O}$ and satisfies
\[
\begin{split}
(x,y) &= \frac{1}{2}(x\bar{y} + y\bar{x}).
\end{split}
\]
Set $|x| = \sqrt{(x,x)}$.
In the present paper, we often denote $O(\mathbb{O})$ and $SO(\mathbb{O})$ by $O(8)$ and $SO(8)$, respectively.
Moreover, $O(7) = \{ g \in O(8)\ ;\ g(1) = 1 \}$ and $SO(7) = \{ g \in SO(8) \ ;\ g(1) = 1\}$.
If a linear automorphism $f:\mathbb{O} \rightarrow \mathbb{O}$ satisfies $f(xy) = f(x)f(y)$ for any $x,y \in \mathbb{O}$, then $f$ is called an automorphism of $\mathbb{O}$.
The group of all automorphisms of $\mathbb{O}$ is the exceptional compact Lie group $G_{2}$.
For any $g \in G_{2}$, it holds that $g(1) = 1$ and $(g(x), g(y)) = (x,y)$ for any $x,y \in \mathbb{O}$.
Moreover, $G_{2}$ is simply connected and $G_{2} \subset SO(7)$, that is, $G_{2} = \{ g \in SO(7) \ ;\ g(u)g(v) = g(uv) \}$.
Let $\frak{so}(8)$ and $\frak{so}(7)$ denote the Lie algebras of $SO(8)$ and $SO(7)$, respectively.
The Lie algebra of $G_{2}$ is denoted by $\frak{g}_{2}$.
Then, $\frak{g}_{2} = \{ X \in \frak{so}(7) \ ;\ X(u)v + uX(v) = 0 \}$.

Next, we recall some outer linear automorphisms $\alpha, \beta, \gamma$ of $\frak{so}(8)$ from \cite{Yokota-g-r}.
For any $a \in \mathbb{O}$, we set $L_{a}, R_{a}, T_{a} \in \frak{so}(8)$ as follows:
\[
L_{a}(u) = au, \quad
R_{a}(u) = ua , \quad
T_{a}(u) = au + ua = (L_{a} + R_{a})(u).
\]
For any $0 \leq i < j \leq 7$, we define $E_{ij} \in \frak{so}(8)$ as
\[
E_{ij}(e_{k}) = 
\begin{cases}
e_{j} & (k = i), \\
e_{i} & (k = j), \\
0 & (\text{otherwise}).
\end{cases}
\]
Then, $T_{e_{k}} = 2E_{k0}$ and $[T_{e_{k}}, T_{e_{l}}] = 4E_{kl}$.
Since $E_{ij} \ (0 \leq i < j \leq 7)$ forms a basis of $\frak{so}(8)$, we see that $T_{e_{k}}, [T_{e_{k}}, T_{e_{l}}] \ (1 \leq k < l \leq 7)$ form a basis of $\frak{so}(8)$.
Moreover, for any $0 \leq i < j \leq 7$, we set $F_{ij} \in \frak{so}(8)$ for any $0 \leq i,j \leq 7$ as follows.
For any $x \in \mathbb{O}$,
\[
\begin{split}
& F_{i0}(x) = -\frac{1}{2}(L_{e_{i}}x), \quad F_{0i}(x) = \frac{1}{2}(L_{e_{i}}x) \quad (i \not= 0), \\
& F_{ij}(x) = \frac{1}{2}L_{e_{i}}(L_{e_{j}}x) = \frac{1}{4}[ L_{e_{i}}, L_{e_{j}} ](x)\quad (1 \leq i \not= j \leq 7).
\end{split}
\]
Then, 
\[
\begin{matrix}
\begin{array}{lllll} \vspace{2mm}
\begin{array}{llll}
2F_{01} = E_{01} + E_{23} + E_{45} + E_{67}, \\
2F_{23} = E_{01} + E_{23} - E_{45} - E_{67}, \\ 
2F_{45} = E_{01} - E_{23} + E_{45} - E_{67}, \\
2F_{67} = E_{01} - E_{23} - E_{45} + E_{67}, \\
\end{array} &
\begin{array}{llll}
2F_{02} = E_{02} - E_{13} - E_{46} + E_{57}, \\ 
2F_{13} = -E_{02} + E_{13} - E_{46} + E_{57}, \\
2F_{46} = -E_{02} - E_{13} + E_{46} + E_{57}, \\ 
2F_{57} = E_{02} + E_{13} + E_{46} + E_{57}, \\
\end{array} \\ \vspace{3mm}
\begin{array}{llll}
2F_{03} = E_{03} + E_{12} + E_{47} + E_{56}, \\
2F_{12} =  E_{03} + E_{12} - E_{47} - E_{56}, \\ 
2F_{47} = E_{03} - E_{12} + E_{47} - E_{56}, \\ 
2F_{56} = E_{03} - E_{12} - E_{47} + E_{56}, \\
\end{array} &
\begin{array}{llll}
2F_{04} = E_{04} - E_{15} + E_{26} - E_{37}, \\ 
2F_{15} = - E_{04} + E_{15} + E_{26} - E_{37}, \\ 
2F_{26} = E_{04} + E_{15} + E_{26} + E_{37}, \\
2F_{37} = - E_{04} - E_{15} + E_{26} + E_{37}, \\
\end{array} \\ \vspace{3mm}
\begin{array}{llll}
2F_{05} = E_{05} + E_{14} - E_{27} - E_{36}, \\ 
2F_{14} = E_{05} + E_{14} + E_{27} + E_{36}, \\
2F_{27} = - E_{05} + E_{14} + E_{27} - E_{36}, \\ 
2F_{36} = - E_{05} + E_{14} - E_{27} + E_{36}, \\
\end{array} &
\begin{array}{llll}
2F_{06} = E_{06} - E_{17} - E_{24} + E_{35}, \\
2F_{17} = -E_{06} + E_{17} - E_{24} + E_{35}, \\ 
2F_{24} = -E_{06} - E_{17} + E_{24} + E_{35}, \\ 
2F_{35} = E_{06} + E_{17} + E_{24} + E_{35}, \\
\end{array} \\ 
\end{array} \\
\begin{array}{llll}
2F_{07} = E_{07} + E_{16} + E_{25} + E_{34}, \\ 
2F_{16} = E_{07} + E_{16} - E_{25} - E_{34}, \\ 
2F_{25} = E_{07} - E_{16} + E_{25} - E_{34}, \\
2F_{34} = E_{07} - E_{16} - E_{25} + E_{34}.
\end{array}
\end{matrix}
\]
Define $\alpha, \beta, \gamma : \frak{so}(8) \rightarrow \frak{so}(8)$ as follows:
for any $X \in \frak{so}(8)$ and $a \in \mathbb{O}$,
\[
\alpha(X)(a) = \overline{X(\overline{a})}, \quad \beta(E_{ij}) = F_{ij} \ (0 \leq i < j \leq 7), \quad \gamma = \beta \circ \alpha.
\]
We see that $\alpha^{2} = \beta^{2} = \mathrm{id}_{\frak{so}(8)}$.
Remark that $\beta(T_{e_{k}}) = L_{e_{k}}$ and $\beta([T_{e_{k}}, T_{e_{l}}]) = [L_{e_{k}}, L_{e_{l}}]$ for any $1 \leq k < l \leq 7$.

\begin{prop} \cite{Yokota-g-r}
The maps $\alpha, \beta, \gamma$ are outer automorphisms of $\frak{so}(8)$.

\end{prop}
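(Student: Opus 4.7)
The plan is to treat $\alpha$ directly, reduce the hard part—checking that $\beta$ preserves the Lie bracket—to a computation in a convenient basis, and then handle $\gamma$ and outerness uniformly via representation theory.

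First I would dispose of $\alpha$. The octonion conjugation $c : \mathbb{O} \to \mathbb{O}$, $a \mapsto \bar{a}$, is $\mathbb{R}$-linear and orthogonal with $c^{2} = \mathrm{id}$, so $c \in O(8) \setminus SO(8)$. By definition $\alpha(X)(a) = \overline{X(\bar{a})} = (c \circ X \circ c^{-1})(a)$, i.e.\ $\alpha$ is conjugation by $c$. Since conjugation by any element of $O(8)$ preserves $\mathfrak{so}(8)$ and its Lie bracket, $\alpha$ is an automorphism of $\mathfrak{so}(8)$.

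Next, for $\beta$, the text already records that $\{T_{e_{k}}\}_{1 \le k \le 7} \cup \{[T_{e_{k}}, T_{e_{l}}]\}_{1 \le k < l \le 7}$ is a basis of $\mathfrak{so}(8)$, and that $\beta(T_{e_{k}}) = L_{e_{k}}$ while $\beta([T_{e_{k}}, T_{e_{l}}]) = [L_{e_{k}}, L_{e_{l}}]$. By bilinearity of the bracket, the identity $\beta([X,Y]) = [\beta(X), \beta(Y)]$ needs only to be checked on pairs of basis vectors. Concretely, one would compute $[T_{e_{i}}, T_{e_{j}}]$, $[T_{e_{i}}, [T_{e_{k}}, T_{e_{l}}]]$ and $[[T_{e_{i}}, T_{e_{j}}], [T_{e_{k}}, T_{e_{l}}]]$, re-expand in the $T$-basis, repeat the analogous computation with $L$'s in place of $T$'s, and verify that the resulting structure constants agree. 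These reduce to octonion identities on $\mathrm{Im}\,\mathbb{O}$, most efficiently handled via the Moufang and alternator identities together with $L_{a}^{2} = -|a|^{2}\,\mathrm{id}$ for $a \in \mathrm{Im}\,\mathbb{O}$. A fully mechanical alternative is to read off each $[F_{ij}, F_{kl}]$ directly from matrix multiplication in $\mathrm{End}(\mathbb{O})$ using the explicit $F_{ij}$-to-$E_{ij}$ table already written above, and compare with $\beta([E_{ij}, E_{kl}])$.

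The map $\gamma = \beta \circ \alpha$ is then automatically an automorphism. For outerness, I would invoke the following representation-theoretic fact about $\mathfrak{so}(8)$: the three real $8$-dimensional irreducible representations—the vector representation on $\mathbb{O}$ and the two half-spin representations $V_{8}^{\pm}$—are pairwise inequivalent and form a single orbit under $\mathrm{Out}(\mathfrak{so}(8)) \cong S_{3}$. Any inner automorphism preserves the isomorphism class of every representation. Under $\beta$, the element $T_{e_{k}} = 2E_{0k}$, which acts by the stated $2 \times 2$ block in the vector module, is sent to $L_{e_{k}}$, whose action on $\mathbb{O}$ (via the identifications of Subsection 2.3) is that of a half-spin representation; hence pulling the vector representation back by $\beta$ produces a spin representation, forcing $\beta$ to be outer. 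A parallel check—$\alpha$ exchanges $V_{8}^{+}$ and $V_{8}^{-}$ because $c \in O(8) \setminus SO(8)$—shows $\alpha$ is outer, and then $\gamma$ is outer as well.

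The main obstacle is plainly the bracket verification for $\beta$: non-associativity of $\mathbb{O}$ means one cannot simplify $L_{a} L_{b}$ to $L_{ab}$, so one must either grind through the bracket table built from the $F_{ij}$-expansions or set up a short Moufang-identity calculus for the $L$'s. Once the bracket identities are in hand, every remaining step is essentially formal.
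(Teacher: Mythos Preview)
The paper does not prove this proposition; it is simply quoted from \cite{Yokota-g-r} with no argument given. So there is nothing in the paper to compare your proof against, and your plan stands or falls on its own merits.

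Your outline is correct. The treatment of $\alpha$ as conjugation by $c\in O(8)\setminus SO(8)$ is complete. For $\beta$, the brute-force verification on the $\{T_{e_k},[T_{e_k},T_{e_l}]\}$ basis is valid but, as you acknowledge, laborious. One small gap: ``and then $\gamma$ is outer as well'' does not follow just from $\alpha,\beta$ being outer, since a product of outer automorphisms can be inner. You should say explicitly that $\gamma$ cyclically permutes the three $8$-dimensional irreducibles (the paper's Example~\ref{LRT} records $\gamma(L_a)=R_a$, $\gamma(R_a)=-T_a$, $\gamma(T_a)=-L_a$), hence cannot be inner.

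It is worth noting that the paper's subsequent development gives a much cleaner route that bypasses your main obstacle entirely. Once Theorem~\ref{tri-d4} (the infinitesimal triality principle) is in hand, one checks in two lines that $\tilde{\frak d}_4=\{(X_1,X_2,X_3)\in\frak{so}(8)^3:(X_1u)v+u(X_2v)=X_3(uv)\}$ is closed under the componentwise bracket: applying the triality relation for $(Y_1,Y_2,Y_3)$ and then for $(X_1,X_2,X_3)$, and subtracting the reversed order, gives $[X_3,Y_3](uv)=([X_1,Y_1]u)v+u([X_2,Y_2]v)$. Thus $\tilde{\frak d}_4$ is a Lie subalgebra of $\frak{so}(8)^3$, the first projection is a Lie-algebra isomorphism onto $\frak{so}(8)$ (bijective by the existence/uniqueness in Theorem~\ref{tri-d4}), and composing its inverse with the second and third projections exhibits $\gamma$ and $\beta$ as Lie-algebra homomorphisms without any bracket-by-bracket computation. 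If you are free to reorder the exposition, this is the argument to use.
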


\begin{example} \label{LRT}
For any $a \in \mathrm{Im}\mathbb{O}$, 
\[
\begin{array}{lllll}
\alpha (L_{a}) = -R_{a}, & \alpha (R_{a}) = - L_{a}, & \alpha(T_{a}) = - T_{a}, \\
\beta(L_{a}) = T_{a}, & \beta(R_{a}) = -R_{a}, & \beta(T_{}) = L_{a}, \\
\gamma(L_{a}) = R_{a}, & \gamma(R_{a}) = -T_{a}, & \gamma(T_{a}) = -L_{a}.
\end{array}
\]
\end{example}

Since $E_{ij} \ (0 \leq i < j \leq 7)$ form a basis of $\frak{so}(8)$, it holds that $F_{ij}\ (0 \leq i < j \leq 7)$ also form a basis of $\frak{so}(8)$.
Moreover, since $T_{e_{k}} \ (01 \leq k \leq 7)$ generates $\frak{so}(8)$, $L_{e_{k}} \ (1 \leq k \leq 7)$ generates $\frak{so}(8)$.
By Example \ref{LRT}, we obtain the following.

\begin{thm} \cite{Yokota-g-r}
The subgroup $S_{3}$ of the automorphism group of $SO(8)$ generated by $\alpha$ and $\beta$ is isomorphic to the symmetric group $\frak{S}_{3}$.

\end{thm}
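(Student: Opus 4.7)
The plan is to verify the Coxeter-type presentation of $\mathfrak{S}_3$, namely that $\alpha^2 = \beta^2 = \mathrm{id}$ and that $\gamma := \beta\circ\alpha$ has order exactly $3$. These relations force $\langle\alpha,\beta\rangle$ to be a quotient of the dihedral group $D_3$; combined with $\alpha\neq\beta$ and $\gamma\neq\mathrm{id},\gamma^2\neq\mathrm{id}$, the quotient must be trivial, so the group has order $6$ and is isomorphic to $\mathfrak{S}_3\cong D_3$.

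The involutivity $\alpha^2=\beta^2=\mathrm{id}_{\mathfrak{so}(8)}$ is already recorded in the paragraph immediately preceding Example~\ref{LRT}. Since $\alpha$ and $\beta$ are automorphisms of $\mathfrak{so}(8)$ that integrate to automorphisms of $SO(8)$ (indeed the identification with the triality automorphisms of $SO(8)$ is the content of the cited reference), it suffices to carry out all computations at the Lie algebra level.

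The main computation is $\gamma^3=\mathrm{id}$. As noted just before the theorem, $L_{e_k}$ for $1\leq k\leq 7$ generates $\mathfrak{so}(8)$ as a Lie algebra, so it is enough to check that $\gamma^3$ fixes every $L_a$ with $a\in\mathrm{Im}\,\mathbb{O}$; for symmetry I also track $R_a$ and $T_a$. Iterating Example~\ref{LRT} gives
\begin{align*}
\gamma(L_a) &= R_a, & \gamma(R_a) &= -T_a, & \gamma(T_a) &= -L_a,\\
\gamma^2(L_a) &= -T_a, & \gamma^2(R_a) &= L_a, & \gamma^2(T_a) &= -R_a,\\
\gamma^3(L_a) &= L_a, & \gamma^3(R_a) &= R_a, & \gamma^3(T_a) &= T_a.
\end{align*}
Thus $\gamma^3$ fixes a generating set of $\mathfrak{so}(8)$, and since $\gamma^3$ is a Lie algebra automorphism we conclude $\gamma^3=\mathrm{id}_{\mathfrak{so}(8)}$.

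To rule out collapse, the same table shows $\gamma(L_a)=R_a\neq L_a$ and $\gamma^2(L_a)=-T_a\neq L_a$ for $a\neq 0$, so $\gamma$ has order exactly $3$; moreover $\alpha\neq\beta$ since $\alpha(L_a)=-R_a$ while $\beta(L_a)=T_a$. Therefore $\langle\alpha,\beta\rangle$ satisfies the full presentation $\langle\alpha,\beta\mid\alpha^2=\beta^2=(\beta\alpha)^3=1\rangle$ of $\mathfrak{S}_3$ without further identifications, proving the claim. There is no real obstacle here beyond the bookkeeping in iterating Example~\ref{LRT}; the only subtle point is to invoke that $\{L_{e_k}\}_{k=1}^{7}$ generates $\mathfrak{so}(8)$, so that the identity $\gamma^3=\mathrm{id}$ on the $L_a$'s propagates to all of $\mathfrak{so}(8)$.
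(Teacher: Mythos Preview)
Your proof is correct. Both your argument and the paper's rely on the formulas in Example~\ref{LRT}, but the presentation differs: the paper simply writes down the explicit bijection $\alpha\mapsto(1\,2)$, $\beta\mapsto(1\,3)$, $\gamma\mapsto(1\,2\,3)$, etc., and asserts it is an isomorphism, whereas you verify the Coxeter presentation $\alpha^2=\beta^2=(\beta\alpha)^3=1$ directly and then argue that no proper quotient of $\mathfrak{S}_3$ can contain both an involution and an element of order~$3$. Your route is more self-contained, since it spells out the order-$3$ computation for $\gamma$ that the paper's bijection implicitly requires; the paper's route has the minor advantage of naming the isomorphism explicitly. Either way the substance is the same iteration of Example~\ref{LRT} on the generators $L_a$.
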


\begin{proof}
The following correspondence gives the isomorphism:
\[
\begin{array}{llll} \vspace{2mm}
1 \mapsto \begin{pmatrix} 1 & 2 & 3 \\ 1 & 2 & 3 \\ \end{pmatrix}, &
\alpha \mapsto \begin{pmatrix} 1 & 2 & 3 \\ 2 & 1 & 3 \end{pmatrix}, &
\beta \mapsto \begin{pmatrix} 1 & 2 & 3 \\ 3 & 2 & 1 \end{pmatrix}, \\
\gamma \mapsto \begin{pmatrix} 1 & 2 & 3 \\ 2 & 3 & 1 \end{pmatrix}, &
\gamma^{2} \mapsto \begin{pmatrix} 1 & 2 & 3 \\ 3 & 1 & 2 \end{pmatrix}, &
\gamma \beta \mapsto \begin{pmatrix} 1 & 2 & 3 \\ 1 & 3 & 2 \end{pmatrix}.
\end{array}
\]
\end{proof}

The following property is called the triality principle of $\frak{so}(8)$.

\begin{thm} \cite{Yokota-g-r} \label{tri-d4}
For any $X_{1} \in \frak{so}(8)$, there exist unique $X_{2}, X_{3} \in \frak{so}(8)$ such that
\[
\Big( X_{1}(u) \Big)v + u \Big( X_{2}(v) \Big) = X_{3}(uv)
\]
for any $u, v \in \mathbb{O}$.
Moreover, $X_{2} = \gamma(X_{1})$ and $X_{3} = \beta(X_{3})$.

\end{thm}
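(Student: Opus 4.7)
The plan has two parts: establishing uniqueness of $(X_2, X_3)$ given $X_1$, and exhibiting $(X_2, X_3) = (\gamma(X_1), \beta(X_1))$ as an explicit solution.

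For uniqueness, suppose $(X_2, X_3)$ and $(X_2', X_3')$ both satisfy the identity for the same $X_1$. Subtracting, I get $u(X_2 - X_2')(v) = (X_3 - X_3')(uv)$ for all $u, v \in \mathbb{O}$. Setting $u = 1$ gives $X_2 - X_2' = X_3 - X_3'$; setting $v = 1$ and writing $c := (X_2 - X_2')(1)$ gives $X_3 - X_3' = R_c$. Substituting back, the identity reduces to $u(vc) = (uv)c$, i.e.\ $[u, v, c] = 0$ for all $u, v$. Since the associator on $\mathbb{O}$ is totally antisymmetric and vanishes only when one entry is real, $c \in \mathbb{R}$; but $R_c$ lies in $\mathfrak{so}(8)$ and so is skew, forcing $c \in \mathrm{Im}\,\mathbb{O}$ and hence $c = 0$.

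For existence, both sides of the identity are $\mathbb{R}$-linear in $X_1$, and the assignment $X_1 \mapsto (X_1, \gamma(X_1), \beta(X_1))$ is linear, so it suffices to verify the identity on a spanning set of $\mathfrak{so}(8)$. First I would check it directly for $X_1 = L_a, R_a, T_a$ with $a \in \mathrm{Im}\,\mathbb{O}$, using Example~\ref{LRT}. For example, for $X_1 = T_a$ we have $\gamma(T_a) = -L_a$ and $\beta(T_a) = L_a$, and the identity becomes $(au + ua)v - u(av) = a(uv)$, which rearranges to $[a, u, v] = -[u, a, v]$, true by the alternating property of the octonion associator. The cases $L_a$ and $R_a$ are analogous one-line checks.

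Since $\{L_a, R_a, T_a : a \in \mathrm{Im}\,\mathbb{O}\}$ does not span $\mathfrak{so}(8)$, I promote the verified cases to all of $\mathfrak{so}(8)$ by a closure argument. I claim the set
\[
\mathcal{T} = \{(Y_1, Y_2, Y_3) \in \mathfrak{so}(8)^3 \;:\; Y_1(u)\,v + u\,Y_2(v) = Y_3(uv) \text{ for all } u, v \in \mathbb{O}\}
\]
is a Lie subalgebra of $\mathfrak{so}(8)^3$ under componentwise bracket. Given two such triples, substituting the identity for one into the identity for the other (first replace $u$ by $Y_1'(u)$, then unfold $Y_1'(u)\,v$ using the second identity, and symmetrize) converts $[Y_1, Y_1'](u)\,v + u\,[Y_2, Y_2'](v)$ into $[Y_3, Y_3'](uv)$. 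Because $\gamma, \beta$ are Lie algebra automorphisms by the preceding proposition, the set of $X_1$ with $(X_1, \gamma(X_1), \beta(X_1)) \in \mathcal{T}$ is a Lie subalgebra of $\mathfrak{so}(8)$. It contains each $T_{e_k}$, hence all brackets $[T_{e_k}, T_{e_l}]$; since $\{T_{e_k}\} \cup \{[T_{e_k}, T_{e_l}]\}$ spans $\mathfrak{so}(8)$ (noted in Subsection~\ref{O}), existence follows for every $X_1 \in \mathfrak{so}(8)$.

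The main technical obstacle is verifying that $\mathcal{T}$ is closed under the bracket: it requires a careful two-step substitution and a symmetric subtraction, with terms like $X_3(u X_2'(v))$ canceling against $X_3'(u X_2(v))$. Conceptually the computation is formal, but bookkeeping matters. The uniqueness step rests entirely on the non-associativity of $\mathbb{O}$, and the base-case verification is a single invocation of the alternating associator, so the real content of the argument is concentrated in this bracket-closure identity.
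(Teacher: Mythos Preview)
Your proof is correct and follows essentially the same route as the paper. The uniqueness argument is identical; for existence, the paper verifies the identity for $X_1 = L_{e_k}$ and then computes directly that composing two such identities yields the identity for $[L_{e_k}, L_{e_l}]$, which is exactly your bracket-closure computation $[Y_3,Y_3'](uv) = ([Y_1,Y_1']u)v + u([Y_2,Y_2']v)$ written out in the specific case---so your Lie-subalgebra packaging is a mild abstraction of the same step, and your extra base-case checks for $L_a$ and $R_a$ are harmless but unnecessary once you have $T_a$.
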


\begin{proof}
Since $L_{e_{k}} \ (1 \leq k \leq 7)$ generate $\frak{so}(8)$, it is sufficient to show the statement follows in tha case of $X_{1} = L_{e_{k}}$ and $X_{1} = [L_{e_{k}}, L_{e_{l}}] \ (1 \leq k < l \leq 7)$.
In the case of $X_{1} = L_{e_{k}}$, we set $X_{2} = R_{e_{k}}$ and $X_{3} = T_{e_{k}}$.
Then,
\[
(L_{e_{k}}u)v + u(R_{e_{k}}u) = (e_{k}u)v + u(ve_{k}) = e_{k}(uv) + (uv)e_{k} = T_{e_{k}}(uv).
\]
In the case of $X_{1} = L_{e_{k}} L_{e_{l}}$, since
\[
\begin{split}
T_{e_{k}}T_{e_{l}}(uv) 
&= T_{e_{k}} \Big( \big( L_{e_{l}}u \big) v + u \big( R_{e_{l}}u \big) \Big) \\
&= \big( L_{e_{k}}L_{e_{l}}u \big) v + \big( L_{e_{l}}u \big) \big( R_{e_{k}} v \big) + \big( L_{e_{k}}u \big) \big( R_{e_{l}}v \big) + u \big( R_{e_{k}} R_{e_{l}} \big), \\
T_{e_{l}}T_{e_{k}}(uv) 
&= T_{e_{l}} \Big( \big( L_{e_{k}}u \big) v + u \big( R_{e_{k}}u \big) \Big) \\
&= \big( L_{e_{l}}L_{e_{k}}u \big) v + \big( L_{e_{k}}u \big) \big( R_{e_{l}} v \big) + \big( L_{e_{l}}u \big) \big( R_{e_{k}}v \big) + u \big( R_{e_{l}} R_{e_{k}} \big),
\end{split}
\]
we obtain
\[
\Big( [L_{e_{k}}, L_{e_{l}}] u \Big) v + u \Big( [R_{e_{k}}, R_{e_{l}}] v \Big) = [T_{e_{k}}, T_{e_{l}}](uv).
\]
Next, we consider the uniqueness of $X_{2}$ and $X_{3}$.
If $X'_{2}$ and $X'_{3}$ satisfies the same condition.
Then, for any $u,v \in \mathbb{O}$,
\[
u \big( (X_{2} -X'_{2})v \big) = (X_{3} - X'_{3})(uv).
\]
If $u = 1$, then $(X_{2} - X'_{2})(v) = (X_{3} - X'_{3})(v)$.
Hence, $X_{2} - X'_{2} = X_{3} - X'_{3}$.
Set $X = X_{2} - X'_{2} = X_{3} - X'_{3}$.
Then, $u \big( X(v) \big) = X(uv)$.
Set $a = X(1)$.
Since $X \in \frak{so}(8)$,
\[
2(a,1) = (a,1) + (1,a) = (X(1), 1) + (1, X(1)) = 0.
\]
Therefore, $a \in \mathrm{Im}\mathbb{O}$ and $X(u) = ua$ for any $u \in \mathbb{O}$.
Since $u(X(v)) = X(uv)$, we obtain $u(va) = (uv)a$ for any 
Hence, $a \in \mathbb{R}$ and $a = 0$.
Thus, $X = 0$ and $X_{2} = X'_{2}, X_{3} = X'_{3}$.

\end{proof}

By Theorem \ref{tri-d4}, we set
\[
\tilde{\frak{d}}_{4} = \{ (X_{1}, X_{2}, X_{3}) \in \frak{so}(8)^{3} \ ;\ (X_{1}u)v + u(X_{2}v) = X_{3}(uv) \ (u,v \in \mathbb{O}) \}.
\]
Then, $\tilde{\frak{d}}_{4}$ is isomorphic to $\frak{so}(8)$ by $\tilde{\frak{d}}_{4} \ni (X_{1}, X_{2}, X_{3}) \mapsto X_{1} \in \frak{so}(8)$.
The corresponding automorphisms to $\alpha, \beta, \gamma, \gamma^{2}, \gamma \beta$ are
\[
\begin{split}
& \alpha : \tilde{\frak{d}}_{4} \rightarrow \tilde{\frak{d}}_{4} \ ;\ (X, \gamma(X), \beta(X)) = (X_{1}, X_{2}, X_{3}) \mapsto ( \alpha(X_{1}), X_{3}, X_{2} ) = (\alpha(X), \beta(X), \gamma(X)), \\
& \beta : \tilde{\frak{d}}_{4} \rightarrow \tilde{\frak{d}}_{4} \ ;\ (X, \gamma(X), \beta(X)) = (X_{1}, X_{2}, X_{3}) \mapsto ( X_{3}, \alpha(X_{2}), X_{1} ) = ( \beta(X), \gamma\beta(X), X ), \\
& \gamma : \tilde{\frak{d}}_{4} \rightarrow \tilde{\frak{d}}_{4} \ ;\ (X, \gamma(X), \beta(X)) = (X_{1}, X_{2}, X_{3}) \mapsto ( X_{2}, \alpha(X_{3}), \alpha(X_{1}) ) = (\gamma(X), \alpha \beta(X), \alpha(X)), \\
& \gamma^{2} : \tilde{\frak{d}}_{4} \rightarrow \tilde{\frak{d}}_{4} \ ;\ (X, \gamma(X), \beta(X)) = (X_{1}, X_{2}, X_{3}) \mapsto ( \alpha(X_{3}), X_{1}, \alpha(X_{2}) ) = (\alpha\beta(X), X, \alpha\gamma(X)), \\
& \gamma \beta  : \tilde{\frak{d}}_{4} \rightarrow \tilde{\frak{d}}_{4} \ ;\ (X, \gamma(X), \beta(X)) = (X_{1}, X_{2}, X_{3}) \mapsto ( \alpha(X_{2}), \alpha(X_{1}), \alpha(X_{3})) = ( \alpha \gamma(X), \alpha(X), \alpha\beta(X) ).
\end{split}
\]
Let $X_{1} \in \frak{so}(8)$ be $X_{1}(1) = 0$, that is, $X_{1} \in \frak{so}(7)$.
Then, $X_{1}(1)u + 1(X_{2}(u)) = X_{3}(1u) = X_{3}(u)$.
Therefore, $X_{2} = X_{3}$.
On the other hand, if $X_{2} = X_{3}$, then $X_{1}(1) = X_{3}(1) - X_{2}(1) = 0$ and $X_{1} \in \frak{so}(7)$.
Set a subalgebra $\tilde{\frak{b}}_{3}$ as follows:
\[
\begin{split}
\tilde{\frak{b}}_{3} 
& = \{ (X_{1}, X_{2}, X_{3}) \in \tilde{\frak{d}}_{4} \ ;\ X_{1}(1) = 0 \} = \{ (X_{1}, X_{2}, X_{3}) \in \tilde{\frak{d}}_{4} \ ;\ X_{2} = X_{3} \} \\
& = \{ X = (X_{1}, X_{2}, X_{3}) \in \tilde{\frak{d}}_{4} \ ;\ \alpha(X) = X \}.
\end{split}
\]
Then, $\tilde{\frak{b}}_{3}$ is isomorphic to $\frak{so}(7)$.
Moreover, we assume that $X_{1} \in \frak{g}_{2}$.
Then, $X_{2} = X_{3} = X_{1}$.
Conversely, if $X_{1} = X_{2} = X_{3}$, then $X_{1} \in \frak{g}_{2}$.
Set a subalgebra $\tilde{\frak{g}}_{2}$ as follows:
\[
\begin{split}
\tilde{\frak{g}}_{2} 
&= \{ (X_{1}, X_{2}, X_{3}) \in \tilde{\frak{d}}_{4} \ ;\ X_{1} \in \frak{g}_{2} \} = \{ (X_{1}, X_{2}, X_{3}) \in \tilde{\frak{d}}_{4} \ ;\ X_{1} = X_{2} = X_{3} \} \\
&= \{ X = (X_{1}, X_{2}, X_{3}) \in \tilde{\frak{d}}_{4} \ ;\ \gamma(X) = X \}.
\end{split}
\]
Then, $\tilde{\frak{g}}_{2}$ is isomorphic to $\frak{g}_{2}$.
The Lie algebra $\frak{g}_{2}$ is spanned by
\[
\begin{matrix}
\begin{array}{lll}
x E_{23} + y E_{45} + z E_{67}, & -x E_{13} -y E_{46} + x E_{57}, \\
x E_{12} + y E_{47} + z E_{56}, & -x E_{15} + y E_{26} - z E_{37}, & x,y,z \in \mathbb{R} \\
x E_{14} - y E_{27} - z E_{36}, & -x E_{17} - y E_{24} + z E_{35}, & x + y + z = 0, \\
\end{array} \\
x E_{16} + y E_{25} + z E_{34}
\end{matrix}
\]

\begin{thm} \cite{Yokota-g-r} \label{tri-D4}
For any $g_{1} \in SO(8)$, there exist $g_{2}, g_{3} \in SO(8)$ such that
\[
\Big( g_{1}(u) \Big) \Big( g_{2}(u) \Big) = g_{3} (uv)
\]
for any $u,v \in \mathbb{O}$.
Moreover, such $(g_{2}, g_{3})$ is either $(g_{2}, g_{3})$ or $(-g_{2}, -g_{3})$.
\end{thm}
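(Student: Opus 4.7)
The plan is to upgrade the Lie-algebra triality of Theorem \ref{tri-d4} to the group level by means of a natural covering of Lie groups. I introduce
\[
\widetilde{D}_{4} := \{ (g_{1}, g_{2}, g_{3}) \in SO(8)^{3} \ ;\ (g_{1}u)(g_{2}v) = g_{3}(uv) \text{ for all } u, v \in \mathbb{O} \}.
\]
Since the defining identity is closed, $\widetilde{D}_{4}$ is a closed subset of $SO(8)^{3}$. It is also closed under multiplication and inversion: if $(g_{1}, g_{2}, g_{3})$ and $(h_{1}, h_{2}, h_{3})$ belong to $\widetilde{D}_{4}$, then
\[
((g_{1}h_{1})u)((g_{2}h_{2})v) = g_{3}\big((h_{1}u)(h_{2}v)\big) = (g_{3}h_{3})(uv),
\]
and substituting $u \mapsto g_{1}^{-1}u$, $v \mapsto g_{2}^{-1}v$ in the defining identity yields $(g_{1}^{-1}, g_{2}^{-1}, g_{3}^{-1}) \in \widetilde{D}_{4}$. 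Hence $\widetilde{D}_{4}$ is a closed, and therefore Lie, subgroup of $SO(8)^{3}$.

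Next I identify its Lie algebra. Differentiating the defining identity along a smooth path in $\widetilde{D}_{4}$ through the identity gives exactly the condition defining $\widetilde{\frak{d}}_{4}$ in Theorem \ref{tri-d4}, so $\mathrm{Lie}(\widetilde{D}_{4}) = \widetilde{\frak{d}}_{4}$. That theorem says in particular that the first-factor projection $\widetilde{\frak{d}}_{4} \to \frak{so}(8)$ is a linear isomorphism. Hence the projection
\[
\pi_{1} : \widetilde{D}_{4} \to SO(8), \quad (g_{1}, g_{2}, g_{3}) \mapsto g_{1},
\]
has invertible differential at the identity, so $\pi_{1}(\widetilde{D}_{4})$ is an open subgroup of $SO(8)$. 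Since $SO(8)$ is connected, $\pi_{1}$ is surjective, and this yields the existence of $(g_{2}, g_{3})$.

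For the uniqueness up to sign, I compute $\ker \pi_{1}$ directly. If $(1, g_{2}, g_{3}) \in \widetilde{D}_{4}$, then putting $u = 1$ in the defining identity gives $g_{2}(v) = g_{3}(v)$, so $g_{2} = g_{3}$; putting $v = 1$ then gives $g_{2}(u) = u \cdot g_{2}(1)$, so $g_{2}$ is right multiplication by $a := g_{2}(1)$. The defining identity now becomes $u(va) = (uv)a$ for all $u, v \in \mathbb{O}$, i.e.\ the associator of $(u, v, a)$ vanishes for every pair $(u, v)$. By alternativity of $\mathbb{O}$, the nucleus of $\mathbb{O}$ equals $\mathbb{R}$, so $a \in \mathbb{R}$; combined with $|a| = 1$, this forces $a = \pm 1$ and $\ker \pi_{1} = \{(1, 1, 1), (1, -1, -1)\}$. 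Consequently the only pair other than $(g_{2}, g_{3})$ satisfying the defining identity with first coordinate $g_{1}$ is $(-g_{2}, -g_{3})$.

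The main obstacle is the kernel computation, specifically the step in which the identity $u(va) = (uv)a$ is used to conclude $a \in \mathbb{R}$; this relies on the nucleus of $\mathbb{O}$ coinciding with $\mathbb{R}$, which is a consequence of alternativity. Once this is in place, the argument is a routine lift from infinitesimal to global level, made possible by the connectedness of $SO(8)$ and by Theorem \ref{tri-d4}.
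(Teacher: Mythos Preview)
Your proof is correct, and the uniqueness half (the computation of $\ker\pi_{1}$) is essentially identical to the paper's. For existence, however, you take a genuinely different route. The paper proceeds concretely: it writes $g_{1}=\exp X_{1}$ (using surjectivity of $\exp$ on the compact connected group $SO(8)$), and then verifies the group identity directly from the power-series expansion, using the Leibniz-type formula
\[
X_{3}^{n}(uv)=\sum_{i+j=n}\binom{n}{i}(X_{1}^{i}u)(X_{2}^{j}v),
\]
which follows inductively from the infinitesimal triality relation. Your argument is instead structural: you package the triples into the closed subgroup $\widetilde{D}_{4}\subset SO(8)^{3}$, identify its Lie algebra with $\widetilde{\frak{d}}_{4}$, and deduce surjectivity of $\pi_{1}$ from connectedness of $SO(8)$ together with the fact that $d\pi_{1}$ is an isomorphism at the identity. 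This avoids both the explicit series manipulation and the appeal to surjectivity of $\exp$; what it buys is a cleaner conceptual picture (indeed it shows at once that $\pi_{1}$ is a double cover, foreshadowing the paper's subsequent identification of $\widetilde{D}_{4}$ with $Spin(8)$), at the cost of invoking some standard Lie theory.

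One point to make explicit. Your sentence ``Differentiating the defining identity along a smooth path in $\widetilde{D}_{4}$ through the identity gives exactly the condition defining $\widetilde{\frak{d}}_{4}$, so $\mathrm{Lie}(\widetilde{D}_{4})=\widetilde{\frak{d}}_{4}$'' literally only establishes the inclusion $\mathrm{Lie}(\widetilde{D}_{4})\subseteq\widetilde{\frak{d}}_{4}$. The reverse inclusion, that each $(X_{1},X_{2},X_{3})\in\widetilde{\frak{d}}_{4}$ exponentiates into $\widetilde{D}_{4}$, is precisely what the paper's power-series computation shows. In your framework you can either note that $\widetilde{D}_{4}$ is the stabiliser of the multiplication tensor $m\in\mathrm{Hom}(\mathbb{O}\otimes\mathbb{O},\mathbb{O})$ under a linear $SO(8)^{3}$-action (so its Lie algebra equals the infinitesimal stabiliser by the standard result), or observe directly that $t\mapsto \exp(-tX_{3})\big((\exp(tX_{1})u)(\exp(tX_{2})v)\big)$ has vanishing derivative by the triality relation and is therefore constant. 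Since the surjectivity of $d\pi_{1}$ hinges on this equality, you should state one of these justifications.
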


\begin{proof}
Since $SO(8)$ is connected, for any $x_{1} \in SO(8)$, there exist $X_{1} \in \frak{so}(8)$ such that $x = \mathrm{exp} X_{1}$.
By Theorem \ref{tri-d4}, there exist unique $X_{2}, X_{3} \in \frak{so}(8)$ such that $(X_{1}u)v + u(X_{2}v) = X_{3}(uv)$ for any $u,v \in \mathbb{O}$.
Then,
\[
X_{3}^{n}(uv) = \sum_{i+j = n} \frac{n!}{i! \ j!} \Big( X_{1}^{i} u \Big) \Big( X_{2}^{j} v \Big).
\]
Therefore,
\[
\begin{split}
\mathrm{exp}X_{3}(uv) 
&= \sum_{n=0}^{\infty} \frac{1}{n!} X_{3}^{n}(uv) = \sum_{n=0}^{\infty} \sum_{i+j = n} \frac{1}{i! \ j!} \Big( X_{1}^{i} u \Big) \Big( X_{2}^{j} v \Big) \\
&= \Big( \sum_{i=0}^{\infty} \frac{1}{i!} X_{1}^{i}(u) \Big) \Big( \sum_{j=0}^{\infty} \frac{1}{j!} X_{2}^{j}(u) \Big) = (\mathrm{exp}X_{1}u)(\mathrm{exp}X_{2}v).
\end{split}
\]
Hence, the existence follows.
Next, we consider the uniqueness.
If $x_{1} =1$, then $u(x_{2}(v)) = x_{3}(uv)$.
By consdiering the case of $u = 1$, we obtain $x_{2} = x_{3}$.
Denote $x_{2}$ and $x_{3}$ by $x$.
Set $a = x(1)$.
Since $x \in SO(8)$, we see $|a| = 1$.
By considering the case of $v = 1$, we obatin $x(u) = ua$.
Therefore,
\[
(uv)a = x(uv) = u(x(v)) = u(va)
\]
and we obtain $a \in \mathbb{R}$.
Hence, $a = \pm1$ and $x_{2} = x_{3} = \pm 1$.
Next, we assume that $x_{1}$ is any element of $SO(8)$.
First, remark that
\[
x_{3}^{-1}(uv) = (x_{1}^{-1}u)(x_{2}^{-1}v)
\]
for any $u,v \in \mathbb{O}$.
Let $x'_{2}, x'_{3} \in SO(8)$ satisfy $(x_{1}u)(x'_{2}v) = x'_{3}(uv)$.
Then,
\[
(x'_{3})^{-1} \big( x_{3}(uv) \big) = (x'_{3})^{-1} \big( (x_{1}u)(x_{2}v) \big) = \big( x_{1}^{-1}x_{1}u \big) \big( (x'_{2})^{-1}x_{2}v \big) = u \big( (x'_{2})^{-1}x_{2}v \big).
\]
By the above argument, $(x'_{2}, x'_{3}) = \pm (x_{2}, x_{3})$ and the latter part of the statement follows.
\end{proof}

This property is called the triality principle of $SO(8)$.
Set
\[
\tilde{D}_{4} = \{ (x_{1}, x_{2}, x_{3}) \in SO(8)^{3} \ ;\ (x_{1}u)(x_{2}v) = x_{3}(uv) \ (u,v \in \mathbb{O}) \}.
\]
Then, $\pi_{\mathbb{O}} ; \tilde{D}_{4} \rightarrow SO(8) \ ;\ (x_{1}, x_{2}, x_{3}) \mapsto x_{1}$ is a double covering and $\tilde{D}_{4}$ is isomorphic to $Spin(8)$.
The subgroup of the automorphism group of $Spin(8)$ corresponding to $S_{3}$ is given by
\[
\begin{split}
& \alpha : \tilde{D}_{4} \rightarrow \tilde{D}_{4} \ ;\ (x_{1}, x_{2}, x_{3}) \mapsto ( \alpha(x_{1}), x_{3}, x_{2} ), \\
& \beta : \tilde{D}_{4} \rightarrow \tilde{D}_{4} \ ;\ (x_{1}, x_{2}, x_{3}) \mapsto ( x_{3}, \alpha(x_{2}), x_{1} ), \\
& \gamma : \tilde{D}_{4} \rightarrow \tilde{D}_{4} \ ;\ (x_{1}, x_{2}, x_{3}) \mapsto ( x_{2}, \alpha(x_{3}), \alpha(x_{1}) ), \\
& \gamma^{2} : \tilde{D}_{4} \rightarrow \tilde{D}_{4} \ ;\ (x_{1}, x_{2}, x_{3}) \mapsto ( \alpha(x_{3}), x_{1}, \alpha(x_{2}) ), \\
& \gamma \beta  : \tilde{D}_{4} \rightarrow \tilde{D}_{4} \ ;\ (x_{1}, x_{2}, x_{3}) \mapsto ( \alpha(x_{2}), \alpha(x_{1}), \alpha(x_{3})),
\end{split}
\]
where $\alpha(x)u = \overline{x(\overline{u})}$ for any $u \in \mathbb{O}$.
Asuume that $x_{1} \in SO(7)$.
Then, $x_{1}(1)x_{2}(u) = x_{3}(u)$ for anu $u \in \mathbb{O}$ and $x_{2} = x_{3}$.
On the other hand, if $x_{2} = x_{3}$, then $x_{1}(1)(u) = u$ for any $u \in \mathbb{O}$. 
Therefore, $x_{1}(1) = 1$.
Thus, set
\[
\begin{split}
\tilde{B}_{3} 
&= \{ (x_{1}, x_{2}, x_{3}) \in \tilde{D}_{4} \ ;\ x_{1} \in SO(7) \} = \{ (x_{1}, x_{2}, x_{3}) \in \tilde{D}_{4} \ ;\ x_{2} = x_{3} \} \\
&= \{ x = (x_{1}, x_{2}, x_{3}) \in \tilde{D}_{4} \ ;\ \alpha(x) = x\}.
\end{split}
\]
Then, $\pi_{\mathbb{O}}|_{\tilde{B}_{3}}$ is a double covering from $\tilde{B}_{3}$ onto $SO(7)$.
Therefore, $\tilde{B}_{3}$ is isomorphic to $Spin(7)$.
Moreover, we assume that $x_{1} \in G_{2}$.
Then, $(x_{2}, x_{3}) = (x_{1}, x_{1})$ or $(x_{2}, x_{3}) = (-x_{1}, -x_{1})$.
Set
\[
\begin{split}
\tilde{G}_{2} 
&= \{ (x_{1}, x_{2}, x_{3}) \in \tilde{D}_{4} \ ;\ x_{1} \in G_{2} \} = \{ (x_{1}, x_{2}, x_{3}) \ ;\ x_{1} = x_{2} = x_{3} \} \\
&= \{ x = (x_{1}, x_{2}, x_{3}) \in \tilde{D}_{4} \ ;\ \gamma(x) = x \}.
\end{split}
\]
Then, $\pi_{\mathbb{O}}|_{\tilde{G}_{2}}$ is an isomorphism between $\tilde{G}_{2}$ and $G_{2}$.
The Lie algebar of $\tilde{D}_{4}, \tilde{B}_{3}, \tilde{G}_{2}$ are $\tilde{\frak{d}}_{4}, \tilde{\frak{b}}_{3}, \tilde{\frak{g}_{2}}$ respectively.
For any $g \in G_{2}$, set
\[
g^{0,0} = (g,g,g), \ g^{1,0} = (g,-g,-g), \ g^{0,1} = (-g,-g,g), \ g^{11} = (-g,g,-g) \in \tilde{D}_{4}.
\]
The centers $C(Spin(8), C(Spin(7))$ of $Spin(8), Spin(7)$ are
\[
\begin{split}
& C(Spin(8)) = \{ 1_{8}^{0,0}, 1_{8}^{1,0}, 1_{8}^{0,1}, 1_{8}^{1,1} \} \cong \mathbb{Z}_{2} \times \mathbb{Z}_{2}, \\
& C(Spin(7)) = \{ 1_{8}^{0,0}, 1_{8}^{1,0} \} \cong \mathbb{Z}_{2}.
\end{split}
\]

Define the isomorphism $\lambda : Spin(8) \rightarrow \tilde{D}_{4}$ such that the induced isomorphim $\lambda : \frak{so}(8) \rightarrow \frak{d}_{4}$ satisfies $\lambda = \pi^{-1} \circ \pi_{\mathbb{O}}$, where $Spin(8)$ is defined by the Clliford algebra over $\mathbb{R}e_{0} + \cdots + \mathbb{R}e_{7}$.
We identify $Spin(8)$ with $\tilde{D}_{4}$ under the isomorphism $\lambda$ and often omit $\lambda$. 
Moreover, $\tilde{D}_{4}, \frak{d}_{4}, \pi_{\mathbb{O}}$ are denoted by $Spin(8), \frak{spin}(8), \pi$.
The subgroup $\tilde{B}_{3}$ and the subalgbera $\tilde{\frak{b}}_{3}$ are also denoted by $Spin(7)$ and $\frak{spin}(7)$.
The subgroup $\tilde{G}_{2}$ and the subalgebra $\tilde{\frak{g}}_{2}$ is denoted by $G_{2}$ and $\frak{g}_{2}$.
Under the identification by $\lambda$,
\[
\begin{split}
1 = 1_{8}^{0,0}, \quad -1 = 1_{8}^{1,0} \quad e_{0}e_{1} \cdots e_{7} = 1_{8}^{0,1}, \quad -e_{0}e_{1} \cdots e_{7} = 1_{8}^{1,1}.
\end{split}
\]
For any $\theta_{1}, \cdots, \theta_{4} \in \mathbb{R}$, we set
\[
\begin{split}
& \tilde{T}_{8}(\theta_{1}, \cdots, \theta_{4}) \\
&=
\Big(
T_{8}'(\theta_{1}, \theta_{2}, \theta_{3}, \theta_{4}), \\
& \quad\quad T'_{8} \left( \frac{-\theta_{1} + \theta_{2} + \theta_{3} + \theta_{4}}{2}, \frac{-\theta_{1} + \theta_{2} - \theta_{3} - \theta_{4}}{2}, \frac{-\theta_{1} - \theta_{2} + \theta_{3} - \theta_{4}}{2}, \frac{-\theta_{1} - \theta_{2} - \theta_{3} + \theta_{4}}{2} \right), \\
& \quad\quad T'_{8} \left( \frac{+\theta_{1} + \theta_{2} + \theta_{3} + \theta_{4}}{2}, \frac{+\theta_{1} + \theta_{2} - \theta_{3} - \theta_{4}}{2}, \frac{+\theta_{1} - \theta_{2} + \theta_{3} - \theta_{4}}{2}, \frac{+\theta_{1} - \theta_{2} - \theta_{3} + \theta_{4}}{2} \right)
\Big) \\
\end{split}
\]
and $\tilde{T}_{8} := \{ \tilde{T}_{8}(\theta_{1}, \cdots, \theta_{4}) \ ;\ \theta_{1}, \cdots, \theta_{4} \in \mathbb{R} \}$.
Then, $\tilde{T}_{8}$ is a maximal torus of $\tilde{D}_{4}$.
Moreover, $\pi(T_{8}(\theta_{1}, \cdots, \theta_{4})) = \pi_{\mathbb{O}}(\tilde{T}(\theta_{1}, \cdots, \theta_{4})) = T'_{8}(\theta_{1}, \cdots, \theta_{4})$ and $\pi(T_{8}) = \pi_{\mathbb{O}}(\tilde{T}_{8})$.
Hence, $\lambda(T_{8}(\theta_{1}, \cdots, \theta_{4})) = \tilde{T}_{8}(\theta_{1}, \cdots, \theta_{4})$.
In the following, we identify $T_{8}(\theta_{1}, \cdots, \theta_{4})$ with $\tilde{T}_{8}(\theta_{1}, \cdots, \theta_{4})$.

Next, we consider the spin representation of $Spin(8)$.
Set real representations $\tilde{\Delta}_{8}^{\pm}$ of $Spin(8)$ as follows:
For any $g = (g_{1}, g_{2}, g_{3}) \in Spin(8)$,
\[
\tilde{\Delta}_{8}^{+}(g) : \mathbb{O} \rightarrow \mathbb{O} \ ;\ u \mapsto g_{3}(u), \quad\quad
\tilde{\Delta}_{8}^{-}(g) : \mathbb{O} \rightarrow \mathbb{O} \ ;\ u \mapsto g_{2}(u).
\]
The complexification of $\tilde{\Delta}_{8}^{\pm}$ is denoted by $(\tilde{\Delta}_{8}^{\pm})^{\mathbb{C}}$.
The set of all weights of $(\tilde{\Delta}^{\pm}_{8})^{\mathbb{C}}$ with respect to $\frak{t}^{\mathbb{C}}$ is denoted by $\tilde{\Pi}^{\pm}_{8}$.
Then,
\[
\begin{split}
\tilde{\Pi}_{8}^{\pm} &= \left\{ \frac{1}{2} \Big( \epsilon_{1}x_{1} + \cdots + \epsilon_{4}x_{4} \Big) \ ;\ \epsilon_{i} = \pm 1 \ (i = 1, \cdots, 4), \epsilon_{1} \cdots \epsilon_{4} = \pm 1 \right\}. \\
\end{split}
\]
The weight space of $(\tilde{\Delta}_{8}^{\pm})^{\mathbb{C}}$ with $\gamma \in \tilde{\Pi}_{8}^{\pm}$ is denoted by $(\tilde{V}_{8}^{\pm})^{\mathbb{C}}_{\gamma}$.
For each element of $\tilde{\Pi}_{8}^{+}$, 
\[
\begin{array}{lllllll}
(\tilde{V}_{8}^{+})_{\frac{1}{2}(x_{1} + x_{2} + x_{3} + x_{4})}^{\mathbb{C}} = \mathbb{C}(e_{0} - ie_{1}), & &
(\tilde{V}_{8}^{+})_{\frac{1}{2}(-x_{1} - x_{2} - x_{3} - x_{4})}^{\mathbb{C}} = \mathbb{C}(e_{0} + ie_{1}), \\
(\tilde{V}_{8}^{+})_{\frac{1}{2}(x_{1} + x_{2} - x_{3} - x_{4})}^{\mathbb{C}} = \mathbb{C}(e_{2} - ie_{3}), & &
(\tilde{V}_{8}^{+})_{\frac{1}{2}(-x_{1} - x_{2} + x_{3} + x_{4})}^{\mathbb{C}} = \mathbb{C}(e_{2} + ie_{3}), \\
(\tilde{V}_{8}^{+})_{\frac{1}{2}(x_{1} - x_{2} + x_{3} - x_{4})}^{\mathbb{C}} = \mathbb{C}(e_{4} - ie_{5}), & &
(\tilde{V}_{8}^{+})_{\frac{1}{2}(-x_{1} + x_{2} - x_{3} + x_{4})}^{\mathbb{C}} = \mathbb{C}(e_{4} + ie_{5}), \\ 
(\tilde{V}_{8}^{+})_{\frac{1}{2}(x_{1} - x_{2} - x_{3} + x_{4})}^{\mathbb{C}} = \mathbb{C}(e_{6} - ie_{7}), & &
(\tilde{V}_{8}^{+})_{\frac{1}{2}(-x_{1} + x_{2} + x_{3} - x_{4})}^{\mathbb{C}} = \mathbb{C}(e_{6} + ie_{7}). \\
\end{array}
\]
For each element of $\tilde{\Pi}_{8}^{+}$, 
\[
\begin{array}{lllllll}
(\tilde{V}_{8}^{-})_{\frac{1}{2}(-x_{1} + x_{2} + x_{3} + x_{4})}^{\mathbb{C}} = \mathbb{C}(e_{0} - ie_{1}), & &
(\tilde{V}_{8}^{-})_{\frac{1}{2}(x_{1} - x_{2} - x_{3} - x_{4})}^{\mathbb{C}} = \mathbb{C}(e_{0} + ie_{1}), \\
(\tilde{V}_{8}^{-})_{\frac{1}{2}(-x_{1} + x_{2} - x_{3} - x_{4})}^{\mathbb{C}} = \mathbb{C}(e_{2} - ie_{3}), & &
(\tilde{V}_{8}^{-})_{\frac{1}{2}(x_{1} - x_{2} + x_{3} + x_{4})}^{\mathbb{C}} = \mathbb{C}(e_{2} + ie_{3}), \\
(\tilde{V}_{8}^{-})_{\frac{1}{2}(-x_{1} - x_{2} + x_{3} - x_{4})}^{\mathbb{C}} = \mathbb{C}(e_{4} - ie_{5}), & &
(\tilde{V}_{8}^{-})_{\frac{1}{2}(x_{1} + x_{2} - x_{3} + x_{4})}^{\mathbb{C}} = \mathbb{C}(e_{4} + ie_{5}), \\
(\tilde{V}_{8}^{-})_{\frac{1}{2}(-x_{1} - x_{2} - x_{3} + x_{4})}^{\mathbb{C}} = \mathbb{C}(e_{6} - ie_{7}), & &
(\tilde{V}_{8}^{-})_{\frac{1}{2}(x_{1} + x_{2} + x_{3} - x_{4})}^{\mathbb{C}} = \mathbb{C}(e_{6} + ie_{7}). \\
\end{array}
\]
Hence, $\tilde{\Delta}_{8}^{\pm}$ and $(\tilde{\Delta}_{8}^{\pm})^{\mathbb{C}}$ are isomorphic to the spin representations $\Delta_{8}^{\pm}$ and $(\Delta_{8}^{+})^{\mathbb{C}}$ as representations respectively.
In the following, we identify $\tilde{\Delta}_{8}^{\pm}$ and $(\tilde{\Delta}_{8}^{\pm})^{\mathbb{R}}$ with $\Delta_{8}^{\pm}$ and $(\Delta_{8}^{\pm})^{\mathbb{R}}$ respectively.
Then, $V_{8}^{\pm}$ and $(V_{8}^{\pm})^{\mathbb{C}}$ are $\mathbb{O}$ and $\mathbb{O}^{\mathbb{C}}$.
When we consider $\mathbb{O}$ and $\mathbb{O}^{\mathbb{C}}$ as the representation spaces of $(\Delta_{8}^{\pm})^{\mathbb{R}}$ and $\Delta_{8}^{\pm}$, then we denote $\mathbb{O}$ and $\mathbb{O}^{\mathbb{C}}$ by $\mathbb{O}^{\pm}$ and $(\mathbb{O}^{\mathbb{C}})^{\pm}$.

Set elements of $G_{2} \subset SO(8)$ as follows:
\[
\begin{split}
\gamma_{0} := \mathrm{id}, \hspace{43mm} &
\gamma_{1} := \mathrm{Id}_{\langle e_{0}, e_{1}, e_{2}, e_{3} \rangle} - \mathrm{id}_{\langle e_{4}, e_{5}, e_{6}, e_{7} \rangle}, \\
\gamma_{2} := \mathrm{Id}_{\langle e_{0}, e_{1}, e_{4}, e_{5} \rangle} - \mathrm{id}_{\langle e_{2}, e_{3}, e_{6}, e_{7} \rangle}, \quad &
\gamma_{3} := \mathrm{Id}_{\langle e_{0}, e_{1}, e_{6}, e_{7} \rangle} - \mathrm{id}_{\langle e_{2}, e_{3}, e_{4}, e_{5} \rangle}, \\
\gamma_{4} := \mathrm{Id}_{\langle e_{0}, e_{2}, e_{4}, e_{6} \rangle} - \mathrm{id}_{\langle e_{1}, e_{3}, e_{5}, e_{7} \rangle}, \quad &
\gamma_{5} := \mathrm{Id}_{\langle e_{0}, e_{2}, e_{5}, e_{7} \rangle} - \mathrm{id}_{\langle e_{1}, e_{3}, e_{4}, e_{6} \rangle}, \\
\gamma_{6} := \mathrm{Id}_{\langle e_{0}, e_{3}, e_{4}, e_{7} \rangle} - \mathrm{id}_{\langle e_{1}, e_{2}, e_{5}, e_{6} \rangle}, \quad &
\gamma_{7} := \mathrm{Id}_{\langle e_{0}, e_{3}, e_{5}, e_{6} \rangle} - \mathrm{id}_{\langle e_{1}, e_{2}, e_{4}, e_{7} \rangle}, \\
\end{split}
\]
where $\langle v_{1}, \cdots, v_{4} \rangle \ (v_{1}, \cdots, v_{4} \in \mathbb{O})$ is the subspace of $\mathbb{O}$ spanned by $v_{1}, \cdots, v_{4}$.
Set $A(G_{2}) := \{ \gamma_{i} \ ;\ 0 \leq i \leq 7\}$.
Then, $\#A(G_{2}) = 8$.

\begin{thm}\cite{Tanaka-Tasaki-Yasukura}
$A(G_{2})$ is a maximal antipodal subgroup of $G_{2}$.
Moreover, any maximal antipodal subgroup of $G_{2}$is conjugate to $A(G_{2})$ and $\#_{2}G_{2} = 8$.

\end{thm}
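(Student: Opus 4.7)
The plan is to verify in sequence that $A(G_2)$ is an elementary abelian $2$-subgroup of $G_2$ (hence antipodal), that it is self-centralizing (hence maximal among antipodal sets), and that every maximal antipodal subgroup is conjugate to it via reduction to the centralizer of an involution.

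First, I would check that each $\gamma_i\ (1 \leq i \leq 7)$ lies in $G_2$. Its $+1$-eigenspace has the form $\langle e_0\rangle \oplus L_i$, where $L_i$ is one of the seven lines of the Fano multiplication diagram, so the fixed subspace is a quaternion subalgebra $\mathbb{H}_i \subset \mathbb{O}$. Then $\gamma_i$ is the standard Cayley--Dickson involution $x + y e \mapsto x - y e$ for $e \in \mathbb{H}_i^\perp$ of unit norm, which is an algebra automorphism. Pairwise commutativity is automatic from the common diagonal structure in the basis $e_0, \ldots, e_7$, and for $i \neq j$ the product $\gamma_i \gamma_j$ has $+1$-eigenspace $(\mathbb{H}_i \cap \mathbb{H}_j) \oplus (\mathbb{H}_i^\perp \cap \mathbb{H}_j^\perp)$, which is the quaternion subalgebra attached to the third Fano line through the common point; so $\gamma_i \gamma_j = \gamma_k$ for the appropriate $k$. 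This shows $A(G_2) \cong (\mathbb{Z}/2)^3$ is a subgroup of order $8$, and since abelian groups of involutions in a compact Lie group are antipodal (using $(xy^{-1})^2 = 1$), $A(G_2)$ is antipodal.

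Next, I would prove maximality by computing $C(A(G_2), G_2) = A(G_2)$. Any $g \in G_2$ commuting with all $\gamma_i$ preserves each $\pm 1$-eigenspace, hence preserves all seven $3$-dimensional Fano lines $L_i \subset \mathrm{Im}\,\mathbb{O}$. Since each coordinate axis $\mathbb{R} e_k$ is the intersection of the three Fano lines through $e_k$, this forces $g(e_k) = \epsilon_k e_k$ with $\epsilon_k = \pm 1$. The multiplicativity $g(e_i e_j) = g(e_i)g(e_j)$ then forces $\epsilon_i \epsilon_j \epsilon_k = 1$ for every Fano triple $e_i e_j = \pm e_k$. Writing $\epsilon_k = (-1)^{a_k}$, this becomes a homogeneous linear system over $\mathbb{F}_2$ whose matrix is the $7 \times 7$ point--line incidence matrix of the Fano plane $\mathrm{PG}(2,2)$; this matrix has rank $4$, so the kernel has dimension $3$ and there are exactly $2^3 = 8$ such diagonal automorphisms, namely the elements of $A(G_2)$. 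Hence $A(G_2)$ is self-centralizing in $G_2$ and is a maximal antipodal subgroup.

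For the uniqueness up to conjugacy and the value of $\#_2 G_2$, let $A \subset G_2$ be any maximal antipodal subgroup and pick a nontrivial involution $\tau \in A$. Since $G_2$ has a unique conjugacy class of involutions, with centralizer isomorphic to $SO(4) = (SU(2) \times SU(2))/\{\pm 1\}$ (corresponding to the symmetric space $G_2/SO(4)$), we may conjugate so that $\tau = \gamma_1$ and $A \subset C(\gamma_1, G_2) \cong SO(4)$. Then $A$ is a maximal antipodal subgroup of this $SO(4)$; since maximal antipodal subgroups of $SO(4)$ form a single conjugacy class (the diagonal $\pm 1$ subgroup of order $8$), a further conjugation inside $C(\gamma_1, G_2)$ carries $A$ to $A(G_2)$. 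Finally, $\#_2 G_2 = |A(G_2)| = 8$ because every antipodal set left-translates to one containing $e$, and maximal antipodal sets containing $e$ are precisely the maximal elementary abelian $2$-subgroups. The main obstacle is the conjugacy step: the two inputs (uniqueness of the involution conjugacy class in $G_2$, and uniqueness up to conjugacy of order-$8$ elementary abelian $2$-subgroups of $SO(4)$) are each mild but nontrivial, and together they are what elevates the maximality computation from producing one congruence class to classifying \emph{all} maximal antipodal subgroups.
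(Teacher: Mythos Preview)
The paper does not supply its own proof of this theorem; it is quoted as a result of Tanaka--Tasaki--Yasukura and used as input for later constructions. So there is no paper-side argument to compare against directly.

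Your argument is correct and stands on its own. A few remarks on the pieces that carry the weight:
\begin{itemize}
\item The self-centralizer computation is the cleanest part. Your observation that the multiplicativity constraints $\epsilon_i\epsilon_j\epsilon_k=1$ form the $\mathbb{F}_2$-linear system given by the Fano incidence matrix, of $2$-rank $4$, is exactly right and pins down $|C(A(G_2),G_2)|=8$ without any case analysis.
\item In the conjugacy step, the logical order is: maximality of $A$ in $G_2$ forces $A$ to be maximal inside $C(\gamma_1,G_2)\cong SO(4)$ as well (otherwise enlarge inside the centralizer), hence $|A|=8$; then uniqueness in $SO(4)$ finishes. You have this implicitly, but it is worth making the ``maximal in $G_2$ $\Rightarrow$ maximal in $SO(4)$'' implication explicit, since that is what allows you to invoke the $SO(4)$ classification rather than merely an inclusion.
\item The $SO(4)$ input you use---that every elementary abelian $2$-subgroup of $SO(4)$ is conjugate into the diagonal subgroup---follows from simultaneous diagonalization of commuting orthogonal involutions over $\mathbb{R}$; the passage from $O(4)$- to $SO(4)$-conjugacy is handled by composing with a diagonal reflection, which normalizes the diagonal group. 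This is standard, but since you flagged it as one of the two nontrivial inputs, a one-line justification would make the write-up self-contained.
\end{itemize}
With those small additions your proof is complete and independent of the cited reference.
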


We define $A(Spin(8))$ as follows:
\[
A(Spin(8)) := \{ \gamma_{i}^{a,b} \in Spin(8) \ ;\ 0 \leq i \leq 7, a,b = 0,1 \}.
\]
Then, $\#A(Spin(8)) = 32$.

\begin{thm} \cite{Wood} 
$A(Spin(8))$ is a maximal antipodal subgroup of $Spin(8)$.
Any maximal antipodal subgroup of $Spin(8)$ is conjugate to $A(Spin(8))$.
Moreover, $\#_{2}Spin(8) = 32$.
\end{thm}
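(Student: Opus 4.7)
The plan is to verify three things in order: $A(Spin(8))$ is an elementary abelian $2$-subgroup of cardinality $32$, it is maximal as an antipodal set, and every maximal antipodal subgroup of $Spin(8)$ is conjugate to it. For the first point I would use the componentwise multiplication in $\tilde{D}_{4}$, which gives $g^{a,b}\cdot h^{c,d}=(gh)^{a+c,b+d}$ with indices mod $2$. Together with the preceding theorem that $A(G_{2})$ is an elementary abelian $2$-subgroup, this yields closure under multiplication and $(g^{a,b})^{2}=(g^{2})^{0,0}=1$; the four lifts $g^{0,0},g^{1,0},g^{0,1},g^{1,1}$ are distinct because they differ in the $SO(8)$-components of the triple, so $|A(Spin(8))|=4\cdot 8=32$. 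Equivalently, $A(Spin(8))$ is the image of $A(G_{2})^{0,0}$ translated by the central $\mathbb{Z}_{2}\times\mathbb{Z}_{2}=C(Spin(8))$.

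For maximality I would argue as follows. If $A(Spin(8))\cup\{g\}$ is antipodal, then $g^{2}=1$ and $g$ commutes with every element of $A(Spin(8))$, so it suffices to rule out such $g$ outside $A(Spin(8))$. Projecting to $SO(8)$, the image $\pi(A(Spin(8)))=\{\pm\gamma_{i}\ ;\ 0\leq i\leq 7\}$ is a $(\mathbb{Z}_{2})^{4}$ whose simultaneous real eigenvectors are exactly $e_{0},\dots,e_{7}$ (each $e_{j}$ has its own sign pattern under $\gamma_{1},\dots,\gamma_{7}$). Hence $C(\pi(A(Spin(8))),SO(8))$ consists of the matrices diagonal in the basis $\{e_{i}\}$, and its preimage in $Spin(8)$ consists of signed products $\pm e_{i_{1}}\cdots e_{i_{k}}$ with $k$ even. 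A Clifford-algebra computation gives $(e_{i_{1}}\cdots e_{i_{k}})^{2}=(-1)^{k(k+1)/2}$, so involutive lifts force $k\in\{0,4,8\}$; imposing further that the lift genuinely commutes (not merely up to sign) with the specific triples $\gamma_{i}^{0,0}$ cuts this down to precisely the $32$ elements of $A(Spin(8))$.

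For uniqueness up to conjugacy I would take any maximal antipodal subgroup $A\subset Spin(8)$. By maximality $A$ contains the central involutions $C(Spin(8))$, so $\pi(A)$ is an elementary abelian $2$-subgroup of $SO(8)$ of order $|A|/2$. Since commuting involutions in $SO(8)$ simultaneously diagonalize, after conjugation by an element of $SO(8)$ (lifted to $Spin(8)$) one can place $\pi(A)$ inside the diagonal torus; the analysis from the maximality step, combined with the triality automorphisms, then identifies $A$ with $A(Spin(8))$. The main obstacle is this uniqueness step: not every elementary abelian $2$-subgroup of $SO(8)$ lifts to one of $Spin(8)$, because the Clifford-algebra central extension carries $\mathbb{Z}_{2}$-valued cocycle obstructions (exactly why the formula $(-1)^{k(k+1)/2}$ obstructs lifts for $k\equiv 2,6\pmod 8$). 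Ruling out all competing conjugacy classes is where the combinatorial theory of anisotropic $\mathbb{F}_{2}$-quadratic forms developed by Wood is needed.
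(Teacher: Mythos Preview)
The paper does not supply its own proof of this theorem; it is quoted verbatim as a result of Wood and used as a black box. So there is no ``paper's proof'' to compare against: the paper's treatment is simply the citation \cite{Wood}.

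Your sketch of the first two points is sound. The multiplication rule $g^{a,b}\cdot h^{c,d}=(gh)^{a+c,b+d}$ follows directly from the componentwise description $g^{a,b}=((-1)^{b}g,(-1)^{a+b}g,(-1)^{a}g)$ in $\tilde{D}_{4}$, and together with $A(G_{2})$ being an elementary abelian $2$-group this gives that $A(Spin(8))$ is one as well, of order $32$. For maximality your centraliser argument is correct in outline: the projection $\pi(A(Spin(8)))=\{\pm\gamma_{i}\}$ has only the $e_{j}$ as common eigenvectors (indeed $\gamma_{1},\gamma_{2},\gamma_{4}$ already separate all eight lines), so any further commuting involution in $Spin(8)$ must be of the form $\pm e_{i_{1}}\cdots e_{i_{k}}$ with $k\in\{0,4,8\}$ by the sign computation $(e_{i_{1}}\cdots e_{i_{k}})^{2}=(-1)^{k(k+1)/2}$. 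The remaining cut-down from the $144$ such elements to the $32$ in $A(Spin(8))$ is a finite check (two even products $e_{I},e_{J}$ commute in the Clifford algebra iff $|I\cap J|$ is even, and the $4$-subsets appearing in $A(Spin(8))$ are exactly those meeting every listed $4$-subset evenly); you should say this explicitly rather than leave it implicit.

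Where your write-up is honest but incomplete is the conjugacy claim. Diagonalising $\pi(A)$ in $SO(8)$ is fine, but after that one must classify, up to $Spin(8)$-conjugacy, the elementary abelian $2$-subgroups of $Spin(8)$ sitting over diagonal subgroups and containing the centre. This is exactly the problem Wood solves via the $\mathbb{F}_{2}$-quadratic form $q(e_{I})=|I|(|I|+1)/2\bmod 2$ on the ``diagonal'' $2$-torus, and you correctly flag that his argument (or an equivalent one using triality to reduce the possible types) is needed here. As written, your proposal proves the first two assertions and reduces the third to Wood, which is precisely the level at which the paper itself leaves the statement.
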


In $Spin(8)$, the number of polars except for the trivial pole is $4$.
The all poles of the identity element is given by $\{ \gamma_{0}^{a,b} \ ;\ a,b = 0,1 \}$ and the other polar of the identity element is isomorphic to $\tilde{G}_{4}(\mathbb{R}^{8})$ denoted by $(\tilde{G}_{4}(\mathbb{R}^{8})_{+}$.
Set
\[
A((\tilde{G}_{4}(\mathbb{R}^{8})_{+}) = A(Spin(8)) \cap (\tilde{G}_{4}(\mathbb{R}^{8})_{+} = \{ \gamma_{i}^{a,b} \ ;\ 1 \leq i \leq 7, a,b = 0,1 \}.
\]
Note $\#A((\tilde{G}_{4}(\mathbb{R}^{12})^{+}) = 28$.

\begin{thm} \cite{Tasaki} 
$A(\tilde{G}_{4}(\mathbb{R}^{8})$ is a maximal antipodal set of $M^{+}(Spin(8))$.
Any maximal antipodal set of $M^{+}(Spin(8))$ is congruent to $A(\tilde{G}_{4}(\mathbb{R}^{8})$.
Moreover, $\#_{2}\tilde{G}_{4}(\mathbb{R}^{8}) = 28$.

\end{thm}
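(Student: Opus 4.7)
The plan is to leverage two facts: the polar $M^{+}(Spin(8))$ is a totally geodesic submanifold of $Spin(8)$ diffeomorphic to $\tilde{G}_{4}(\mathbb{R}^{8})$, and this Grassmannian is a symmetric $R$-space, so Tanaka--Tasaki's classification of maximal antipodal sets in symmetric $R$-spaces applies.

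First, I would verify that $A(\tilde{G}_{4}(\mathbb{R}^{8}))$ is an antipodal set of $M^{+}(Spin(8))$. By construction, $A(\tilde{G}_{4}(\mathbb{R}^{8})) = A(Spin(8)) \cap M^{+}(Spin(8))$. Since $A(Spin(8))$ is antipodal in the ambient group by the previous theorem, any two elements $x,y$ in the intersection satisfy $s_{x}(y) = y$ in $Spin(8)$. As $M^{+}(Spin(8))$ is totally geodesic and invariant under $s_{x}$, the restriction $s_{x}|_{M^{+}}$ coincides with the geodesic symmetry of $M^{+}$ at $x$, so $A(\tilde{G}_{4}(\mathbb{R}^{8}))$ is antipodal in $M^{+}$.

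Next, I would count cardinality. The four poles of $Spin(8)$ in $A(Spin(8))$ are exactly $\{\gamma_{0}^{a,b} \mid a,b \in \{0,1\}\}$, which lie in the trivial pole component rather than in $M^{+}$. Subtracting them from the 32 elements of $A(Spin(8))$ leaves the $7 \cdot 2 \cdot 2 = 28$ elements $\gamma_{i}^{a,b}$ with $1 \le i \le 7$. Distinctness is immediate from the descriptions of the $\gamma_{i}$ as involutions of $\mathbb{O}$ with different $\pm 1$-eigenspace decompositions, together with the definition of the covers $g^{a,b} \in \tilde{D}_{4}$.

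Finally, I would invoke the theory of symmetric $R$-spaces. The oriented real Grassmannian $\tilde{G}_{4}(\mathbb{R}^{8}) \cong SO(8)/S(O(4)\times O(4))_{o}$ is a symmetric $R$-space, so by Takeuchi's theorem $\#_{2}\tilde{G}_{4}(\mathbb{R}^{8}) = \dim H_{*}(\tilde{G}_{4}(\mathbb{R}^{8});\mathbb{Z}_{2})$; a standard Poincar\'{e}-polynomial computation (or direct citation to Chen--Nagano) yields the value $28$. Since $|A(\tilde{G}_{4}(\mathbb{R}^{8}))| = 28 = \#_{2}\tilde{G}_{4}(\mathbb{R}^{8})$, $A(\tilde{G}_{4}(\mathbb{R}^{8}))$ is a great antipodal set and hence maximal. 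Tanaka--Tasaki's result that all maximal antipodal sets in a symmetric $R$-space are great and congruent to one another then gives the uniqueness clause. The main delicate point is the cohomological computation of the $2$-number; everything else reduces to bookkeeping on the intersection $A(Spin(8)) \cap M^{+}$.
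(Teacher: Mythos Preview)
The paper does not actually prove this theorem; it is cited from Tasaki's work \cite{Tasaki} on antipodal sets in oriented real Grassmannians, so there is no ``paper's own proof'' to compare against. Your argument up to the cardinality count is fine, but the final step has a genuine gap.

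The problem is your claim that $\tilde{G}_{4}(\mathbb{R}^{8})$ is a symmetric $R$-space, so that Tanaka--Tasaki's uniqueness theorem applies. Oriented real Grassmannians $\tilde{G}_{k}(\mathbb{R}^{n})$ are \emph{not} symmetric $R$-spaces for general $k$ (only the unoriented $G_{k}(\mathbb{R}^{n})$ are, and the oriented case $k=2$ is the complex quadric). Indeed, your argument, if valid, would apply verbatim to $\tilde{G}_{4}(\mathbb{R}^{12})$ and force all of its maximal antipodal sets to be congruent; but Theorem~\ref{Ori-Grass} in this very paper (again citing \cite{Tasaki}) exhibits two non-congruent maximal antipodal sets $A_{1}(\tilde{G}_{4}(\mathbb{R}^{12}))$ and $A_{2}(\tilde{G}_{4}(\mathbb{R}^{12}))$, each of cardinality $30$. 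So the Tanaka--Tasaki symmetric $R$-space theorem cannot be invoked here, and the uniqueness clause of the statement does not follow from your outline. Tasaki's original proof in \cite{Tasaki} proceeds by a direct combinatorial analysis of antipodal subsets in oriented Grassmannians, which is precisely why a separate paper was needed beyond the symmetric $R$-space framework.
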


We can easily check that 
\[
\begin{array}{llllllllllllll}\vspace{2mm}
\gamma_{0}^{0,0} = 1, & \gamma_{0}^{1,0} = -1, & \gamma_{0}^{0,1} = e_{0} \cdots e_{7}, & \gamma_{0}^{1,1} = -e_{0} \cdots e_{7}, \\\vspace{2mm}
\gamma_{1}^{0,0} = -e_{4}e_{5}e_{6}e_{7}, & \gamma_{1}^{1,0} = e_{4}e_{5}e_{6}e_{7}, & \gamma_{1}^{0,1} = -e_{0}e_{1}e_{2}e_{3}, & \gamma_{1}^{1,1} = e_{0}e_{1}e_{2}e_{3}, \\\vspace{2mm}
\gamma_{2}^{0,0} = -e_{2}e_{3}e_{6}e_{7}, & \gamma_{2}^{1,0} = e_{2}e_{3}e_{6}e_{7}, & \gamma_{2}^{0,1} = -e_{0}e_{1}e_{4}e_{5}, & \gamma_{2}^{1,1} = e_{0}e_{1}e_{4}e_{5}, \\\vspace{2mm}
\gamma_{3}^{0,0} = -e_{2}e_{3}e_{4}e_{5}, & \gamma_{3}^{1,0} = e_{2}e_{3}e_{4}e_{5}, & \gamma_{3}^{0,1} = -e_{0}e_{1}e_{6}e_{7}, & \gamma_{3}^{1,1} = e_{0}e_{1}e_{6}e_{7}, \\\vspace{2mm}
\gamma_{4}^{0,0} = e_{1}e_{3}e_{5}e_{7}, & \gamma_{4}^{1,0} = -e_{1}e_{3}e_{5}e_{7}, & \gamma_{4}^{0,1} = e_{0}e_{2}e_{4}e_{6}, & \gamma_{4}^{1,1} = -e_{0}e_{2}e_{4}e_{6}, \\\vspace{2mm}
\gamma_{5}^{0,0} = -e_{1}e_{3}e_{4}e_{6}, & \gamma_{5}^{1,0} = e_{1}e_{3}e_{4}e_{6}, & \gamma_{5}^{0,1} = -e_{0}e_{2}e_{5}e_{7}, & \gamma_{5}^{0,1} = e_{0}e_{2}e_{5}e_{7}, \\\vspace{2mm}
\gamma_{6}^{0,0} = -e_{1}e_{2}e_{5}e_{6}, & \gamma_{6}^{1,0} = e_{1}e_{2}e_{5}e_{6}, & \gamma_{6}^{0,1} = -e_{0}e_{3}e_{4}e_{7}, & \gamma_{6}^{1,1} = e_{0}e_{3}e_{4}e_{7}, \\\vspace{2mm}
\gamma_{7}^{0,0} = -e_{1}e_{2}e_{4}e_{7}, & \gamma_{7}^{1,0} = e_{1}e_{2}e_{4}e_{7}, & \gamma_{7}^{0,1} = -e_{0}e_{3}e_{5}e_{6}, & \gamma_{7}^{1,1} = e_{0}e_{3}e_{5}e_{6}. \\
\end{array}
\]
We see that $T \cap A(Spin(8)) =  \{ \gamma_{i}^{a,b} \ ;\ 0 \leq i \leq 3, a,b = 0,1 \}$.
For each element of $T \cap A(Spin(8))$,
\[
\begin{array}{lllllllllll} \vspace{2mm}
\mathrm{exp}\,\pi(iA_{x_{1} - x_{2}}) = \gamma_{1}^{0,1}, & \mathrm{exp}\,\pi(iA_{x_{1} + x_{2}}) = \gamma_{1}^{1,1}, & \mathrm{exp}\,\pi(iA_{x_{3} - x_{4}}) = \gamma_{1}^{0,0}, & \mathrm{exp}\,\pi(iA_{x_{3} + x_{4}}) = \gamma_{1}^{1,0}, \\ \vspace{2mm}
\mathrm{exp}\,\pi(iA_{x_{1} - x_{3}}) = \gamma_{2}^{0,1}, & \mathrm{exp}\,\pi(iA_{x_{1} + x_{3}}) = \gamma_{2}^{1,1}, & \mathrm{exp}\,\pi(iA_{x_{2} - x_{4}}) = \gamma_{2}^{0,0}, & \mathrm{exp}\,\pi(iA_{x_{2} + x_{4}}) = \gamma_{2}^{1,0}, \\ \vspace{2mm}
\mathrm{exp}\,\pi(iA_{x_{1} - x_{4}}) = \gamma_{3}^{0,1}, & \mathrm{exp}\,\pi(iA_{x_{1} + x_{4}}) = \gamma_{3}^{1,1}, & \mathrm{exp}\,\pi(iA_{x_{2} - x_{3}}) = \gamma_{3}^{0,0}, & \mathrm{exp}\,\pi(iA_{x_{2} + x_{3}}) = \gamma_{3}^{1,0}. \\
\end{array}
\]






\newpage

\subsection{Exceptional Lie groups and involutive automorphisms}\label{ELg}

In this subsection, we recall the construction of the simply connected compact exceptional simple Lie groups $E_{8}, E_{7}, E_{6}$.
First, we summarize some propeties of $E_{8}, E_{7}, E_{6}$, and $F_{4}$. 
It is well known that $E_{8}, E_{7}, E_{6}, F_{4}$ are simply connected.
The dimensions and cetnters of each groups are follows:
\[
\begin{array}{|c|c|c|c|c|c|cccccc} \hline
G & E_{8} & E_{7} & E_{6} & F_{4} & G_{2} \\ \hline
\dim G & 248 & 133 & 78 & 52 & 14 \\ \hline
C(G) & \{ e \} & \mathbb{Z}_{2} & \mathbb{Z}_{3} & \{ e \} & \{ e \} \\ \hline
\end{array}
\]
Let $G$ be one of $E_{8}, E_{7}, E_{6}, F_{4}$ and $f$ be an involutive automorphism of $G$.
Then, $F(f,G)$ is connected and isomorphic to one of the following:
\[
\begin{array}{|c|c|c|c|c|c|c|cccccccccccc} \hline
\mathrm{type} & G & F(f, G) & \dim F(f,G) & \dim G/F(f,G) \\ \hline
FI & F_{4} & Sp(1) \cdot Sp(3) & 24 & 28 \\
FII & F_{4} & Spin(9) & 36 & 16 \\ \hline
EI & E_{6} & Sp(4)/\mathbb{Z}_{2} & 36 & 42 \\
EII & E_{6} & Sp(1) \cdot SU(6) & 38 & 40 \\
EIII & E_{6} & (U(1) \times Spin(10))/\mathbb{Z}_{4} & 46 & 32 \\
EIV & E_{6} & F_{4} & 52 & 26 \\ \hline
EV & E_{7} & SU(8)/\mathbb{Z}_{2} & 63 & 70 \\
EVI & E_{7} & Sp(1) \cdot Spin(12) & 69 & 64 \\
EVII & E_{7} & (U(1) \times E_{6})/\mathbb{Z}_{3} & 79 & 54 \\ \hline
EVIII & E_{8} & Sp(1) \cdot E_{7} & 120 & 128 \\
EIX & E_{8} & Ss(16) & 136 & 112 \\ \hline
\end{array}
\]
By this classification, for any involutive automorphism $f$ of $G$, we can decide the isomorphism class of $F(f,G)$ by caluculating $\dim F^{+}(f, \frak{g})$.
For example, let $f$ be an involutive autmorphism of $F_{4}$.
If $\dim F^{+}(f, \frak{g}) = 36$, then $f$ is type $FI$ and $F(f, G) \cong Spin(9)$.
In the present paper, the simply connected compact symmetric space of some type is denoted by the notation of the type,
For example, the simply connected compact symmetric space of type $FI$ is denoted by $FI$.
In the following, we will explicitly describe involutive automorphims of $E_{8}, E_{7}, E_{6}, F_{4}$.


\subsubsection{Lie group $E_{8}$ and Lie algebra $\frak{e}_{8}$} \label{E8}

First, we recall some properties of $Spin(16)$.
Assume that $Spin(16)$ is defined by the Clliford algebra $Cl(\sum_{i=0}^{15}\mathbb{R}e_{i})$.
The spinor groups defined by the Clliford algebras $Cl(\sum_{i=0}^{7}\mathbb{R}e_{i})$ and $Cl(\sum_{i=8}^{15}\mathbb{R}e_{i})$ are denoted by $Spin^{0}(8)$ and $Spin^{1}(8)$ respectively.
The Lie algbera of $Spin^{i}(8) \ (i = 0,1)$ is denoted by $\frak{spin}^{i}(8)$.
The polar of $Spin^{i}(8)$ isomorphic to $\tilde{G}_{4}(\mathbb{R}^{8})$ is denoted by $\tilde{G}_{4}(\mathbb{R}^{8})_{+}^{i}$.
Set $A(\tilde{G}_{4}(\mathbb{R}^{8}))_{+}^{i}) = A(Spin^{i}(8)) \cap (\tilde{G}_{4}(\mathbb{R}^{8}))_{+}^{i}$.
For $i = 1,2$, set
\[
\begin{split}
& T^{i}(\theta_{1}, \theta_{2}, \theta_{3}, \theta_{4}) = \Big( \cos \frac{\theta_{1}}{2} + \sin \frac{\theta_{1}}{2} e_{8i}e_{8i+1} \Big) \Big( \cos \frac{\theta_{1}}{2} + \sin \frac{\theta_{1}}{2} e_{8i+2}e_{8i+3} \Big)  \\
& \hspace{60mm} \Big( \cos \frac{\theta_{1}}{2} + \sin \frac{\theta_{1}}{2} e_{8i+4}e_{8i+5} \Big) \Big( \cos \frac{\theta_{1}}{2} + \sin \frac{\theta_{1}}{2} e_{8i+6}e_{8i+7} \Big). \\
\end{split}
\] 
Then, $T^{i}_{8} = \{ T^{i}(\theta_{1}, \theta_{2}, \theta_{3}, \theta_{4}) \ ;\ \theta_{i} \in \mathbb{R} \ (i = 1,\cdots,4) \}$ is a maximal torus of $Spin^{i}(8)$.
Set
\[
\frak{t}_{8}^{i} = \left\{ t(c_{1}, \cdots, c_{4}) = \sum_{k=1}^{4}\frac{c_{k}}{2} e_{2k-2}e_{k-1} \ ;\ c_{k} \in \mathbb{R} \right\} \quad \ (i = 0,1).
\]
Then, $\frak{t}_{8}^{i}$ is a Lie algebra of $T_{8}^{i}$.
We recall the homomorphism
\[
i := i_{8,8} : Spin^{0}(8) \times Spin^{1}(8) \rightarrow Spin(16)\ ;\ (g,h) \mapsto gh
\]
and the image is denoted by $Spin^{0}(8) \cdot Spin^{1}(8)$,
Set
\[
T_{16}(\theta_{1}, \cdots,\theta_{8}) = i( T^{1}_{8}(\theta_{1}, \cdots, \theta_{4}), T^{2}_{8}(\theta_{5}, \cdots, \theta_{8}))
\]
for any $\theta_{i} \in \mathbb{R} \ (1 \leq i \leq 8)$ and $T_{16} = i(T^{1}_{8}, T^{2}_{8})$ 
Then, $T_{16}$ is a maximal antipodal set of $Spin(16)$.
The Lie algebra $\frak{t}_{16}$ of $T_{16}$ is equal to $\frak{t}_{8}^{0} + \frak{t}_{8}^{1}$.

Next, we consider the spinor representation of $Spin(16)$.
The spinor representation $(\Delta_{16}^{\mathbb{C}})^{\mathbb{C}}$ and $\Delta_{16}^{+}$ are denoted by $\Delta^{\mathbb{C}}$ and $\Delta$.
Moreover, the representation space $\Delta^{\mathbb{C}}$ and $\Delta$ are denoted by $V^{\mathbb{C}}$ and $V$.
The restriction of the spin representations $\Delta^{\mathbb{C}}$ and $\Delta$ to $Spin^{0}(8) \cdot Spin^{1}(8)$ are isomorphic to the direct sum decompositions 
\[
\begin{split}
& \Delta^{\mathbb{C}} = (\Delta_{8}^{+})^{\mathbb{C}} \otimes (\Delta_{8}^{+})^{\mathbb{C}} + (\Delta_{8}^{-})^{\mathbb{C}} \otimes (\Delta_{8}^{-})^{\mathbb{C}}, \quad\quad \Delta = (\Delta_{8}^{+}) \otimes (\Delta_{8}^{+}) + (\Delta_{8}^{-}) \otimes (\Delta_{8}^{-}) \\
\end{split}
\]
respectively.
The representation spaces $(\mathbb{O}^{+})^{\mathbb{C}} \otimes (\mathbb{O}^{+})^{\mathbb{C}}$ and $\mathbb{O}^{+} \otimes \mathbb{O}^{+}$ of $(\Delta_{8}^{+})^{\mathbb{C}} \otimes (\Delta_{8}^{+})^{\mathbb{C}}$ and $\Delta_{8}^{+} \otimes \Delta_{8}^{+}$ are denoted by $\mathbb{O}^{\mathbb{C}} \otimes^{+} \mathbb{O}^{\mathbb{C}}$ and $\mathbb{O} \otimes^{+}\mathbb{O}$ respectively.
Each element of $\mathbb{O}^{\mathbb{C}} \otimes^{+} \mathbb{O}^{\mathbb{C}}$ is denoted by $u \otimes^{+} v \ (u,v \in \mathbb{O}^{\mathbb{C}})$.
We also use the similar notations in the case of $(\Delta_{8}^{-})^{\mathbb{C}} \otimes (\Delta_{8}^{-})^{\mathbb{C}}$.
Then,
\[
V^{\mathbb{C}} = \mathbb{O}^{\mathbb{C}} \otimes^{+} \mathbb{O}^{\mathbb{C}} + \mathbb{O}^{\mathbb{C}} \otimes^{-} \mathbb{O}^{\mathbb{C}}, \quad
V = \mathbb{O} \otimes^{+} \mathbb{O} + \mathbb{O} \otimes^{-} \mathbb{O}.
\]
For any $g_{i} \in Spin^{i}(8) \ (i = 0,1)$ and $u,v,w,z \in \mathbb{O}^{\mathbb{C}}$,
\[
\Delta^{\mathbb{C}}(g_{0}, g_{1})(u \otimes^{+} v + w \otimes^{-} z)
= (\Delta_{8}^{+})^{\mathbb{C}}(g_{0})(u) \otimes^{+} (\Delta_{8}^{+})^{\mathbb{C}}(g_{1})(v) + (\Delta_{8}^{-})^{\mathbb{C}}(g_{0})(w) \otimes^{-} (\Delta_{8}^{-})^{\mathbb{C}}(g_{1})(z).
\]
For calculating $\Delta_{8}^{\pm}$, it is useful to consider that $Spin^{i}(8)$ as $\tilde{D}_{4}$.
Hence, in the followng, we often idenitify $Spin^{i}(8)$ as $\tilde{D}_{4}$. 
The set of all weights of $\Delta^{\mathbb{C}}$ is given by
\[
\Pi_{16} = \left\{ \frac{1}{2} \Big( \epsilon_{1}x_{1} + \cdots + \epsilon_{8}x_{8} \Big) \ ;\ \epsilon_{i} = \pm1 \ (i = 1, \cdots, 8), \ \epsilon_{1} \cdots \epsilon_{8} = 1 \right\}.
\]
We take some linear order of $\frak{t}_{16}$ such that the set $\Pi_{16}^{+}$ of all positive weights is
\[
\Pi_{16}^{+} = \left\{ \frac{1}{2} \Big( \epsilon_{1}x_{1} + \cdots + \epsilon_{8}x_{8} \Big) \ ;\ \epsilon_{i} = \pm1 \ (i = 1, \cdots, 8), \ \epsilon_{1} \cdots \epsilon_{8} = 1,\  \epsilon_{1} = 1 \right\}.
\]
For any $\gamma \in \Pi_{16}$, the weight space $V_{\gamma}^{\mathbb{C}}$ is given by 
\[
V_{\gamma}^{\mathbb{C}} = \mathbb{C} \Big( (e_{2i} \pm ie_{2i+1}) \otimes^{\pm} (e_{2j} \pm ie_{2j+1}) \Big)
\]
for some $0 \leq i, j \leq 3$.
For example, if $\gamma = (++++++++)$, then
\[
V_{\gamma}^{\mathbb{C}} = \mathbb{C} \Big( (e_{0} - ie_{1}) \otimes^{+} (e_{0} - ie_{1}) \Big) = \mathbb{C} \Big( (e_{0} \otimes^{+} e_{0} - e_{1} \otimes^{+} e_{1}) - i ( e_{0} \otimes^{+} e_{1} + e_{1} \otimes^{+} e_{0}) \Big).
\]
Moreover, if $\gamma = (--------)$, then
\[
V^{\mathbb{C}} = \mathbb{C} \Big( (e_{0} + ie_{1}) \otimes^{+} (e_{0} + ie_{1}) \Big) = \mathbb{C} \Big( (e_{0} \otimes^{+} e_{0} - e_{1} \otimes^{+} e_{1}) + i ( e_{0} \otimes^{+} e_{1} + e_{1} \otimes^{+} e_{0}) \Big).
\]
Note that $\overline{V_{\gamma}^{\mathbb{C}}} = V_{-\gamma}^{\mathbb{C}}$ for any $\gamma \in \Pi_{16}$, where $ \overline{ \ \cdot \ }$ is the complex conjugation of $V^{\mathbb{C}}$.
We set
\[
i(\gamma_{4}^{0,1}, \gamma_{4}^{0,1}) = i(e_{0}e_{2}e_{4}e_{6}, e_{8}e_{10}e_{12}e_{14}) = e_{0}e_{2} \cdots e_{12}e_{14} \in Spin(16).
\]
Then, $x = i(\gamma_{4}^{0,1}, \gamma_{4}^{0,1}) \in Spin(16)$, where $x$ is defined in Subsection \ref{Sg}.
We see that $\Delta(x)$ satisfies
\[
\Delta(x) \Big( V \cap \big( V_{\gamma}^{\mathbb{C}} + V_{-\gamma}^{\mathbb{C}} \big) \Big) \subset V \cap \big( V_{\gamma}^{\mathbb{C}} + V_{-\gamma}^{\mathbb{C}} \big).
\]
For each $\gamma \in \Pi^{+}_{16}$, we set
\[
\begin{split}
& V_{\gamma}^{\pm} = \{ X \in V_{16}^{+} \cap \big( (V_{16}^{+})_{\gamma}^{\mathbb{C}} + (V_{16}^{+})_{-\gamma}^{\mathbb{C}} \big) \ ;\ \Delta_{16}^{+}(x)(X) = \pm X \}. \\
\end{split}
\]
Then, for any $\gamma \in \Pi^{+}_{16}$, there exsit some $1 \leq k,j \leq 3$ such that
\[
\begin{split}
& (V_{16}^{+})_{\gamma}^{+} = \mathbb{R} \Big( (e_{2j} \otimes^{\pm} e_{2k}) \pm (e_{2j+1} \otimes^{\pm} e_{2k+1}) \Big), \quad\quad
 (V_{16}^{+})_{\gamma}^{-} = \mathbb{R} \Big( (e_{2j} \otimes^{\pm} e_{2k+1}) \pm (e_{2j} \otimes^{\pm} e_{2k+1}) \Big). \\
\end{split}
\]
For example, if $\gamma = (++++++++)$, then
\[
V_{\gamma}^{+} = \mathbb{R}(e_{0} \otimes^{+} e_{0} - e_{1} \otimes e_{1}), \quad\quad
V_{\gamma}^{-} = \mathbb{R}(e_{0} \otimes^{+} e_{1} + e_{1} \otimes e_{0}).
\]


In the following, we denote $\frak{spin}(16)$ by $L$ and $V$.
First, set an $Spin(16)$-invariant inner product $( \ ,\ )_{V}$ of $V$.
Moreover, we set $Spin(16)$-invariant inner product $(\ ,\ )_{L}$ of $L$ such that
\[
(e_{r}e_{s}, e_{t}e_{u}) = \delta_{rt}\delta_{su} \quad (r < s, t < u).
\]
We define the skew-symmetric bilinear map $[\ ,\ ] : (L + V) \times (L + V) \rightarrow L + V$ as follows:

\begin{itemize}

\item
For any $A, B \in L$, set $[A,B] = [A,B]_{L}$, where $[\ ,\ ]_{L}$ is a bracket of $L = \frak{spin}(n)$.

\item
For any $A \in L$ and $u \in V$, set $[A, u] = \Delta(A)(u)$ and $[u, A] = -[A, u]$.

\item
For any $u,v \in V$, set $[u,v] \in L$ such that $(A, [u,v])_{L} = (\Delta(A)(u), v)_{L}$ for any $A \in L$.

\end{itemize}
Then, $[\ ,\ ]$ is skew symmetric and satisfies the Jacobi identity, and $(L + V, [\ ,\ ])$ is a Lie algebra.
This Lie algebra is called the exceptional Lie algbera $\frak{e}_{8}$.
It is well known that $\frak{e}_{8}$ is compact and simple.
Define the inner product $(\ ,\ )$ of $\frak{e}_{8}$ such that
\[
(\ ,\ ) = (\ ,\ )_{L} + (\ ,\ )_{V}.
\]
Then, $\langle \ ,\ \rangle = -240(\ ,\ )$ is a Killing form of $\frak{e}_{8}$.
The group of all automorphisms of $\frak{e}_{8}$ is a connected simple compact Lie group and denoted by $E_{8}$.
It is well known that $E_{8}$ is simply connected and the center $C(E_{8})$ is trivial.


We define a map $\phi : Spin(16) \rightarrow E_{8}$ as any $g \in Spin(16), A \in L,$ and $u \in V$,
\[
\phi(g)(A + u) = \mathrm{Ad}(g)(A) + \Delta(g)(u).
\]
Since $(\ ,\ )_{L}$ and $(\ ,\ )_{V}$ are invariant under $Spin(16)$, the bracket $[\ ,\ ]$ is invariant under $\phi(Spin(16))$.
Hence, $\phi(Spin(16)) \subset E_{8}$ and $\phi$ is a homomorphism.
Obviously, $\mathrm{Ker}\phi = \{ 1, e_{0} \cdots e_{15} \}$ and $\phi(Spin(16)) \cong Ss(16)$.
Hence, $\phi : L \rightarrow \frak{e}_{8}$ is one-to-one and we denote $\phi(A)$ by $A$ for any $A \in L$ .
Let $p, q$ be even and $p + q = 16$.
Set
\[
\psi_{p,q} = \phi \circ i_{p,q} : Spin(p) \times Spin(q) \rightarrow E_{8}.
\]
Then, $\psi_{p,q}$ is a homomorphism and 
\[
\begin{split}
\mathrm{Ker}\psi_{p,q} &= \{ (1,1), (-1,-1), (e_{0}, \cdots, e_{p}, e_{p+1}, \cdots, e_{16}), (-e_{0} \cdots e_{p}, -e_{p+1} \cdots e_{16}) \}. \\
\end{split}
\]
We denote $\psi_{8,8}$ by $\psi$ simply.
Then,
\[
\begin{split}
\mathrm{Ker}\psi &= \{ (1,1), (-1,-1), (e_{0}, \cdots, e_{p}, e_{p+1}, \cdots, e_{16}), (-e_{0} \cdots e_{p}, -e_{p+1} \cdots e_{16}) \} \\
&= \{ (\gamma_{0}^{a,b}, \gamma_{0}^{a,b}) \ ;\ a,b = 0,1 \}.
\end{split}
\]


Next, we consider the root system of $\frak{e}_{8}$.
We set 
\[
T(\theta_{1}, \cdots, \theta_{8}) = \phi(T_{16}(\theta_{1}, \cdots, \theta_{8})) = \psi(T_{8}(\theta_{1}, \cdots, \theta_{4}), T_{8}(\theta_{5}, \cdots, \theta_{8}))
\]
for any $\theta_{i} \in \mathbb{R} \ (1 \leq i \leq 8)$ and $T = \phi(T_{16}) = \psi(T_{8}, T_{8})$.
It is well known that the rank of $E_{8}$ is $8$, and $T$ is a maximal torus of $E_{8}$.
Set
\[
t(c_{1}, \cdots, c_{8}) = t_{16}(c_{1}, \cdots, c_{8}) = \psi(t_{8}(c_{1}, \cdots, c_{4}), t_{8}(c_{5}, \cdots, c_{8}))
\]
for any $c_{i} \in \mathbb{R} \ (1 \leq i \leq 8)$ and $\frak{t} = \frak{t}_{16} = \psi(\frak{t}_{8}, \frak{t}_{8})$.
Then, $\frak{t}$ is the Lie algebra of $T$ and a maximal abelian subspace of $\frak{e}_{8}$.
We denote $\phi(x) \in \phi(Spin(16)) \subset E_{8}$ by $x$ simply.
Then, by the definition of $x$, it is obvious that $\mathrm{Ad}(x)(\frak{t}) \subset \frak{t}$ and $\mathrm{Ad}(x)(X) = -X$ for any $X \in \frak{t}$.
The root system $\Sigma(\frak{e}_{8}, \frak{t})$ of $\frak{e}_{8}^{\mathbb{C}}$ with respect to $\frak{t}^{\mathbb{C}}$ is obtained by
\[
\Sigma(\frak{e}_{8}, \frak{t}) 
= \Big\{ \pm x_{i} \pm x_{j} \ ;\ 1 \leq i < j \leq 8 \Big\} 
\cup \Big\{ \frac{1}{2} \sum_{i=1}^{8}\epsilon_{i} x_{i} \ ;\ \epsilon_{i} = \pm 1, \ \prod_{i=1}^{8}\epsilon_{i} = 1 \Big\}.
\]
We often denote $(1/2)\sum_{i=1}^{8}\epsilon_{i}x_{i} \in \Sigma$ by $(\epsilon_{1} \cdots \epsilon_{8})$.
For example, $\gamma = (1/2)(x_{1} + x_{2} + x_{3} - x_{4} - x_{5} + x_{6} + x_{7} + x_{8})$ is denoted by $(+++--+++)$.
For $\gamma = \pm x_{i} \pm x_{j} \ (1 \leq i < j \leq 8)$,
\[
\frak{r}_{\gamma}^{\mathbb{C}}(\frak{e}_{8}, \frak{t}) = \frak{r}_{\gamma}^{\mathbb{C}}(L, \frak{t}_{16})
\]
For $\gamma = (1/2)\sum_{i=1}^{8}\epsilon_{i}x_{i} \in \Sigma$,
\[
\frak{r}_{\gamma}^{\mathbb{C}}(\frak{e}_{8}, \frak{t}) = V_{\gamma}^{\mathbb{C}}.
\]
We take some linear order of $\frak{t}$ such that the set $\Sigma^{+}$ of all positive roots is
\[
\Sigma^{+} 
= \Big\{ x_{i} \pm x_{j} \ ;\ 1 \leq i < j \leq 8 \Big\} 
\cup \Big\{ \frac{1}{2} \sum_{i=1}^{8}\epsilon_{i} x_{i} \ ;\ \epsilon_{i} = \pm 1, \ \epsilon_{1} = 1, \ \prod_{i=1}^{8}\epsilon_{i} = 1 \Big\}.
\]
For $\gamma = x_{i} \pm x_{j} \ (1 \leq i < j \leq 8)$,
\[
\frak{r}_{\gamma}(\frak{e}_{8}, \frak{t}) = \frak{r}_{\gamma}(L, \frak{t}_{16}), \quad
\frak{r}_{\gamma}^{\pm}(\frak{e}_{8}, \frak{t}) = \frak{r}_{\gamma}^{\pm}(L, \frak{t}_{16}).
\]
Moreover, for any $\gamma = (1/2)(\sum_{i=1}^{8}\epsilon_{i}x_{i}) \in \Sigma$, there exist some $0 \leq j, k \leq 3$ such that $\frak{r}_{\gamma}$ is one of
\[
\begin{split}
& \mathbb{R}( e_{2k} \otimes^{+} e_{2j} + e_{2k+1} \otimes^{+} e_{2j+1} ) + \mathbb{R}( e_{2k} \otimes^{+} e_{2j+1} - e_{2k+1} \otimes^{+} e_{2j} ), \\
& \mathbb{R}( e_{2k} \otimes^{+} e_{2j} - e_{2k+1} \otimes^{+} e_{2j+1} ) + \mathbb{R}( -e_{2k} \otimes^{+} e_{2j+1} - e_{2k+1} \otimes^{+} e_{2j} ), \\
& \mathbb{R}( e_{2k} \otimes^{-} e_{2j} + e_{2k+1} \otimes^{-} e_{2j+1} ) + \mathbb{R}( e_{2k} \otimes^{-} e_{2j+1} - e_{2k+1} \otimes^{-} e_{2j} ), \\
& \mathbb{R}( e_{2k} \otimes^{-} e_{2j} - e_{2k+1} \otimes^{-} e_{2j+1} ) + \mathbb{R}( -e_{2k} \otimes^{-} e_{2j+1} - e_{2k+1} \otimes^{-} e_{2j} ). \\
\end{split}
\]
Moreover, $\frak{r}_{\gamma}^{+}$ is one of
\[
\begin{split}
& \mathbb{R}( e_{2k} \otimes^{+} e_{2j} + e_{2k+1} \otimes^{+} e_{2j+1} ), \quad \mathbb{R}( e_{2k} \otimes^{+} e_{2j} - e_{2k+1} \otimes^{+} e_{2j+1} ), \\
& \mathbb{R}( e_{2k} \otimes^{-} e_{2j} + e_{2k+1} \otimes^{-} e_{2j+1} ), \quad \mathbb{R}( e_{2k} \otimes^{-} e_{2j} - e_{2k+1} \otimes^{-} e_{2j+1} ), \\
\end{split}
\]
and $\frak{r}_{\gamma}^{-}$ is one of
\[
\begin{split}
& \mathbb{R}( e_{2k} \otimes^{+} e_{2j+1} - e_{2k+1} \otimes^{+} e_{2j} ), \quad \mathbb{R}( e_{2k} \otimes^{+} e_{2j+1} + e_{2k+1} \otimes^{+} e_{2j} ), \\
& \mathbb{R}( e_{2k} \otimes^{-} e_{2j+1} - e_{2k+1} \otimes^{-} e_{2j} ), \quad \mathbb{R}( e_{2k} \otimes^{-} e_{2j+1} + e_{2k+1} \otimes^{-} e_{2j} ). \\
\end{split}
\]
For example, if $\gamma = (++++--++)$, then
\[
\begin{split}
\frak{r}_{\gamma} &= \mathbb{R}( e_{0} \otimes^{+} e_{2} + e_{1} \otimes^{+} e_{3} ) + \mathbb{R}( e_{0} \otimes^{+} e_{3} - e_{1} \otimes^{+} e_{2} ), \\
\frak{r}_{\gamma}^{+} &= \mathbb{R}( e_{0} \otimes^{+} e_{2} + e_{1} \otimes^{+} e_{3} ), \\
\frak{r}_{\gamma}^{-} &= \mathbb{R}( e_{0} \otimes^{+} e_{3} - e_{1} \otimes^{+} e_{2} ).
\end{split}
\]
Moreover, if $\gamma = (+++--+++)$, then
\[
\begin{split}
\frak{r}_{\gamma} &= \mathbb{R}(e_{6} \otimes^{-} e_{0} + e_{7} \otimes^{-} e_{1}) + \mathbb{R}(e_{6} \otimes^{-} e_{1} - e_{7} \otimes^{-} e_{0}), \\
\frak{r}_{\gamma}^{+} &= \mathbb{R}(e_{6} \otimes^{-} e_{0} + e_{7} \otimes^{-} e_{1}), \\
\frak{r}_{\gamma}^{-} &= \mathbb{R}(e_{6} \otimes^{-} e_{1} - e_{7} \otimes^{-} e_{0}).
\end{split}
\]

For each $\alpha = x_{i} \pm x_{j} \ (1 \leq i < j \leq 8)$, we consider the subgroup $SU^{\alpha}(2) \subset Spin(16)$.
Then, $-1$ is not contained in $SU^{\alpha}(2)$.
Therefore, $\phi|_{SU^{\alpha}(2)} : SU^{2}_{\alpha} \rightarrow E_{8}$ is one-to-one and the image is denoted by the same symbol $SU^{\alpha}(2)$.
In the root system of type $E_{8}$, it is well known that the Weyl group acts on all roots transitively.
Hence, for any $\gamma \in \Sigma^{+}$, the connected Lie subgroup whose Lie algebra is $\frak{su}^{\gamma}(2)$ is isomorphic to $SU(2)$.
Therefore, this subgroup is denoted by $SU^{\gamma}(2)$.
Moreover, let $\beta = x_{k} \pm x_{l}\ (1 \leq k < l \leq 8)$ satisfy $\beta \in \Sigma_{\alpha, -1}$. 
Then, we can define the subgroup $SU^{\alpha, \beta}(3) \subset Spin(16)$.
Since it is obvious that $e_{0} \cdots e_{15} \not\in SU^{\alpha, \beta}(3)$, we see that $\phi|_{SU^{\alpha, \beta}(3)} : SU^{\alpha, \beta}(3) \rightarrow E_{8}$ is one-to-one.
Therefore, the image is denoted by $SU^{\alpha, \beta}(3)$.
Let $\gamma, \delta \in \Sigma^{+}$ satisfy $2(\gamma,\delta)/(\gamma,\gamma) = \pm1$.
Then, it is well known that there exist an element $g$ of the Weyl group such that $g(\gamma), g(\delta) \in \Sigma(L, \frak{t}_{16})$.
Hence, the Lie subgroup whose Lie algebra is $\frak{su}^{\gamma,\delta}(2)$ is isomorphic to $SU(3)$ and this subgroup is denoted by $SU^{\gamma,\delta}(3)$.


\subsubsection{Antipodal sets of $E_{8}$}

We recall the maximal antipodal set of $E_{8}$ from \cite{Adams}.
Set $A_{1}(E_{8})$ as follows:
\[
\begin{split}
A_{1}(E_{8}) &:= \psi \big( A(Spin(8)), A(Spin(8)) \big) \\
&= \left\{ \psi( \gamma_{i}^{a,b}, \gamma_{j}^{c,d}) \ ;\ 0 \leq i,j \leq 7, a,b = 0,1 \right\} = \left\{ \psi( \gamma_{i}^{0,0}, \gamma_{j}^{a,b}) \ ;\ 0 \leq i,j \leq 7, a,b = 0,1 \right\}.
\end{split}
\]
Then, since $A(Spin(8))$ is an antipodal subgroup of $Spin(8)$ and $\psi$ is a totally geodesic embedding, $A_{1}(E_{8})$ is an antipodal set of $E_{8}$.
In particular, $A_{1}(E_{8})$ is a maximal antipodal subgroup of $E_{8}$ and $\# A_{1}(E_{8}) = 2^{8} = 256$.
Let $A(T)$ be a maximal antipodal subgroup of the maximal torus $T$ of $E_{8}$.
Then, $A(T) = \{ t \in T \ ;\ t^{2} = e \}$.
We set $\Gamma = \{ A \in \frak{t} \ ;\ \mathrm{exp}A = e \}$.
Since $E_{8}$ is simply connected, we obtain $\Gamma = \mathrm{span}_{\mathbb{Z}}\{ 2\pi (iA_{\gamma}) \ ;\ \gamma \in \Sigma \}$ (\cite{Helgason}).
Let $J$ be a complete system of represetatives of $\mathrm{span}_{\mathbb{Z}}\{ \pi(iA_{\gamma}) \ ;\ \gamma \in \Sigma \}/\Gamma$.
Then $\{ \mathrm{exp}X \ ;\ X \in J \}$ is $A(T)$.
Let $\alpha = x_{7} - x_{8}$ and $\beta = x_{6} - x_{7}$.
Then, by Lemma \ref{Lattice},
\[
\{ 0 \} \cup \Sigma^{+} \cup \{ \gamma + \alpha \ ;\ \gamma \in \Sigma_{\alpha, 0}^{+}\} \cup \{ \delta + \beta, \delta + \alpha + \beta \ ;\ \delta \in \Sigma_{\alpha, 0}^{+} \cap \Sigma_{\beta, 0} \}
\]
is a complete system of representatives.
Therefore,
\[
\begin{split}
A(T) = \{ e \} &\cup \Big\{ \mathrm{exp}\pi(iA_{\gamma}) \ ;\ \gamma \in \Sigma^{+} \Big\} 
\cup \Big\{ \mathrm{exp}\pi(iA_{\gamma})\mathrm{exp}\pi(iA_{\alpha}) \ ;\ \gamma \in \Sigma_{\alpha, 0}^{+} \Big\} \\
&\cup \Big\{ \mathrm{exp}\pi(iA_{\delta})\mathrm{exp}\pi(iA_{\beta}), \mathrm{exp}\pi(iA_{\delta})\mathrm{exp}\pi(iA_{\alpha + \beta})\ ;\ \delta \in \Sigma_{\alpha, 0}^{+} \cap \Sigma_{\beta, 0}  \Big\}. \\
\end{split}
\]
The element $x$ satisfies $x^{2} = e$ and $xtx = t^{-1}$ for any $t \in T$.
Hence, 
\[
A_{2}(E_{8}) = A(T) \cup x(A(T))
\]
 is an antipodal subgroup of $E_{8}$.
In particular, $A_{2}(E_{8})$ is a maximal antipodal subgroup of $E_{8}$ and $\#A_{2}(E_{8}) = 512$.

\begin{thm} \cite{Adams}
$A_{i}(E_{8}) \ (i = 1,2)$ are maximal antipodal subsubgroups of $E_{8}$ and $A_{1}(E_{8})$ and $A_{2}(E_{8})$ are not conjugate to each other.
Moreover, any maximal antipodal subset of $E_{8}$ is congruent to one of $A_{i}(E_{8}) \ (i = 1,2)$.

\end{thm}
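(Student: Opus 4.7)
The plan decomposes into three parts: verify each $A_{i}(E_{8})$ is an antipodal subgroup, verify maximality together with non-conjugacy, and show no other conjugacy class of maximal antipodal subgroups exists.

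For the first part, $A_{1}(E_{8})$ is the image of $A(Spin(8)) \times A(Spin(8))$ under the homomorphism $\psi$; each factor is an elementary abelian $2$-subgroup of $Spin(8)$ of order $32$ by the preceding theorem, so the direct product has order $1024$, and since $\mathrm{Ker}\,\psi = \{(\gamma_{0}^{a,b}, \gamma_{0}^{a,b}) \ ;\ a,b = 0,1\}$ has order $4$, the image is an elementary abelian $2$-subgroup of $E_{8}$ of order $256$. For $A_{2}(E_{8})$, $A(T) = \{t \in T \ ;\ t^{2} = e\}$ is an elementary abelian $2$-subgroup of order $2^{8} = 256$ (since $\mathrm{rank}\, E_{8} = 8$); the element $x$ satisfies $xtx^{-1} = t^{-1}$ on $T$, and hence commutes with every $t \in A(T)$ because $t^{-1} = t$ there. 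Since $C(E_{8})$ is trivial, $x^{2} = e$ by the lemma in Subsection \ref{Rs}, so $A_{2}(E_{8}) = A(T) \cup xA(T)$ is an elementary abelian $2$-subgroup of order $512$.

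Non-conjugacy is immediate from $|A_{1}| = 256 \not= 512 = |A_{2}|$. For maximality, $A_{2}(E_{8})$ attains the $2$-number $\#_{2}E_{8} = 2^{9}$ (established as part of the theorem), so it is a great antipodal set and is automatically maximal. For $A_{1}(E_{8})$, I would compute the centralizer of $A_{1}$ directly using the decomposition $\frak{e}_{8} = L + V$ with $V = \mathbb{O} \otimes^{+} \mathbb{O} + \mathbb{O} \otimes^{-} \mathbb{O}$: any element of $\frak{e}_{8}$ centralizing $A_{1}$ must have vanishing $V$-component (since every nonzero weight space of $V$ is moved nontrivially by some element of $A_{1}$ acting through the weights computed in Subsection \ref{E8}) and an $L$-component centralizing $\psi^{-1}(A_{1})$ in $\frak{spin}(16)$, forcing it into the Lie algebra of $A_{1}$ itself. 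This shows no involution of $E_{8}$ outside $A_{1}$ centralizes $A_{1}$, whence $A_{1}$ is maximal.

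For the classification, let $A$ be an arbitrary maximal antipodal subgroup of $E_{8}$. If $|A| = 2^{9}$, I would show that $A$ contains an index-$2$ subgroup $A_{0}$ lying inside some maximal torus $T$; then $A_{0}$ is forced to equal $A(T)$ and the remaining coset is represented by an element of $N(T)$ inverting $T$, which by the lemma on $x$ coincides with $x$ up to multiplication by an element of $T$, so $A$ is conjugate to $A_{2}(E_{8})$. If $|A| < 2^{9}$, then any nontrivial involution $g \in A$ has centralizer either $Ss(16)$ or $Sp(1) \cdot E_{7}$ by the classification of involutive automorphisms of $E_{8}$ (types $EVIII$ and $EIX$); applying the classification of maximal antipodal subgroups of these classical groups via the $(M^{+}, M^{-})$-method together with the reduction through $\psi$ to $A(Spin(8)) \times A(Spin(8))$, one concludes that $A$ is conjugate to $A_{1}(E_{8})$. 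The main obstacle is the first step of the $|A| = 2^{9}$ case, namely proving that a great antipodal subgroup must contain an index-$2$ subgroup lying in a maximal torus; Adams's original argument carries out this step by studying the action of the Steenrod algebra on the mod-$2$ cohomology of $BE_{8}$, which is the essential algebraic-topological input of the proof and cannot be replaced by an elementary argument.
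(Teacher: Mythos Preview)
The paper does not prove this theorem; it is quoted from Adams \cite{Adams} and used as a black box. There is therefore no proof in the paper to compare against, and your proposal should be read as an independent attempt to reconstruct Adams's argument.

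Several steps in your outline do not close. Your maximality argument for $A_{2}(E_{8})$ is circular: you invoke $\#_{2}E_{8} = 2^{9}$, but that equality is a \emph{consequence} of the theorem (it follows once one knows the only maximal antipodal subgroups have orders $256$ and $512$), not an independent input. You would need an a priori upper bound $\#_{2}E_{8} \leq 512$ obtained by other means. Your maximality argument for $A_{1}(E_{8})$ shows at most that the Lie algebra $\{X \in \frak{e}_{8} : \mathrm{Ad}(a)X = X \text{ for all } a \in A_{1}\}$ vanishes, hence that $C(A_{1}, E_{8})$ is finite; but you still must show that every involution in this finite centralizer already lies in $A_{1}$, which is a separate group-theoretic computation you have not carried out.

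For the classification, your case split on $|A|$ presupposes the bound $|A| \leq 2^{9}$, which again is part of what must be proved. In the $|A| = 2^{9}$ case you correctly flag the real obstruction: showing that a great antipodal subgroup has an index-$2$ toral subgroup is exactly where Adams's cohomological argument in $H^{*}(BE_{8};\mathbb{Z}_{2})$ enters, and you are right that no elementary substitute is known. In the $|A| < 2^{9}$ case your appeal to ``the classification of maximal antipodal subgroups of these classical groups'' is not a proof: the relevant centralizer $Ss(16)$ has its own non-trivial classification of elementary abelian $2$-subgroups, and reducing to $A(Spin(8)) \times A(Spin(8))$ requires exactly the kind of conjugacy control that is at issue. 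In short, your sketch identifies the correct architecture and honestly names the hard step, but the elementary portions as written contain circularities and the reduction in the small-order case is not substantiated.
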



\subsubsection{Symmetric subgroup of type $EVIII$}

We consider some involutive automorphisms of $E_{8}$.
First, set
\[
\sigma_{VIII} = \theta_{\phi(-1)} : E_{8} \rightarrow E_{8} \ ;\ g \mapsto \phi(-1) g \phi(-1).
\]
Since $\phi(-1)$ is involutive, $\sigma_{VIII}$ is involutive automorphism.
Note that 
\[
\phi(-1) = \psi(\gamma_{0}^{0,0}, \gamma_{0}^{1,0}) = \mathrm{exp}\pi (iA_{x_{i}- x_{j}}) \mathrm{exp}\pi (iA_{x_{i} + x_{j}}) \quad (1 \leq i < j \leq 8).
\]
In particular, $\mathrm{Ad}(\phi(-1))(A) = A$ for any $A \in L$ and $\mathrm{Ad}(\phi(-1))(u) = -u$ for any $u \in \Delta$.
Therefore,
\[
F^{+}(\sigma_{VIII}, \frak{e}_{8}) = L, \quad
F^{-}(\sigma_{VIII}, \frak{e}_{8}) = V.
\]
Since the connected subgroup of $E_{8}$ whose Lie algebra is $L$ is $\phi(Spin(16))$, we obtain
\[
F(\sigma_{VIII}, E_{8}) = \phi(Spin(16)) \cong Ss(16).
\]
Hence, $\sigma_{VIII}$ is an ivolutive automorphism of type $EVIII$.
The symmetric pair of type $EVIII$ is a normal form.
Next, we consider
\[
\sigma'_{VIII} = \theta_{x} : E_{8} \rightarrow E_{8} \ ;\ g \mapsto xgx.
\]
Since $x$ is involutive, $\sigma'_{VIII}$ is an involutive automorphism of $E_{8}$.
Then,
\[
F^{+}(\sigma'_{VIII}, \frak{e}_{8}) = \sum_{\gamma \in \Sigma^{+}} \frak{r}_{\gamma}^{+}, \quad
F^{-}(\sigma'_{VIII}, \frak{e}_{8}) = \frak{t} + \sum_{\gamma \in \Sigma^{+}}\frak{r}_{\gamma}^{-}.
\]
Therefore, the dimension of a maximal abelian subspace of $F^{-}(\sigma'_{VIII}, \frak{e}_{8})$ is $8$ and $(\frak{e}_{8}, F^{+}(\sigma'_{VIII}, \frak{e}_{8}))$ is a normal form.
Hence, $F^{+}(\sigma'_{VIII}, \frak{e}_{8})$ is isomorphic to $\frak{spin}(16)$ and $F(\sigma'_{VIII}, E_{8})$ is isomorphic to $Ss(16)$.
Note that there exists $g \in E_{8}$ such that $gxg^{-1} = \phi(-1)$ by the classification of Riemmanian symmetric pairs.


\subsubsection{Lie group $E_{7}$, Lie algebra $\frak{e}_{7}$, and symmetric subgroup of type $EIX$} \label{E7}

Fix $\alpha \in \Sigma^{+}$.
The centralizer $C(\frak{su}^{\alpha}(2), \frak{e}_{8})$ is called an exceptional Lie algebra $\frak{e}_{7}$.
We denote $C(\frak{su}^{\alpha}(2), \frak{e}_{8})$ by $\frak{e}_{7}^{\alpha}$.
The idenitity component of the centralizer $C(SU^{\alpha}(2), E_{8})$ is called an exceptional Lie group $E_{7}$ and denoted by $E_{7}^{\alpha}$.
Obviously, the Lie algebra of $E_{7}^{\alpha}$ is $\frak{e}_{7}^{\alpha}$.
It is well known that $E_{7}^{\alpha}$ is a simply connected compact simple Lie group.
The following direct sum decomposition follows:
\[
\frak{e}_{7}^{\alpha} = \frak{t}^{\alpha} + \sum_{\beta \in \Sigma_{\alpha,0}^{+}}\frak{r}_{\beta}(\frak{e}_{8}, \frak{t}).
\]
Obviously, $\frak{t}^{\alpha}$ is a maximal abelian subspace of $\frak{e}_{7}$ and $\mathrm{rank}\frak{e}_{7} = 7$.
The root system $\Sigma(\frak{e}_{7}^{\alpha}, \frak{t}_{\alpha})$ of $(\frak{e}_{7}^{\alpha})^{\mathbb{C}}$ with respect to $(\frak{t}^{\alpha})^{\mathbb{C}}$ is $\Sigma_{\alpha, 0}(\frak{e}_{8}, \frak{t})$.
For example, if $\alpha = x_{7} - x_{8}$, then
\[
\Sigma(\frak{e}_{7}^{\alpha}, \frak{t}^{\alpha}) = \left\{ \pm x_{i} \pm x_{j} \ ;\ 1 \leq i \leq j \leq 6 \right\} \cup \left\{ \pm(x_{7} + x_{8}) \right\} \cup \left\{ \frac{1}{2}\sum_{i=1}^{8}\epsilon_{i}x_{i} \ ;\ \epsilon_{i} = \pm1, \ \prod_{i=1}^{8}\epsilon_{i} = 1, \ \epsilon_{7} = \epsilon_{8} \right\}.
\]
For any $\beta \in \Sigma(\frak{e}_{7}, \frak{t}^{\alpha})$,
\[
\frak{r}_{\beta}(\frak{e}_{7}^{\alpha}, \frak{t}^{\alpha}) = \frak{r}_{\beta}(\frak{e}_{8}, \frak{t}).
\]
Moreover, $\sigma_{x}(\frak{e}_{7}^{\alpha}) \subset \frak{e}_{7}^{\alpha}$ and
\[
\frak{r}_{\beta}^{\pm}(\frak{e}_{7}^{\alpha}, \frak{t}_{\alpha}) = \frak{r}_{\beta}^{\pm}(\frak{e}_{8}, \frak{t}_{\alpha}).
\]
Set $T^{\alpha} = \mathrm{exp}\frak{t}^{\alpha}$.
Then, $T^{\alpha}$ is a maximal torus of $E_{7}^{\alpha}$.
For $\alpha = x_{i} \pm x_{j} \ (1 \leq i < j \leq 8)$, we can define the subgroup $Spin^{\alpha}(12)$ of $Spin(16)$.
Then, since $Spin^{\alpha}(12) \cap C(Spin(16)) = \{ \pm1 \}$, we see that $\phi|_{Spin^{\alpha}(12)} : Spin^{\alpha}(12) \rightarrow E_{8}$ is one-to-one and $Spin^{\alpha}(12)$ is a subgroup of $E_{8}$.
Moreover, each $g \in Spin^{\alpha}(12)$ satisfies $gh = hg$ for any $h \in SU^{\alpha}(2)$.
Therefore, $Spin^{\alpha}(12)$ is a subgroup of $E_{7}^{\alpha}$.
The Lie algebra $\frak{spin}^{\alpha}(12)$ of $Spin^{\alpha}(12)$ is given by
\[
\frak{spin}_{\alpha}(12) = \frak{t}^{\alpha, \bar{\alpha}} + \sum_{x_{k} \pm x_{l} , k,l \not= i,j } \frak{r}_{x_{k} \pm x_{l}}(\frak{e}_{8}, \frak{t}).
\]
It is well known that the center of $E_{7}$ is isomorphic to $\mathbb{Z}_{2}$.
We consider the center $C(E_{7}^{\alpha})$ of $E_{7}^{\alpha}$.
For any $\alpha \in \Sigma^{+}(\frak{e}_{8}, \frak{t})$, there exist $\beta_{1}, \beta_{2}, \beta_{3} \in \Sigma_{\alpha, 0}$ such that
\[
\alpha \equiv \beta_{1} + \beta_{2} + \beta_{3} \mod \Gamma.
\]
For example, if $\alpha = x_{7} - x_{8}$, the roots $\beta_{1} = x_{1} + x_{2}, \ \beta_{2} = x_{3} + x_{4}, \ \beta_{3} = x_{5} - x_{6}$ satisfy this property.
In fact,
\[
\alpha - (\beta_{1} + \beta_{2} + \beta_{3}) = 2 (-----++-).
\]
Hence, $\tau_{\alpha} = \tau_{\beta_{1}} \tau_{\beta_{2}} \tau_{\beta_{3}} \in E_{7}^{\alpha}$.
Since $\tau_{\alpha} \in SU^{\alpha}(2)$, we obtain $g \tau_{\alpha} = \tau_{\alpha} g$ for any $g \in E_{7}^{\alpha}$.
Hence, $\tau_{\alpha} \in C(E_{7}^{\alpha})$ and $C(E_{7}^{\alpha}) = \{ e, \tau_{\alpha} \}$.

For any $\alpha \in \Sigma^{+}(\frak{e}_{8}, \frak{t})$, we set
\[
\sigma_{IX}^{\alpha} = \theta_{\alpha} : E_{8} \rightarrow E_{8} \ ;\ g \mapsto \tau_{\alpha} g \tau_{\alpha}.
\]
Since $\tau_{\alpha}$ is involutive, $\sigma_{IX}^{\alpha}$ is an involutive automorphism.
By Subsection \ref{Rs}, 
\[
F^{+}(\sigma_{IX}^{\alpha}, \frak{e}_{8}^{\alpha}) = \frak{t} + \sum_{\beta \in \Sigma^{+}_{\alpha, 0}} \frak{r}_{\beta} = \frak{su}^{\alpha}(2) + \frak{e}_{7}^{\alpha}.
\]
Moreover, $F(\sigma_{IX}^{\alpha}, E_{8})$ is connected and
\[
F(\sigma^{\alpha}_{IX}, E_{8}) = \{ gh \ ;\ g \in SU^{\alpha}(2), h \in E_{7}^{\alpha} \} = SU^{\alpha}(2) \times E_{7}^{\alpha}/\{ (e,e), (\tau_{\alpha}, \tau_{\alpha}) \} \cong SU(2) \cdot E_{7}.
\]
Therefore, $\sigma_{IX}^{\alpha}$ is an involutive automorphism of type $EIX$.
In the following, $F(\sigma^{\alpha}_{IX}, E_{8})$ is denoted by $SU^{\alpha}(2) \cdot E_{7}^{\alpha}$.
For example, if $\alpha = x_{7} - x_{8}$, then
\[
\frak{e}_{7}^{\alpha} = \frak{spin}^{\alpha}(12) + \frak{su}^{x_{7} + x_{8}}(2) + \mathbb{O} \otimes^{+} \mathbb{H} + \mathbb{O} \otimes^{-} \mathbb{H},
\]
where $\mathbb{H} = \mathrm{sapn}_{\mathbb{R}}\{ e_{0}, e_{1}, e_{2}, e_{3} \}$.
Moreover,
\[
\begin{split}
\frak{t}^{\alpha} = \{ t(c_{1}, \cdots, c_{6}, c_{7}, c_{7}) \ ;\ c_{i} \in \mathbb{R} \}, \quad
T^{\alpha} = \mathrm{exp}\frak{t}^{\alpha} = \{ T(\theta_{1}, \cdots, \theta_{6}, \theta_{7}, \theta_{7}) \ ;\ \theta_{i} \in \mathbb{R} \}.
\end{split}
\]


\subsubsection{Symmetric subgroup of type $EVI$}

Set $\alpha = x_{7} - x_{8} \in \Sigma^{+}$ and consider $E_{7}^{\alpha}$.
Let $\beta \in \Sigma^{+}(\frak{e}_{7}^{\alpha}, \frak{t}^{\alpha}) = \Sigma_{\alpha, 0}^{+}$.
Set an involutive automorphism $\sigma_{VI}^{\beta}$ of $E_{7}^{\alpha}$ as 
\[
\sigma_{VI}^{\beta} = \theta_{\tau_{\beta}} : E_{7} \rightarrow E_{7} \ ;\ g \mapsto \tau_{\beta}g\tau_{\beta}.
\]
Since the Weyl group $W(\Sigma_{\alpha, 0})$ of $\Sigma_{\alpha, 0}$ acts on $\Sigma_{\alpha, 0}$ and this action is transitive, we can assume that $\beta = \bar{\alpha} = x_{7} + x_{8}$.
Then, $\tau_{\beta} = \psi( \gamma_{0}^{0,0}, \gamma_{1}^{1,0} )$ and
\[
\begin{split}
& F^{+}(\sigma_{VI}^{x_{7} + x_{8}}, \frak{e}_{7}) = \frak{su}^{\bar{\alpha}}(2) + \frak{spin}^{\alpha}(12), \quad
F^{-}(\sigma_{VI}^{x_{7} + x_{8}}, \frak{e}_{7}) = \mathbb{O} \otimes^{+} \mathbb{H} + \mathbb{O} \otimes^{-} \mathbb{H}.
\end{split}
\]
Note that the kernel of the restriction of $\phi$ to $SU^{\bar{\alpha}}(2) \times Spin^{\alpha}(12) \subset Spin(16)$ is 
\[
\{ (1,1), (-e_{0}e_{1} \cdots -e_{10}e_{11}, e_{12} \cdots e_{15}) \}.
\]
Therefore, the image is isomorphic to $SU(2) \cdot Spin(12)$ and denoted by $SU^{\bar{\alpha}}(2) \cdot Spin^{\alpha}(12)$.
Then, $SU^{\bar{\alpha}}(2) \cdot Spin^{\alpha}(12)$ is a subgroup of $E_{7}^{\alpha}$ and the Lie algebra is $\frak{su}^{\bar{\alpha}}(2) + \frak{spin}^{\alpha}(12)$.
Therefore,
\[
F(\sigma_{VI}^{x_{7} + x_{8}}, E_{7}) = SU^{\alpha}(2) \cdot Spin^{\alpha}(12).
\]
Next, we set 
\[
\sigma'_{VI} = \theta_{\phi(-1)} : E_{7} \rightarrow E_{7} \ ;\ g \mapsto \phi(-1) g \phi(-1).
\]
Since $F^{+}(\theta_{\phi(-1)}, \frak{e}_{8}) = L$ and $F^{-}(\theta, \frak{e}_{8}) = V$, we obtain
\[
\begin{split}
& F^{+}(\sigma'_{VI}, \frak{e}_{7}) = \frak{su}^{\bar{\alpha}}(2) + \frak{spin}^{\alpha}(12), \quad
F^{-}(\sigma'_{VI}, \frak{e}_{7}) = \mathbb{O} \otimes^{+} \mathbb{H} + \mathbb{O} \otimes^{-} \mathbb{H}.
\end{split}
\]
Note that we obtain the same result by $\phi(-1) = \tau_{\alpha}\tau_{\bar{\alpha}}$ since $\tau_{\alpha}$ is an element of $E_{7}^{\alpha}$. 
Hence,
\[
F(\sigma'_{VI}, E_{7}) = F(\sigma_{VI}^{x_{7} + x_{8}}, E_{7}).
\]
However, there does not exist $g \in E_{7}$ such that $g \phi(-1) g^{-1} = g \tau_{\alpha} \tau_{\bar{\alpha}} g^{-1} = \tau_{\bar{\alpha}}$ since $\tau_{\alpha}$ is an element of the center of $E_{7}^{\alpha}$.


\subsubsection{Symmetric subgroup of type $EV$}

Fix $\alpha \in \Sigma^{+}$ and consider $E_{7}^{\alpha}$.
Recall $x \in E_{8}$.
Then, $\mathrm{Ad}(x)$ leaves $\frak{su}^{\alpha}(2)$ invariant.
Since $\mathrm{Ad}(x)|_{\frak{su}^{\alpha}(2)} \not= \mathrm{id}_{\frak{su}^{\alpha}(2)}$, we obtain $x \not\in E_{7}^{\alpha}$.
However, $x$ is an element of the normalizer of $E_{7}^{\alpha}$.
Set
\[
\sigma_{V} = \theta_{x} : E_{7} \rightarrow E_{7} \ ;\ g \mapsto xg^{-1}x.
\]
Then, $\sigma_{V}$ is an involutive automorphism of $E_{7}^{\alpha}$ and
\[
F^{+}(\sigma_{V}, \frak{e}_{7}) = \sum_{\gamma \in \Sigma^{+}} \frak{r}_{\gamma}^{+}, \quad
F^{-}(\sigma_{V}, \frak{e}_{7}) = \frak{t}^{\alpha} + \sum_{\gamma \in \Sigma^{+}} \frak{r}_{\gamma}^{+}.
\]
Obviously, the dimension of a maximal abelian subspace of $F^{-}(\sigma_{V}, \frak{e}_{7})$ is $7$ and $(\frak{e}_{7}, F^{+}(\sigma_{V}, \frak{e}_{7}))$ is a normal form.
Therefore, $\sigma_{V}$ is an involutive automorphism of type $EV$ and 
\[
F^{+}(\sigma_{V}, \frak{e}_{7}) \cong \frak{su}(8), \quad F(\sigma_{V}, E_{7}) \cong SU(8)/\mathbb{Z}_{2}.
\]

Let $\gamma = (++++++++) \in \Sigma^{+}(\frak{e}_{8}, \frak{t})$ and consider $E_{7}^{\gamma}$.
Note that
\[
\Sigma_{\gamma,0} = \Sigma(\frak{e}_{7}^{\gamma}, \frak{t}^{\gamma}) = \Big\{ \pm(x_{i} - x_{j}) \ ;\ 1 \leq i < j \leq 8 \Big\} \cup \Big\{ \frac{1}{2}\sum_{i=1}^{8}\epsilon_{i}x_{i} \ ;\ \epsilon_{i} = \pm1, \epsilon_{1} + \cdots + \epsilon_{8} = 0 \Big\}.
\]
Since $\theta_{\phi(-1)}(E_{7}^{\gamma}) \subset E_{7}^{\gamma}$,
\[
\sigma'_{V} = \theta_{\phi(-1)} : E_{7}^{\gamma} \rightarrow E_{7}^{\gamma}
\]
is an involutive automorphism.
Then,
\[
F^{+}(\sigma'_{V}, \frak{e}_{7}^{\gamma}) = \frak{t}^{\gamma} + \sum_{1 \leq i < j \leq 8} \frak{r}_{x_{i} - x_{j}} \cong \frak{su}(8).
\]
Recall $J'' \in Spin(16)$ and $SU'(8)$ from Subsection \ref{Sg}.
Then, $\phi(J'') = \tau_{\gamma}$ and $F(\sigma'_{V}, E_{7}^{\gamma}) = \phi(SU''(8))$.
We can easily check that $e_{0}e_{1} \cdots e_{14}e_{15} \in SU''(8)$ and obtain 
\[
F(\sigma'_{V}, E_{7}^{\gamma}) = \phi(SU''(8)) \cong SU(8)/\mathbb{Z}_{2}.
\]
Hence, $\sigma'_{V}$ is also an involutive automorphism of type $EV$.


\subsubsection{Lie group $E_{6}$ and Lie algebra $\frak{e}_{6}$} \label{E6}

Fix $\alpha, \beta \in \Sigma^{+}(\frak{e}_{8}, \frak{t})$ such that $2(\alpha, \beta)/(\alpha,\alpha) = \pm1$.
Then, $\{ \pm \alpha, \pm \beta, \pm(\alpha \mp \beta) \}$ is a root system of type $A_{2}$.
In the following, we can assume $2(\alpha, \beta)/(\alpha,\alpha) = -1$ by replacing $\beta$ to $\alpha - \beta$ if necessary.
We consider $\frak{su}^{\alpha,\beta}(3) \subset \frak{e}_{8}$ and $SU^{\alpha, \beta}(3) \subset E_{8}$.
The identity component $C_{o}(SU^{\alpha, \beta}, E_{8})$ of the centralizer $C(SU^{\alpha, \beta}, E_{8})$ is called a exceptional Lie group $E_{6}$.
It is well known that $E_{6}$ is a simply connceted compact simple Lie group.
The group $C_{o}(SU^{\alpha, \beta}(2), E_{8})$ is denoted by $E_{6}^{\alpha, \beta}$.
The Lie algebra of $E_{6}$ is called a exceptional Lie algebra $\frak{e}_{6}$.
The Lie algebra of $E_{6}^{\alpha, \beta}$ is denoted $\frak{e}_{6}^{\alpha, \beta})$.
Then, $\frak{e}_{6} = C(\frak{su}^{\alpha, \beta}(2), \frak{e}_{8})$.
By the definition, $E_{6}^{\alpha, \beta} \subset E_{7}^{\alpha}, E_{7}^{\beta}$ and $\frak{e}_{6}^{\alpha, \beta} \subset \frak{e}_{7}^{\alpha}, \frak{e}_{7}^{\beta}$.
The following direct sum decomposition holds:
\[
\frak{e}_{6}^{\alpha, \beta} = \frak{t}^{\alpha, \beta} + \sum_{\gamma \in \Sigma^{+}_{\alpha,0} \cap \Sigma_{\beta,0}} \frak{r}_{\gamma}(\frak{e}_{8}, \frak{t}).
\]
Obviously, we see that $\frak{t}^{\alpha,\beta}$ is a maximal abelian subspace of $\frak{e}_{6}^{\alpha,\beta}$ and $\mathrm{rank}\frak{e}_{6}^{\alpha,\beta} = 6$.
The root system $\Sigma(\frak{e}_{6}^{\alpha,\beta}, \frak{t}^{\alpha, \beta})$ of $(\frak{e}_{6}^{\alpha,\beta})^{\mathbb{C}}$ with respect to $(\frak{t}^{\alpha,\beta})^{\mathbb{C}}$ is given by
\[
\Sigma(\frak{e}_{6}^{\alpha, \beta}, \frak{t}^{\alpha,\beta}) = \Sigma_{\alpha, 0}(\frak{e}_{8}, \frak{t}) \cap \Sigma_{\beta,0}(\frak{e}_{8}, \frak{t}).
\]
Obviously, $\theta_{x}(\frak{e}_{6}^{\alpha,\beta}) \subset \frak{e}_{6}^{\alpha,\beta}$, and for any $\gamma \in \Sigma^{+}(\frak{e}_{6}^{\alpha,\beta}, \frak{t}^{\alpha,\beta})$,
\[
\frak{r}^{\pm}_{\gamma}(\frak{e}_{6}^{\alpha,\beta}, \frak{t}^{\alpha,\beta}) = \frak{r}_{\gamma}^{\pm}(\frak{e}_{8}, \frak{t}).
\]

Since the Weyl group of $\Sigma_{\alpha, 0}$ acts on $\Sigma_{\alpha, -1}$ and this action is transitive, we can assume that $\alpha = x_{7} - x_{8}$ and $\beta = x_{6} - x_{7}$.
Then,
\[
\Sigma_{\alpha, 0} \cap \Sigma_{\beta, 0} 
= \{ \pm x_{i} \pm x_{j} \ ;\ 1 \leq i < j \leq 5 \} 
\cup \left\{ \frac{1}{2} \sum_{i=1}^{8}\epsilon_{i}x_{i} \ ;\ \prod_{i=1}^{8}\epsilon_{i} = 1, \ \epsilon_{6} = \epsilon_{7} = \epsilon_{8} \right\}.
\]
We recall $Spin^{\alpha, \beta}(10)$ from Subsection \ref{Sg}.
Then, $Spin^{\alpha, \beta}(10)$ is defined by the Clliford algebra over $\mathrm{span}_{\mathbb{R}}\{ e_{0}, \cdots, e_{9}\}$.
We see that the restiction of $\phi$ to $Spin^{\alpha, \beta}(10)$ is one-to-one and $Spin^{\alpha, \beta}(10)$ is a subgroup of $E_{8}$.
Since $Spin^{\alpha,\beta}(10) \subset C(SU^{\alpha,\beta}(3), Spin(16))$, we see that $Spin^{\alpha,\beta}(10)$ is a subgroup of $E_{6}^{\alpha,\beta}$.
The Lie algebra $\frak{spin}^{\alpha,\beta}(10)$ of $Spin^{\alpha,\beta}(10)$ is 
\[
\frak{spin}^{\alpha,\beta}(10) = \{ t(c_{1}, \cdots, c_{5}, 0, 0, 0) \ ;\ c_{i} \in \mathbb{R} \} + \sum_{1 \leq i < j \leq 5}\frak{r}_{x_{i} \pm x_{j}}. \\
\]
Next, we consider the centralizer $C(SU^{\alpha,\beta}(3), Spin(16)) \cong U(1) \cdot Spin(10)$.
Then, the kernel of the homomorphism
\[
U(1) \times Spin(10) \rightarrow U(1) \cdot Spin(10) \cong C(SU^{\alpha,\beta}(3), Spin(16)) \rightarrow E_{8}
\]
is $\{ (1,1), (-1,-1), (e_{10}e_{11}e_{12}e_{13}e_{14}e_{15}, e_{0}e_{2} \cdots e_{9}), (-e_{10}e_{11}e_{12}e_{13}e_{14}e_{15}, -e_{0}e_{2} \cdots e_{9}) \} \cong \mathbb{Z}_{4}$.
Hence, 
\[
\phi(C(SU^{\alpha,\beta}(3), Spin(16)) \cong \Big( U(1) \times Spin(10) \Big)/\mathbb{Z}_{4}.
\]
The Lie algebra of $\phi(C(SU^{\alpha,\beta}(3), Spin(16)))$ is $C(\frak{su}^{\alpha, \beta}(3), L)$ and
\[
\begin{split}
C(\frak{su}^{\alpha, \beta}(3), L) 
&= \{ t(c_{2}, \cdots, c_{5}, c_{1}, c_{1}, c_{1}) \ ;\ c_{i} \in \mathbb{R} \} + \sum_{1 \leq i < j \leq 5}\frak{r}_{x_{i} \pm x_{j}} \\
& = \mathbb{R}t(0,0,0,0,0,1,1,1) + \frak{spin}^{\alpha, \beta}(10).
\end{split}
\]
Then,
\[
\frak{e}_{6}^{\alpha, \beta} = C(\frak{su}^{\alpha, \beta}(3), L) + \mathbb{O} \otimes^{+} \mathbb{C} + \mathbb{O} \otimes^{-} \mathbb{C}.
\]

Next, we consider the center $C(E_{6}^{\alpha,\beta})$ of $E_{6}^{\alpha,\beta}$.
It is well known that the center of $E_{6}$ is isomorphic to $\mathbb{Z}_{3}$.
Set an element $\omega \in E_{6}^{\alpha, \beta}$ as
\[
\omega = t \left( 0, 0, 0, 0, 0, \frac{4}{3}\pi, \frac{4}{3}\pi, \frac{4}{3}\pi \right), \quad
\omega^{2} = t \left( 0, 0, 0, 0, 0, \frac{8}{3}\pi, \frac{8}{3}\pi, \frac{8}{3}\pi \right). \quad
\]
Then, for any $\gamma = x_{i} \pm x_{j} \ (1 \leq i < j \leq 5)$ and $X \in \frak{r}_{\gamma}^{\mathbb{C}}$,
\[
\mathrm{Ad}(\omega)(X) = (\cos 0 + i \sin 0)X = X.
\]
Moreover, for any $\delta = \frac{1}{2}(\sum_{i=1}^{8}\epsilon_{i}x_{i}) \in \Sigma \ (\epsilon_{6} = \epsilon_{7} = \epsilon_{8})$ and $Y \in \frak{r}_{\delta}^{\mathbb{C}}$,
\[
\mathrm{Ad}(\omega)(Y) = (\cos 2\pi \pm i \sin 2\pi)Y = Y.
\]
Therefore, $\mathrm{Ad}(\omega) = \mathrm{Ad}(\omega^{2}) = \mathrm{id}_{\frak{e}_{6}^{\alpha,\beta}}$.
Since $\{ e, \omega^{2}, \omega^{3} \} \cong \mathbb{Z}_{3}$, we obtain $C(E_{6}^{\alpha, \beta}) = \{ e, \omega, \omega^{2} \}$.


\subsubsection{Symmetric subgroup of type $EVII$}

We consider an involutive automorphism of type $EVII$.
Fix $\alpha = x_{7} - x_{8}$ and consider $E_{7}^{\alpha}$.
Let $\beta \in \Sigma^{+}$ satisfy $2(\alpha,\beta)/(\beta,\beta) = -1$.
Set
\[
\sigma_{VII}^{\beta} = \theta_{\tau_{\beta}} : E^{\alpha}_{7} \rightarrow E^{\alpha}_{7} \ ;\ g \mapsto \tau_{\beta} g \tau_{\beta}.
\]
Then,
\[
F(\sigma_{VII}^{\beta}, \frak{e}_{7}^{\alpha}) = \frak{t}^{\alpha} + \sum_{\gamma \in \Sigma_{\alpha, 0}^{+} \cap \Sigma_{\beta, 0}} \frak{r}_{\gamma}.
\]
Since the Weyl group of $\Sigma_{\alpha, 0}$ acts on $\Sigma_{\alpha, -1}$ and this action is transitive, we can assume $\beta = x_{6} - x_{7}$.
Then,
\[
F(\sigma_{VII}^{\beta}, \frak{e}_{7}^{\alpha}) = \mathbb{R}t(0, \cdots, 0, 2,-1,-1) + \frak{e}_{6}^{\alpha, \beta}.
\]
Since $E_{7}$ is simply connected, we see $F(\sigma_{VII}^{\beta}, E_{7}^{\alpha})$ is connected.
Therefore, we obtain that
\[
\begin{split}
F(\sigma_{VII}^{\beta}, E_{7}^{\alpha}) = & \{ T(0, \cdots, 0, 2\theta, -\theta, -\theta)g \ ;\ \theta \in \mathbb{R}, \ g \in E_{6}^{\alpha,\beta} \} \\
= & \big( \{ T(0, \cdots, 0, 2\theta, -\theta, -\theta) \ ;\ \theta \in \mathbb{R} \} \times E_{6}^{\alpha, \beta} \big) \\
& \quad \quad \big/ \left\{ (1,1),  \left( T \left( 0, \cdots, 0, \frac{8}{3}\pi, -\frac{4}{3}\pi, -\frac{4}{3}\pi \right), \omega \right), \left( T\left(0, \cdots, 0, \frac{4}{3}\pi, -\frac{2}{3}\pi, -\frac{2}{3}\pi \right), \omega^{2} \right) \right\} \\
\cong & \Big( U(1) \times E_{6} \Big)/\mathbb{Z}_{3}.
\end{split}
\]
Hence, $\sigma_{VII}^{\beta}$ is an involutive automorphism of type $EVII$.


\subsubsection{The symmetric subgroup of type $EIII$}

Let $\alpha = x_{7} - x_{8}$ and $\beta = x_{6} - x_{7}$ and consider $E_{7}^{\alpha}$. 
We consider an involutive automorphism of type $EIII$.
We see $\phi(-1) \in E_{6}^{\alpha, \beta}$ since $-1 \in Spin^{\alpha, \beta}(12)$.
Hence, $\theta_{\phi(-1)}(E_{6}^{\alpha, \beta}) \subset E_{6}^{\alpha, \beta}$.
Set
\[
\sigma_{III} = \theta_{\phi(-1)} : E_{6}^{\alpha, \beta} \rightarrow E_{6}^{\alpha, \beta} \ ;\ g \mapsto \phi(-1) g \phi(-1).
\]
Then, $\sigma_{III}$ is an invoultive automorphism of $E_{6}^{\alpha, \beta}$.
Since $F(\theta_{\phi(-1)}, E_{8}) = \phi(Spin(16))$, we obtain
\[
F^{+}(\sigma_{III}, \frak{e}_{7}^{\alpha}) = L \cap C(\frak{su}^{\alpha, \beta}(2), \frak{e}_{6}^{\alpha, \beta}) = C(\frak{su}^{\alpha,\beta}(3), L).
\]
Therefore,
\[
\begin{split}
F^{+}(\sigma_{III}, E_{6}) &= \phi \big( C(SU^{\alpha,\beta}(3), Spin(16)) \big) \cong \big( U(1) \times Spin(10) \big)/\mathbb{Z}_{4} \\
\end{split}
\]
and $\sigma_{III}$ is an involutive automorphism of type $EIII$.


\subsubsection{The symmetric subgroup of type $EII$}

Let $\alpha = 1/2(x_{1} + \cdots x_{8})$ and $\beta = x_{7} + x_{8}$.
Then,
\[
\begin{split}
\Sigma_{\alpha, 0} \cap \Sigma_{\beta, 0}
&= \{ \pm ( x_{i} - x_{j} ) \ ;\ 1 \leq i \leq 6 \} \cup \{ \pm (x_{7} - x_{8}) \} \cup \left\{ \frac{1}{2} \sum_{i=1}^{8} \epsilon_{i}x_{i} \ ;\ \epsilon_{i} = \pm1, \sum_{i=1}^{8}\epsilon_{i} = 0, \epsilon_{7} = - \epsilon_{8} \right\}.
\end{split}
\]
Let $\gamma$ be any element of $\Sigma^{+}(\frak{e}_{6}^{\alpha, \beta}, \frak{t}^{\alpha, \beta}) = \Sigma_{\alpha, 0}^{+} \cap \Sigma_{\beta, 0}$.
Since the Weyl group of $\Sigma(\frak{e}_{6}^{\alpha, \beta}, \frak{t}^{\alpha, \beta})$ acts on $\Sigma(\frak{e}_{6}^{\alpha, \beta}, \frak{t}^{\alpha, \beta})$ and this action is transitive, we can assume $\gamma = x_{7} - x_{8}$.
Set
\[
\sigma_{II}^{\gamma} = \theta_{\tau_{\gamma}} : E_{6}^{\alpha, \beta} \rightarrow E_{6}^{\alpha, \beta} \ ;\ g \mapsto \tau_{\gamma} g \tau_{\gamma}.
\]
Then, we see
\[
F^{+}(\sigma_{II}^{\gamma}, \frak{e}_{6}^{\alpha, \beta}) = \frak{su}^{\gamma}(2) + \left\{ t(c_{1}, \cdots, c_{6}, 0, 0) \ ;\ c_{i} \in \mathbb{R}, \sum_{i=1}^{6}c_{i} = 0 \right\} + \sum_{1 \leq i < j \leq 6} \frak{r}_{x_{i} - x_{j}}.
\]
The orthogonal complement of $\frak{su}^{\gamma}(2)$ of $F^{+}(\sigma_{II}^{\gamma}, \frak{e}_{6}^{\alpha, \beta})$ is isomorphic to $\frak{su}(6)$ and denoted by $\frak{su}^{\gamma}(6)$.
Moreover, the connected subgroup of $Spin(16)$ whose Lie algebra is $\frak{su}^{\gamma}(6)$ is isomorphic to $SU(6)$ and denoted by $SU^{\gamma}(6)$.
In particular, $SU^{\gamma}(6)$ is equal to $SU''(6)$ of $Spin(12)$ defined by the Clifford algebra over $\mathrm{span}_{\mathbb{R}}\{ e_{0}, e_{1}, \cdots, e_{11} \}$.
We consider the homomorphism
\[
\begin{array}{ccccccccccccccc}
SU^{\gamma}(2) \times SU^{\gamma}(6) & \rightarrow & Spin(16) & \rightarrow & E_{8} \\
(g,h) & \mapsto & gh & \mapsto & \phi(gh)
\end{array}.
\]
The kernel of this homomorphism is
\[
\{ (1,1), (- e_{12}e_{13}e_{14}e_{15}, -e_{0}e_{1} \cdots e_{10}e_{11}) \} \cong \mathbb{Z}_{2}.
\]
The image is denoted by $SU^{\gamma}(2) \cdot SU^{\gamma}(6)$.
Since the Lie algebra of $SU^{\gamma}(2) \cdot SU^{\gamma}(6)$ is $F^{+}(\sigma_{II}^{\gamma}, \frak{e}_{6}^{\alpha, \beta})$, we obtain
\[
F(\sigma_{II}^{\gamma}, E_{6}^{\alpha, \beta}) =  SU^{\gamma}(2) \cdot SU^{\gamma}(6).
\]
Therefore, $\sigma_{III}^{\gamma}$ is an involutive automorphism of type $EII$.


\subsubsection{The symmetric subgroup of type $EI$}

Let $\alpha = x_{7} - x_{8}$ and $\beta = x_{6} - x_{7}$, and consider $E_{6}^{\alpha, \beta}$.
Then, $\theta_{x}(E_{6}^{\alpha, \beta}) \subset E_{6}^{\alpha, \beta}$ and set
\[
\sigma_{I} = \theta_{x} : E_{6}^{\alpha, \beta} \rightarrow E_{6}^{\alpha, \beta} \ ;\ g \mapsto x g x^{-1}.
\]
Then, 
\[
F^{+}(\sigma_{I}, \frak{e}_{6}^{\alpha, \beta}) = \sum_{\gamma \in \Sigma^{+}(\frak{e}_{6}, \frak{t}^{\alpha, \beta})} \frak{r}_{\gamma}^{+}, \quad\quad
F^{-}(\sigma_{I}, \frak{e}_{6}^{\alpha, \beta}) = \frak{t}^{\alpha, \beta} + \sum_{\gamma \in \Sigma^{+}(\frak{e}_{6}, \frak{t}^{\alpha, \beta})} \frak{r}_{\gamma}^{-}.
\]
Therefore, the dimension of a maximal abelian subspace of $F^{-}(\sigma_{I}, \frak{e}_{6}^{\alpha, \beta})$ is $6$ and $(\frak{e}_{6}^{\alpha, \beta}, F^{-}(\sigma_{I}, \frak{e}_{6}^{\alpha, \beta}))$ is a normal form.
Therefore, we obtain $F^{+}(\sigma_{I}, \frak{e}_{6}^{\alpha, \beta}) \cong \frak{sp}(4)$ and $F(\sigma_{I}, E_{6}) \cong Sp(4)/\mathbb{Z}_{2}$, and $\sigma_{I}$ is an involutive automorphism of type $EI$.

We more explicitly construct $Sp(4)/\mathbb{Z}_{2}$ in $E_{6}$.
Let $\gamma = 1/2(x_{1} + \cdots + x_{8})$ and consider $E_{7}^{\gamma}$. 
Define the element $J''_{4}$ of $Spin(8)$ as follows:
\[
J''_{4} = \big( E_{02} - E_{13} + E_{46} - E_{57}, \ -E_{02} - E_{13} - E_{46} - E_{57}, \ \pi(e_{1}e_{3}) \big).
\]
Then, 
\[
\begin{split}
i(J''_{4}, J''_{4}) = J''_{8}x = \prod_{i=0}^{1} \Big( -\cos \frac{\pi}{4} + & \sin \frac{\pi}{4}e_{8i}e_{8i+2} \Big) \Big( \cos \frac{\pi}{4} + \sin \frac{\pi}{4}e_{8i+1}e_{8i+3} \Big) \\
& \Big( - \cos \frac{\pi}{4} + \sin \frac{\pi}{4}e_{8i+4}e_{8i+6} \Big) \Big( \cos \frac{\pi}{4} + \sin \frac{\pi}{4}e_{8i+5}e_{8i+7} \Big) \in Spin(16).
\end{split}
\]
Set $\omega = \phi(J''_{8}x)$.
We see that $\theta_{\omega}$ leaves $\frak{su}^{\gamma}(2)$ invariant and $\omega$ is an element of the normalizer $N(E_{7}^{\gamma})$.
Therefore, $\theta_{\omega}(E_{7}^{\gamma}) \subset E_{7}^{\gamma}$.
Note that $\theta_{\omega}$ leaves $L$ and $V$ respectively.
By the arguments of subsection \ref{Sg},
\[
F^{+}(\theta_{\omega}, \frak{e}_{7}^{\gamma} \cap L) = \frak{sp}''(4).
\]
Next, we consider $F^{+}(\theta_{\omega}, \frak{e}_{7}^{\gamma} \cap V)$.
We see that $\phi(\omega) \in N(T)$ and
\[
\begin{split}
\mathrm{Ad}(\omega) t \Big(c_{1}, c_{2}, c_{3}, c_{4}, c_{5}, c_{6}, c_{7}, c_{8} \Big) = t \Big(-c_{2}, -c_{1}, -c_{4}, -c_{3}, c_{5}, -c_{4}, -c_{8}, -c_{7} \Big).
\end{split}
\]
Hence, if $\delta \in \Sigma^{+}(\frak{e}_{7}^{\gamma}, \frak{t}^{\gamma})$ satisfies $\mathrm{Ad}(\omega)\delta = \pm \delta$, that is, $\delta$ is one of 
\[
\begin{array}{llll}
(++++|----), & (++--|++--), & (++--|--++), \\
(+-+-|+--+), & (+-+-|-++-), & (+-+-|-+-+), & (+-+-|+-+-), \\
(+--+|+--+), & (+--+|-++-), & (+--+|-+-+), & (+--+|+-+-), \\
\end{array}
\]
then, $\theta_{\omega}$ leaves $\frak{r}_{\delta}$ invariant.
If $\delta$ satisfied $\mathrm{Ad}(\omega)\delta = \delta$, that is, $\delta$ is one of 
\[
(++++|----), \ (++--|++--), \ (++--|--++),
\]
then, we see that $\dim F^{+}(\theta_{\omega}, \frak{r}_{\delta}) = 1$.
If $\delta$ satisfies $\mathrm{Ad}(\omega)\delta = -\delta$, then we see $\dim F^{+}(\theta_{\omega}, \frak{r}_{\delta}) = 2$.
Moreover, if $\mathrm{Ad}(\omega)\delta \not= \delta$, that is, $\delta$ is one of 
\[
\begin{array}{ccccccccc}
(+++-|+---), & (++-+|-+--), & (+++-|-+--), & (++-+|+---), \\
(+++-|--+-), & (++-+|---+), & (+++-|---+), & (++-+|--+-), \\
(+-++|+---), & (+---|+-++), & (+-++|-+--), & (+---|-+++), \\
(+-++|--+-), & (+---|+++-), & (+-++|---+), & (+---|++-+), \\
(++--|+-+-), & (++--|-+-+), & (++--|+--+), & (++--|-++-), \\
(+-+-|++--), & (+-+-|--++), & (+--+|++--), & (+--+|--++), \\
\end{array}
\]
then, $\theta_{\omega} (\frak{r}_{\delta} ) \subset \frak{r}_{\mathrm{Ad}(\omega)\delta}$.
Hence, $\dim F^{+}(\theta_{\omega}, \frak{r}_{\delta} + \frak{r}_{\mathrm{Ad}(\omega)\delta}) = 2$.
Therefore, 
\[
\dim F^{+}(\theta_{\omega}, \frak{e}_{7}^{\gamma} \cap V) =  3 + 16 + 24 = 43.
\]
Thus, $\dim F^{+}(\theta_{\omega}, \frak{e}_{7}^{\gamma}) = 43 + 36 = 79$ and $\theta_{\omega}$ is an involutive automorphism of type $EVII$.
In particular, $F^{+}(\theta_{\omega}, \frak{e}_{7}^{\gamma}) \cong (U(1) \times E_{6})/\mathbb{Z}_{3}$.
The center of $F^{+}(\theta_{\omega}, \frak{e}_{7}^{\gamma})$ is $1$-dimensional and contained in $V$ since $F^{+}(\theta_{\omega}, \frak{e}^{\gamma}_{7} \cap L) = \frak{sp}(4)$ is simple.
The irreducible component of $F^{+}(\theta_{\omega}, \frak{e}_{7}^{\gamma})$ that is isomorphic to $\frak{e}_{6}$ is denoted by $\frak{e}'_{6}$.
The connected subgroup of $F^{+}(\theta_{\omega}, \frak{e}_{7}^{\gamma})$ whose Lie algebra is $\frak{e}'_{6}$ is denoted by $E'_{6}$.
It is obvious that $\theta_{\phi(-1)}$ leaves $E'_{6}$ invariant.
Set
\[
\sigma'_{I} := \theta_{\phi(-1)} : E'_{6} \rightarrow E'_{6} \ ;\ g \mapsto \phi(-1) g \phi(-1).
\]
Then, $F^{+}(\sigma'_{I}, \frak{e}'_{6}) = F^{+}(\theta_{\omega}, \frak{e}_{7}^{\gamma} \cap L) = \frak{sp}''(4)$.
Since $Sp''(4)$ contains $e_{0}e_{1} \cdots e_{14}e_{15}$, we obtain $\phi(Sp(4)) \cong Sp(4)/\mathbb{Z}_{2}$.
The Lie algebra of $\phi(Sp(4))$ is $\frak{sp}(4)$.
Therefore,
\[
F^{+}(\sigma'_{I}, E'_{6}) = \phi(Sp''(4)) \cong Sp(4)/\mathbb{Z}_{2}
\]
and $\sigma'_{I}$ is an involutive automorphism of type $EI$.


\subsubsection{Lie algebra $\frak{f}_{4}$ and Lie group $F_{4}$}

Recall the homomorphism $\psi : Spin^{0}(8) \times Spin^{1}(8) \rightarrow E_{8}$.
Then, $\psi|_{Spin^{0} \times \{ 1 \}}$ and $\psi |_{\{1\} \times Spin^{1}(8)}$ are one-to-one and $Spin^{i}(8) \ (i = 0,1)$ is a subrgoup of $E_{8}$.
We consider $G_{2} \subset Spin^{2}(8)$ and $\frak{g}_{2} \subset \frak{spin}^{2}(8)$.
The identity component of $C(G_{2}, E_{8})$ is called an exceptional Lie group $F_{4}$.
It is well known that $F_{4}$ is a simply connected compact simple Lie group.
The Lie algebra of $F_{4}$ is denoted by $\frak{f}_{4}$.
Then, $\frak{f}_{4} = C(\frak{g}_{2}, \frak{e}_{8})$.
It is well known that $G_{2}$ fixes the identity element $e_{0}$ of $\mathbb{O}$ and any non-trivial orbit of $G_{2}$ in $\mathrm{Im}\mathbb{O}$ is a round sphere.
Therefore, 
\[
C( \frak{g}_{2}, V) = \mathbb{O} \otimes^{+} (\mathbb{R}e_{0}) + \mathbb{O} \otimes^{-} (\mathbb{R}e_{0}).
\]
In the following, $\mathbb{O} \otimes^{+} (\mathbb{R}e_{0})$ and $\mathbb{O} \otimes^{-} (\mathbb{R}e_{0})$ are denoted by $\mathbb{O}^{+}$ and $\mathbb{O}^{-}$ respectively.
Moreover, in $Spin(16)$, the idenitity component of the centralizer $C(G_{2}, Spin(16))$ is $Spin(9)$, where $Spin(9)$ is defined by the Clifford algebra over $\mathrm{span}_{\mathbb{R}}\{ e_{0}, e_{1}, \cdots, e_{8} \}$.
Hence,
\[
\frak{f}_{4} = \frak{spin}(9) + \mathbb{O}^{+} + \mathbb{O}^{-}.
\]
We see that $\phi|_{Spin(9)}$ is one-to-one and $Spin(9)$ is a subgroup of $F_{4}$.

Set an abelian subspace of $\frak{f}_{4}$ as follows:
\[
\frak{t}_{\frak{f}_{4}} := \{ t(c_{1}, c_{2}, c_{3}, c_{4}, 0, \cdots, 0) \ ;\ c_{i} \in \mathbb{R} \}.
\]
Then, $\frak{t}_{\frak{f}_{4}}$ is a maximal abelian subspace of not only $\frak{spin}(9)$ but also $\frak{f}_{4}$.
The root system $\Sigma(\frak{f}_{4}, \frak{t}_{\frak{f}_{4}})$ of $\frak{f}_{4}^{\mathbb{C}}$ with respect to $\frak{t}_{\frak{f}_{4}}^{\mathbb{C}}$ is
\[
\Sigma(\frak{f}_{4}, \frak{t}_{\frak{f}_{4}}) = \{ \pm x_{i}, \ \pm x_{i} \pm x_{j} \ ;\ 1 \leq i < j \leq 4 \} 
\cup \left\{ \frac{1}{2} \sum_{i=1}^{4} \epsilon_{i}x_{i} \ ;\ \epsilon_{i} = \pm 1 \right\}.
\]
We consider the linear order of $\frak{t}_{\frak{f}_{4}}$ induced by the order of $\frak{t}$.
The set of all positive roots is
\[
\Sigma^{+} = 
\{ x_{i}, \ x_{i} \pm x_{j} \ ;\ 1 \leq i < j \leq 4 \} \cup \left\{ \frac{1}{2} \sum_{i=1}^{4} \epsilon_{i}x_{i} \ ;\ \epsilon_{i} = \pm 1, \epsilon_{1} = 1 \right\}.
\]
Then, for any $1 \leq i < j \leq 4$,
\[
\begin{split}
& \frak{r}_{x_{i} \pm x_{j}}^{\mathbb{C}} (\frak{f}_{4}, \frak{t}_{\frak{f}_{4}}) = \frak{r}_{x_{i} \pm x_{j}}^{\mathbb{C}} (\frak{e}_{8}, \frak{t}) = \frak{r}_{x_{i} \pm x_{j}}(\frak{spin}(9), \frak{t}_{4})^{\mathbb{C}}, \quad \\
& \frak{r}_{x_{i} \pm x_{j}} (\frak{f}_{4}, \frak{t}_{\frak{f}_{4}}) = \frak{r}_{x_{i} \pm x_{j}} (\frak{e}_{8}, \frak{t})= \frak{r}_{x_{i} \pm x_{j}}(\frak{spin}(9), \frak{t}_{4}).
\end{split}
\]
For any $1 \leq i \leq 4$,
\[
\begin{split}
& \frak{r}_{x_{i}}^{\mathbb{C}} (\frak{f}_{4}, \frak{t}_{\frak{f}_{4}}) = \frak{r}_{x_{i}}(\frak{spin}(9), \frak{t}_{9})^{\mathbb{C}}, \quad 
\frak{r}_{x_{i}} (\frak{f}_{4}, \frak{t}_{\frak{f}_{4}}) = \frak{r}_{x_{i}}(\frak{spin}(9), \frak{t}_{9}).
\end{split}
\]
For any $\gamma = (1/2)(x_{1} + \epsilon_{2} x_{2} + \epsilon_{3} x_{3} + \epsilon_{4} x_{4}) \in \Sigma^{+}(\frak{f}_{4}, \frak{t}_{\frak{f}_{4}})$, if $\epsilon_{1}\epsilon_{2}\epsilon_{3}\epsilon_{4} = 1$, then
\[
\frak{r}_{\gamma}^{\mathbb{C}} = (V_{8}^{+})_{\gamma}^{\mathbb{C}}, \quad
\frak{r}_{\gamma} = \mathbb{O} \cap \Big( (V_{8}^{+})_{\gamma}^{\mathbb{C}} + (V_{8}^{+})_{-\gamma}^{\mathbb{C}} \Big),
\]
and if $\epsilon_{1}\epsilon_{2}\epsilon_{3}\epsilon_{4} = -1$, then
\[
\frak{r}_{\gamma}^{\mathbb{C}} = (V_{8}^{-})_{\gamma}^{\mathbb{C}}, \quad
\frak{r}_{\gamma} = \mathbb{O} \cap \Big( (V_{8}^{-})_{\gamma}^{\mathbb{C}} + (V_{8}^{-})_{-\gamma}^{\mathbb{C}} \Big).
\]
It is well known that the center of $F_{4}$ is trivial.


\subsubsection{The symmetric subgroup of type $EIV$}

We consider an involutive automorphism of type $EIV$.
Let $\alpha = x_{7} - x_{8}$ and $\beta = x_{6} - x_{7}$, and consider $E_{6}^{\alpha, \beta}$.
Then, 
\[
\begin{split}
\frak{e}_{6}^{\alpha, \beta} &= \mathbb{R} t(0, \cdots, 0, 1,1,1) + \frak{spin}(10) + \mathbb{O} \otimes^{+} \mathbb{C} + \mathbb{O} \otimes^{-} \mathbb{C},
\end{split}
\]
where $\frak{spin}(10)$ is defined by the Clifford algebra over $\mathrm{span}_{\mathbb{R}}\{ e_{0}, \cdots, e_{7}, e_{8}, e_{9} \}$.
We set $z = \psi( \gamma_{0}^{0,0}, \gamma_{4}^{0,0})$.
Then, we see that $\theta_{z}$ leaves $\frak{su}^{\alpha, \beta}(3)$ invariant and $z$ is an element of the normalizer $N(E_{6}^{\alpha, \beta})$.
Therefore, $\theta_{z}(E_{6}^{\alpha, \beta}) \subset E_{6}^{\alpha, \beta}$.
Set
\[
\sigma_{IV} = \theta_{z} : E_{6}^{\alpha, \beta} \rightarrow E_{6}^{\alpha, \beta}.
\]
Then, $\sigma_{IV}$ leaves $\frak{e}_{6}^{\alpha, \beta} \cap L$ and $\frak{e}_{6}^{\alpha, \beta} \cap V$ respectively.
Moreover,
\[
F^{+}(\sigma_{IV}, \frak{e}_{6}^{\alpha, \beta} \cap L) = \frak{spin}(9),
\]
where $\frak{spin}(9)$ is defined by the Clifford algebra over $\mathrm{span}_{\mathbb{R}}\{ e_{0}, \cdots, e_{7}, e_{8} \}$, and
\[
F^{+}(\sigma_{IV}, \frak{e}_{6}^{\alpha, \beta} \cap V) = \mathbb{O} \otimes^{+} e_{0} + \mathbb{O} \otimes^{-} e_{0}.
\]
Hence, we obtain
\[
F^{+}(\sigma_{IV}, \frak{e}_{6}^{\alpha, \beta}) = \frak{spin}(9) + \mathbb{O} \otimes^{+} e_{0} + \mathbb{O} \otimes^{-} e_{0} = \frak{f}_{4}.
\]
Since $E_{6}^{\alpha, \beta}$ is simply connceted, we obtain $F(\sigma_{IV}, E_{6}^{\alpha, \beta}) = F_{4}$, and $\sigma_{IV}$ is an involutive automorphism of type $EIV$.


\subsubsection{The symmetric subgroup of type $FII$}

Since $-1 \in Spin(9)$, we see $\phi(-1) \in F_{4}$.
Hence, $\theta_{\phi(-1)}(F_{4}) \subset F_{4}$, and set
\[
\sigma_{FII} : = \theta_{\phi(-1)} : F_{4} \rightarrow F_{4} \ ;\ g \mapsto \phi(-1) g \phi(-1).
\]
Then, $F^{+}(\sigma_{FII}, \frak{f}_{4}) = \frak{spin}(9)$ and $F(\sigma_{FII}, F_{4}) = Spin(9)$.
Therefore, $\sigma_{IV}$ is an involutive automorphism of type $FII$.


\newpage

\subsubsection{The symmetric subgroup of type $FI$}

We consider involutive automorphisms of type $FI$.
Recall $x \in E_{8}$.
Then, $x = \psi( \gamma_{4}^{0,0}, \gamma_{4}^{0,0})$.
Since $\gamma_{4}^{0,0} \in G_{2}$, we see that $\theta_{x}$ leaves $C(G_{2}, E_{8})$ invariant.
In particular, $\theta_{x}(F_{4}) \subset F_{4}$, and set
\[
\sigma_{FI} := \theta_{x} : F_{4} \rightarrow F_{4} \ ;\ g \mapsto g x g^{-1}.
\]
Then, $\sigma_{FI}$ leaves $\frak{spin}(9), \mathbb{O}^{+}$, and $\mathbb{O}^{-}$ invariant respectively, and
\[
\begin{split}
& F^{+}(\sigma_{FI}, \frak{spin}(9)) = \frak{spin}(5) + \frak{spin}(4), \\
& F^{+}(\sigma_{FI}, \mathbb{O}^{+}) = \mathrm{span_{\mathbb{R}}\{ e_{0}, e_{2}, e_{4}, e_{6}} \}, \\
& F^{+}(\sigma_{FI}, \mathbb{O}^{-}) = \mathrm{span_{\mathbb{R}}\{ e_{0}, e_{2}, e_{4}, e_{6}} \}, \\
\end{split}
\]
where $\frak{spin}(5)$ and $\frak{spin}(4)$ are defined by the Clifford algebra over $\mathrm{span}_{\mathbb{R}}\{ e_{0}, e_{2}, e_{4}, e_{6}, e_{8} \}$ and $\mathrm{span}_{\mathbb{R}}\{ e_{1}, e_{3}, e_{5}, e_{7} \}$ respectively.
Hence, we obtain 
\[
\dim F(\sigma_{FI}, F_{4}) = 10 + 6 + 8 = 24 = \dim Sp(1) \cdot Sp(3).
\]
Therefore, $F(\sigma_{FI}, F_{4}) \cong Sp(1) \cdot Sp(3)$, and $\sigma_{FI}$ is an involutive automorphism of type $FI$.

Let $\alpha$ be a longer root of $F_{4}$.
Since the Weyl gorup acts on the all longest roots and this action is transitive, we can assume $\alpha = x_{1} - x_{2}$.
Set
\[
\sigma'_{FI} = \theta_{\tau_{\alpha}} : F_{4} \rightarrow F_{4} \ ;\ g \mapsto \tau_{\alpha} g \tau_{\alpha}.
\]
Then, $F^{+}(\sigma'_{FI}, F_{4})$ is given by
\[
F^{+}(\sigma'_{FI}, F_{4}) = \frak{su}^{\alpha}(2) + \frak{t}_{\frak{f}_{4}}^{\alpha} + \sum_{\beta \in \Sigma_{\alpha, 0}^{+}(\frak{f}_{4}, \frak{t}_{\frak{f}_{4}})} \frak{r}_{\gamma},
\]
where $\Sigma_{\alpha, 0}^{+}(\frak{f}_{4}, \frak{t}_{\frak{f}_{4}})$ is given by
\[
\Sigma_{\alpha, 0}^{+} = \{ x_{1} + x_{2}, x_{3} \pm x_{4} \} \cup \{ x_{i} \ ;\ 1 \leq i \leq 4 \} \cup \left\{ \frac{1}{2}(x_{1} + x_{2} \pm x_{3} \pm x_{4}) \right\}.
\]
Note that $\Sigma_{\alpha, 0}(\frak{f}_{4}, \frak{t}_{\frak{f}_{4}})$ is a root system of type $C_{3}$.
Therefore,
\[
F^{+}(\sigma'_{FI}, F_{4}) \cong \frak{sp}(1) + \frak{sp}(3)
\]
and $\sigma'_{FI}$ is an involutive automorphism of type $FI$.

Next, we more explicitly construct $Sp(1) \cdot Sp(3) \subset F_{4}$.
We recall the group $E'_{6}$.
Set $\delta = x_{7} + x_{8}$.
Then, $\tau_{\delta} = \psi(\gamma_{0}^{0,0}, \gamma_{1}^{1,0})$.
We can easily check that $\theta_{\tau_{\delta}}$ leaves $\frak{su}^{\gamma}(2)$ invariant, and $\theta_{\tau_{\delta}}$ induces an involutive automorphism of $E_{7}^{\gamma}$.
Moreover, we see that $\tau_{\delta}$ commutes with $\omega$.
Hence, $\theta_{\tau_{\delta}}$ leaves $E'_{6}$ invariant.
Note that $\theta_{\tau_{\delta}}(L) \subset L$ and $\theta_{\tau_{\delta}}(V) \subset V$.
We see $F^{+}(\theta_{\tau_{\delta}}, \frak{e}'_{6} \cap L) = F^{+}(\theta_{\tau_{\delta}}, \frak{sp}''(4)) = \frak{su}^{x_{7} - x_{8}}(2) + \frak{sp}''(3)$, where $\frak{sp}''(3) \subset \frak{spin}^{x_{7}- x_{8}}(12)$.
Then,
\[
\dim F^{+} \Big( \theta_{\tau_{\beta}}, F^{+}(\theta_{\omega}, \frak{e}_{7}^{\gamma}) \Big) = \dim \Big( \frak{sp}(1) + \frak{sp}(3) \Big) + 16 + 12 = 24 + 28 = 52. 
\]
and $F(\theta_{\tau_{\beta}}, E'_{6}) \cong F_{4}$.
We denote $F(\theta_{\tau_{\beta}}, E'_{6})$ by $F'_{4}$.
The Lie algebra of $F'_{4}$ is denoted by $\frak{f}'_{4}$.
We see that $\theta_{\phi(-1)}$ leaves $F'_{4}$ invariant, and set
\[
\sigma''_{FI} = \theta_{\phi(-1)} : F'_{4} \rightarrow F'_{4} \ ;\ g \mapsto \phi(-1) g \phi(-1).
\]
Then, $F^{+}(\sigma''_{FI}, \frak{f}'_{4}) = F^{+}(\theta_{\tau_{\delta}}, \frak{e}'_{6} \cap L)$.
Since $-e_{12}e_{13}e_{14}e_{15} \in SU^{x_{7} - x_{8}}(2)$ and $-e_{0}e_{1} \cdots e_{7}e_{12}e_{13}e_{14}e_{15} \in Sp''(3)$, we obtain $\phi(SU^{x_{7} - x_{8}}(2) \times Sp''(3))$ is isomorphic to $Sp(1) \cdot Sp(3)$.
The Lie algebra of $\phi(SU^{x_{7} - x_{8}}(2) \times Sp''(3))$ is $\frak{su}^{x_{7} - x_{8}}(2) + \frak{sp}''(3)$.
Therefore, 
\[
F(\sigma''_{FI}, F'_{4}) = \phi(SU^{x_{7} - x_{8}}(2) \times Sp''(3)) \cong Sp(1) \cdot Sp(3)
\] 
and $\sigma''_{FI}$ is an involutive automorphism of type $FI$.

In fact, we consider the following metric vector spaces $(V_{i}, (\ ,\ )_{i})\ (i = 1,2)$ and roots systems $A_{i}\ (i = 1,2)$:
\[
\begin{split} 
& V_{1} = \mathbb{R}^{3}, \\
& \text{$(\ ,\ )_{1}$ is the standard inner product of $\mathbb{R}^{3}$,} \\
& A_{1} = \{ \pm e_{i} \pm e_{j} \ ;\ 1 \leq i < j \leq 3 \} \cup \{ \pm 2e_{i} \ ;\ 1 \leq i \leq 3 \}, \\
& V_{2} = \{ (c_{1}, c_{1}, c_{2}, c_{3}) \in \mathbb{R}^{4} \ ;\ c_{i} \in \mathbb{R} \}, \\
& \text{$(\ ,\ )_{2}$ is the metric of $V$ such that $(1/2)(1,1,0,0), (1/2)(0,0,1,-1), (1/2)(0,0,1,1)$ is an orthonormal basis of $V$}, \\
& A_{2} = \left\{ 
\begin{matrix}
\pm (1/2)(1,1,1,1), & \pm (1/2)(1,1,1,-1), & \pm (1/2)(1,1,-1,1), & \pm (1/2)(1,1,-1,-1), \\
\pm (1,1,0,0), & \pm (0,0,1,1), & \pm (0,0,1,-1), & \pm (0,0,1,0), & \pm (0,0,0,1)
\end{matrix}
\right\}.
\end{split}
\]
Then, the following linear isomorphism 
\[
f : \mathbb{R}^{3} \rightarrow V \ ;\ 
e_{1} \mapsto \frac{1}{2} \begin{pmatrix} 1 \\ 1 \\ 0 \\ 0 \end{pmatrix}, 
e_{2} \mapsto \frac{1}{2} \begin{pmatrix} 0 \\ 0 \\ 1 \\ 1 \end{pmatrix}, 
e_{3} \mapsto \frac{1}{2} \begin{pmatrix} 0 \\ 0 \\ 1 \\ -1 \end{pmatrix}. 
\]
is an isomorphism map between the root system $A_{1}$ and $A_{2}$.
Therefore, $\Sigma_{\alpha, 0}^{+}$ is a root system of type $C_{3}$.
Hence,
\[
F(\sigma'_{FI}, F_{4}) = \frak{t}_{F_{4}} + \frak{r}_{\alpha} + \sum_{\beta \in \Sigma^{+}_{\alpha, 0}} \frak{r}_{\beta} \cong \frak{su}(2) + \frak{sp}(3) = \frak{sp}(1) + \frak{sp}(3).
\]
Thus, $\sigma'_{FI}$ is an involutive automorphism of type $FI$.






\newpage

\subsection{Maximal antipodal sets of some exceptional symmetric spaces}

By the previous studies, the classification of the conguent classes of maximal antipodal sets is complete in the case of $F_{4}, FI, FII, E_{6}, EI, EII, EIII, EIV, EVII, E_{8}$.
We already recalled the result of $E_{8}$ in Subsection \ref{ELg}.
In this subsection, we recall the result of the other cases. 
Moreover, we explicitly describe each maximal antipodal sets using the maximal antipodal set $A_{i}(E_{8}) \ (i = 1,2)$ of $E_{8}$.


\subsubsection{Maximal antipodal sets of $F_{4}, FI$, and $FII$} \label{s-F_{4}}

By the author \cite{Sasaki-F}, the conjugate classes of maximal antipodal subgroups of $F_{4}$ were classified.

\begin{thm}\cite{Sasaki-F} \label{F_{4}}
Any maximal antipodal sets of $F_{4}$ are congruent to each other.
The cardinality of any maximal antipodal set is $32$ and $\#F_{4} = 32$

\end{thm}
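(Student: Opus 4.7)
The plan is to apply the $(M^{+},M^{-})$-method of Chen--Nagano to the compact Lie group $F_{4}$, using the explicit realization $F_{4}=C_{o}(G_{2},E_{8})$ from Subsection~\ref{ELg}. Since every involutive element of $F_{4}$ is conjugate to one whose centralizer is either $Sp(1)\cdot Sp(3)$ (type $FI$) or $Spin(9)$ (type $FII$), the polars of the identity $e\in F_{4}$ consist of the trivial pole $\{e\}$, a polar $M^{+}_{I}$ isomorphic to $FI$, and a polar $M^{+}_{II}$ isomorphic to $FII$. Given a maximal antipodal subgroup $A\subset F_{4}$ containing $e$, every non-identity element of $A$ is involutive and therefore lies in one of these two non-trivial polars, so $A$ decomposes as
\[
A=\{e\}\,\sqcup\,A_{I}\,\sqcup\,A_{II},\qquad A_{I}=A\cap M^{+}_{I},\ \ A_{II}=A\cap M^{+}_{II}.
\]

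First I would show that $A_{I}$ and $A_{II}$ are \emph{maximal} antipodal sets of $M^{+}_{I}$ and $M^{+}_{II}$ respectively. Each polar is totally geodesic, so any antipodal extension of $A_{I}$ inside $M^{+}_{I}$ is automatically antipodal in $F_{4}$; one must verify that every such extension element also commutes with all of $A_{II}$, since then enlarging $A$ is possible and contradicts maximality. This compatibility I would check by analyzing the centralizers of $\sigma_{FI}$ and $\sigma_{FII}$ in the explicit model of Subsection~\ref{ELg}, showing that an element of $M^{+}_{I}$ commuting with every element of $A_{I}$ automatically centralizes any element of $M^{+}_{II}$ commuting with $A_{I}$. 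Granted this, the classifications cited earlier --- Tanaka--Tasaki's theorem \cite{Tanaka-Tasaki} giving a unique congruence class in the symmetric $R$-space $FII$ with $\#_{2}FII=3$, and the author's earlier work \cite{Sasaki-F} giving a unique congruence class in $FI$ with $\#_{2}FI=28$ --- yield
\[
|A|\,=\,1+28+3\,=\,32.
\]

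For the congruence statement, given two maximal antipodal subgroups $A$ and $A'$ containing $e$, the uniqueness of maximal antipodal sets in $FI$ and in $FII$ lets me find $g\in F_{4}$ with $g\,A_{I}\,g^{-1}=A'_{I}$. It then remains to conjugate $A_{II}$ onto $A'_{II}$ by an element of the centralizer $C(A_{I},F_{4})$, so I would study the action of $C(A_{I},F_{4})$ on the maximal antipodal sets of $M^{+}_{II}$ compatible with $A_{I}$, and verify that this action is transitive. I expect this last transitivity to be the main obstacle, since it is precisely here that the analogous classifications for $E_{6}, E_{7}, E_{8}$ split into several congruence classes with distinct cardinalities; I plan to dispose of it for $F_{4}$ by exhibiting an explicit model of $A$ inside $A_{1}(E_{8})$ coming from the triality description of $Spin(8)$ and the embedding $F_{4}\subset E_{8}$ of Subsection~\ref{ELg}, and then checking that this model exhausts the possibilities up to $F_{4}$-conjugation.
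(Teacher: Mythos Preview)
The paper does not prove this theorem here; it is quoted from the author's earlier work \cite{Sasaki-F} and only the explicit description of the resulting antipodal set is supplied (Proposition~\ref{F_{4}-1}). So there is no ``paper's own proof'' to compare against in this manuscript.

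That said, your proposal has a structural circularity. You invoke the classification of maximal antipodal sets in $FI$ with $\#_{2}FI=28$, citing \cite{Sasaki-F}---but that is precisely the reference from which the present theorem is taken. In the methodology of this paper (Subsection~\ref{strategies}) and presumably of \cite{Sasaki-F}, the logical flow runs the other way: one first classifies maximal antipodal subgroups of the Lie group $F_{4}$, and \emph{then} deduces the classification in each polar $M^{+}$ by intersecting. The paper states this explicitly: ``any maximal antipodal set of $M^{+}$ is obtained by the intersection between a maximal antipodal set of $M$ and $M^{+}$.'' So using the $FI$ result to prove the $F_{4}$ result inverts the dependence and is likely circular.

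Even setting circularity aside, the step where you assert that $A_{I}=A\cap M^{+}_{I}$ is \emph{maximal} in $M^{+}_{I}$ is a genuine gap. The paper warns in Subsection~\ref{strategies} that ``in general, each $A_{j}(M^{+})$ is not a maximal antipodal set of $M^{+}$.'' Your proposed fix---checking that any extension of $A_{I}$ inside $M^{+}_{I}$ automatically commutes with all of $A_{II}$---is exactly the hard part, and you have not carried it out. Without it, the cardinality count $1+28+3=32$ is not justified, since a priori $A_{I}$ could be a non-maximal antipodal set of $M^{+}_{I}$ of smaller size compensated by some other configuration. A self-contained argument for $F_{4}$ would more naturally proceed by analyzing elementary abelian $2$-subgroups directly (e.g.\ via the $Spin(8)$ or $Spin(9)$ subgroup and triality), rather than by reducing to the polars.
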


We describe maximal antipodal sets of $F_{4}$.
Consider $F_{4}$ as a subgroup of $E_{8}$ as in Subsection \ref{ELg}.
Set $A(F_{4}) = A_{1}(E_{8}) \cap F_{4}$.
Then, 
\[
A_{1}(E_{8}) \cap C(G_{2}, E_{8}) = \{ \psi(\gamma_{i}^{0,0}, \gamma_{0}^{a,b}) \ ;\ 0 \leq i \leq 7, a,b = 0,1 \}.
\]
Since $Spin^{1}(8) \subset F_{4}$, we obtain $A(F_{4}) = A_{1}(E_{8}) \cap C(G_{2}, E_{8})$.
The cardinality of $A(F_{4})$ is $32$ and $A(F_{4})$ is a great antipodal set.
Note $A(F_{4}) = \psi(A(Spin^{1}(8) \times \{ 1 \})$.
Summarizing these arguments, we obtain Propsition \ref{F_{4}-1}.

\begin{prop} \label{F_{4}-1}
Set $A(F_{4}) = A_{1}(E_{8}) \cap F_{4}$.
Then, $A(F_{4})$ is a maximal antipodal set of $F_{4}$.

\end{prop}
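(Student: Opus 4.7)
The plan is to deduce the proposition directly from two inputs already in hand: the classification of maximal antipodal subgroups of $E_{8}$ (so that $A_{1}(E_{8})$ is indeed antipodal in $E_{8}$) and the equality $\#_{2}F_{4}=32$ from Theorem \ref{F_{4}}. The strategy is to show that $A(F_{4})$ is an antipodal subset of $F_{4}$ with exactly $32$ elements; by Theorem \ref{F_{4}} this forces $A(F_{4})$ to be a great antipodal set, and in particular a maximal one.

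First I would observe that the antipodal property passes to subgroup intersections. Since every pair of elements in $A_{1}(E_{8})$ commutes in $E_{8}$ and each non-identity element squares to the identity, the subset $A_{1}(E_{8}) \cap F_{4}$ consists of pairwise commuting involutions (plus the identity) of $F_{4}$, hence is an antipodal subgroup of $F_{4}$ with respect to the Lie-group symmetry $s_{g}(h) = g h^{-1} g$.

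For the cardinality I would use the parametrization $A_{1}(E_{8}) = \psi(A(Spin^{0}(8)) \times A(Spin^{1}(8)))$ together with $F_{4} = C_{o}(G_{2}, E_{8})$ and the fact, noted in the preceding paragraph of the paper, that $\psi(\{1\} \times Spin^{1}(8)) \subset F_{4}$. An element $\psi(g_{0}, g_{1}) \in A_{1}(E_{8})$ lies in $F_{4}$ if and only if $g_{0}$ centralizes $G_{2}$ inside $Spin^{0}(8)$. Since $G_{2}$ is a connected semisimple subgroup with trivial center, its centralizer in $Spin^{0}(8)$ reduces to $C(Spin^{0}(8)) = \{\gamma_{0}^{a,b} : a,b \in \{0,1\}\}$. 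Exploiting the kernel $\ker \psi = \{(\gamma_{0}^{a,b}, \gamma_{0}^{a,b})\}$, each element $\psi(\gamma_{0}^{a,b}, \gamma_{j}^{c,d})$ can be rewritten as $\psi(1, \gamma_{0}^{a,b}\gamma_{j}^{c,d})$, so
\[
A(F_{4}) = \psi\bigl(\{1\} \times A(Spin^{1}(8))\bigr),
\]
and $\# A(F_{4}) = \# A(Spin^{1}(8)) = 32$.

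The only nontrivial step is the centralizer identity $C(G_{2}, Spin^{0}(8)) = C(Spin^{0}(8))$, which follows from the structural information about $G_{2} \subset Spin(7) \subset Spin(8)$ collected in Subsection \ref{O} and is effectively built into the construction of $F_{4}$ as $C_{o}(G_{2}, E_{8})$. Combining this cardinality count with the antipodal property established above, the equality $\# A(F_{4}) = 32 = \#_{2} F_{4}$ from Theorem \ref{F_{4}} implies that $A(F_{4})$ is a great, and hence maximal, antipodal set of $F_{4}$.
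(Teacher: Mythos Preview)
Your proof is correct and follows essentially the same route as the paper: identify $A_{1}(E_{8})\cap C(G_{2},E_{8})$ via the centralizer computation $C(G_{2},Spin(8))=C(Spin(8))$, use $\ker\psi$ to rewrite the intersection as the $\psi$-image of one copy of $A(Spin(8))$, and then invoke $\#_{2}F_{4}=32$ from Theorem \ref{F_{4}} to conclude maximality. The only cosmetic discrepancy is that you place $G_{2}$ in the $Spin^{0}(8)$ factor while the paper's construction of $F_{4}$ has $G_{2}\subset Spin^{1}(8)$ (so the roles of the two indices are swapped throughout); this does not affect the argument, and in fact the paper's own text contains the same index inconsistency you inherited.
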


\begin{remark}
We see that
\[
\begin{split}
A_{2}(E_{8}) \cap F_{4}
&= \{ e \} \cup \{ \tau_{x_{i} \pm x_{j}} \ ;\ 1 \leq i < j \leq 4 \} \cup \{ \tau_{x_{1} \pm x_{2}} \tau_{x_{3} + x_{4}}, \tau_{x_{1} - x_{2}}\tau_{x_{1} + x_{2}} \} \\
&= \{ \psi(\gamma_{i}^{0,0}, \gamma_{0}^{a,b}) \ ; \ 0 \leq i \leq 3, a,b = 0,1 \}. 
\end{split}
\]
Therefore, $A_{2}(E_{8}) \cap F_{4} \subset A(F_{4})$, and $A_{2}(E_{8}) \cap F_{4}$ is not a maximal antipodal set of $F_{4}$.

\end{remark}


\subsubsection{Maximal antipodal sets of $FI$ and $FII$} \label{s-FIFII}

By the author \cite{Sasaki-F}, the congruent classes of maximal antipodal sets of $FI$ were classified.

\begin{thm}\cite{Sasaki-F} \label{FI}
Any maximal antipodal sets of $FI$ are congruent to each other.
The cardilnality of any maximal antipodal set is $28$ and $\#FI = 28$.

\end{thm}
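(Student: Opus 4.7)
The plan is to exploit the realization of $FI = F_4/Sp(1)\cdot Sp(3)$ as the conjugate orbit in $F_4$ through the involutive element defining $\sigma_{FI}$. Concretely, since $\sigma_{FI} = \theta_x$ with $x = \psi(\gamma_4^{0,0}, \gamma_4^{0,0})$ (as set up in Subsection \ref{ELg}), we can identify $FI$ with $\{gxg^{-1} : g \in F_4\} \subset F_4$. Under this identification the symmetry $s_y$ at a point $y = gxg^{-1} \in FI$ is simply $h \mapsto yh^{-1}y$ restricted to $FI$; in particular, any antipodal set of $F_4$ that happens to be contained in $FI$ is automatically an antipodal set of $FI$.

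First, I would construct the candidate maximal antipodal set by intersecting: define $A(FI) := A(F_4) \cap FI$, where $A(F_4) = A_1(E_8) \cap F_4 = \{\psi(\gamma_i^{0,0}, \gamma_0^{a,b}) : 0 \leq i \leq 7,\ a,b = 0,1\}$ is the unique (up to congruence) maximal antipodal subgroup of $F_4$ from Theorem \ref{F_{4}}. Using the explicit form of $x$ and the list of $\gamma_i^{a,b}$ given at the end of Subsection \ref{O}, I would check case by case which of the 32 elements are conjugate to $x$ in $F_4$. Since $A(F_4)$ is a subgroup, the four elements of $C(F_4) \cap A(F_4) = \{\psi(\gamma_0^{0,0}, \gamma_0^{a,b})\}$ are central and hence form a separate fiber; the remaining $32-4 = 28$ elements will be shown to lie in the single orbit $FI$, giving $\#A(FI) = 28$.

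Second, I would verify that $A(FI)$ is maximal (not merely antipodal) using the $(M^+,M^-)$-method of Chen-Nagano: choose $o = x$ as base point and enumerate the polars $(FI_o^+)_i$ using the classification in \cite{Chen-Nagano1}. For each polar I invoke the already-known maximal antipodal sets of the corresponding smaller symmetric spaces (certain Grassmannians or $\mathbb{H}P^n$-type factors), and check that the partition $A(FI) = \bigsqcup_i A(FI) \cap (FI_o^+)_i$ restricts to a maximal antipodal set in each polar. Together with the fact that a maximal antipodal set meeting every polar is necessarily maximal in $M$, this establishes maximality and simultaneously yields $\#_2 FI = 28$.

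Third, for the congruence uniqueness, I would take an arbitrary maximal antipodal set $B \subset FI$ containing $o$ and argue by induction on polar rank: $B \cap (FI_o^+)_i$ is a maximal antipodal set of $(FI_o^+)_i$, and the inductive congruence statements in each polar combine with the transitivity of $K_o = Sp(1)\cdot Sp(3)$ on suitable flag-like data to produce an element of $F_4$ carrying $B$ onto $A(FI)$. The main obstacle will be this last step: one must check that the congruences in the individual polars can be glued into a single $F_4$-congruence on all of $FI$, which requires controlling the residual freedom in $K_o$ once the images of all polar-components are fixed. This is the delicate point where simply knowing the maximal antipodal set of $F_4$ is unique up to conjugation is not enough — one must track the conjugating element inside $F(\sigma_{FI},F_4)$ rather than in the full group $F_4$, and this is precisely the standard subtlety handled (in \cite{Sasaki-F}) by a careful centralizer calculation of the longest root subgroup $SU^{\alpha}(2)$ inside $Sp(1)\cdot Sp(3)$.
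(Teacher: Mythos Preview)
There is a concrete error in your realization of $FI$. You take $\sigma_{FI}=\theta_x$ with $x=\psi(\gamma_4^{0,0},\gamma_4^{0,0})$ and assert that the conjugacy orbit $\{gxg^{-1}:g\in F_4\}$ lies inside $F_4$. But $x\notin F_4$: recall that $F_4$ is the identity component of $C(G_2,E_8)$ with $G_2\subset Spin^1(8)$, and the second component $\gamma_4^{0,0}$ of $x$ is a non-central element of $G_2$, hence fails to commute with all of $G_2$. The paper is explicit only that $\theta_x$ \emph{normalizes} $F_4$ (because $\gamma_4^{0,0}\in G_2$); it never claims $x\in F_4$. Consequently your proposed intersection $A(F_4)\cap\{gxg^{-1}:g\in F_4\}$ is empty, since every element of $A(F_4)$ lies in $F_4$ while the orbit does not.

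A related slip: you write that the four elements $\{\psi(\gamma_0^{0,0},\gamma_0^{a,b})\}$ are central in $F_4$. The center of $F_4$ is trivial, so only the identity is central. The other three elements do lie in $F_4$, but they populate the \emph{other} polar $FII_+$, not some central fiber. Your arithmetic $32-4=28$ happens to match the correct decomposition $A(F_4)=\{e\}\sqcup A(FI_+)\sqcup A(FII_+)$ with $|A(FII_+)|=3$, but for the wrong reason.

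Note also that the paper does not prove Theorem~\ref{FI} here at all; it is cited from \cite{Sasaki-F}. What the present paper does (Proposition~\ref{FI-1}) is describe the maximal antipodal set explicitly, and for this it uses a different realization: $FI_+$ is taken to be the polar of $e$ in $F_4$, namely the conjugacy class of $\tau_{x_1-x_2}$ (coming from $\sigma'_{FI}$, conjugation by the exponential of a longest coroot). The element $\tau_{x_1-x_2}$ \emph{does} lie in $F_4$ because its second component is trivial, so $FI_+\subset F_4$, and one finds directly that $A(FI_+)=A_1(E_8)\cap FI_+=\{\psi(\gamma_i^{0,0},\gamma_0^{a,b}):1\le i\le 7,\ a,b=0,1\}$ with cardinality $28$. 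The congruence uniqueness and maximality are not argued here; your polar-induction outline for that portion is reasonable in spirit, but the setup preceding it must first be repaired by switching to the $\tau_{x_1-x_2}$ realization.
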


We describe maximal antipodal sets of $FI$.
It is well known that the number of polars of except for the trivial pole is $2$ in $F_{4}$.
Such polars are isomorphic to either $FI$ or $FII$.
Consider the following orbits:
\[
\begin{split}
& \bigcup_{g \in F_{4}}g \tau_{x_{1} - x_{2}} g^{-1} = F_{4}/F(\sigma'_{FI}, F_{4}) \cong FI, \\
& \bigcup_{g \in F_{4}}g \phi(-1) g^{-1} = F_{4}/F(\sigma_{FII}, F_{4}) \cong FII. \\
\end{split}
\]
These are polars of the identity element,
The first polar is denoted by $FI_{+}$ and the second is denoted by $FII_{+}$.
Set $A(FI_{+}) = A_{1}(E_{8}) \cap FI_{+}$.
Then, 
\[
A(FI_{+}) = \{ \psi(\gamma_{i}^{0,0}, \gamma_{0}^{a,b}) \ ;\ 1 \leq i \leq 7 \}
\]
and the cardinality of $A(FI_{+})$ is $28$.
Therefore, $A(FI_{+})$ is a great antipodal set of $FI_{+}$.
Note that $A(FI_{+}) = \psi( A((\tilde{G}_{4}(\mathbb{R}^{8})_{+}) \times \{ 1 \} )$.
Summarizing these argument, we obtain Proposition \ref{FI-1}.

\begin{prop}\label{FI-1}
Set $A(FI_{+}) = A_{1}(E_{8}) \cap FI_{+}$.
Then, $A(FI_{+})$ is a maximal antipodal sets of $FI_{+}$. 

\end{prop}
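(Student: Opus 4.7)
The plan is to derive the proposition in two steps: first confirm that $A(FI_{+})$ is antipodal in $FI_{+}$, and then pin down its cardinality as $28 = \#_{2}FI$, which together with Theorem \ref{FI} forces maximality. The antipodal property is essentially automatic: since $FI_{+}$ is a polar, it is totally geodesic in $F_{4}$; and $F_{4}$ is itself totally geodesic in $E_{8}$, because $F_{4}$ is a connected Lie subgroup of the bi-invariantly metric compact Lie group $E_{8}$, so its one-parameter subgroups are geodesics of $E_{8}$ contained in $F_{4}$. Hence $FI_{+} \subset F_{4} \subset E_{8}$ is totally geodesic in $E_{8}$, so the geodesic symmetries of $FI_{+}$ coincide with the restrictions of those of $E_{8}$. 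Since $A_{1}(E_{8})$ is antipodal in $E_{8}$ by Adams's classification, the intersection $A(FI_{+}) = A_{1}(E_{8}) \cap FI_{+}$ is antipodal in $FI_{+}$.

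For the cardinality I would argue by pigeonhole across the polars of $F_{4}$. By Proposition \ref{F_{4}-1}, $A_{1}(E_{8}) \cap F_{4} = A(F_{4})$ has $32$ elements; every element $x \in A(F_{4})$ satisfies $x = s_{e}(x) = x^{-1}$, so $x^{2} = e$, and hence $x$ lies in one of the three polars $\{e\}, FI_{+}, FII_{+}$ of the identity in $F_{4}$. This yields the partition
\[
32 = 1 + \#\bigl(A(F_{4}) \cap FI_{+}\bigr) + \#\bigl(A(F_{4}) \cap FII_{+}\bigr),
\]
while each of the two intersections is antipodal in its respective polar, so $\#(A(F_{4}) \cap FI_{+}) \leq \#_{2}FI = 28$ by Theorem \ref{FI} and $\#(A(F_{4}) \cap FII_{+}) \leq \#_{2}FII = 3$ by the Tanaka--Tasaki classification of the symmetric $R$-space $FII$. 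Since $1 + 28 + 3 = 32$, both inequalities must saturate, giving $\#A(FI_{+}) = 28$. With cardinality equal to $\#_{2}FI$, $A(FI_{+})$ is a great antipodal set of $FI_{+}$, and therefore maximal.

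There is no genuine obstacle; the argument is a three-term counting exercise once the antipodal property is granted. The only point of care is confirming that each element of $A(F_{4})$ truly belongs to $F(s_{e}, F_{4})$ and hence to one of the three polars, which is immediate as noted. If one preferred a more explicit derivation, one could instead enumerate $A(FI_{+}) = \{\psi(\gamma_{i}^{0,0}, \gamma_{0}^{a,b}) \ ;\ 1 \leq i \leq 7,\ a,b \in \{0,1\}\}$ and check directly that each listed element is $F_{4}$-conjugate to $\tau_{x_{1} - x_{2}}$ by computing its $+1$-eigenspace on $\frak{f}_{4}$; but the pigeonhole route is shorter and avoids case-by-case verification.
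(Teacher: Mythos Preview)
Your proof is correct. Both your argument and the paper's rely on showing $\#A(FI_{+}) = 28 = \#_{2}FI$ and invoking Theorem \ref{FI}, but you reach the count by a different mechanism. The paper proceeds by explicit enumeration: starting from $A(F_{4}) = \{\psi(\gamma_{i}^{0,0}, \gamma_{0}^{a,b}) : 0 \le i \le 7,\ a,b=0,1\}$, it identifies directly that the elements with $i=0$ are the identity and the three points of $FII_{+}$, so the remaining $28$ elements with $1 \le i \le 7$ constitute $A(FI_{+})$. Your pigeonhole argument sidesteps this identification entirely: the partition $32 = 1 + \#(A(F_{4}) \cap FI_{+}) + \#(A(F_{4}) \cap FII_{+})$ together with the a priori bounds $28$ and $3$ from Theorems \ref{FI} and \ref{FII} forces both inequalities to be equalities. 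This is cleaner in that it requires no case analysis of which conjugacy class each $\psi(\gamma_{i}^{0,0}, \gamma_{0}^{a,b})$ belongs to; on the other hand, the paper's explicit description is reused later (e.g.\ in \S\ref{s-EII} and \S\ref{A_{1}(EIX_{+})}), so the enumeration is not wasted work in context.
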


\begin{remark}
We see 
\[
A_{2}(E_{8}) \cap FI_{+} = \{ \tau_{x_{i} \pm x_{j}} \ ;\ 1 \leq i < j \leq 4 \} = \{ \psi(\gamma_{i}^{0,0}, \gamma_{0}^{a,b}) \ ;\ 1 \leq i \leq 3, a.b = 0,1 \}.
\]
Hence, $A_{2}(E_{8}) \cap FI_{+} \subset A(FI_{+})$ and $A_{2}(E_{8}) \cap FI_{+}$ is not a maximal antipodal set of $FI_{+}$.

\end{remark}

By Tanaka and Tasaki \cite{Tanaka-Tasaki}, the congruent classes of maximal antipodal sets of symmetric $R$-spaces were classified.
Any maximal antipodal sets of a symmetric $R$-space are congruent to each other and any maximal antipodal sets is an orbit of a Weyl group.
It is well known that the simply connected compact symmetric space of type $FII$ is a symmetric $R$ space.

\begin{thm} \cite{Tanaka-Tasaki} \label{FII}
Any maximal antipodal sets of $FII$ are congruent to each other.
The cardinality of any maximal antipodal set is $3$ and $\#_{2}FII = 3$.

\end{thm}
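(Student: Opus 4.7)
The plan is to deduce this statement directly from the Tanaka--Tasaki theorem for symmetric $R$-spaces cited earlier in the paper: in any symmetric $R$-space $M$, every maximal antipodal set is great, all maximal antipodal sets are mutually congruent, and their common cardinality equals $\dim H_{*}(M;\mathbb{Z}_{2})$. So the work reduces to (i) recognizing $FII$ as a symmetric $R$-space and (ii) computing the total mod-$2$ Betti number.

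First, I would verify that $FII = F_{4}/Spin(9)$ is a symmetric $R$-space; this is classical, as it is the Cayley projective plane $\mathbb{O}P^{2}$, realizable as an $F_{4}$-orbit of a rank-one idempotent in the trace-free part of the exceptional Jordan algebra $\mathfrak{h}_{3}(\mathbb{O})$. In the framework of the present paper, the same space appears as the polar $FII_{+}$ of the identity in $F_{4}$, namely the conjugate orbit through $\phi(-1)$ described in Subsection~\ref{ELg}. Either realization places $FII$ on the standard list of symmetric $R$-spaces.

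Second, I would invoke the well-known calculation $H^{*}(\mathbb{O}P^{2};\mathbb{Z}_{2}) \cong \mathbb{Z}_{2}[x]/(x^{3})$ with $|x|=8$, giving total Betti number $3$. Tanaka--Tasaki then immediately yields $\#_{2}FII = 3$ together with the congruence of all maximal antipodal sets. To make the bound concrete, I would exhibit three mutually orthogonal rank-one idempotents in $\mathfrak{h}_{3}(\mathbb{O})$ as a $3$-element antipodal set; equivalently, within the $E_{8}$-realization, one checks that $A_{1}(E_{8}) \cap FII_{+}$ contains such a triple, so no serious obstacle arises. The only point requiring care is confirming that the $F_{4}$-geodesic symmetry on $FII_{+}$ matches the symmetric $R$-space structure used by Tanaka--Tasaki, which follows because $FII_{+}$ is a polar and hence totally geodesic in $F_{4}$ with the induced symmetric-space structure.
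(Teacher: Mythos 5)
Your proposal is correct and follows essentially the same route as the paper: the paper likewise obtains this theorem by citing the Tanaka--Tasaki classification for symmetric $R$-spaces, noting that $FII$ is a symmetric $R$-space, and then exhibiting the explicit three-element set $A(FII_{+}) = A_{1}(E_{8}) \cap FII_{+}$. The only cosmetic difference is that you extract the cardinality $3$ from the mod-$2$ Betti number $\dim H_{*}(\mathbb{O}P^{2};\mathbb{Z}_{2})=3$ via Takeuchi's formula, whereas the paper reads it off from the explicit Weyl-group orbit; both are standard and equivalent here.
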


Set $A(FII_{+}) = A_{1}(E_{8}) \cap FII_{+}$.
Then, 
\[
A(FII_{+}) = \{ \psi(\gamma_{0}^{0,0}, \gamma_{0}^{a,b}) \ ;\ (a,b) = (1,0), (0,1), (1,1) \}.
\]
The cardinlaity of $A(FII_{+})$ is $3$ and $A(FII_{+})$ is a great antipodal set of $FII_{+}$.
Moreover,
\[
A_{2}(E_{8}) \cap FII_{+} = \{ \tau_{x_{1} + x_{2}}\tau_{x_{3} + x_{4}}, \ \tau_{x_{1} - x_{2}}\tau_{x_{3} + x_{4}},\ \tau_{x_{1} + x_{2}}\tau_{x_{1} - x_{2}} \} = A(FII).
\]
Summarizing these arguments, we obtain Propostion \ref{FII-1}.

\begin{prop} \label{FII-1}
Set $A(FII_{+}) = A_{1}(E_{8}) \cap FII_{+} = A_{2}(E_{8}) \cap FII_{+}$.
Then, $A(FII_{+})$ is a maximal antipodal set of $FII_{+}$.

\end{prop}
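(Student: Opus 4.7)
The plan is to show that both intersections $A_{1}(E_{8}) \cap FII_{+}$ and $A_{2}(E_{8}) \cap FII_{+}$ reduce to the same explicit three-element subset of $F_{4}$, and then to invoke Tanaka--Tasaki's computation $\#_{2}FII = 3$ from Theorem \ref{FII} to obtain maximality. Since $FII_{+}$ is a polar of the identity in $F_{4}$, it is totally geodesic in $E_{8}$, so any subset of a maximal antipodal set of $E_{8}$ that lands in $FII_{+}$ is automatically antipodal in $FII_{+}$ itself; the only remaining question is therefore one of cardinality and conjugacy type.

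For the first intersection, I will use that $FII_{+} \subset F_{4}$ forces $A_{1}(E_{8}) \cap FII_{+} \subset A_{1}(E_{8}) \cap F_{4} = A(F_{4})$ by Proposition \ref{F_{4}-1}. The $32$ elements of $A(F_{4})$ split, according to their $F_{4}$-conjugacy class, into $\{e\}$, the $28$-element set $A(FI_{+})$ of Proposition \ref{FI-1}, and the three remaining elements $\psi(\gamma_{0}^{0,0}, \gamma_{0}^{a,b})$ with $(a,b) \in \{(1,0),(0,1),(1,1)\}$. Each of these three residual elements acts trivially on $L$ and by $-1$ on $V$ in the decomposition $\frak{e}_{8} = L + V$, matching the behaviour of $\phi(-1)$, so each is conjugate in $F_{4}$ to the type-$FII$ involution $\phi(-1)$ singled out in Subsection \ref{ELg} and therefore lies in $FII_{+}$. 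Conversely, the nontrivial $\gamma_{i}^{0,0}$ with $1 \leq i \leq 7$ already represent type-$FI$ involutions (that is the content of Proposition \ref{FI-1}), so none of them lies in $FII_{+}$. This gives $A_{1}(E_{8}) \cap FII_{+} = \{\psi(\gamma_{0}^{0,0}, \gamma_{0}^{a,b}) \ ;\ (a,b) = (1,0),(0,1),(1,1)\}$.

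For the second intersection I will use the explicit description of $A_{2}(E_{8})$ from Subsection \ref{E8} together with the table of identifications $\mathrm{exp}\,\pi(iA_{x_{i}\pm x_{j}}) = \gamma_{\ast}^{\ast,\ast}$ at the end of Subsection \ref{O}. A short direct calculation using these identifications, combined with the kernel description of $\psi$, will show that the three products $\tau_{x_{1}+x_{2}}\tau_{x_{3}+x_{4}}$, $\tau_{x_{1}-x_{2}}\tau_{x_{3}+x_{4}}$, $\tau_{x_{1}+x_{2}}\tau_{x_{1}-x_{2}}$ coincide, as elements of $E_{8}$, with the same three $\psi(\gamma_{0}^{0,0}, \gamma_{0}^{a,b})$ found above. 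The same conjugacy-class analysis then rules out any other element of $A_{2}(E_{8}) \cap F_{4}$ from lying in $FII_{+}$. Combining the two identifications yields $A_{1}(E_{8}) \cap FII_{+} = A_{2}(E_{8}) \cap FII_{+} =: A(FII_{+})$, a three-element antipodal subset of $FII_{+}$; since Theorem \ref{FII} gives $\#_{2}FII = 3$, this antipodal set is great and in particular maximal, completing the proof.

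The main obstacle I anticipate is the conjugacy-type bookkeeping inside $A(F_{4})$: one must confirm that exactly three of its nonidentity members are of type $FII$ and the remaining $28$ of type $FI$. This is not conceptually hard, but it requires carefully matching the dimensions of the fixed subspaces of each conjugation with the dimensions in the classification table at the beginning of Subsection \ref{ELg}, and tracking the $\psi$-kernel relations that make several ostensibly different words in the $e_{i}$'s represent the same element of $E_{8}$.
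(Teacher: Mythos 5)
Your proposal follows essentially the same route as the paper: identify the three elements $\psi(\gamma_{0}^{0,0},\gamma_{0}^{a,b})$, $(a,b)=(1,0),(0,1),(1,1)$, as the intersection with $A_{1}(E_{8})$, match them with the three products $\tau_{x_{1}+x_{2}}\tau_{x_{3}+x_{4}}$, $\tau_{x_{1}-x_{2}}\tau_{x_{3}+x_{4}}$, $\tau_{x_{1}+x_{2}}\tau_{x_{1}-x_{2}}$ coming from $A_{2}(E_{8})$, and conclude maximality from $\#_{2}FII=3$ (Theorem \ref{FII}). That is exactly what the paper does, and the argument is sound.

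One intermediate claim in your second paragraph is false as stated: only $\psi(\gamma_{0}^{0,0},\gamma_{0}^{1,0})=\phi(-1)$ acts trivially on $L$ and by $-1$ on $V$. The elements $\psi(\gamma_{0}^{0,0},\gamma_{0}^{0,1})$ and $\psi(\gamma_{0}^{0,0},\gamma_{0}^{1,1})$ involve the volume element $e_{8}\cdots e_{15}$, which anticommutes with each $e_{j}$ for $8\leq j\leq 15$; hence $\mathrm{Ad}$ of these elements is nontrivial on $L$ (indeed $\dim F^{+}(\cdot|_{L},L)=56$, as recorded in Subsection \ref{s-EVIIIEIX}). So you cannot conclude membership in $FII_{+}$ by "matching the behaviour of $\phi(-1)$'' on $L+V$. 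The correct criterion is the one you fall back on in your final paragraph: compute $\dim F(\theta_{g},F_{4})$ by restricting to $\frak{f}_{4}=\frak{spin}(9)+\mathbb{O}^{+}+\mathbb{O}^{-}$ (for these two elements one gets $28+8+0=36$, hence type $FII$), and since the two nontrivial polars of $F_{4}$ are the conjugacy orbits of dimensions $28$ and $16$, the orbit through an involution with $36$-dimensional centralizer is $FII_{+}$. With that correction the proof goes through.
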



\subsubsection{Maximal antipodal sets of $E_{6}$} \label{s-E_{6}}

By the author \cite{Sasaki-E}, the congruent classes of maximal antipodal sets of $E_{6}$ were classified.

\begin{thm} \cite{Sasaki-E}
The number of congruent classes of maximal antipodal subgroups of $E_{6}$ is $2$.
The cardinality of any maximal antipodal set is either $32$ or $64$.
\end{thm}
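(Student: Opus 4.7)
My plan is to apply the $(M^{+},M^{-})$-method of Chen--Nagano together with the explicit realization $E_{6}=E_{6}^{\alpha,\beta}\subset E_{8}$ from Subsection \ref{E6}, and then compare with the known classification of $A_{1}(E_{8})$ and $A_{2}(E_{8})$. The first step is to produce two candidate maximal antipodal subgroups of $E_{6}$ by intersection: set
\[
A_{1}(E_{6}):= A_{1}(E_{8})\cap E_{6}^{\alpha,\beta}, \qquad A_{2}(E_{6}):= A_{2}(E_{8})\cap E_{6}^{\alpha,\beta}
\]
(possibly after conjugating the maximal antipodal set of $E_{8}$ into a convenient position relative to $SU^{\alpha,\beta}(3)$). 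Using $E_{6}\subset E_{7}^{\alpha}\subset E_{8}$ and the description of $A(T)$ in terms of the lattice $\Lambda_{\alpha,\beta}$ (Lemma \ref{Lattice}), these intersections can be computed explicitly and the cardinalities should come out to be $64$ and $32$ respectively. Maximality in $E_{6}$ is verified by the standard argument: if $A_{i}(E_{6})\cup\{g\}$ were still antipodal for some $g\in E_{6}$, then $g$ would commute with every element of $A_{i}(E_{6})$, and an explicit centralizer computation (using the root-space decomposition of $\frak{e}_{6}^{\alpha,\beta}$) would force $g\in A_{i}(E_{6})$.

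The non-conjugacy of the two candidates is immediate from the different cardinalities. To show every maximal antipodal subgroup of $E_{6}$ is congruent to one of them, I would pick any maximal antipodal subgroup $A\subset E_{6}$ with $e\in A$, select a non-identity involutive element $g\in A$, and invoke the polar decomposition
\[
F(s_{e},E_{6})=\bigsqcup_{i}(M^{+}_{e})_{i}.
\]
Since $E_{6}$ is simply connected and the involutive automorphisms of $E_{6}$ are classified into types $EI,EII,EIII,EIV$, the element $g$ lies (up to conjugacy by $E_{6}$) in the conjugacy class corresponding to one of types $EII,EIII,EIV$. Thus $A\subset F(\theta_{g},E_{6})$, and on the identity component this fixed-point subgroup is one of
\[
SU(2)\cdot SU(6),\qquad (U(1)\times Spin(10))/\mathbb{Z}_{4},\qquad F_{4}.
\]
In each case, the classification of maximal antipodal sets in the polar components and in the fixed-point subgroups is already known (Theorems \ref{F_{4}}, \ref{FII}, the classifications of $EII$, $EIII$, $EIV$ recalled in the paper), so $A\cap M^{+}_{i}$ must be conjugate to one of the finitely many listed maximal antipodal sets of each polar.

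Finally, I would glue these local pieces together: an induction on polars shows that the combinatorial choice of antipodal sets on the collection of polars of $E_{6}$ produces exactly two congruence classes of maximal antipodal subgroups, realized by $A_{1}(E_{6})$ and $A_{2}(E_{6})$. The main obstacle I expect is exactly this gluing step, because in principle different polars could be filled in inconsistently; ruling out extra congruence classes requires careful bookkeeping of how the center $C(E_{6})\cong\mathbb{Z}_{3}$ acts, together with the additional constraint coming from the conjugation action of $N(T)/T$ permuting involutions within a single conjugacy class. The cardinalities $32$ and $64$ then emerge as the two stable combinatorial possibilities.
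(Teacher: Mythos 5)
This theorem is not proved in the present paper at all: it is quoted from \cite{Sasaki-E}, and what the paper actually supplies is only the explicit description of the two representatives, namely $A_{1}(E_{6}^{\alpha,\beta})=A_{1}(E_{8})\cap E_{6}^{\alpha,\beta}$ and $A_{2}(E_{6}^{\alpha,\beta})=A_{2}(E_{8})\cap E_{6}^{\alpha,\beta}$ (Proposition \ref{E_{6}-1}). Your first step therefore agrees with the paper's construction, but you have the cardinalities backwards: $A_{1}(E_{8})\cap E_{6}^{\alpha,\beta}=\{\psi(\gamma_{i}^{0,0},\gamma_{0}^{a,b})\}$ has $8\cdot 4=32$ elements, while $A_{2}(E_{8})\cap E_{6}^{\alpha,\beta}=\{e\}\cup\{\tau_{\delta}\}\cup\{\tau_{\delta}\tau_{\alpha}\}$ has $1+36+27=64$ elements, not the other way around. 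A more substantive error occurs in your exhaustiveness argument: for a non-identity involutive $g\in E_{6}$ the automorphism $\theta_{g}$ is \emph{inner}, so $F(\theta_{g},E_{6})$ is of type $EII$ or $EIII$ only, i.e.\ $SU(2)\cdot SU(6)$ or $(U(1)\times Spin(10))/\mathbb{Z}_{4}$; the subgroup $F_{4}$ arises from the \emph{outer} involution of type $EIV$ and can never occur as the centralizer of an involutive element. Consistently with this, the paper records that $E_{6}^{\alpha,\beta}$ has exactly two non-trivial polars, $EII_{+}$ and $EIII_{+}$, so your case list is wrong as stated.

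The deeper problem is that the heart of the theorem --- that \emph{every} maximal antipodal subgroup is congruent to one of the two representatives --- is exactly the ``gluing'' step you defer, and your sketch does not supply the missing idea. Reducing to $A\subset C(g,E_{6})$ and invoking classifications of maximal antipodal sets of the polars $EII_{+}$, $EIII_{+}$ conflates antipodal sets of the polar (a symmetric space) with antipodal subgroups of the centralizer (a group), and in any case the classification for $EII$ comes from the same reference \cite{Sasaki-E}, so this route risks circularity. Moreover, the convenient ``pole'' mechanism of Subsection \ref{strategies}, which lets one reconstruct a maximal antipodal subgroup of $M$ from one polar when $C(M)$ contains an involution, is unavailable here because $C(E_{6})\cong\mathbb{Z}_{3}$ has no element of order $2$; some genuinely different argument (as carried out in \cite{Sasaki-E}) is needed to rule out further congruence classes. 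As it stands the proposal establishes at most the existence of two non-congruent maximal antipodal subgroups of cardinalities $32$ and $64$, not the full classification.
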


We describe maximal antipodal sets of $E_{6}$.
In this subsection, let $\alpha = x_{7} - x_{8}$ and $\beta = x_{6} - x_{7}$, and consider $E_{6}^{\alpha, \beta}$.
Set $A_{i}(E_{6}^{\alpha, \beta}) = A_{i}(E_{8}) \cap E_{6}^{\alpha, \beta} \ (i = 1,2)$.
First, 
\[
\begin{split}
A_{1}(E_{8}) \cap C(SU^{\alpha, \beta}(3), E_{8}) &= 
\{ \psi(\gamma_{i}^{0,0}, \gamma_{0}^{a,b}) \ ;\ 0 \leq i \leq 7, a,b = 0,1 \}.
\end{split}
\]
Then, $A_{1}(E_{8}) \cap C(SU^{\alpha, \beta}(3), E_{8}) = A(F_{4}) \subset E_{6}^{\alpha, \beta}$.
Therefore, $A_{1}(E_{6}^{\alpha, \beta}) = A_{1}(E_{8}) \cap C(SU^{\alpha, \beta}(3), E_{8})$.
The cardinality of $\#A_{1}(E_{6}^{\alpha, \beta})$ is $32$, and $A_{1}(E_{6}^{\alpha, \beta})$ is a maximal antipodal set of $E_{6}^{\alpha, \beta}$.
Next, 
\[
A_{2}(E_{8}) \cap C(SU^{\alpha, \beta}(3), E_{8}) 
= \{ e \} \cup \{ \tau_{\delta} \ ;\ \delta \in (\Sigma_{\alpha, 0} \cap \Sigma_{\beta, 0})^{+} \} \cup \{ \tau_{\delta}\tau_{\alpha} \ ;\ \delta \in \Sigma_{\alpha, 0} \cap \Sigma_{\beta, 1} \}.
\]
We can easily verify that, for any $\delta \in \Sigma_{\alpha,0}^{+} \cap \Sigma_{\beta, \pm1}$, it is true that $\tau_{\delta}\tau_{\alpha} \in T^{\alpha, \beta} = \mathrm{exp} \frak{t}^{\alpha, \beta}$.
Therefore, $A_{2}(E_{8}) \cap C(SU^{\alpha, \beta}(3), E_{8}) \subset E_{6}^{\alpha, \beta}$ and $A_{2}(E_{6}^{\alpha, \beta}) = A_{2}(E_{8}) \cap C(SU^{\alpha, \beta}(3), E_{8})$.
The cardinality of $A_{2}(E_{6}^{\alpha, \beta})$ is $64$ and $A_{2}(E_{6}^{\alpha, \beta})$ is a great antipodal set of $E_{6}^{\alpha, \beta}$.
Note that $A_{2}(E_{6}^{\alpha, \beta})$ is a maximal antipodal set of the maximal torus $T^{\alpha, \beta}$ of $E_{6}^{\alpha, \beta}$.
Summarizing these arguments, we obtain Proposition \ref{E_{6}-1}.

\begin{prop} \label{E_{6}-1}
Set $A_{i}(E_{6}^{\alpha, \beta}) = A_{i}(E_{8}) \cap E_{6}^{\alpha, \beta} \ (i = 1,2)$.
Then, $A_{1}(E_{6}^{\alpha, \beta})$ and $A_{2}(E_{6}^{\alpha, \beta})$ are maximal antipodal subgroups of $E_{6}^{\alpha, \beta}$ and the cardinalities are $32$ and $64$.

\end{prop}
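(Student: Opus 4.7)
The plan is to treat the two sets separately, relying on the explicit cardinality computations already carried out in the paragraphs just preceding the statement. Both $A_{1}(E_{6}^{\alpha,\beta})$ and $A_{2}(E_{6}^{\alpha,\beta})$ are antipodal subgroups of $E_{6}^{\alpha,\beta}$ because each is the intersection of an antipodal subgroup $A_{i}(E_{8}) \subset E_{8}$ with the closed subgroup $E_{6}^{\alpha,\beta}$. The cardinality counts are $|A_{1}(E_{6}^{\alpha,\beta})| = |A(F_{4})| = 32$ and $|A_{2}(E_{6}^{\alpha,\beta})| = 1 + |(\Sigma_{\alpha,0} \cap \Sigma_{\beta,0})^{+}| + |\Sigma_{\alpha,0} \cap \Sigma_{\beta,1}| = 1 + 36 + 27 = 64$, the distinctness of the $64$ torus elements being guaranteed by Lemma \ref{Lattice} applied to the $E_{6}$-type subsystem $\Sigma_{\alpha,0} \cap \Sigma_{\beta,0}$.

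For the maximality of $A_{2}(E_{6}^{\alpha,\beta})$, the strategy is to read $\#_{2}E_{6} = 64$ from Table \ref{intro-1}. Since $|A_{2}(E_{6}^{\alpha,\beta})| = 64 = \#_{2}E_{6}$, $A_{2}(E_{6}^{\alpha,\beta})$ is a great antipodal set of $E_{6}^{\alpha,\beta}$ and so automatically a maximal antipodal set. This closes the $A_{2}$ case in the same spirit as the treatments of $F_{4}$, $FI$, and $FII$ in the previous subsections.

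The maximality of $A_{1}(E_{6}^{\alpha,\beta}) = A(F_{4})$ is more delicate, since the short cardinality argument is unavailable ($32 < \#_{2}E_{6} = 64$). The plan here is to appeal to the author's earlier classification in \cite{Sasaki-E}, which asserts that every maximal antipodal subgroup of $E_{6}$ has cardinality either $32$ or $64$. Under this dichotomy, $A_{1}(E_{6}^{\alpha,\beta})$ is contained in some maximal antipodal subgroup $B \subset E_{6}^{\alpha,\beta}$ with $|B| \in \{32, 64\}$; the case $|B| = 32$ forces $B = A_{1}(E_{6}^{\alpha,\beta})$ by cardinality, yielding maximality. The main obstacle is ruling out $|B| = 64$, i.e., showing that no involution of $E_{6}^{\alpha,\beta}$ outside $A(F_{4})$ commutes with every element of $A(F_{4})$. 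For this, one combines the maximality of $A(F_{4})$ in $F_{4} = F(\sigma_{IV}, E_{6}^{\alpha,\beta})$ (Theorem \ref{F_{4}}) with the Cartan decomposition $\frak{e}_{6} = \frak{f}_{4} + \frak{m}$ of the symmetric pair $(E_{6}, F_{4})$: a hypothetical extra involution $g$ must satisfy $g \notin F_{4}$, hence $\sigma_{IV}(g) \neq g$, and analyzing the three involutions $g$, $\sigma_{IV}(g)$, $g\,\sigma_{IV}(g)$ together with the explicit spin-representation description of the elements $\psi(\gamma_{i}^{0,0}, \gamma_{0}^{a,b})$ of $A(F_{4})$ produces a contradiction with the centralizer structure of $A(F_{4})$ in $E_{6}^{\alpha,\beta}$.
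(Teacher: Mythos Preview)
Your treatment of $A_{2}(E_{6}^{\alpha,\beta})$ matches the paper's: both compute the cardinality as $64$, observe $\#_{2}E_{6}=64$, and conclude that a great antipodal set is maximal.

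For $A_{1}(E_{6}^{\alpha,\beta})$ you and the paper diverge. The paper is in a ``recall'' section: the theorem quoted from \cite{Sasaki-E} is not merely the cardinality dichotomy $|A|\in\{32,64\}$ but the full classification, which already establishes that the specific set $A(F_{4})\subset E_{6}$ is one of the two maximal antipodal subgroups. The paper therefore does no further work beyond identifying $A_{1}(E_{8})\cap E_{6}^{\alpha,\beta}$ with $A(F_{4})$ and reading off maximality from the cited result. You instead extract only the dichotomy from \cite{Sasaki-E} and then attempt to rule out an extension to a $64$-element subgroup by hand. That extra step is not carried out: the phrase ``analyzing the three involutions $g,\ \sigma_{IV}(g),\ g\,\sigma_{IV}(g)$ \dots\ produces a contradiction with the centralizer structure'' is a plan, not an argument, and as written does not exclude the possibility that some involution in $E_{6}^{\alpha,\beta}\setminus F_{4}$ centralizes all of $A(F_{4})$. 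If you want a self-contained proof along your lines, you must actually compute $C(A(F_{4}),E_{6}^{\alpha,\beta})$ and show it contains no involutions outside $A(F_{4})$; otherwise, simply invoke the full statement of \cite{Sasaki-E} as the paper does.
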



\subsubsection{Maximal antipodal sets of $EI$} \label{s-EI}

By the author \cite{Sasaki-E}, the congruent classes of maximal antipodal sets of $EI$ were classified.

\begin{thm} \cite{Sasaki-E}
The number of congruent classes of maximal antipodal sets of $EI$ is $2$.
The cardinality of any maximal antipodal set is either $28$ or $64$.

\end{thm}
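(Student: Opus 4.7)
The plan is to follow the same template used for $E_{6}^{\alpha,\beta}$ in Proposition \ref{E_{6}-1}, realizing $EI$ as an orbit inside $E_{8}$ and then pulling back the two known maximal antipodal sets $A_{1}(E_{8})$ and $A_{2}(E_{8})$. Concretely, with $\alpha = x_{7}-x_{8}$, $\beta = x_{6}-x_{7}$, I would realize $EI$ as the conjugation orbit
\[
EI \;\cong\; \bigcup_{g\in E_{6}^{\alpha,\beta}} g\,x\,g^{-1}
\]
of $x\in E_{8}$ (the element inducing the normal form involution $\sigma_{I}$ on $E_{6}^{\alpha,\beta}$ in Subsection \ref{ELg}); this identification exhibits $EI$ as a totally geodesic submanifold of $E_{8}$ contained in a single coset of the subgroup $\phi(Spin(16))$, so that intersections with $A_{i}(E_{8})$ are automatically antipodal subsets of $EI$.

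First I would compute the two candidates $A_{i}(EI):=A_{i}(E_{8})\cap EI$ explicitly. Because $A_{2}(E_{8})=A(T)\cup xA(T)$ and $x$ lies in the second coset, the intersection $A_{2}(EI)$ is exactly $x$ times a maximal $2$-torus inside $E_{6}^{\alpha,\beta}$ (after centralizing by $SU^{\alpha,\beta}(3)$), which using Lemma \ref{Lattice} is counted by the lattice description of $T^{\alpha,\beta}/\Gamma$ and yields cardinality $64$; this set is in fact a great antipodal set of $EI$ since $\operatorname{rank} EI=6$ and $2^{6}\cdot(\text{Weyl factor})=64$. For $A_{1}(E_{8})=\psi(A(Spin(8)),A(Spin(8)))$, one traces through which elements of the form $\psi(\gamma_{i}^{a,b},\gamma_{j}^{c,d})$ are congruent under $E_{6}^{\alpha,\beta}$ to elements of $xE_{6}^{\alpha,\beta}$; the surviving elements correspond to $A(FI_{+})$ sitting inside $FI\subset EI$, giving $A_{1}(EI)$ of cardinality $28$.

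The core of the proof is maximality, and this is where I would invoke the $(M^{+},M^{-})$-method of Chen-Nagano. The polars of $EI$ were classified in \cite{Chen-Nagano1}: each non-trivial polar is congruent to a polar of either $FI$ or a classical Grassmannian-type space whose maximal antipodal sets were settled in Subsection \ref{s-FIFII} (and in the classical references recalled in Subsection 2.6). For any maximal antipodal set $A\subset EI$, choose a base point $o\in A$; then $A\setminus\{o\}$ is contained in $F(s_{o},EI)$, and its restriction to each polar is antipodal there. Maximality of $A$ forces the restriction to be maximal in each polar, so by the already-known classifications one can reconstruct $A$ up to congruence and check it matches one of $A_{1}(EI)$ or $A_{2}(EI)$. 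To separate the two congruence classes I would compare invariants such as the isotropy types of the stabilizer of a base point (or simply the cardinalities $28\neq 64$).

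The main obstacle I expect is the case analysis for maximality of $A_{1}(EI)$: with cardinality $28$ it is strictly smaller than the great antipodal set, so one must genuinely exhibit an obstruction to enlarging it. The strategy will be to show that any element $y\in EI\setminus A_{1}(EI)$ fails the antipodal condition with some $z\in A_{1}(EI)$ by examining the image of $y$ under the projection $EI\hookrightarrow E_{8}\to E_{8}/F(\sigma_{VIII},E_{8})$ and using the explicit root-space description of $F^{\pm}(\sigma_{I},\frak{e}_{6})$ from Subsection \ref{ELg}. The non-congruence with $A_{2}(EI)$ follows trivially from cardinality, but checking that $A_{1}(EI)$ and any maximal antipodal set of the same cardinality are congruent will require a more delicate argument using the action of the Weyl group $W(\Sigma_{\alpha,0}\cap\Sigma_{\beta,0})$ recalled in Subsection \ref{Rs}.
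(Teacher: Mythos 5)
First, a point of reference: the paper does not prove this theorem. It is imported verbatim from \cite{Sasaki-E}, and what the present paper supplies in Subsection \ref{s-EI} is only the explicit realization of the two representatives, namely $EI_{x}=\bigcup_{g\in E_{6}^{\alpha,\beta}}gxg^{-1}$ together with $A_{i}(EI_{x})=A_{i}(E_{8})\cap EI_{x}$ of cardinalities $28$ and $64$. Your construction of these two candidates agrees with what the paper does (your $A_{2}(EI)$ is the paper's $A_{2}(E_{6}^{\alpha,\beta})x$, a maximal antipodal set of a maximal flat torus; your $A_{1}(EI)$ is the paper's $\psi(A(\tilde{G}_{4}(\mathbb{R}^{8})_{+})\times\{\gamma_{4}^{0,0}\})$, a translate of $A(FI_{+})$), so that part of the proposal is sound.

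The genuine gap is in your classification step. You assert that if $A$ is a maximal antipodal set of $EI$ containing $o$, then ``maximality of $A$ forces the restriction to be maximal in each polar,'' and you then propose to reconstruct $A$ from the known classifications on the polars. This implication is false, and the paper itself warns against it in Subsection \ref{strategies}: ``In general, each $A_{j}(M^{+})$ is not a maximal antipodal set of $M^{+}$.'' The reason is that enlarging $A\cap M_{i}^{+}$ inside the polar $M_{i}^{+}$ by a point $y$ only requires the antipodal condition against the other points of $M_{i}^{+}$; the obstruction to enlarging $A$ itself may come from points of $A$ lying in \emph{other} polars, so $A\cap M_{i}^{+}$ can be a non-maximal antipodal set of $M_{i}^{+}$ even when $A$ is maximal in $EI$ (the paper exhibits exactly this phenomenon, e.g.\ $A_{2}(E_{8})\cap FI_{+}$ is antipodal but not maximal in $FI_{+}$). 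The correct logical direction of the $(M^{+},M^{-})$-method as used throughout this paper runs from the ambient space down to the polar: every maximal antipodal set of $M^{+}$ is of the form $A(M)\cap M^{+}$ for some maximal antipodal set $A(M)$ of $M$, so one classifies on $M^{+}$ once one already knows the answer on $M$. To go the other way (classify $EI$ from its polars) you need extra structure, such as the pole-plus-covering-transformation argument of Subsection \ref{strategies} used for $E_{7}$ and $EV$, or an independent argument inside $E_{6}$; your proposal does not supply this, and in particular the claimed obstruction to enlarging $A_{1}(EI)$ via the projection $E_{8}\to E_{8}/F(\sigma_{VIII},E_{8})$ is not developed enough to close the gap. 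A secondary, smaller issue: calling $A_{2}(EI)$ a \emph{great} antipodal set because $2^{\operatorname{rank}EI}=64$ presupposes $\#_{2}EI=64$, which is part of what is being proved.
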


We describe maximal antipodal sets of $EI$.
Since $\sigma_{I} = \theta_{x}$ is an involutive automorphims of type $EI$, 
\[
\bigcup_{g \in E_{6}^{\alpha, \beta}}g x g^{-1} = E_{6}^{\alpha, \beta}/F(\sigma_{I}, E_{6}^{\alpha, \beta}) \cong EI
\]
and this orbit is denoted by $EI_{x}$.
Set $A_{i}(EI_{x}) = A_{i}(E_{8}) \cap EI_{x} \ (i = 1,2)$.
First, 
\[
\{ g \in A_{1}(E_{8}) \ ;\  \mathrm{Ad}(g)|_{\frak{su}^{\alpha, \beta}(3)} = \mathrm{Ad}(x)|_{\frak{su}^{\alpha, \beta}(3)} \} = \{ \psi(\gamma_{i}^{0,0}, \gamma_{4}^{a,b}) \ ;\ 0 \leq i \leq 7, \ a,b = 0,1 \}.
\]
By calculating the eigenvalues, we see that $\{ \psi(\gamma_{0}^{0,0}, \gamma_{4}^{a,b}) \ ;\ a,b = 0,1 \}$ is not contained in $EI_{x}$.
Since $\{ \gamma_{i}^{a,b} \ ;\ 1 \leq i \leq 7 \} \subset (\tilde{G}_{4}(\mathbb{R}^{8}))_{+}$ and $Spin^{1}(8) \subset E_{6}^{\alpha, \beta}$, we obtain
\[
A_{1}(EI_{x}) = \{ \psi(\gamma_{i}^{0,0}, \gamma_{4}^{a,b}) \ ;\ 1 \leq i \leq 7 \}.
\]
Then, the cardinality of $A_{1}(EI_{x})$ is $28$ and $A_{1}(EI_{x})$ is a maximal antipodal set of $EI_{x}$.
Note 
\[
A_{1}(EI_{x}) = A(FI_{x}) = \psi(A((\tilde{G}_{4}(\mathbb{R}^{8})^{+}) \times \{ \gamma_{4}^{0,0} \} ).
\]
Next, we consider $A_{2}(EI_{x})$.
Since
\[
A_{2}(E_{6}^{\alpha, \beta})x = \{ x \} \cup \{ \tau_{\delta}x \ ;\ \delta \in (\Sigma_{\alpha, 0} \cap \Sigma_{\beta, 0})^{+} \} \cup \{ \tau_{\delta}\tau_{\alpha}x \ ;\ \delta \in \Sigma_{\alpha, 0} \cap \Sigma_{\beta, 1} \} \subset A_{2}(EI_{x})
\]
and the cardinality of $A_{2}(E_{6}^{\alpha, \beta})x$ is $64$, we obtain $A_{2}(EI_{x}) = A_{2}(E_{6}^{\alpha, \beta})x$, and $A_{2}(EI_{x})$ is a great antipodal set.
Note that $A_{2}(EI_{x})$ is a maximal antipodal set of a maximal flat torus of $EI_{x}$.
Summarizing these arguments, we obtain Proposition \ref{EI-1}.

\begin{prop} \label{EI-1}
Set $A_{i}(EI_{x}) = A_{i}(E_{8}) \cap EI_{x} \ (i = 1,2)$.
Then, $A_{1}(EI_{x})$ and $A_{2}(EI_{x})$ are maximal antipodal sets of $EI_{x}$ and the cardinalities are $28$ and $64$.

\end{prop}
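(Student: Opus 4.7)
The plan is to verify three items for each $A_i(EI_x)$: antipodality in $EI_x$, the prescribed cardinality, and maximality with respect to inclusion. Since $EI_x = \bigcup_{g \in E_6^{\alpha,\beta}} g x g^{-1}$ is realized as a totally geodesic orbit inside $E_8$ and $A_i(E_8)$ is already an antipodal set of $E_8$, the intersection $A_i(E_8) \cap EI_x$ is automatically antipodal in $EI_x$. So the real work is to pin down which elements lie in the orbit, count them, and establish maximality.

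For $A_1(EI_x)$, I would first characterize membership in the orbit $E_6^{\alpha,\beta} \cdot x$ via the centralizer condition: an element $g \in E_8$ lies in this orbit only if $\mathrm{Ad}(g)|_{\mathfrak{su}^{\alpha,\beta}(3)} = \mathrm{Ad}(x)|_{\mathfrak{su}^{\alpha,\beta}(3)}$. Computing this condition on $A_1(E_8) = \psi(A(Spin(8)) \times A(Spin(8)))$ singles out the $32$ candidates $\psi(\gamma_i^{0,0}, \gamma_4^{a,b})$ with $0 \le i \le 7$ and $a,b \in \{0,1\}$. Among these, the four elements with $i=0$ fail to lie in $EI_x$: an eigenvalue analysis of their adjoint action places them inside $F(\sigma_I, E_6^{\alpha,\beta})$ rather than in the orbit coset. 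That leaves the $28$ elements listed, namely $\psi(\gamma_i^{0,0}, \gamma_4^{a,b})$ with $1 \le i \le 7$, $a,b \in \{0,1\}$. For maximality, I would identify this $28$-element set with $A(FI_x)$, obtained from the totally geodesic copy of the $FI$-polar $FI_x \subset EI_x$ coming from $F_4 \subset E_6^{\alpha,\beta}$ (using $\{\gamma_i^{a,b} : 1 \le i \le 7\} \subset (\tilde G_4(\mathbb{R}^8))_+$ and $Spin^1(8) \subset E_6^{\alpha,\beta}$). Theorem \ref{FI} gives that this set is a great antipodal set of $FI_x$ with $\#_2 FI = 28$; combining with the classification of maximal antipodal sets in $EI$ (which forces the cardinality to be $28$ or $64$), any strict extension inside $EI_x$ would have to land in the $64$-element congruence class, contradicting that a proper extension of a $28$-element antipodal set would have at least $29$ but at most the extension's cardinality within that class.

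For $A_2(EI_x)$, I would observe that $A_2(E_6^{\alpha,\beta}) \cdot x \subset EI_x$ since $A_2(E_6^{\alpha,\beta})$ already lies in $E_6^{\alpha,\beta}$ and $\sigma_I = \theta_x$ inverts the maximal torus $T^{\alpha,\beta}$, so every $\tau_\delta x$ is involutive and represents a point of the orbit. The explicit description from the $A_2(E_6^{\alpha,\beta})$ list shows the cardinality equals $64$, matching $\#(A_2(E_6^{\alpha,\beta})) = 2^6$. Since $EI_x$ is a normal form of rank $6$, the coset $T^{\alpha,\beta} \cdot x$ is a maximal flat torus of $EI_x$, and $A_2(EI_x)$ is the set of all involutions in this torus; hence it is a maximal antipodal set within the torus, which extends to maximality in $EI_x$ because in a normal form any antipodal set is congruent into a maximal flat torus (a standard polar-flat argument going back to Chen-Nagano and used throughout \cite{Tanaka-Tasaki}). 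No strict extension is possible.

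The main obstacle is the passage from local maximality (within $FI_x$ in the first case, within a maximal flat torus in the second) to genuine maximality within $EI_x$. For $A_2(EI_x)$ the torus argument is standard, but for $A_1(EI_x)$ one must carefully exclude the possibility of an exotic $29$-th element lying outside both the polar $FI_x$ and the large class $A_2(EI_x)$. This is handled by appealing to the classification of maximal antipodal sets of $EI$ from \cite{Sasaki-E}, which guarantees that the only permitted cardinalities of maximal antipodal sets are $28$ and $64$, so a $28$-element antipodal set that is maximal in the totally geodesic $FI_x$ and not congruent to a $64$-element set must already be maximal in $EI_x$.
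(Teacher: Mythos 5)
Your description and counting of both sets follows the paper's route exactly: the centralizer condition $\mathrm{Ad}(g)|_{\frak{su}^{\alpha,\beta}(3)} = \mathrm{Ad}(x)|_{\frak{su}^{\alpha,\beta}(3)}$ cutting $A_{1}(E_{8})$ down to the $32$ candidates $\psi(\gamma_{i}^{0,0},\gamma_{4}^{a,b})$, the eigenvalue computation excluding the four elements with $i=0$, the identification of the remaining $28$ elements with $\psi(A((\tilde{G}_{4}(\mathbb{R}^{8}))_{+})\times\{\gamma_{4}^{0,0}\})$, and for $A_{2}$ the observation that $A_{2}(E_{6}^{\alpha,\beta})x$ is the full set of antipodal points in the maximal flat torus $T^{\alpha,\beta}x$ of the normal form $EI_{x}$, which has the maximal possible cardinality $64$ and is therefore a great (hence maximal) antipodal set. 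That part is fine.

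The gap is in your maximality argument for $A_{1}(EI_{x})$. The statement ``any strict extension would have to land in the $64$-element congruence class, contradicting that a proper extension \dots would have at least $29$ but at most the extension's cardinality'' is not a contradiction: an antipodal set of cardinality between $29$ and $64$ is not excluded by anything you have said. Likewise, ``maximal in the totally geodesic $FI_{x}$'' does not imply maximal in $EI_{x}$ (the paper itself stresses that $A\cap L$ need not be maximal in $L$ and, dually, a maximal antipodal set of a totally geodesic submanifold need not be maximal in the ambient space), and ``not congruent to a $64$-element set'' is vacuous for a $28$-element set --- the relevant question is whether it is \emph{contained} in one. What actually closes the argument is one of the following. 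Either you invoke that the classification of \cite{Sasaki-E} already exhibits the $28$-element congruence class as a maximal antipodal set of a totally geodesic $\tilde{G}_{4}(\mathbb{R}^{8})\subset EI$ and match your explicit set with that representative (this is what the paper does, since this proposition sits in the section that merely re-describes previously classified sets); or you rule out containment in the $64$-class directly, e.g.\ by the generation argument the paper uses for $EV_{x}$: every $64$-element maximal antipodal set of $EI_{x}$ is the antipodal set of a maximal flat torus coset, whereas the Cartan-type translate $A_{1}(EI_{x})\cdot\psi(\gamma_{0}^{0,0},\gamma_{4}^{0,0})^{-1}$ generates a copy of $A(Spin(8))$, which is not contained in the $2$-torsion of any maximal torus (the paper computes $T\cap A(Spin(8))$ has only $16$ elements), so $A_{1}(EI_{x})$ cannot sit inside a flat torus coset. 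Without one of these steps the maximality of the $28$-element set is not established.
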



\subsubsection{Maximal antipodal sets of $EII$} \label{s-EII}

By the author \cite{Sasaki-E}, the congruent classes of maximal antipodal sets of $EII$ were classified.

\begin{thm} \cite{Sasaki-E}
The number of congruent classes of maximal antipodal sets of $EII$ is $2$.
The cardinality of any maximal antipodal set is either $28$ or $36$.

\end{thm}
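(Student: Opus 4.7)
The plan is to follow the same template as in Subsections \ref{s-E_{6}}, \ref{s-EI}, realizing $EII$ as a polar orbit inside $E_8$ and intersecting with the two maximal antipodal sets $A_1(E_8)$ and $A_2(E_8)$ from Adams's classification. Recall that for $\alpha = \tfrac{1}{2}(x_1+\cdots+x_8)$, $\beta = x_7+x_8$ and $\gamma = x_7-x_8 \in \Sigma^+_{\alpha,0}\cap\Sigma_{\beta,0}$, the involutive automorphism $\sigma_{II}^\gamma=\theta_{\tau_\gamma}$ of $E_6^{\alpha,\beta}$ is of type $EII$ with fixed subgroup $SU^\gamma(2)\cdot SU^\gamma(6)$. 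Thus
\[
EII_\gamma := \bigcup_{g \in E_6^{\alpha,\beta}} g\,\tau_\gamma\,g^{-1} \;\cong\; E_6/SU(2)\cdot SU(6)
\]
is a polar of the identity in $E_6^{\alpha,\beta}$, and maximal antipodal sets of $EII$ are obtained as intersections of this orbit with maximal antipodal sets of $E_8$.

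First I would compute $A_i(EII_\gamma) := A_i(E_8)\cap EII_\gamma$ for $i=1,2$. For $i=1$, using $A_1(E_8)=\{\psi(\gamma_i^{a,b},\gamma_j^{c,d})\}$, I filter those elements lying in $E_6^{\alpha,\beta}$ (the description from Proposition \ref{E_{6}-1} gives $A_1(E_6^{\alpha,\beta}) = A(F_4)$ of cardinality 32) and keep only those conjugate to $\tau_\gamma$ in $E_6^{\alpha,\beta}$ by checking the $\mathrm{Ad}$-eigenvalue pattern on $\frak{su}^{\alpha,\beta}(3)$ and on the sum $\frak{su}^\gamma(2)+\frak{su}^\gamma(6)$. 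I expect this to single out $28$ elements, giving a subset $A_1(EII_\gamma)$ of cardinality $28$. For $i=2$, starting from $A_2(E_6^{\alpha,\beta})$ of cardinality $64$ (a maximal antipodal subgroup of the maximal torus $T^{\alpha,\beta}$), I compute $A_2(E_8)\cap EII_\gamma$ by testing membership in the orbit through $\tau_\gamma$; the expected output is a set $A_2(EII_\gamma)$ of cardinality $36$, which will contain representatives coming from both the toral part $A(T^{\alpha,\beta})$ and from $x\cdot A(T^{\alpha,\beta})$.

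Next I would verify maximality. Given any antipodal set $B\supset A_i(EII_\gamma)$ in $EII_\gamma$, I embed $B$ into $E_8$ and apply the fact that a maximal antipodal set of $E_8$ containing $B$ is congruent to $A_1(E_8)$ or $A_2(E_8)$; intersecting back with $EII_\gamma$ forces $B = A_i(EII_\gamma)$. Non-congruence of the two classes is immediate from the different cardinalities $28\neq 36$. To see that these two exhaust all congruence classes, I would use the general $(M^+,M^-)$-strategy recalled in Subsection 3.1 combined with the classification of polars of $EII$ and induction on dimension; any maximal antipodal set $A\subset EII$ must meet each polar in a maximal antipodal set of that polar, and the polars of $EII$ are known symmetric $R$-spaces for which Tanaka-Tasaki's classification gives unique congruence classes, so $A$ is determined by its centralizing data up to congruence.

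The main obstacle is Step 3, showing exhaustiveness: while producing the two explicit classes and verifying their cardinalities is a routine eigenvalue bookkeeping on the weights of the spin representation, ruling out a hypothetical third class requires a careful analysis of how a maximal antipodal set intersects all the polars of $EII$ and how these restrictions glue together. Since the polars of $EII$ involve Hermitian symmetric spaces whose antipodal theory is already complete, the gluing argument essentially reduces the problem to a finite combinatorial check, which can be carried out case by case along the lines of \cite{Sasaki-E}.
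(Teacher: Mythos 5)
First, note that this theorem is quoted from \cite{Sasaki-E}; the present paper does not reprove it, but only recalls it and then, in Subsection \ref{s-EII}, exhibits the two maximal antipodal sets explicitly as $A_{i}(EII_{+}) = A_{i}(E_{8}) \cap EII_{+}$ with cardinalities $28$ and $36$. Your overall skeleton (realize $EII$ as the conjugation orbit of $E_{6}^{\alpha,\beta}$ through $\tau_{\gamma}$, i.e.\ as a polar of the identity, and intersect with the two known maximal antipodal subgroups of the ambient group) is the right one and matches the strategy of Subsection \ref{strategies}. Your expected cardinalities $28$ and $36$ are correct, although one detail is off: $A_{2}(E_{8}) \cap E_{6}^{\alpha,\beta}$ is already contained in the maximal torus $T^{\alpha,\beta}$, so $A_{2}(EII_{+}) = \{ \tau_{\delta} \ ;\ \delta \in (\Sigma_{\alpha,0} \cap \Sigma_{\beta,0})^{+} \}$ receives no contribution from $x\,A(T^{\alpha,\beta})$.

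The genuine gaps are in your maximality and exhaustiveness steps. For maximality you embed an antipodal set $B \supset A_{i}(EII_{+})$ into $E_{8}$ and invoke Adams's classification; but the element $g \in E_{8}$ carrying the resulting maximal antipodal subgroup onto $A_{j}(E_{8})$ has no reason to preserve $EII_{+}$ (or even $E_{6}^{\alpha,\beta}$), so ``intersecting back with $EII_{+}$'' is not available. The argument must be run inside $E_{6}^{\alpha,\beta}$: since $EII_{+}$ is a polar of $e$ there, $\{e\} \cup B$ lies in a maximal antipodal subgroup $A$ of $E_{6}^{\alpha,\beta}$, and by the $E_{6}$ classification $A = h A_{j}(E_{6}^{\alpha,\beta}) h^{-1}$ for some $h$; a conjugation fixes $e$ and preserves each polar of $e$ setwise (polars are single conjugation orbits), so $B \subset h \bigl( A_{j}(E_{6}^{\alpha,\beta}) \cap EII_{+} \bigr) h^{-1}$. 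This single observation also yields exhaustiveness via Subsection \ref{strategies}. By contrast, your proposed gluing argument over the polars of $EII$ rests on a false premise: a maximal antipodal set need not meet every polar in a maximal antipodal set of that polar (the paper itself records counterexamples, e.g.\ $A_{2}(E_{8}) \cap FI_{+}$ is a non-maximal antipodal set of $FI_{+}$), and the polars of $EII$ are not all symmetric $R$-spaces. Finally, after reducing to the two candidates $A_{j}(E_{6}^{\alpha,\beta}) \cap EII_{+}$ one must still verify that both are actually maximal in $EII_{+}$ (this is where $FI_{+} \subset EII_{+}$, $FII_{+} \cap EII_{+} = \emptyset$, and the transitivity of the Weyl group on $\{\tau_{\delta}\}$ enter); non-congruence then follows from $28 \neq 36$.
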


We describe maximal antipodal sets of $EII$.
Let $\gamma \in \Sigma^{+}(\frak{e}_{6}^{\alpha, \beta}, \frak{t}^{\alpha, \beta})$.
Since $\sigma_{II}^{\gamma}$ is an involutive automorphism of type $EII$, 
\[
\bigcup_{g \in E_{7}}g \tau_{\gamma} g^{-1} \cong E_{6}^{\alpha, \beta}/F(\sigma_{II}^{\gamma}, E_{6}^{\alpha, \beta}) \cong EII.
\]
This orbit is a polar of the identity element in $E_{6}^{\alpha, \beta}$ and denoted by $EII_{+}$.
Then, $FI_{+}$ is a totally geodesic submanifold of $EII_{+}$ and $FII_{+} \cap EII_{+} = \phi$.
Set $A_{i}(EII_{+}) = A_{i}(E_{8}) \cap EII_{+} \ (i = 1,2)$.
First, we consider $A_{1}(EII_{+})$.
Since $FII_{+} \cap EII_{+} = \phi$, we see that $\{ \psi(\gamma_{0}^{0,0}, \gamma_{0}^{a,b}) \ ;\ a,b = 0,1 \}$ is not contained in $A_{1}(EII_{+})$.
Moreover, since $FI_{+} \subset EII_{+}$, 
\[
A_{1}(EII_{+}) = A(FI_{+}) = \{ \psi(\gamma_{i}^{0,0}, \gamma_{0}^{a,b}) \ ;\ 1 \leq i \leq 7, a,b = 0,1 \}.
\]
The cardinality of $A_{1}(EII_{+})$ is $28$ and $A_{1}(EII_{+})$ is a maximal antipodal set of $EII_{+}$.
Next, we consider $A_{2}(EII_{+})$.
Since $EII_{+}$ is a polar of $E_{6}^{\alpha, \beta}$, it is obvious that $\{ \tau_{\delta} \ ;\ \delta \in \Sigma_{\alpha,0}^{+} \cap \Sigma_{\beta, 0} \} \subset A_{2}(EII_{+})$.
The cardinality of the left hand side is $36$, and 
\[
A_{2}(EII_{+}) = 
\{ \tau_{\delta} \ ;\ \delta \in (\Sigma_{\alpha,0} \cap \Sigma_{\beta, 0})^{+} \} =
\{ \tau_{\delta} \ ;\ \delta \in \Sigma^{+}(\frak{e}_{6}^{\alpha, \beta}, \frak{t}^{\alpha, \beta}) \}.
\]
Therefore, $A_{2}(EII_{+})$ is a great antipodal set of $EII_{+}$.
Let $W(E_{6}^{\alpha, \beta})$ be the Weyl group of the root system $\Sigma(\frak{e}_{6}^{\alpha, \beta}, \frak{t}^{\alpha, \beta}) = \Sigma_{\alpha, 0} \cap \Sigma_{\beta, 0}$.
Then, $W(E_{6}^{\alpha, \beta})$ acts on $A_{2}(EII_{+})$ and this action is transitive.
Summarizing these arguments, we obtain Proposition \ref{EII-1}.

\begin{prop}\label{EII-1}
Set $A_{i}(EII_{+}) = A_{i}(E_{8}) \cap EII_{+}\ (i = 1,2)$.
Then, $A_{1}(EII_{+})$ and $A_{2}(EII_{+})$ are maximal antipodal sets of $EII_{+}$ and the cardinalities are $28$ and $36$.

\end{prop}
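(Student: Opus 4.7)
The plan is to combine Proposition \ref{E_{6}-1}, which identifies $A_i(E_8) \cap E_6^{\alpha,\beta}$ explicitly, with the polar decomposition of $F_4 \subset E_6^{\alpha,\beta}$ into $\{e\}$, $FI_+$ and $FII_+$, and to close maximality by appealing to the author's prior classification of antipodal sets in $EII$ from \cite{Sasaki-E}. Since $EII_+ \subset E_6^{\alpha,\beta}$, one has $A_i(EII_+) \subset A_i(E_6^{\alpha,\beta})$, so the task reduces to filtering the explicit finite set $A_i(E_6^{\alpha,\beta})$ by membership in the polar $EII_+$.

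For $i = 1$, I would use Proposition \ref{F_{4}-1} together with the polar decomposition $F(s_e, F_4) = \{e\} \sqcup FI_+ \sqcup FII_+$ to rewrite $A_1(E_6^{\alpha,\beta}) = A(F_4) = \{e\} \sqcup A(FI_+) \sqcup A(FII_+)$. The inclusion $FI_+ \subset EII_+$ and the disjointness $FII_+ \cap EII_+ = \emptyset$, both recalled in the discussion preceding the proposition, combined with $e \notin EII_+$, collapse the intersection to $A_1(EII_+) = A(FI_+)$, whose explicit description gives cardinality $7 \cdot 4 = 28$.

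For $i = 2$, Proposition \ref{E_{6}-1} places $A_2(E_6^{\alpha,\beta})$ inside the maximal torus $T^{\alpha,\beta}$. Because $\Sigma(\frak{e}_6^{\alpha,\beta}, \frak{t}^{\alpha,\beta})$ is the simply-laced $E_6$ root system, the Weyl group $W(E_6^{\alpha,\beta})$ acts transitively on positive roots, so all $\tau_\delta$ with $\delta \in \Sigma^+(\frak{e}_6^{\alpha,\beta}, \frak{t}^{\alpha,\beta})$ are conjugate in $E_6^{\alpha,\beta}$ to the chosen involution $\tau_\gamma$, which places the $36$-element set $\{\tau_\delta : \delta \in \Sigma^+(\frak{e}_6^{\alpha,\beta}, \frak{t}^{\alpha,\beta})\}$ inside $EII_+$. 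To complete the identification I would exclude the remaining elements of $A_2(E_6^{\alpha,\beta})$, namely $e$ (obvious) and the mixed products $\tau_\delta \tau_\alpha$ for $\delta \in \Sigma_{\alpha,0} \cap \Sigma_{\beta,\pm 1}$, by computing $\dim F^+(\mathrm{Ad}(\cdot), \frak{e}_6^{\alpha,\beta})$ on a representative and comparing with $\dim F^+(\mathrm{Ad}(\tau_\gamma), \frak{e}_6^{\alpha,\beta}) = 38$; this eigenvalue invariant distinguishes $EII_+$ from the other polars of $e$ in $E_6^{\alpha,\beta}$, yielding $|A_2(EII_+)| = 36$.

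The main obstacle is maximality. Both $A_i(EII_+)$ are antipodal, being subsets of the antipodal set $A_i(E_8)$. For $A_2(EII_+)$ maximality is automatic, since $36 = \#_2 EII$ by \cite{Sasaki-E}, so it is a great antipodal set. The hard part is $A_1(EII_+)$: one must rule out an extension to a set of cardinality $36$. Here I would invoke the classification from \cite{Sasaki-E}, asserting that the congruence classes of maximal antipodal sets of $EII$ are exactly two, of cardinalities $28$ and $36$, and then distinguish the two: the $36$-element class lies in a single flat torus, whereas $A_1(EII_+) = A(FI_+)$ contains non-toric elements (the $\psi(\gamma_i^{0,0}, \gamma_0^{a,b})$ with $\gamma_i^{0,0} \not\in T_8^1$ for $4 \le i \le 7$ cannot all be simultaneously conjugated into $T^{\alpha,\beta}$, since any common maximal torus containing them would have to contain the non-abelian subalgebra they generate). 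Consequently $A_1(EII_+)$ is not contained in any torus-supported antipodal set, hence not extendable to cardinality $36$, and therefore maximal at cardinality $28$.
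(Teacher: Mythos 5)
Your identification of the two intersections follows the paper's route almost verbatim: $A_{1}(EII_{+}) = A(FI_{+})$ via the polar decomposition of $F_{4}$ together with $FI_{+} \subset EII_{+}$ and $FII_{+} \cap EII_{+} = \emptyset$, and $A_{2}(EII_{+}) = \{ \tau_{\delta} \ ;\ \delta \in \Sigma^{+}(\frak{e}_{6}^{\alpha,\beta}, \frak{t}^{\alpha,\beta}) \}$. For the latter the paper short-circuits your exclusion-by-dimension step: this $36$-element set is already an antipodal subset of $EII_{+}$ of the maximal possible cardinality $\#_{2}EII = 36$, so $A_{2}(EII_{+})$ can contain nothing else and is automatically great, hence maximal. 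Up to that point your argument is sound.

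The genuine gap is in your final step, the maximality of $A_{1}(EII_{+})$. First, the elements of $A_{1}(EII_{+})$ all lie in the elementary abelian $2$-group $A(F_{4}) = A_{1}(E_{6}^{\alpha,\beta})$, so they commute pairwise; there is no ``non-abelian subalgebra they generate,'' and that obstruction is vacuous as stated. Second, the $36$-element class does not lie in a maximal flat torus of the symmetric space $EII_{+}$ (which has rank $4$ and therefore carries at most $2^{4} = 16$ antipodal points); what is true is that it lies in the maximal torus $T^{\alpha,\beta}$ of the ambient group $E_{6}^{\alpha,\beta}$, and the paper records it instead as a Weyl group orbit. The argument can be repaired along the lines you intend: the subgroup of $E_{6}^{\alpha,\beta}$ generated by $A_{1}(EII_{+})$ is all of $A(F_{4})$ (products $\psi(\gamma_{i}^{0,0}, \gamma_{0}^{a,b})\psi(\gamma_{j}^{0,0}, \gamma_{0}^{c,d})$ sweep out every $\psi(\gamma_{k}^{0,0}, \gamma_{0}^{e,f})$ modulo $\mathrm{Ker}\,\psi$), and $A(F_{4})$ cannot lie in any maximal torus $T'$ of $E_{6}^{\alpha,\beta}$, since otherwise it would be properly contained in the order-$64$ antipodal subgroup $\{ t \in T' \ ;\ t^{2} = e \}$, contradicting the maximality of $A_{1}(E_{6}^{\alpha,\beta})$ in Proposition \ref{E_{6}-1}. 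Since every $36$-element maximal antipodal set of $EII_{+}$ is congruent to $A_{2}(EII_{+})$ under conjugation by the isotropy group and hence lies in a conjugate of $T^{\alpha,\beta}$, the set $A_{1}(EII_{+})$ extends to no such set, and the classification from \cite{Sasaki-E} then forces it to be maximal at cardinality $28$.
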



\subsubsection{Maximal antipodal sets of $EIII$} \label{s-EIII}

It is well known that $EIII$ is a Hermitian symmetric space.
In particular, $EIII$ is a symmetric $R$-space.
Therefore, the congruent classes of maximal antipodal sets of $EIII$ were classified by Tanaka-Tasaki \cite{Tanaka-Tasaki}.

\begin{thm}\cite{Tanaka-Tasaki} \label{anti-EIII}
Any maximal antipodal sets of $EIII$ are congruent to each other.
The cardinality of any maximal antipodal set is $27$.

\end{thm}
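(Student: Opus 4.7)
The plan is to apply Tanaka-Tasaki's general classification of maximal antipodal sets in symmetric $R$-spaces \cite{Tanaka-Tasaki}. Since $EIII$ is a compact Hermitian symmetric space, it carries the structure of a symmetric $R$-space, so their theorem immediately implies that every maximal antipodal set of $EIII$ is in fact a great antipodal set and that any two such sets are congruent under the identity component of the isometry group; each is realized concretely as the orbit of the base point under the associated Weyl group. This reduces the problem to producing one great antipodal set and computing its cardinality.

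To pin down the cardinality, I would combine Takeuchi's equality $\#_{2}M = \dim H_{*}(M;\mathbb{Z}_{2})$ for symmetric $R$-spaces \cite{Takeuchi} with the fact that $EIII$ is K\"{a}hler, so that all odd Betti numbers vanish and $\dim H_{*}(EIII;\mathbb{Z}_{2}) = \chi(EIII)$. For a compact Hermitian symmetric space $G/K$ the Euler characteristic equals $|W(G)|/|W(K)|$, which here gives
\[
\chi(EIII) = \frac{|W(E_{6})|}{|W(Spin(10))|} = \frac{51840}{1920} = 27,
\]
so that $\#_{2}EIII = 27$ as claimed.

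Alternatively, one could construct a great antipodal set explicitly within the framework of the present paper, by realizing $EIII$ as the polar $EIII_{+} \subset E_{6}^{\alpha,\beta}$ through $\phi(-1)$ under the involution $\sigma_{III}$, and then setting $A(EIII_{+}) := A_{1}(E_{8}) \cap EIII_{+}$. The main obstacle in this constructive approach would be identifying exactly which elements of $A_{1}(E_{8})$ lie on the orbit $EIII_{+}$, which requires an eigenvalue analysis of $\mathrm{Ad}(g)$ on $\frak{su}^{\alpha,\beta}(3)$ for $g \in A_{1}(E_{8})$, in the spirit of the analysis carried out for $EI_{x}$ in Subsection 2.6 of the present paper. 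Once the correct subset is isolated, maximality is inherited from Tanaka-Tasaki's theorem applied in the reverse direction.
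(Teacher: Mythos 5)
Your main route is essentially the paper's own: the theorem is quoted from Tanaka--Tasaki's classification of maximal antipodal sets of symmetric $R$-spaces, and since $EIII$ is Hermitian symmetric this applies directly. Your supplementary cardinality computation via Takeuchi's equality $\#_{2}M = \dim H_{*}(M;\mathbb{Z}_{2})$, the vanishing of odd-degree cohomology, and $\chi(EIII) = |W(E_{6})|/|W(D_{5})| = 51840/1920 = 27$ is correct and gives an independent check the paper does not spell out. However, your alternative constructive sketch contains a concrete error: you propose $A(EIII_{+}) := A_{1}(E_{8}) \cap EIII_{+}$, but this intersection equals $A(FII_{+}) = \{ \psi(\gamma_{0}^{0,0}, \gamma_{0}^{a,b}) \ ;\ (a,b) = (1,0), (0,1), (1,1) \}$ and has cardinality only $3$, hence is \emph{not} maximal in $EIII_{+}$ (this is exactly the content of the remark following Proposition \ref{EIII-1}). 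The correct explicit representative is $A(EIII_{+}) = A_{2}(E_{8}) \cap EIII_{+} = \{ \tau_{\delta}\tau_{\alpha} \ ;\ \delta \in \Sigma_{\alpha,0} \cap \Sigma_{\beta,1}^{+} \}$, which has cardinality $27$ and is a single orbit of the Weyl group $W(E_{6}^{\alpha,\beta})$. The distinction between which of $A_{1}(E_{8})$ and $A_{2}(E_{8})$ meets a given conjugation orbit in a maximal antipodal set is precisely the subtle point the paper tracks throughout Sections 3 and 4, so if you pursue the constructive route you must switch to $A_{2}(E_{8})$ here.
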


We describe maximal antipodal sets of $EIII$.
Since $\sigma_{III} = \theta_{\phi(-1)}$ is an involutive automorphism of type $EIII$,
\[
\bigcup_{g\in E_{6}^{\alpha, \beta}} g \phi(-1) g^{-1} = E_{6}^{\alpha, \beta}/F(\sigma_{III}, E_{6}^{\alpha, \beta}) \cong EIII.
\]
This orbit is a polar of the identity element of $E_{6}^{\alpha, \beta}$ and denoted by $EIII_{+}$.
The polars except for the trivial pole of the identity element are $EII_{+}$ and $EIII_{+}$ in $E_{6}^{\alpha, \beta}$.
Then, $FI_{+} \cap EIII_{+} = \phi$ and $FII_{+}$ is a totally geodesic submanifold of $EIII_{+}$.
Set $A(EIII_{+}) = A_{2}(E_{8}) \cap EIII_{+}$.
Then, 
\[
A(EIII_{+}) = \{ \tau_{\delta}\tau_{\alpha} \ ;\ \delta \in \Sigma_{\alpha, 0} \cap \Sigma_{\beta, 1}^{+} \}.
\]
The cardinlality of $A(EIII_{+})$ is $27$ and $A(EIII_{+})$ is a great antipodal set of $EIII_{+}$.
The Weyl group $W(E_{6}^{\alpha, \beta})$ acts on $\Sigma_{\alpha, 0} \cap \Sigma_{\beta, 1}$ and this action is transitive.
Hence, $W(E_{6}^{\alpha, \beta})$ acts on $A_{2}(EIII_{+})$ and this action is transitive.
Summarizing these arguments, we obtain Proposition \ref{EIII-1}.

\begin{prop} \label{EIII-1}
Set $A(EIII_{+}) = A_{2}(E_{8}) \cap EIII_{+}$.
Then, $A(EIII_{+})$ is a maximal antipodal set of $EIII_{+}$.

\end{prop}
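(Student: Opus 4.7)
The plan is to identify $A(EIII_{+})$ explicitly, show it is an antipodal set of cardinality $27$, and then invoke Theorem \ref{anti-EIII} (which asserts $\#_{2}EIII = 27$) to conclude it is a great antipodal set, hence in particular maximal.

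First, since $EIII_{+} \subset E_{6}^{\alpha,\beta}$, the intersection $A_{2}(E_{8}) \cap EIII_{+}$ is contained in $A_{2}(E_{6}^{\alpha,\beta}) = A_{2}(E_{8}) \cap E_{6}^{\alpha,\beta}$, which was computed in Subsection~\ref{s-E_{6}} to be
\[
A_{2}(E_{6}^{\alpha,\beta}) = \{e\} \sqcup \{\tau_{\delta} \ ;\ \delta \in (\Sigma_{\alpha,0} \cap \Sigma_{\beta,0})^{+}\} \sqcup \{\tau_{\delta}\tau_{\alpha} \ ;\ \delta \in \Sigma_{\alpha,0} \cap \Sigma_{\beta,1}^{+}\},
\]
with total cardinality $64$. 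The polars of the identity in $E_{6}^{\alpha,\beta}$ other than the trivial pole are exactly $EII_{+}$ and $EIII_{+}$, so every nontrivial involutive element of $A_{2}(E_{6}^{\alpha,\beta})$ lies in precisely one of these two polars.

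Next, I would use Proposition~\ref{EII-1}, which identifies $A_{2}(EII_{+}) = \{\tau_{\delta} \ ;\ \delta \in (\Sigma_{\alpha,0}\cap\Sigma_{\beta,0})^{+}\}$, of cardinality $36$. Combined with the trivial pole $\{e\}$, this accounts for $1 + 36 = 37$ of the $64$ elements of $A_{2}(E_{6}^{\alpha,\beta})$. The remaining $27$ elements, namely $\{\tau_{\delta}\tau_{\alpha} \ ;\ \delta \in \Sigma_{\alpha,0}\cap\Sigma_{\beta,1}^{+}\}$, therefore cannot lie in the identity pole or in $EII_{+}$, and so must lie in $EIII_{+}$. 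This identifies $A(EIII_{+})$ as that set, with $\#A(EIII_{+}) = 27$.

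Finally, since $EIII_{+}$ is a totally geodesic submanifold of $E_{8}$ and $A(EIII_{+}) \subset A_{2}(E_{8})$ is the intersection with an antipodal set of $E_{8}$, the set $A(EIII_{+})$ is automatically an antipodal set of $EIII_{+}$. By Theorem~\ref{anti-EIII} we have $\#_{2}EIII_{+} = 27 = \#A(EIII_{+})$, so $A(EIII_{+})$ is a great antipodal set, which is in particular maximal. The only substantive point in the argument is the polar-by-polar sorting of $A_{2}(E_{6}^{\alpha,\beta})$; the main obstacle is verifying that no element of the form $\tau_{\delta}\tau_{\alpha}$ with $\delta \in \Sigma_{\alpha,0}\cap\Sigma_{\beta,1}^{+}$ accidentally falls into $EII_{+}$ or the trivial pole, but this is forced by the cardinality count $1+36+27=64$ together with the fact that each involutive element lies in a unique polar.
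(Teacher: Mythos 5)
Your proof is correct and follows essentially the same route as the paper: identify $A_{2}(E_{8}) \cap EIII_{+}$ as the $27$ elements $\tau_{\delta}\tau_{\alpha}$ with $\delta \in \Sigma_{\alpha,0}\cap\Sigma_{\beta,1}$, and then invoke Theorem \ref{anti-EIII} ($\#_{2}EIII = 27$) to conclude the set is great and hence maximal. Your elimination argument (sorting $A_{2}(E_{6}^{\alpha,\beta})$ among the trivial pole, $EII_{+}$, and $EIII_{+}$ using Proposition \ref{EII-1} and the count $1+36+27=64$) is a clean way of justifying the explicit identification that the paper essentially asserts.
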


\begin{remark}
We consider $A_{1}(E_{8}) \cap EIII_{+}$.
We can easily check that
\[
A_{1}(E_{8}) \cap EIII_{+} = \{ \psi(\gamma_{0}^{0,0}, \gamma_{0}^{a,b}) \ ;\ (a,b) = (1,0), (0,1), (1,1) \}
\]
and $A_{1}(E_{8}) \cap EIII_{+} = A(FII_{+})$.
Since the cardinality is $3$, the antipodal set $A_{1}(E_{8}) \cap EIII_{+}$ is not maximal.

\end{remark}


\subsubsection{Maximal antipodal sets of $EIV$} \label{s-EIV}

The congruent classes of maximal antipodal sets of $EIV$ were classified by the author \cite{Sasaki-E}.

\begin{thm} \cite{Sasaki-E}
Any maximal antipodal sets of $EIV$ are congruent to each other.
The cardinality of any maximal antipodal set is $4$.

\end{thm}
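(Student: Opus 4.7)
The plan is to follow the pattern already established for $EI$, $EII$, $EIII$ in Subsections \ref{s-EI}--\ref{s-EIII}: realize $EIV$ as an explicit conjugate orbit in $E_{8}$ through a distinguished involutive element, exhibit a concrete antipodal set of cardinality $4$ inside that orbit using the master antipodal sets $A_{i}(E_{8})$, and then prove both maximality and uniqueness by pushing the argument through the totally geodesic inclusion $FII \subset EIV$.

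First I would set $\alpha = x_{7} - x_{8}$, $\beta = x_{6} - x_{7}$, and $z = \psi(\gamma_{0}^{0,0}, \gamma_{4}^{0,0})$, and consider the orbit
\[
EIV_{z} := \bigcup_{g \in E_{6}^{\alpha,\beta}} g z g^{-1} \cong E_{6}^{\alpha,\beta}/F_{4},
\]
which by the construction of $\sigma_{IV} = \theta_{z}$ realizes $EIV$ as a polar of the identity element inside $E_{6}^{\alpha,\beta}$. I would then compute $A(EIV_{z}) := A_{1}(E_{8}) \cap EIV_{z}$. Because $z = \psi(\gamma_{0}^{0,0}, \gamma_{4}^{0,0})$ acts on $\frak{su}^{\alpha,\beta}(3)$ in a specific way, only elements of $A_{1}(E_{8})$ of the shape $\psi(\gamma_{0}^{0,0}, \gamma_{4}^{a,b})$ have the correct restriction to $\frak{su}^{\alpha,\beta}(3)$; among those, checking which lie in the connected orbit $EIV_{z}$ (as opposed to a different orbit of the normalizer) will cut down to exactly $4$ elements. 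I would also verify that $A_{2}(E_{8}) \cap EIV_{z}$ is empty (or contained in $A(EIV_{z})$), so that no larger antipodal configuration is produced.

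Next I would verify maximality by direct checking that $A(EIV_{z})$ has the expected cardinality $4$, which matches $\#_{2}EIV$ recalled in the statement. The key geometric input is that $FII_{+}$ (a polar of type $FII$ in $F_{4}$) sits as a totally geodesic submanifold of $EIV_{z}$, and any antipodal set of $EIV_{z}$ restricts, after a suitable translation, to an antipodal set of $FII_{+}$ together with one extra point determined by the $F_{4}$-isotropy. Because $\#_{2}FII = 3$ and all maximal antipodal sets of $FII$ are congruent by Theorem \ref{FII}, the restriction is congruent to $A(FII_{+})$, and the ``extra'' point is forced into the unique $F_{4}$-orbit of poles opposite to $FII_{+}$ inside $EIV_{z}$. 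Combining these two rigidity statements yields that any maximal antipodal set of $EIV_{z}$ is congruent to $A(EIV_{z})$.

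The main obstacle will be the congruence step, i.e.\ proving that the ``extra'' element beyond the embedded $A(FII_{+})$ is truly unique up to the action of $F_{4}$ on $EIV$. To handle this I would exploit that $EIV$ has rank $2$ with Weyl group acting transitively on the relevant $2$-torsion of a maximal flat torus, so that two candidate completions of the $FII$-antipodal set to a size-$4$ antipodal set of $EIV$ lie in a single Weyl orbit. Combined with the transitivity of $F_{4}$ on $A(FII_{+})$-realizations, this gives the desired single congruence class and completes the proof.
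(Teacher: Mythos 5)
First, a point of context: the paper does not actually prove this theorem. It is quoted from \cite{Sasaki-E}, and what the present paper supplies in Subsection \ref{s-EIV} is only the weaker Proposition \ref{EIV-1}, namely that the explicit set $A(EIV_{z}) = A_{1}(E_{8}) \cap EIV_{z} = \{ \psi(\gamma_{0}^{0,0}, \gamma_{4}^{a,b}) \}$ of cardinality $4$ is \emph{one} maximal antipodal set, the full classification being taken as known. Your realization of $EIV$ as the orbit $EIV_{z}$ and your computation of $A_{1}(E_{8}) \cap EIV_{z}$ agree with that part of the paper, so the constructive half of your proposal is sound and matches the text.

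There are, however, two genuine gaps in the classification half. The first is a circularity: you justify maximality of your $4$-element set by saying its cardinality ``matches $\#_{2}EIV$ recalled in the statement,'' but $\#_{2}EIV = 4$ is part of the conclusion you are trying to prove, so it cannot be used as input. The second concerns the congruence step. The argument that works (and is the one implicit in the strategy of Subsection \ref{strategies}) is that $F(s_{o}, EIV) = \{o\} \sqcup FII_{+}$, i.e.\ the \emph{unique} non-trivial polar of $EIV$ is of type $FII$ and $EIV$ has no non-trivial pole; then any maximal antipodal set containing $o$ equals $\{o\} \cup A$ with $A$ a maximal antipodal set of $FII_{+}$, and Theorem \ref{FII} together with the transitivity of the $K_{o} \cong F_{4}$-action gives a single congruence class of cardinality $1 + 3 = 4$. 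You only assert that $FII_{+}$ is a totally geodesic submanifold of $EIV_{z}$, which is strictly weaker than identifying the full polar decomposition, and your phrase about the extra point being ``forced into the unique $F_{4}$-orbit of poles opposite to $FII_{+}$'' is confused: $EIV$ has no non-trivial pole, and the ``extra point'' is simply the base point $o$ itself. The concluding appeal to the rank-$2$ Weyl group acting on $2$-torsion of a maximal flat torus is not needed once the polar structure is in hand and, as stated, does not substitute for it.
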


We describe maximal antipodal sets of $EIV$.
Since $\sigma_{IV} = \theta_{z} \ (z = \psi(\gamma_{0}^{0,0}, \gamma_{4}^{0,0}))$ is an involutive automorphism of type $EIV$,
\[
\bigcup_{g \in E_{6}^{\alpha, \beta}}g x g^{-1} = E_{6}^{\alpha, \beta}/F(\sigma_{IV}, E_{6}^{\alpha, \beta}) \cong EIV.
\]
This orbit is denoted by $EIV_{z}$.
Set $A(EIV_{z}) = A_{1}(E_{8}) \cap EIV_{z}$.
Then, 
\[
A(EIV_{z}) = \{ \psi(\gamma_{0}^{0,0}, \gamma_{4}^{a,b}) \ ;\ a,b = 0,1 \}.
\]
The cardinality of $A(EIV_{z})$ is $4$ and $A(EIV_{z})$ is a maximal antipodal set of $EIV_{z}$.
Note that $A(EIV_{z})$ is a maximal antipodal set of a maximal flat torus.
Summarizing these arguments we obtain Proposition \ref{EIV-1}.

\begin{prop} \label{EIV-1}
Set $A(EIV_{z}) = A_{1}(E_{8}) \cap EIV_{z}$.
Then, $A(EIV_{z})$ is a maximal antipodal set of $EIV_{z}$.

\end{prop}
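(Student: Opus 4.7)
The plan is to identify $A(EIV_{z})$ explicitly as a $4$-element set, verify it is antipodal, and then invoke the classification $\#_{2}EIV = 4$ from the previously stated theorem to conclude maximality. The argument parallels the identification of $A_{1}(EI_{x})$ given in Subsection~\ref{s-EI}, but with a different output since $z = \psi(\gamma_{0}^{0,0}, \gamma_{4}^{0,0})$ has a trivial first factor, unlike $x$.

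First, I would use the fact that $EIV_{z}$ is the $E_{6}^{\alpha,\beta}$-conjugacy orbit through $z$. Since every $h \in E_{6}^{\alpha,\beta}$ centralizes $\mathfrak{su}^{\alpha,\beta}(3)$, any $g = hzh^{-1} \in EIV_{z}$ satisfies $\mathrm{Ad}(g)|_{\mathfrak{su}^{\alpha,\beta}(3)} = \mathrm{Ad}(z)|_{\mathfrak{su}^{\alpha,\beta}(3)}$. Because $\alpha = x_{7} - x_{8}$ and $\beta = x_{6} - x_{7}$ involve only the directions of $\mathfrak{t}_{8}^{1}$, the subalgebra $\mathfrak{su}^{\alpha,\beta}(3)$ lies in $\mathfrak{spin}^{1}(8)$. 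Hence for $g = \psi(\gamma_{i}^{0,0}, \gamma_{j}^{a,b}) \in A_{1}(E_{8})$, the first factor commutes with $\mathfrak{spin}^{1}(8)$, and the condition reduces to $\mathrm{Ad}(\gamma_{j}^{a,b})|_{\mathfrak{su}^{\alpha,\beta}(3)} = \mathrm{Ad}(\gamma_{4}^{0,0})|_{\mathfrak{su}^{\alpha,\beta}(3)}$ inside $Spin^{1}(8)$. A direct check of the action on the root spaces for $\alpha, \beta, \alpha - \beta$ forces $j = 4$.

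Second, I would refine further by examining the action of $g$ on $F^{-}(\sigma_{IV}, \mathfrak{e}_{6}^{\alpha,\beta})$, or equivalently on the $V$-component $\mathbb{O} \otimes^{+} \mathbb{C} + \mathbb{O} \otimes^{-} \mathbb{C} \subset \mathfrak{e}_{6}^{\alpha,\beta}$ (with $\mathbb{C} = \mathrm{span}_{\mathbb{R}}\{e_{8}, e_{9}\}$). Under the tensor decomposition $\Delta = (\Delta_{8}^{+}) \otimes (\Delta_{8}^{+}) + (\Delta_{8}^{-}) \otimes (\Delta_{8}^{-})$, the first factor $\gamma_{i}^{0,0}$ acts on the first tensor slot; since $z$ has trivial first factor, comparing eigenvalue patterns on the eight weight spaces inside $\mathbb{O} \otimes^{\pm} \mathbb{C}$ forces $i = 0$. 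This yields $A_{1}(E_{8}) \cap EIV_{z} \subset \{ \psi(\gamma_{0}^{0,0}, \gamma_{4}^{a,b}) \ ;\ a,b = 0,1 \}$, a set of cardinality $4$.

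For the reverse inclusion, I would observe that the four candidates differ from $z$ by the elements $\psi(\gamma_{0}^{0,0}, \gamma_{0}^{a,b})$, which lie in the fixed subgroup $F(\sigma_{IV}, E_{6}^{\alpha,\beta}) \cong F_{4}$; hence they commute with $z$ and translating $z$ by them produces elements of $EIV_{z}$. Finally, $A(EIV_{z}) \subset A_{1}(E_{8})$ is antipodal in $E_{8}$, and since $EIV_{z}$ is a totally geodesic submanifold, it is antipodal in $EIV_{z}$. As $\#A(EIV_{z}) = 4 = \#_{2}EIV$ by the cited theorem, $A(EIV_{z})$ must be a maximal antipodal set. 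The main obstacle is the second step: the careful weight-by-weight check ruling out nontrivial first factors $\gamma_{i}^{0,0}$ ($i \neq 0$), since having the correct action only on $\mathfrak{su}^{\alpha,\beta}(3)$ is not enough to guarantee membership in the orbit $EIV_{z}$.
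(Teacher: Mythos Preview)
Your overall strategy---identify $A(EIV_z)$ explicitly as a four-element set, verify it is antipodal, and invoke $\#_2 EIV = 4$---matches the paper's approach exactly. Your forward inclusion (steps 1 and 2) is sound in spirit, though a cleaner route for step 2 is simply to note that $EIV_z \subset EIX_+$, so that $A_1(E_8) \cap EIV_z \subset A_1(EIX_+)$, and the explicit description of $A_1(EIX_+)$ in Subsection~\ref{s-EVIIIEIX} already forces $i = 0$ once $j \neq 0$.

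The reverse inclusion argument, however, has a genuine gap. You claim that since $h = \psi(\gamma_0^{0,0}, \gamma_0^{a,b}) \in F_4 = F(\sigma_{IV}, E_6^{\alpha,\beta})$ commutes with $z$, left-translating $z$ by $h$ lands in $EIV_z$. But $h$ commuting with $z$ means precisely that $h$ \emph{fixes} $z$ under conjugation ($hzh^{-1} = z$); it gives no reason why the product $hz$ should lie in the conjugacy orbit $\{gzg^{-1} : g \in E_6^{\alpha,\beta}\}$. What is needed instead is an explicit $g \in E_6^{\alpha,\beta}$ with $gzg^{-1} = hz$. For instance, $g = e_0 e_9 \in Spin^{\alpha,\beta}(10) \subset E_6^{\alpha,\beta}$ satisfies $gzg^{-1} = -z = \psi(\gamma_0^{0,0}, \gamma_4^{1,0})$, and similar conjugators handle the remaining two cases. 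The paper carries this out (in Section~4), finding the conjugators inside $F(\sigma_{III}, E_6) \cong (Spin(10) \times U(1))/\mathbb{Z}_4$.
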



\subsubsection{Maximal antipodal sets of $EVII$}

It is well known that $EVII$ is a Hermitian symmetric space.
Therefore, $EVII$ is a symmetric $R$ space and the congruent classes of maximal antipodal sets of $EVII$ were classefied by Tanaka-Tasaki \cite{Tanaka-Tasaki}.

\begin{thm}\cite{Tanaka-Tasaki} \label{anti-EVII}
Any maximal antipodal sets of $EVII$ are congruent to each other.
The cardinality of any maximal antipodal set is $56$.

\end{thm}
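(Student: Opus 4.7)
The plan is to obtain the statement as a direct corollary of the general Tanaka--Tasaki classification for symmetric $R$-spaces \cite{Tanaka-Tasaki}, following exactly the template already used for $FII$ in Theorem \ref{FII} and for $EIII$ in Theorem \ref{anti-EIII}. First I would note that $EVII = (E_7/(U(1) \times E_6))/\mathbb{Z}_3$ is a Hermitian symmetric space of compact type, and that every irreducible compact Hermitian symmetric space is a symmetric $R$-space via its Borel embedding as a generalized flag variety. This identification feeds directly into the Tanaka--Tasaki theorem, which simultaneously produces the congruence-uniqueness of maximal antipodal sets and identifies each such set with a single orbit of the little Weyl group acting on the weights of the isotropy representation.

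The next step is to pin down the cardinality $56$. By Takeuchi's equality for symmetric $R$-spaces, $\#_2 EVII = \dim H_\ast(EVII;\mathbb{Z}_2)$, and the Betti sum of $EVII$ is classically known to be $56$. Combinatorially, this reflects the fact that if $\alpha = x_7 - x_8$ then the Weyl group $W(\Sigma_{\alpha,0})$ of type $E_7$ acts transitively on $\Sigma_{\alpha,-1}\subset \Sigma_{E_8}$ (as recalled at the end of Subsection \ref{Rs}), and this orbit has exactly $56$ elements; these $56$ elements biject with the weights of the $27$-dimensional complex representation of $E_6$ together with its complex conjugate, plus the $U(1)$-weight, accounting for the dimension of the tangent space of $EVII$ at the base point and matching $\#_2 EVII = 56$.

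To make the description parallel to Propositions \ref{EIII-1} and \ref{EIV-1}, I would then realize $EVII$ explicitly as the polar $EVII_+ \subset E_7^{\alpha}$ of the identity corresponding to the involutive automorphism $\sigma_{VII}^{\beta} = \theta_{\tau_{\beta}}$ of Subsubsection on type $EVII$ (with $\beta = x_6 - x_7$), and then define $A(EVII_+) := A_2(E_8) \cap EVII_+$. One verifies by eigenvalue computation on $\frak{su}^{\alpha}(2)$ (as in the treatment of $EI_{x}$ in Subsection \ref{s-EI}) that the intersection consists exactly of the elements $\tau_{\delta}\tau_{\gamma}$ indexed by a Weyl-orbit of size $56$ inside the $A_2(E_8)$-torus, giving a great antipodal set.

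The main technical obstacle is the bookkeeping in this last identification: one must check that no spurious elements from neighbouring polars of $E_7^{\alpha}$ (in particular from the $EV$ or $EVI$ orbits) are mistakenly counted, and that the $56$ elements really do lie in $EVII_+$ rather than in a different conjugacy class of involutions. Once this is in place, the Tanaka--Tasaki theorem closes the argument, since the $W(\Sigma_{\alpha,0})$-action on $\Sigma_{\alpha,-1}$ is transitive, the produced antipodal set is a Weyl orbit of full size $56 = \#_2 EVII$, hence necessarily maximal, and all maximal antipodal sets of a symmetric $R$-space are congruent.
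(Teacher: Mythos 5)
Your proposal follows essentially the same route as the paper: the statement is obtained by observing that $EVII$ is a Hermitian symmetric space, hence a symmetric $R$-space, so the Tanaka--Tasaki classification applies, and the cardinality $56$ is then read off from the transitive $W(\Sigma_{\alpha,0})$-orbit $\{\tau_{\gamma}\ ;\ \gamma \in \Sigma_{\alpha,-1}\}$ realized inside $A_{2}(E_{8}) \cap EVII_{\beta}$, exactly as in the paper's Proposition \ref{EVII-1}. (Only a cosmetic remark: your weight count ``$27 + \overline{27}$ plus the $U(1)$-weight'' should be $27 + \overline{27} + 1 + 1 = 56$, the weights of the $56$-dimensional representation of $E_{7}$, not the $54$-dimensional tangent space; this does not affect the argument.)
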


We describe maximal antipodal sets of $EVII$.
Consider $E_{7}^{\alpha}$.
Since $\sigma_{VII}^{\beta}$ is an involutive automorphism of type $EVII$,
\[
\bigcup_{g \in E_{7}^{\alpha}} g \tau_{\beta} g^{-1} = E_{7}^{\alpha}/F(\sigma_{VII}^{\beta}, E_{7}^{\alpha}) \cong EVII.
\]
This orbit is denoted by $EVII_{\beta}$.
Set $A(EVII_{\beta}) = A_{2}(E_{8}) \cap EVII_{\beta}$.
The Weyl group $W(E_{7}^{\alpha})$ of the root system $\Sigma(\frak{e}_{7}^{\alpha}, \frak{t}^{\alpha})$ acts on $\Sigma_{\alpha, 1}$ and this action is transitive.
Therefore, $\{ \tau_{\gamma} \ ;\ \gamma \in \Sigma_{\alpha, -1} \} \subset A(EVII_{\beta})$.
Since the cardinality of the left hand side is $56$, we obtain 
\[
A(EVII_{\beta}) = \{ \tau_{\gamma} \ ;\ \gamma \in \Sigma_{\alpha, -1} \}.
\]
Hence, $A(EVII_{\beta})$ is a great antipodal set of $EVII_{\beta}$.
Note that $W(E_{7}^{\alpha})$ acts on $A(EVII_{\beta})$ and this action is transitive.
Summarizing these argument, we obtain Proposition \ref{EVII-1}.

\begin{prop}\label{EVII-1}
Set $A(EVII_{\beta}) = A_{2}(E_{8}) \cap EVII_{\beta}$.
Then, $A(EVII_{\beta})$ is a maximal antipodal set.

\end{prop}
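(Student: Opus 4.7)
My plan is to prove the proposition by exhibiting $56$ explicit elements of $A_2(E_8) \cap EVII_\beta$ and then invoking the classification of Tanaka--Tasaki (Theorem \ref{anti-EVII}) to conclude that the intersection is already a great antipodal set and therefore maximal.

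First I would argue that $\{\tau_\gamma \;;\; \gamma \in \Sigma_{\alpha,-1}\} \subset EVII_\beta$. By construction $\tau_\beta \in EVII_\beta$, and the orbit $EVII_\beta$ is stable under conjugation by $E_7^\alpha$. As noted in Subsection \ref{Rs}, the Weyl group $W(E_7^\alpha) = W(\Sigma_{\alpha,0})$ acts transitively on $\Sigma_{\alpha,-1}$, so for every $\gamma \in \Sigma_{\alpha,-1}$ there exists $w \in W(E_7^\alpha)$ with $w(\beta) = \gamma$. Lifting $w$ to an element $g \in N(T^\alpha, E_7^\alpha)$ and using $g\tau_\beta g^{-1} = \tau_{w(\beta)} = \tau_\gamma$, we obtain $\tau_\gamma \in EVII_\beta$. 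Since each $\tau_\gamma$ lies in $T \subset A(T) \subset A_2(E_8)$, this produces the desired $56$ elements inside $A_2(E_8) \cap EVII_\beta$.

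For the upper bound I would use the general principle (to be recorded in Subsection 3.1) that if $A$ is an antipodal set of a compact symmetric space $M$ and $N \subset M$ is a totally geodesic submanifold which is itself a compact symmetric space, then $A \cap N$ is an antipodal set of $N$. The polar $EVII_\beta$ is totally geodesic in $E_7^\alpha$ (hence carries a symmetric structure), and $A_2(E_8) \cap E_7^\alpha$ is an antipodal set of $E_7^\alpha$; therefore $A_2(E_8) \cap EVII_\beta$ is an antipodal set of $EVII_\beta$. By Theorem \ref{anti-EVII} (Tanaka--Tasaki), $\#_2 EVII = 56$, so $|A_2(E_8) \cap EVII_\beta| \leq 56$. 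Combined with the lower bound from the previous paragraph, equality holds and
\[
A(EVII_\beta) = A_2(E_8) \cap EVII_\beta = \{\tau_\gamma \;;\; \gamma \in \Sigma_{\alpha,-1}\}
\]
is a great antipodal set, hence in particular maximal.

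The only delicate point is the transitivity of $W(\Sigma_{\alpha,0})$ on $\Sigma_{\alpha,-1}$, which was recorded at the end of Subsection \ref{Rs}; beyond that, the argument is a matching of cardinalities. I do not anticipate any serious obstacle, since the hard content (the $2$-number of $EVII$) is already supplied by Tanaka--Tasaki and the transitivity of Weyl group actions on the relevant root subsets was established in the preliminaries.
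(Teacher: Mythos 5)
Your proposal is correct and follows essentially the same route as the paper: Weyl-group transitivity on $\Sigma_{\alpha,-1}$ gives the $56$ elements $\{\tau_{\gamma}\}\subset A_{2}(E_{8})\cap EVII_{\beta}$, and Theorem \ref{anti-EVII} caps the cardinality at $56$, forcing equality and greatness. (Only a cosmetic slip: you wrote $T\subset A(T)$ where you mean that each involutive $\tau_{\gamma}\in T$ lies in $A(T)\subset A_{2}(E_{8})$.)
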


\begin{remark}
We see 
\[
A_{1}(E_{8}) \cap EVII_{\beta} = \{ \psi(\gamma_{0}^{0,0}, \gamma_{2}^{a,b}), \psi(\gamma_{0}^{0,0}, \gamma_{3}^{a,b}) \ ;\ a,b = 0,1 \}.
\]
However, $A_{1}(E_{8}) \cap EVII_{\beta} = \{ \tau_{x_{i} \pm x_{7}}, \tau_{x_{i} \pm x_{8}} \ ;\ i = 4,5 \}$.
Therefore, $A_{1}(E_{8}) \cap EVII_{\beta} \subset A(EVII_{\beta})$ and $A_{1}(E_{8}) \cap EVII_{\beta}$ is not a maximal antipodal set of $EVII_{\beta}$.

\end{remark}


\subsection{Maximal antipodal sets of some classical symmetric space}

In this subsection, we recall some results about maximal antipodal sets of some classical symmetric spaces that we will use later.
Set $v_{0}, \cdots, v_{n-1}$ as an oriented orthonormal basis of $\mathbb{R}^{n}$.
For any subspace $V \subset \mathbb{R}^{n}$, we denote by $\pm V$ the oriented subspace $V$ with the positive (resp. negative) orientation with respect to $v_{0}, \cdots, v_{n-1}$.
The set of all subsets of $\{ 0, 1, \cdots n-1 \}$ whose caridinalities are $k$ is denoted by $P_{k}(n)$.
Set $A_{1}, A_{2} \subset P_{4}(12)$ as follows:
\[
\begin{split}
A_{1} &=
\small 
\left\{
\begin{array}{ccccccccccccccc}
\{0,1,2,3\}, & \{0,1,4,5\}, & \{0,1,6,7\}, & \{0,2,4,6\}, & \{0,2,5,7\}, & \{0,3,4,7\}, & \{0,3,5,6\}, \\
\{4,5,6,7\}, & \{2,3,6,7\}, & \{2,3,4,5\}, & \{1,3,5,7\}, & \{1,3,4,6\}, & \{1,2,5,6\} & \{1,2,4,7\}, & \{8,9,10,11\} \\
\end{array}
\right\}, \\
\normalsize
A_{2} &= \Big\{ \{ 2i, 2i+1, 2j, 2j+1 \} \ ;\ 0 \leq i < j \leq 5 \Big\}.
\end{split}
\]
Set subsets $A_{i}(\tilde{G}_{4}(\mathbb{R}^{12})) \ (i = 1,2)$ of $\tilde{G}_{4}(\mathbb{R}^{12})$ as follows:
\[
A(\tilde{G}_{4}(\mathbb{R}^{12})) = \Big\{ \pm( \mathbb{R}v_{a} + \mathbb{R}v_{b} + \mathbb{R}v_{c} + \mathbb{R}v_{d}) \ ;\ \{ a,b,c,d \} \in A_{i} \Big\} \ (i = 1,2).
\]
Note that $\#V_{1} = \#V_{2} = 30$ and $A_{1}(\tilde{G}_{4}(\mathbb{R}^{12})$ is not congruent to $A_{2}(\tilde{G}_{4}(\mathbb{R}^{12})$.

\begin{thm}\cite{Tasaki}\label{Ori-Grass}
The subsets $A_{i}(\tilde{G}_{4}(\mathbb{R}^{12})) \ (i = 1,2)$ are great antipodal sets of $\tilde{G}_{4}(\mathbb{R}^{12})$. 
Moreover, any maximal antipodal set of $\tilde{G}_{4}(\mathbb{R}^{12})$ is congruent to one of them.

\end{thm}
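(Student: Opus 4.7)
The plan is to prove both the antipodality of $A_1(\tilde{G}_4(\mathbb{R}^{12}))$ and $A_2(\tilde{G}_4(\mathbb{R}^{12}))$ and the classification of maximal antipodal sets by a common reduction to a combinatorial problem on $4$-subsets of $\{0,1,\ldots,11\}$. First I would verify that both $A_i$ are antipodal sets. The symmetry $s_V$ on $\tilde{G}_4(\mathbb{R}^{12})$ at an oriented $4$-plane $V$ is the reflection that is $+1$ on $V$ and $-1$ on $V^{\perp}$; for another plane $W$, the condition $s_V(W)=W$ as oriented planes is equivalent to the orthogonal decomposition $W=(W\cap V)\oplus(W\cap V^{\perp})$ together with the parity requirement that $\dim(W\cap V^{\perp})$ be even. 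For coordinate planes $V_S=\mathrm{span}\{v_i:i\in S\}$ and $V_T=\mathrm{span}\{v_i:i\in T\}$ with $|S|=|T|=4$, both conditions reduce to $|S\cap T|$ being even. In $A_1$, the $14$ subsets inside $\{0,\ldots,7\}$ are exactly those underlying the great antipodal set $A(\tilde{G}_4(\mathbb{R}^8)_+)$ of the first eight coordinates, and the exceptional subset $\{8,9,10,11\}$ is disjoint from all of them; in $A_2$, two sets $\{2i,2i+1,2j,2j+1\}$ and $\{2k,2k+1,2l,2l+1\}$ always meet in $0$, $2$, or $4$ coordinates. Since both orientations $\pm V_S$ are included, each collection has cardinality $30$.

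Next I would match this cardinality to $\#_2\tilde{G}_4(\mathbb{R}^{12})=30$, already computed by Chen--Nagano, so that both $A_1$ and $A_2$ are great antipodal sets. To see that $A_1$ and $A_2$ are not congruent, one compares combinatorial invariants: in $A_2$, the $15$ unoriented planes form a vertex-transitive disjointness graph (every block has the same number of disjoint partners, given by the five other pairs of the fixed perfect matching), whereas in $A_1$ the distinguished block $\{8,9,10,11\}$ is disjoint from all fourteen others while the remaining blocks have strictly fewer disjoint partners, so the disjointness graph is not vertex-transitive. No isometry of $\tilde{G}_4(\mathbb{R}^{12})$ can then carry $A_1$ to $A_2$.

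For the classification, the crucial reduction is that, for any antipodal set $A\subset\tilde{G}_4(\mathbb{R}^{12})$, the orthogonal projections $\{P_V\ ;\ V\in A\}$ pairwise commute, because $P_VP_W=P_WP_V$ is equivalent to $W=(W\cap V)\oplus(W\cap V^{\perp})$, which is precisely the antipodality condition. Simultaneous diagonalization of this commuting family of self-adjoint projections produces an orthonormal basis $v_0,\ldots,v_{11}$ with respect to which every element of $A$ is a coordinate plane $V_S$. Since both orientations of a coordinate plane can always be added to a maximal antipodal set without violating the conditions, the problem reduces to classifying, up to the action of the signed permutation group on $v_0,\ldots,v_{11}$, the maximal families $\mathcal{F}\subset P_4(\{0,\ldots,11\})$ with the even-intersection property $|S\cap T|\in\{0,2,4\}$ for all $S,T\in\mathcal{F}$.

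The main obstacle is precisely this combinatorial classification. I would view the $\mathbb{F}_2$-span of the indicator vectors $\{\chi_S\ ;\ S\in\mathcal{F}\}$ as a self-orthogonal binary code $C\subset\mathbb{F}_2^{12}$ of length $12$, and split the argument according to the structure of $C$ on a distinguished first block, say $S_0=\{0,1,2,3\}$. Any $T$ compatible with $S_0$ has $|T\cap S_0|\in\{0,2,4\}$; running through the possibilities and invoking maximality shows that either there exists a block $\{8,9,10,11\}\in\mathcal{F}$ disjoint from a subfamily of $14$ blocks filling out $A(\tilde{G}_4(\mathbb{R}^8)_+)$ on $\{0,\ldots,7\}$, or every coordinate is paired with a unique partner so that $\mathcal{F}$ arises from a perfect matching of $\{0,\ldots,11\}$ as in $A_2$. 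A careful case analysis of how a third block interacts with $S_0$ and one further block, together with the fact that the $G_2$-configuration on eight points is itself rigid by the classification in $\tilde{G}_4(\mathbb{R}^8)$, suffices to rule out intermediate possibilities and leaves exactly the two configurations $A_1$ and $A_2$ up to congruence.
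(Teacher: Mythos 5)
First, note that the paper does not prove this statement at all: Theorem \ref{Ori-Grass} is quoted from Tasaki's paper on antipodal sets in oriented real Grassmann manifolds, so there is no internal proof to compare against. Your general framework does coincide with the known approach: the characterization $s_{V}(W)=W \Leftrightarrow W=(W\cap V)\oplus(W\cap V^{\perp})$ with $\dim(W\cap V^{\perp})$ even, the commuting-projection argument reducing any antipodal set to coordinate $4$-planes for a common orthonormal basis, the observation that a maximal antipodal set contains both orientations of each plane, and the reduction to families in $P_{4}(12)$ with pairwise even intersections are all correct, as is the non-congruence argument via the disjointness relation (in $A_{2}$ every block has $6$ disjoint partners, not $5$ as your count of ``pairs'' suggests, while in $A_{1}$ the block $\{8,9,10,11\}$ has $14$ and the others have $2$; the invariant still separates the two sets).

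The genuine gap is that the entire content of the theorem --- the classification of maximal even-intersecting families of $4$-subsets of a $12$-set --- is asserted rather than proved. Your step ``running through the possibilities and invoking maximality shows that either \dots or \dots'' and the concluding ``careful case analysis \dots suffices to rule out intermediate possibilities'' is exactly the hard combinatorial theorem; nothing in the sketch rules out, for example, a maximal family of cardinality strictly less than $15$ that extends to neither configuration (the theorem implicitly asserts that every maximal antipodal set is great, which your argument never addresses), nor a third $15$-element configuration. Moreover, you feed in $\#_{2}\tilde{G}_{4}(\mathbb{R}^{12})=30$ as ``already computed by Chen--Nagano,'' but the oriented real Grassmannians are precisely among the cases Chen--Nagano left open; this value is due to Tasaki and is a consequence of the classification you are trying to prove, so it cannot be used as an input without circularity. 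As written, the proposal is a correct reduction to a combinatorial problem plus an unexecuted case analysis where the proof of that problem should be.
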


Next, we recall the maximal antipodal sets of $SU(8)/\mathbb{Z}_{2} = SU(8)/\{ \pm I_{8} \}$.
Let $\pi_{SU(8)} : SU(8) \rightarrow SU(8)/\mathbb{Z}_{2}$ be the natural projection.
It is well known that a polar of the identity element of $SU(8)$ is a pole or isomorphic to one of $G_{2}(\mathbb{C}^{8}), G_{4}(\mathbb{C}^{8}), G_{6}(\mathbb{C}^{8})$.
Hence, the polar of the identity element isomorphic to $G_{4}(\mathbb{C}^{8})$ is denoted by $G_{4}(\mathbb{C}^{8})_{+}$.
Then,
\[
G_{4}(\mathbb{C}^{8})_{+} = \bigcup_{g \in SU(8)}g \mathrm{d}(1,1,1,1,-1,-1,-1,-1) g^{-1}.
\]
Set $G_{4}^{*}(\mathbb{C}^{8}) = \pi_{SU(8)}(G_{4}(\mathbb{C}^{8})_{+})$.
Then, $G_{4}^{*}(\mathbb{C}^{8})$ is a polar of the identity element in $SU(8)/\mathbb{Z}_{2}$.
Set
\[
D_{4} = \left\{ 
\pm 1_{2} = \pm \begin{pmatrix} 1 & 0 \\ 0 & 1 \\ \end{pmatrix}, \ 
\pm I_{2} = \pm \begin{pmatrix} -1 & 0 \\ 0 & 1 \\ \end{pmatrix}, \ 
\pm J_{2} = \pm \begin{pmatrix} 0 & -1 \\ 1 & 0 \\ \end{pmatrix}, \ 
\pm K_{2} = \pm \begin{pmatrix} 0 & 1 \\ 1 & 0 \end{pmatrix}
\right\}.
\]
The set of all diagonal matrices of $O(n)$ is denoted by $\Delta(n)$ and set $\Delta^{\pm}(n) = \{ A \in \Delta(n) \ ; \ \mathrm{det}A = \pm 1 \}$.
Define subgroups of $U(8)$ and $SU(8)$ as follows:
\[
\begin{split}
& D_{0} = \{ 1, i \}\Delta(8), \quad
D_{1} = \{ 1, i \}D(4) \otimes \Delta(4), \quad
D_{2} = \{ 1, i \}D(4) \otimes D(4) \otimes D(4), \\
& D_{i}^{+} = \{ A \in D_{i} \ ;\ \det A = 1 \} \ (i = 0,1,2).
\end{split}
\]
Set $A_{i}(SU(8)/\mathbb{Z}_{2}) = \pi_{SU(8)}( D_{i}^{+} )\ (i = 1,2,3)$.

\begin{thm} \cite{Tanaka-Tasaki2}\label{Bottom-U(8)}
The subgroups $A_{i}(SU(8)/\mathbb{Z}_{2}) \ (i = 1,2,3)$ are maximal antipodal subrgoup of $SU(8)/\mathbb{Z}_{2}$ and they are not congruent to each other.
Moreover, any maximal antipodal subgroup is conjugate to one of them.

\end{thm}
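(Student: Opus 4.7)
The plan is to treat maximal antipodal subgroups of $SU(8)/\mathbb{Z}_{2}$ as maximal elementary abelian $2$-subgroups (via Chen-Nagano), and to classify them by lifting to $SU(8)$ and analyzing the resulting central extensions of elementary abelian $2$-groups by $\{\pm I_{8}\}$. First, I would verify that each $A_{i}(SU(8)/\mathbb{Z}_{2}) = \pi_{SU(8)}(D_{i}^{+})$ is indeed antipodal: elements of $D_{i}^{+}$ have entries in $\{1,-1,i,-i\}$ and, by the diagonal or tensor-product structure of $D_{i}$, square to elements of $\{\pm I_{8}\}$ and pairwise commute modulo $\{\pm I_{8}\}$. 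The cardinalities $|D_{i}^{+}|/|\{\pm I_{8}\} \cap D_{i}^{+}|$ already distinguish the three models numerically.

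For the classification, let $A$ be a maximal antipodal subgroup of $SU(8)/\mathbb{Z}_{2}$ containing the identity, and set $\tilde{A} := \pi_{SU(8)}^{-1}(A) \subset SU(8)$. Then $\tilde{A}$ fits in a central extension
\[
1 \longrightarrow \{\pm I_{8}\} \longrightarrow \tilde{A} \longrightarrow A \longrightarrow 1,
\]
where every $\tilde{g} \in \tilde{A}$ satisfies $\tilde{g}^{2} \in \{\pm I_{8}\}$ and $[\tilde{g}, \tilde{h}] \in \{\pm I_{8}\}$ for all $\tilde{g}, \tilde{h} \in \tilde{A}$. The commutator therefore yields a well-defined alternating $\mathbb{F}_{2}$-bilinear form $\omega : A \times A \to \{\pm 1\}$ on the $\mathbb{F}_{2}$-vector space $A$, and $\tilde{A}$ is a $2$-step nilpotent group with center containing $\{\pm I_{8}\}$.

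The key step applies Heisenberg-type representation theory: any $\tilde{A}$-irreducible subspace of $\mathbb{C}^{8}$ on which $-I_{8}$ acts by $-1$ has complex dimension $2^{r}$, where $2r$ is the rank of $\omega$ modulo its radical $\mathrm{rad}(\omega)$, and such an irreducible is determined up to isomorphism by the central character. Since $\tilde{A}$ acts on $\mathbb{C}^{8}$, the summands $2^{r}$ must sum to $8$; maximality of $A$ then forces $\mathbb{C}^{8}$ to be $\tilde{A}$-isotypic of a single type (otherwise the centralizer of $\tilde{A}$ in $SU(8)$ contains nontrivial unitary factors that provide extra commuting involutions in $SU(8)/\mathbb{Z}_{2}$, enlarging $A$). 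This yields exactly three cases: $r = 0$, giving diagonalizable $\tilde{A}$ conjugate to $D_{0}^{+}$; $r = 1$, giving four copies of a $2$-dimensional irreducible of a rank-one extraspecial quotient of $\tilde{A}$, conjugate to $D_{1}^{+}$; and $r = 3$, giving a single $8$-dimensional irreducible of an extraspecial-type group, conjugate to $D_{2}^{+}$. Within each case, once the isotypic type is fixed, two lifts are conjugate by $SU(8)$ since the normalizer of a standard model acts transitively on the realizations of the same extension inside $SU(8)$.

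Finally, non-congruence among the three families follows because the rank of $\omega$ and the isomorphism type of the $\tilde{A}$-decomposition of $\mathbb{C}^{8}$ are conjugation invariants. The main obstacle will be the exclusion step in the classification: one must verify carefully that mixed decompositions of $\mathbb{C}^{8}$ and the intermediate case $r = 2$ (two copies of a $4$-dimensional irreducible, corresponding to a $U(2)$-centralizer) are never maximal, by explicitly producing strictly larger antipodal subgroups inside the centralizer of $\tilde{A}$ in $SU(8)$ and descending them to the quotient. This uses the detailed structure of the normalizer and is where the delicate combinatorics of the argument lies.
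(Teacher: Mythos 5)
The paper does not actually prove this statement; Theorem \ref{Bottom-U(8)} is imported verbatim from \cite{Tanaka-Tasaki2}, so there is no internal proof to compare against. Judged on its own terms, your strategy --- lift $A$ to $\tilde{A}=\pi_{SU(8)}^{-1}(A)$, observe that $\tilde{A}$ is a $2$-group with squares and commutators in $\{\pm I_{8}\}$, and classify via the alternating form $\omega$ and the representation theory of (almost) extraspecial $2$-groups --- is a legitimate route and is in the same spirit as the matrix canonical-form analysis of Tanaka--Tasaki, just packaged more representation-theoretically.

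However, there is a genuine error at the pivotal step: the claim that maximality forces $\mathbb{C}^{8}$ to be $\tilde{A}$-isotypic of a single type is false, and it is contradicted by two of the three models you are trying to recover. For $D_{0}^{+}$ the module splits into eight pairwise distinct characters, and for $D_{1}^{+}=\{1,i\}D(4)\otimes\Delta(4)$ it splits into four pairwise non-isomorphic $2$-dimensional irreducibles distinguished by the character of $\Delta(4)$; only $D_{2}^{+}$ acts isotypically (irreducibly). If you insist on isotypicity you would conclude that only the $r=3$ model is maximal. What maximality actually forces is (i) each irreducible constituent occurs with multiplicity one, and (ii) the radical of $\omega$ is as large as possible, i.e.\ $\tilde{A}$ contains every element of $SU(8)$ acting by a sign on each isotypic component --- and your parenthetical mechanism (extra commuting involutions coming from a centralizer with several unitary factors) is exactly the right tool to prove (i) and (ii), so the repair is local but necessary. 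Two further points need care: the enlargement that kills a candidate need not lie in the centralizer of $\tilde{A}$ in $SU(8)$, only in the group of elements commuting with $\tilde{A}$ modulo $\pm I_{8}$ (e.g.\ the cleanest exclusion of the $r=2$ case is the containment $\{1,i\}D(4)\otimes D(4)\otimes\Delta(2)\subset\{1,i\}D(4)^{\otimes 3}=D_{2}$ after conjugation into standard form, and $1\otimes 1\otimes J_{2}$ does not centralize $1\otimes 1\otimes I_{2}$); and the conjugacy claim within each case should be reduced explicitly to equivalence of the $8$-dimensional representations, keeping track of the quadratic form $q(\tilde{g})=\tilde{g}^{2}$ whose type ambiguity is absorbed by the $\{1,i\}$ factor.
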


Set $A_{i}(G_{4}^{*}(\mathbb{C}^{8})) \ (i = 1,2,3)$ as follows;
\[
\begin{split}
A_{i}(G_{4}^{*}(\mathbb{C}^{8})) = \pi_{SU(8)} \Big( D_{i}^{+} \cap G_{4}(\mathbb{C}^{8})_{+}  \Big) = A_{i}(SU(8)/\mathbb{Z}_{2}) \cap G_{4}^{*}(\mathbb{C}^{8}).
\end{split}
\]
Then,
$\# A_{1}(G_{4}^{*}(\mathbb{C}^{8}) = 35, \# A_{2}(G_{4}^{*}(\mathbb{C}^{8}) = 27$, and $\# A_{3}(G_{4}^{*}(\mathbb{C}^{8}) = 63$.

\begin{thm}\cite{Tanaka-Tasaki3}\label{Bottom-Grass}
The subsets $A_{i}(G_{4}^{*}(\mathbb{C}^{8})) \ (i =1,2,3)$ are maximal antipodal sets of $G_{4}^{*}(\mathbb{C}^{8})$ and not congruent to each other.
The cadinalities are $35, 27, 63$.
Moreover, any maximal antipodal set of $G_{4}^{*}(\mathbb{C}^{8})$ is congruent to one of them.

\end{thm}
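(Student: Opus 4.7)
The plan is to deduce the classification of maximal antipodal sets in the polar $G_{4}^{*}(\mathbb{C}^{8})$ from the corresponding classification in the ambient compact symmetric space $SU(8)/\mathbb{Z}_{2}$ (Theorem \ref{Bottom-U(8)}), by exploiting the fact that $G_{4}^{*}(\mathbb{C}^{8})$ is a polar of the identity and hence totally geodesic. First, each $A_{i}(G_{4}^{*}(\mathbb{C}^{8})) = A_{i}(SU(8)/\mathbb{Z}_{2}) \cap G_{4}^{*}(\mathbb{C}^{8})$ is automatically antipodal in $G_{4}^{*}(\mathbb{C}^{8})$ because $A_{i}(SU(8)/\mathbb{Z}_{2})$ is antipodal and intersection with a totally geodesic submanifold preserves antipodality. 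The cardinalities $35, 27, 63$ would then be obtained by explicitly enumerating the elements of $D_{i}^{+}$ whose eigenvalues are $+1$ and $-1$ each of multiplicity $4$, and dividing by the identification $A \sim -A$ induced by $\pi_{SU(8)}$; for $D_{0}^{+}$ this yields $\binom{8}{4}/2 = 35$ at once, while the counts for $D_{1}^{+}$ and $D_{2}^{+}$ follow from analogous calculations organized around the tensor-product block structure of those subgroups.

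To establish maximality of $A_{i}(G_{4}^{*}(\mathbb{C}^{8}))$ inside the polar, I would argue by contradiction: any element $C \in G_{4}^{*}(\mathbb{C}^{8}) \setminus A_{i}(G_{4}^{*}(\mathbb{C}^{8}))$ antipodal to every element of $A_{i}(G_{4}^{*}(\mathbb{C}^{8}))$ should, together with the elements of $A_{i}(SU(8)/\mathbb{Z}_{2})$ lying outside the polar, yield an antipodal subset of $SU(8)/\mathbb{Z}_{2}$ strictly larger than $A_{i}(SU(8)/\mathbb{Z}_{2})$, contradicting Theorem \ref{Bottom-U(8)}. The required antipodal relation between $C$ and the outside-polar elements should follow from the observation that all elements in question square to the identity modulo the centre of $SU(8)$, so the antipodal condition there reduces to commutation in $SU(8)/\mathbb{Z}_{2}$, which can be read off directly from the diagonal/tensor description of $D_{i}^{+}$. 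Non-congruence of the three sets is then immediate from the distinct cardinalities $27 \ne 35 \ne 63$.

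The final and most delicate step is completeness. Given any maximal antipodal set $B \subset G_{4}^{*}(\mathbb{C}^{8})$, I would extend $B$ to a maximal antipodal subgroup $\tilde{B}$ of $SU(8)/\mathbb{Z}_{2}$ containing $B \cup \{e\}$, by adjoining suitable commuting involutions from outside the polar. Theorem \ref{Bottom-U(8)} then produces $g \in SU(8)/\mathbb{Z}_{2}$ with $g \tilde{B} g^{-1} = A_{j}(SU(8)/\mathbb{Z}_{2})$ for some $j \in \{1,2,3\}$. Since $G_{4}^{*}(\mathbb{C}^{8})$ is a single conjugation orbit in $SU(8)/\mathbb{Z}_{2}$ it is invariant under all inner automorphisms, so $gBg^{-1} \subset A_{j}(SU(8)/\mathbb{Z}_{2}) \cap G_{4}^{*}(\mathbb{C}^{8}) = A_{j}(G_{4}^{*}(\mathbb{C}^{8}))$, and maximality of $B$ in the polar forces equality, giving the desired congruence.

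The main obstacle is the extension step: one must arrange $\tilde{B}$ so that $\tilde{B} \cap G_{4}^{*}(\mathbb{C}^{8}) = B$ exactly. If adjoining an outside-polar involution forces an extra element of the polar to commute with all of $\tilde{B}$, then $\tilde{B} \cap G_{4}^{*}(\mathbb{C}^{8}) \supsetneq B$, and after applying Theorem \ref{Bottom-U(8)} the conclusion $gBg^{-1} = A_{j}(G_{4}^{*}(\mathbb{C}^{8}))$ breaks down. To control this I would analyse the centralizer $C(B, SU(8)/\mathbb{Z}_{2})$, using the fact that $B$ consists of elements with eigenvalues $\pm 1$ of multiplicity $4$ to decompose $\mathbb{C}^{8}$ into common eigenspaces for $B$; the extension elements can then be chosen to act on this decomposition in a prescribed way that avoids producing new polar elements in $\tilde{B}$.
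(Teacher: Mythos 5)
First, a point of reference: this theorem is imported from Tanaka--Tasaki \cite{Tanaka-Tasaki3} and the paper gives no proof of it, so there is nothing in-paper to compare your argument with; I evaluate it on its own terms. Your overall architecture (antipodality via total geodesy of the polar, the counts, non-congruence from the distinct cardinalities, and the completeness step) is sound, and in fact the completeness step needs none of the delicate control you worry about at the end: if $B$ is maximal antipodal in $G_{4}^{*}(\mathbb{C}^{8})$ and $\tilde{B} \supset \{e\} \cup B$ is any maximal antipodal subgroup of $SU(8)/\mathbb{Z}_{2}$, then $\tilde{B} \cap G_{4}^{*}(\mathbb{C}^{8})$ is an antipodal subset of the polar containing $B$ and hence equals $B$ by maximality of $B$ -- exactly the observation of Subsection \ref{strategies}. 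So $\tilde{B}\cap G_{4}^{*}(\mathbb{C}^{8})=B$ automatically and no special choice of the adjoined involutions is required.

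The genuine gap is in the maximality step. You adjoin a putative new element $C$ of the polar to all of $A_{i}(SU(8)/\mathbb{Z}_{2})$ and invoke Theorem \ref{Bottom-U(8)}; this requires $C$ to commute (mod centre) with the elements of $A_{i}(SU(8)/\mathbb{Z}_{2})$ lying \emph{outside} the polar, and that cannot be ``read off from the description of $D_{i}^{+}$'' because $C$ is an arbitrary unknown element of $G_{4}^{*}(\mathbb{C}^{8})$. The intersection of a maximal antipodal subgroup with a polar is in general \emph{not} maximal in the polar (this very paper records such failures: $A_{1}(E_{8})\cap EIII_{+}$ has cardinality $3$ while $\#_{2}EIII=27$, and $A_{2}(E_{8})\cap FI_{+}$ has cardinality $6$ while $\#_{2}FI=28$), so a case-specific argument is unavoidable. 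The device used elsewhere in the paper (Theorem \ref{EIX}, the proposition on $A_{1}(EV_{x})$) -- that the intersection generates the whole maximal antipodal subgroup -- fails here for $i=1$: the diagonal sign matrices with exactly four entries $-1$ generate only $\pi_{SU(8)}(\Delta^{+}(8))$, never the elements $i\,d(\epsilon)$ of $D_{0}^{+}$. What actually closes the gap is a centralizer computation: one must show that the centralizer mod centre of $\langle A_{i}(G_{4}^{*}(\mathbb{C}^{8}))\rangle$ in $SU(8)/\mathbb{Z}_{2}$ meets $G_{4}^{*}(\mathbb{C}^{8})$ only in $A_{i}(G_{4}^{*}(\mathbb{C}^{8}))$ (for $i=1$: commuting mod centre with all $d(\epsilon)$, $\epsilon\in\Delta^{+}(8)$ with two entries $-1$, forces the sign to be $+$ by comparing eigenvalue multiplicities, hence $C$ is diagonal, hence $C$ already lies in the set). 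This computation, carried out for each $D_{i}^{+}$, is the missing content; note that it cannot be recovered from your completeness step, which only shows that every maximal antipodal set is congruent to \emph{some} $A_{j}(G_{4}^{*}(\mathbb{C}^{8}))$, not that each of the three is itself maximal.
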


\newpage

\section{Classification}

In this section, we classify the congruent classes of maximal antipodal sets of the simpley connected compact exceptional symmetric spaces $EV, EVI, EVIII, EIX$ and the compact Lie group $E_{7}$.

\subsection{Strategies} \label{strategies}

In this subsection, we consider a method to calssify maximal antipoldal sets later.
Let $M$ be a compact symmetric space and fix $o \in M$.
Let $M^{+}$ be a polars of $o$ and $A(M^{+})$ be a maximal antipodal set of $M^{+}$.
Since $\{ o \} \cup A(M^{+})$ is an antipodal set of $M$ by the definition of a polar, there exists a maximal antipodal set $A(M)$ of $M$ such that $\{ o \} \cup A(M^{+}) \subset A(M)$.
Note that $A(M) \subset F(s_{o}, M)$ since $o \in A(M)$.
By the maximality of $A(M^{+})$, since $A(M) \cap M^{+}$ is an antipodal set of $M^{+}$, we obtain
\[
A(M) \cap M^{+} = A(M_{i}^{+}).
\]
Therefore, any maximal antipodal set of $M^{+}$ is obtained by the intersection between a maximal antipodal set of $M$ and $M^{+}$.
Let $A_{j}(M) \ (1 \leq j \leq l)$ be maximal antipodal sets of $M$.
Assume that for any $1 \leq i \not= j \leq l$, there does not exist $k \in K_{o}$ such that $k(A_{i}(M)) = A_{j}(M)$, where $K_{o}$ is the isotropy group of the idenitity component of the isometry group at $o$. 
Moreover, assume that for any maximal antipodal set $A$ of $M$ containing $o$, there exist some $k \in K_{o}$ and $1 \leq j \leq l$ such that $A = k(A_{j}(M))$.
Set
\[
A_{j}(M^{+}) = A_{j}(M) \cap M^{+} \ (1 \leq j \leq l).
\]
In general, each $A_{j}(M^{+})$ is not a maximal antipodal set of $M^{+}$.
Set $1 \leq j_{1}, < \cdots < j_{m} < l$ such that $A_{j_{s}}(M^{+}) \ (1 \leq s \leq m)$ is a maximal antipodal set of $M^{+}$.
Then, any maximal antipodal set of $M^{+}$ is congruent to one of $A_{j_{s}}(M^{+}) \ (1 \leq s \leq m)$.
Therefore, to classify the congruent classes of maximal antipodal sets of $M^{+}$, it is sufficient to decide $A_{j_{s}}(M^{+}) \ (1 \leq s \leq m)$ and study the congruent classes of $A_{j_{s}}(M^{+}) \ (1 \leq s \leq m)$.

Next, we consider special cases like $E_{7}$ and $EV$.
Assume that the number of polars is $4$, that is, denoting the polars except for the trivial pole by $M_{i}^{+}\ (1 \leq i \leq 3)$
\[
F(s_{o}, M) = \{ o \} \sqcup M_{1}^{+} \sqcup M_{2}^{+} \sqcup M_{3}^{+}.
\]
Moreover, assume that $M_{3}^{+}$ is a pole and set $M_{3}^{+} = \{ p \}$.
If $M$ has a non-trivial pole, then it is known that there exists a covering transformation $\tau$ of $M$ such that $\tau(p)$ is a non-trivial pole of $p$ for any $p \in M$.
It is well known that $\tau$ commutes any isometry $g$ of $M$ and $s_{p} = s_{\tau(p)}$ for any $p \in M$.
Therefore, $\tau$ leaves any maximal antipodal set of $M$ invariant.
Assume that $\tau(M_{1}^{+}) = M_{2}^{+}$.
Let $A_{u}(M_{1}^{+}) \ (1 \leq u \leq p)$ be maximal antipodal sets of $M_{1}^{+}$, and assume that $A_{u}(M_{1}^{+})$ are not congruent to each other and any maximal antipodal set of $M_{1}^{+}$ is congruent to one of $A_{u}(M_{1}^{+})$.
For any $1 \leq u \leq p$, set
\[
A_{u}(M) = \{ o \} \cup A_{u}(M_{1}^{+}) \cup \tau \big( A_{u}(M_{1}^{+}) \big)  \cup \{ p \}.
\]
Then, $A_{u}(M)$ are maximal antipodal sets of $M$.
Note that there does not exist any element $k$ of the identity component of $K_{o}$ such that $k(A_{i}(M)) = A_{j}(M)$ for any $1 \leq i \not= j \leq p$.
Moreover, any maximal antipodal set of $M$ is congruent to one of $A_{u}(M) \ (1 \leq u \leq p)$.
Therefore, to classify the congruent classes of maximal antipodal set of $M$, it is sufficient to classify $A_{u}(M_{1}^{+}) \ (1 \leq u \leq p)$ and study whether there exist an element of $g \in G$ such that $g(A_{i}(M)) \subset A_{j}(M)$ for any $1 \leq i \not= j \leq p$.


\subsection{Maximal antipodal sets of $EVIII$ and $EIX$}\label{s-EVIIIEIX}

In this subsection, we classify the congruent classes of maximal antipodal sets of $EVIII$ and $EIX$.
Frist, we recall polars of $E_{8}$.
It is well known that the number of polars except for the trivial pole is $2$.
One of polars is isomorphic to the compact symmetric space of type $EVIII$ and the other is isomorphic to type $EIX$.
Hence, the polar of the identity element isomorphic to $EIX$ is denoted by $EIX_{+}$ and the other polar is denoted by $EVIII_{+}$.
Then, 
\[
\begin{split}
& EIX_{+} = \{ g \in E_{8} \ ;\ g^{2} = e, \ \dim F^{+}(\mathrm{Ad}(g), \frak{e}_{8}) = 136 \}, \\
& EVIII_{+} = \{ g \in E_{8} \ ;\ g^{2} = e, \ \dim F^{+}(\mathrm{Ad}(g), \frak{e}_{8}) = 120 \}. \\
\end{split}
\]
Fix $\alpha \in \Sigma^{+}(\frak{e}_{8}, \frak{t})$.
Then,
\[
\begin{split}
& EIX_{+} = \bigcup_{g \in E_{8}}g \tau_{\alpha} g^{-1} \cong E_{8}/F(\sigma_{IX}^{\alpha}, E_{8}) \cong E_{8}/SU(2) \cdot E_{7}, \\
& EVIII_{+} = \bigcup_{g \in E_{8}}g \phi(-1) g^{-1} \cong E_{8}/F(\sigma_{VIII}, E_{8}) \cong E_{8}/Ss(16).
\end{split}
\]
Note that $EIX_{+}$ is the closest polar to the identity element and $EVIII_{+}$ is the farthest polar from the identity element.
To classify the maximal antipodal sets of $EIX_{+}$ and $EVIII_{+}$, we consider $A_{i}(E_{8}) \cap EIX_{+}$ and $A_{i}(E_{8}) \cap EVIII_{+} \ (i = 1,2)$.

First, we consider $A_{1}(E_{8}) \cap EIX_{+}$ and $A_{1}(E_{8}) \cap EVIII_{+}$.
For any $g = \psi(a,b) \in A_{1}(E_{8})$, since $\mathrm{Ad}(g)$ leaves $L$ and $V$ invariant respectively, we obtain $F^{+}(\mathrm{Ad}(g), \frak{e}_{8}) = F^{+}(\mathrm{Ad}(g)|_{L}, L) + F^{+}(\mathrm{Ad}(g)|_{V}, V)$.
Then,
\[
\begin{split}
F^{+}(\mathrm{Ad}(g)|_{V}, V) 
&= F^{+}(\Delta_{16}^{+}(\psi(a,b)), V) \\
&= F^{+}(\Delta_{8}^{+}(a), V_{8}^{+}) \otimes F^{+}(\Delta_{8}^{+}(b), V_{8}^{+}) + F^{-}(\Delta_{8}^{+}(a), V_{8}^{+}) \otimes F^{-}(\Delta_{8}^{+}(b), V_{8}^{+}) \\
&+ F^{+}(\Delta_{8}^{-}(a), V_{8}^{-}) \otimes F^{+}(\Delta_{8}^{-}(b), V_{8}^{-}) + F^{-}(\Delta_{8}^{-}(a), V_{8}^{-}) \otimes F^{-}(\Delta_{8}^{-}(b), V_{8}^{-}). \\
\end{split}
\]
For any $\gamma_{i}^{a,b} \in A(Spin(8))$, we see that
\[
\begin{matrix}
\begin{array}{lll} \vspace{1mm}
\dim F^{+}(\Delta_{8}^{+}(\gamma_{0}^{0,0}), V_{8}^{+}) = \dim F^{+}(\Delta_{8}^{-}(\gamma_{0}^{0,0}), V_{8}^{-}) = 8, &
\dim F^{-}(\Delta_{8}^{+}(\gamma_{0}^{0,0}), V_{8}^{+}) = \dim F^{-}(\Delta_{8}^{-}(\gamma_{0}^{0,0}), V_{8}^{-}) = 0, \\\vspace{1mm}
\dim F^{-}(\Delta_{8}^{+}(\gamma_{0}^{1,0}), V_{8}^{+}) = \dim F^{-}(\Delta_{8}^{-}(\gamma_{0}^{1,0}), V_{8}^{-}) = 8, &
\dim F^{+}(\Delta_{8}^{+}(\gamma_{0}^{1,0}), V_{8}^{+}) = \dim F^{+}(\Delta_{8}^{-}(\gamma_{0}^{1,0}), \Delta_{8}^{-}) = 0, \\\vspace{1mm}
\dim F^{+}(\Delta_{8}^{+}(\gamma_{0}^{0,1}), V_{8}^{+}) = \dim F^{-}(\Delta_{8}^{-}(\gamma_{0}^{0,1}), V_{8}^{-}) = 8, &
\dim F^{-}(\Delta_{8}^{+}(\gamma_{0}^{0,1}), V_{8}^{+}) = \dim F^{+}(\Delta_{8}^{-}(\gamma_{0}^{0,1}), \Delta_{8}^{-}) = 0, \\\vspace{1mm}
\dim F^{-}(\Delta_{8}^{+}(\gamma_{0}^{1,1}), V_{8}^{+}) = \dim F^{+}(\Delta_{8}^{-}(\gamma_{0}^{1,1}), V_{8}^{-}) = 8, &
\dim F^{+}(\Delta_{8}^{+}(\gamma_{0}^{1,1}), V_{8}^{+}) = \dim F^{-}(\Delta_{8}^{-}(\gamma_{0}^{1,1}), V_{8}^{-}) = 0, \\
\end{array}
\\
\dim F^{\pm}(\Delta_{8}^{\pm}(\gamma_{i}^{a,b}), V_{8}^{\pm}) = 4 \ \  (1 \leq i \leq 7, a,b = 0,1).
\end{matrix}
\]
For $F^{+}(\mathrm{Ad}(g)|_{L}, L)$, by direct calculations, we obtain
\[
\dim F^{+}( \mathrm{Ad}(\psi(\gamma_{i}^{0,0}, \gamma_{j}^{a,b}))|_{L}, L) =
\begin{cases}
72 & (i = 0, \ j \not= 0, \ a,b = 0,1 \ \text{or} \ i \not= 0, \ j = 0, \ a,b = 0,1 ), \\
56 & (i,j \not= 0, \ a,b = 0,1 \ \text{or} \ i = j = 0, \ (a,b) = (0,1), (1,1)), \\
120 & (i = j = 0, (a,b) = (0,0), (1,1)). \\
\end{cases}
\]
Therefore, we obtain
\[
\begin{split}
& A_{1}(E_{8}) \cap EIX_{+} = \Big\{ \psi( \gamma_{0}^{0,0}, \gamma_{i}^{a,b} ), \psi( \gamma_{i}^{0,0}, \gamma_{0}^{a,b}) \ ;\ 1 \leq i \leq 7, a,b = 0,1 \Big\}, \\
& A_{1}(E_{8}) \cap EVIII_{+} = \Big\{ \psi( \gamma_{i}^{0,0}, \gamma_{j}^{a,b} ) \ ;\ 1 \leq i, j \leq 7, a,b = 0,1 \Big\} \cup \{ \psi(\gamma_{0}^{0,0}, \gamma_{0}^{1,0}), \psi(\gamma_{0}^{0,0}, \gamma_{0}^{0,1}), \psi(\gamma_{0}^{0,0}, \gamma_{0}^{1,1}) \Big\}.
\end{split}
\]
The cardinalities of $A_{1}(E_{8}) \cap EIX_{+}$ and $A_{1}(E_{8}) \cap EVIII_{+}$ are $56$ and $199$.
Next, we consider $A_{2}(E_{8}) \cap EIX_{+}$ and $A_{2}(E_{8}) \cap EVIII_{+}$.
For any $\alpha \in \Sigma^{+}$ and $\beta \in \Sigma_{\alpha, 0}^{+}$, we obtain
\[
\begin{split}
& F^{+}(\mathrm{Ad}(x), \frak{e}_{8}) = \sum_{\gamma \in \Sigma^{+}}\frak{r}_{\gamma}^{+}, \\
& F^{+}(\mathrm{Ad}(\tau_{\alpha}), \frak{e}_{8}) = \frak{t} + \frak{r}_{\alpha} + \sum_{\gamma \in \Sigma_{\alpha, 0}^{+}}\frak{r}_{\gamma}, \\
& F^{+}(\mathrm{Ad}(\tau_{\alpha}x), \frak{e}_{8}) = \frak{r}_{\alpha}^{+} + \sum_{\gamma \in \Sigma_{\alpha, 0}^{+}}\frak{\gamma}^{+} + \sum_{\delta \in \Sigma_{\alpha, 1}}\frak{r}_{\delta}^{-}, \\
& F^{+}(\mathrm{Ad}(\tau_{\alpha}\tau_{\beta}), \frak{e}_{8}) = \frak{t} + \frak{r}_{\alpha} + \frak{r}_{\beta} + \sum_{\gamma \in \Sigma_{\alpha, 0}^{+} \cap \Sigma_{\beta, 0}}\frak{r}_{\gamma} + \sum_{\delta \in \Sigma_{\alpha, 1} \cap \Sigma_{\beta, \pm1}}\frak{r}_{\delta}, \\
&  F^{+}(\mathrm{Ad}(\tau_{\alpha}\tau_{\beta}x), \frak{e}_{8}) = \frak{r}_{\alpha}^{+} + \frak{r}_{\beta}^{+} 
+ \sum_{\gamma \in \Sigma_{\alpha, 0}^{+} \cap \Sigma_{\beta, 0}}\frak{r}_{\gamma}^{+} 
+ \sum_{\delta \in \Sigma_{\alpha, 1} \cap \Sigma_{\beta, \pm1}}\frak{r}_{\delta}^{+} 
+ \sum_{ \epsilon \in \Sigma_{\alpha, 0} \cap \Sigma_{\beta, 1}}\frak{r}_{\epsilon}^{-} 
+ \sum_{\zeta \in \Sigma_{\alpha, 1} \cap \Sigma_{\beta, 0}}\frak{r}_{\zeta}^{-}.
\end{split}
\]
Fix $\alpha \in \Sigma^{+}$ and $\beta \in \Sigma_{\alpha, 1}^{+}$.
Then,
\[
\begin{split}
& A_{2}(E_{8}) \cap EIX_{+} = \big\{ \tau_{\gamma} \ ;\ \gamma \in \Sigma^{+} \big\}, \\
& A_{2}(E_{8}) \cap EVIII_{+} = xA(T) \cup \big\{ \tau_{\gamma}\tau_{\alpha} \ ;\ \gamma \in \Sigma_{\alpha, 0}^{+} \big\} \cup \big\{ \tau_{\delta}\tau_{\beta}, \tau_{\delta}\tau_{\alpha + \beta} \ ;\ \delta \in \Sigma_{\alpha,0}^{+} \cap \Sigma_{\beta, 0} \big\}.
\end{split}
\]
The cardinalities of $A_{2}(E_{8}) \cap EIX_{+}$ and $A_{2}(E_{8}) \cap EVIII_{+}$ are $120$ and $391$.

\begin{thm} \label{EIX}
Set $A_{i}(EIX_{+}) = A_{i}(E_{8}) \cap EIX_{+} \ (i = 1,2)$.
Then, $A_{1}(EIX_{+})$ and $A_{2}(EIX_{+})$ are maximal antipodal sets of $EIX_{+}$ whose cardinalities are $56$ and $120$.
Moreover, any maximal antipodal set of $EIX_{+}$ is congruent to one of them, and $\#_{2}EIX = 120$.

\end{thm}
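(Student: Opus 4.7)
The plan is to apply the general strategy of Subsection~\ref{strategies}: since $EIX_{+}$ is a polar of the identity in $E_{8}$, every maximal antipodal set $A$ of $EIX_{+}$ together with $\{e\}$ extends to a maximal antipodal set $\tilde{A}$ of $E_{8}$, and $A = \tilde{A} \cap EIX_{+}$. By Adams' theorem recalled above, there exists $g \in E_{8}$ with $\tilde{A} = g A_{i}(E_{8}) g^{-1}$ for some $i \in \{1,2\}$. Since $EIX_{+}$ is a single $E_{8}$-conjugation orbit it is preserved by $\theta_{g}$, and we obtain
\[
A \;=\; g A_{i}(E_{8}) g^{-1} \cap EIX_{+} \;=\; g \bigl( A_{i}(E_{8}) \cap EIX_{+} \bigr) g^{-1} \;=\; g A_{i}(EIX_{+}) g^{-1}.
\]
Thus the only possible congruence classes are represented by $A_{1}(EIX_{+})$ and $A_{2}(EIX_{+})$. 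Both are antipodal sets as subsets of the antipodal subgroups $A_{1}(E_{8})$ and $A_{2}(E_{8})$, with cardinalities $56$ and $120$ already read off from the fixed-point dimension computations preceding the theorem.

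What remains is to verify that each of $A_{1}(EIX_{+})$ and $A_{2}(EIX_{+})$ is genuinely maximal, and that the two are not congruent. Inequivalence is immediate from $56 \neq 120$. For $A_{2}(EIX_{+})$: any strict extension would still be maximal and, by the previous paragraph, congruent to $A_{1}(EIX_{+})$ or $A_{2}(EIX_{+})$, both impossible by cardinality. For $A_{1}(EIX_{+})$, the same dichotomy leaves only the case where a strict extension is congruent to $A_{2}(EIX_{+})$, which means $A_{1}(EIX_{+}) \subseteq g A_{2}(EIX_{+}) g^{-1} \subseteq g T g^{-1}$ for some maximal torus $T$ of $E_{8}$.

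The crux of the proof is ruling out this last inclusion, and this is where I expect the main obstacle. The key observation is that the subgroup of $E_{8}$ generated by the $56$ elements of $A_{1}(EIX_{+})$ equals the full antipodal subgroup $A_{1}(E_{8})$: this follows from the product identity
\[
\psi(\gamma_{i}^{0,0}, \gamma_{0}^{a,b}) \cdot \psi(\gamma_{0}^{0,0}, \gamma_{j}^{c,d}) \;=\; \psi(\gamma_{i}^{0,0}, \gamma_{j}^{a+c, b+d}),
\]
which uses the centrality of $\gamma_{0}^{a,b}$ in $Spin(8)$ and multiplication within $A(Spin(8))$, and which realizes every element of the form $\psi(\gamma_{i}^{0,0}, \gamma_{j}^{c,d})$ making up $A_{1}(E_{8})$. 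Hence $A_{1}(EIX_{+}) \subseteq g T g^{-1}$ would force $A_{1}(E_{8}) \subseteq g T g^{-1}$, and since $|A_{1}(E_{8})| = 2^{8} = |A(g T g^{-1})|$ this gives $A_{1}(E_{8}) = A(g T g^{-1})$. But $A(g T g^{-1})$ is conjugate to $A(T)$, which is properly contained in $A_{2}(E_{8})$, contradicting Adams' maximality of $A_{1}(E_{8})$. This non-torality step is the key obstacle, and once cleared it follows that both $A_{1}(EIX_{+})$ and $A_{2}(EIX_{+})$ are maximal antipodal sets of $EIX_{+}$ and that $\#_{2}EIX = \max(56, 120) = 120$.
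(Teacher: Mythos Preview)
Your proof is correct and follows essentially the same approach as the paper: invoke the strategy of Subsection~\ref{strategies} to reduce to showing that $A_{1}(EIX_{+})$ cannot be contained in a conjugate of $A_{2}(EIX_{+})$, then use the fact that $A_{1}(EIX_{+})$ generates $A_{1}(E_{8})$ to derive a contradiction with the maximality of $A_{1}(E_{8})$. The only minor difference is in the final step: the paper observes directly that $A_{2}(E_{8})$ is itself a subgroup containing $A_{2}(EIX_{+})$, so $g A_{1}(EIX_{+}) g^{-1} \subset A_{2}(EIX_{+})$ immediately gives $g A_{1}(E_{8}) g^{-1} \subset A_{2}(E_{8})$; your route through the maximal torus $T$ and the counting $|A_{1}(E_{8})| = 2^{8} = |A(T)|$ reaches the same contradiction but is slightly longer than necessary.
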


\begin{proof}
By the argument of subsection \ref{strategies}, it is sufficient to show that there does not exist $g \in E_{8}$ such that $g(A_{1}(EIX_{+}))g^{-1} \subset A_{2}(EIX_{+})$.
However, we see that $A_{1}(EIX_{+})$ generates $A_{1}(E_{8})$.
Therefore, if there exists such $g \in E_{8}$, then $gA_{1}(E_{8})g^{-1} \subset A_{2}(E_{8})$ and this contradicts to the maximality of $A_{1}(E_{8})$.
Hence, there does not exist $g \in E_{8}$ such that $g(A_{1}(EIX_{+}))g^{-1} \subset A_{2}(EIX_{+})$.
\end{proof}

Note that 
\[
A_{1}(EIX_{+}) = \psi \Big( A \big( \tilde{G}_{4}(\mathbb{R}^{8})_{+} \big) \times \{ \gamma_{0}^{0,0} \} \Big) \cup \psi \Big( \{ \gamma_{0}^{0,0} \} \times A \big( \tilde{G}_{4}(\mathbb{R}^{8})_{+} \big) \Big).
\]
Therefore, $A_{1}(EIX_{+})$ is constructed by a maximal antipodal set of $\tilde{G}_{4}(\mathbb{R}^{8})$.
Let $W(E_{8})$ be the Weyl group of the root system $\Sigma(\frak{e}_{8}, \frak{t})$.
Since $W(E_{8})$ acts on $\Sigma(\frak{e}_{8}, \frak{t})$ transitively and this action is transitive, we see $W(E_{8})$ acts on $A_{2}(EIX)$ and this action is transitive.

\begin{thm} \label{EVIII}
Set $A_{i}(EVIII_{+}) = A_{i}(E_{8}) \cap EVIII_{+} \ (i = 1,2)$.
Then, $A_{1}(EVIII_{+})$ and $A_{2}(EVIII_{+})$ are maximal antipodal sets of $EVIII_{+}$ and the cardinalities are $199$ and $391$.
Moreover, any maximal antipodal set of $EVIII_{+}$ is congruent to one of them and $\#_{2}EVIII = 391$.

\end{thm}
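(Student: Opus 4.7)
I plan to proceed in parallel with the proof of Theorem~\ref{EIX}. By the strategy of Subsection~\ref{strategies} combined with Adams's classification of maximal antipodal sets of $E_{8}$, every maximal antipodal set of $EVIII_{+}$ is congruent to either $A_{1}(EVIII_{+})$ or $A_{2}(EVIII_{+})$. Their cardinalities $199$ and $391$ have already been computed, so the two sets are not congruent to each other; the equality $\#_{2}EVIII = 391$ will then follow as the larger of the two.

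Maximality of $A_{2}(EVIII_{+})$ in $EVIII_{+}$ is automatic: any proper antipodal extension would produce a maximal antipodal set of cardinality strictly greater than $391$, contradicting the dichotomy above.

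For $A_{1}(EVIII_{+})$ I would argue by contradiction. A proper antipodal extension in $EVIII_{+}$ would, by the cardinality comparison, be conjugate to $A_{2}(EVIII_{+})$, so there would exist $g \in E_{8}$ with $gA_{1}(EVIII_{+})g^{-1} \subsetneq A_{2}(EVIII_{+}) \subset A_{2}(E_{8})$. The core step is then to show that $A_{1}(EVIII_{+})$ generates $A_{1}(E_{8})$ as a subgroup of $E_{8}$; granted this, conjugation yields $gA_{1}(E_{8})g^{-1} \subsetneq A_{2}(E_{8})$, contradicting the maximality of the antipodal set $A_{1}(E_{8})$ established by Adams.

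The main obstacle is the generation claim. I plan to verify it by using that $A(Spin(8))$ is an elementary abelian $2$-group of order $32$, with the subgroup $A(G_{2}) = \{\gamma_{i}^{0,0} : 0 \le i \le 7\}$ of order $8$ governed by the Fano plane incidence structure. Products of the form $\psi(\gamma_{i}^{0,0},\gamma_{j}^{a,b}) \cdot \psi(\gamma_{i}^{0,0},\gamma_{l}^{c,d}) = \psi(1,\gamma_{j}^{a,b}\gamma_{l}^{c,d})$ fall inside $\psi(\{1\}, A(Spin(8)))$, and by swapping the roles of the two factors one likewise obtains $\psi(A(G_{2}), \{1\})$. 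Together with the three central generators $\psi(\gamma_{0}^{0,0}, \gamma_{0}^{a,b})$ with $(a,b) \neq (0,0)$, these should exhaust $A_{1}(E_{8}) = \psi(A(Spin(8)) \times A(Spin(8)))/\mathrm{Ker}\,\psi$, with careful tracking of $\mathrm{Ker}\,\psi = \{(\gamma_{0}^{a,b},\gamma_{0}^{a,b}) : a,b = 0,1\}$ needed to ensure surjectivity onto the full quotient.
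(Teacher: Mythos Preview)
Your proposal is correct and follows essentially the same route as the paper: the paper's entire proof reads ``Since $A_{i}(EVIII_{+})$ generates $A_{i}(E_{8})$ $(i=1,2)$, we can prove the statement by the similar argument to the proof of Theorem~\ref{EIX},'' and your plan is exactly this argument spelled out. Your generation sketch for $A_{1}(EVIII_{+})$ is sound---the products you describe recover $\psi(\{1\},A(Spin(8)))$ and $\psi(A(G_{2}),\{1\})$, whose product is $\psi(A(G_{2}),A(Spin(8)))=A_{1}(E_{8})$---and your observation that maximality of $A_{2}(EVIII_{+})$ follows directly from the cardinality bound (so that the $i=2$ generation claim is not actually needed) is a mild economy over the paper's formulation.
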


\begin{proof}
Since $A_{i}(EVIII_{+})$ generates $A_{i}(E_{8}) \ (i = 1,2)$, we can prove the statement by the similar argument to the proof of Theorem \ref{EIX}.
\end{proof}

Note that
\[
A_{1}(EVIII_{+}) = \psi \Big( A \big( \tilde{G}_{4}(\mathbb{R}^{8})_{+} \big) \times A \big( \tilde{G}_{4}(\mathbb{R}^{8})_{+} \big) \Big) \cup \{ \psi( \gamma_{0}^{0,0}, \gamma_{0}^{a,b}) \ ;\ (a,b) = (1,0),(0,1),(1,1) \}.
\]
Therefore, $A_{1}(EVIII_{+})$ is constructed by a maximal antipodal set of $\tilde{G}_{4}(\mathbb{R}^{8})$.
In $A_{2}(EVIII_{+})$, since $t x t^{-1} = t^{2} t$ for any $t \in T$, we see that $xT \subset EVIII_{+}$ is a maximal flat torus.
Moreover, $x( A(T) )$ is a maximal antipodal set of $xT$.
We can easily check that the Weyl group $W(E_{8})$ acts on
\[
\{ \pm \delta \pm \alpha \ ;\ \delta \in \Sigma_{\alpha, 0}^{+} \} \cup \{ \pm \delta \pm \beta, \pm \delta \pm (\alpha + \beta) \ ;\ \delta \in \Sigma_{\alpha,0}^{+} \cup \Sigma_{\beta, 0} \}
\]
and this action is transitive.
Therefore, $W(E_{8})$ acts on $A_{1}(EVIII_{+}) - xA(T)$ and this action is transitive.
Hence, $A_{2}(EVIII_{+})$ is the union of a maximal antipodal set of a maximal flat torus and an orbit of $W(E_{8})$.


\subsection{Maximal antipodal sets of $EVI$} \label{s-EVI}

In this subsection, we classify the congruent classes of maximal antipodal sets of $EVI$.
Let $\alpha = x_{7} - x_{8} \in \Sigma^{+}(\frak{e}_{8}, \frak{t})$ and consider $E_{7}^{\alpha}$.
Then, $\tau_{\alpha} \in EIX$.
We denote $\tau_{\alpha}$ by $p$ in this subsection.
Then,
\[
\{ g \in E_{8} \ ;\ gpg^{-1} = p \} = F(\theta_{\tau_{\alpha}}, E_{8}) = SU^{\alpha}(2) \cdot E_{7}^{\alpha}.
\]
Therefore, any polar of $p$ in $EIX_{+}$ is an $(SU^{\alpha}(2) \cdot E_{7}^{\alpha})$-orbit.
It is well known that the number of polars except for the trivial pole in $EIX_{+}$ is $2$ and one of them is isomorphic to $S^{2} \cdot EVII$ and the other is isomorphic to $EVI$.
Denote the polar of $p$ isomorphic to $S^{2} \cdot EVII$ by $(S^{2} \cdot EVII)_{+}$ and the other is denoted by $EVI_{+}$.
Then,
\[
\begin{split}
(S^{2} \cdot EVII)_{+} &= \left\{ q \in F(s_{p}, EIX) \ ;\ \dim \Big( F^{+}(\mathrm{Ad}(p), \frak{e}_{8}) \cap F^{+}(\mathrm{Ad}(q), \frak{e}_{8}) \Big) = 80 \right\} \\
&= \left\{ q \in F(s_{p}, EIX) \ ;\ \dim F^{+}(\mathrm{Ad}(q), \frak{su}^{\alpha}(2) + \frak{e}_{7}^{\alpha}) = 80 \right\}, \\
EVI_{+} &= \left\{ q \in F(s_{p}, EIX) \ ;\ \dim \Big( F^{+}(\mathrm{Ad}(p), \frak{e}_{8}) \cap F^{+}(\mathrm{Ad}(q), \frak{e}_{8}) \Big) = 72 \right\} \\
&= \left\{ q \in F(s_{p}, EIX) \ ;\ \dim F^{+}(\mathrm{Ad}(q), \frak{su}^{\alpha}(2) + \frak{e}_{7}^{\alpha}) = 72 \right\}. \\
\end{split}
\]
Moreover, if $\beta \in \Sigma_{\alpha, \pm1}^{+}$ and $\gamma \in \Sigma_{\alpha,0}^{+} = \Sigma^{+}(\frak{e}_{7}^{\alpha}, \frak{t}^{\alpha})$, then
\[
\begin{split}
(S^{2} \cdot EVII)_{+} &= \bigcup_{g \in SU^{\alpha}(2) \cdot E_{7}^{\alpha}}g \tau_{\beta} g^{-1}, \quad\quad
EVI_{+} = \bigcup_{g \in SU^{\alpha}(2) \cdot E_{7}^{\alpha}}g \tau_{\gamma} g^{-1}.
\end{split}
\]
Note that $EVI_{+} \subset E_{7}^{\alpha}$.
Moreover, $EVI_{+}$ is a polar of the identity element in $E_{7}^{\alpha}$.
To classify maximal antipodal sets of $EVI$, we consider $A_{i}(EIX_{+}) \cap EVI_{+} \ (i = 1,2)$.
First, we consider $A_{1}(EIX_{+}) \cap EVI_{+}$.
Since $p = \tau_{\alpha} = \psi(\gamma_{0}^{0,0}, \gamma_{1}^{0,0})$, we see
\[
F^{+}(\mathrm{Ad}(p), \frak{e}_{8}) = \frak{spin}(12) + \frak{spin}(4) + \mathbb{O} \otimes^{+} \mathbb{H} + \mathbb{O} \otimes^{-} \mathbb{H},
\]
where $\frak{spin}(12)$ and $\frak{spin}(4)$ are defined by the Clifford algbera over $\mathrm{span}_{\mathbb{R}}\{ e_{0}, \cdots, e_{11}\}$ and $\mathrm{span}_{\mathbb{R}}\{ e_{12}, \cdots, e_{15}\}$ respectively, and $\mathbb{H} = \mathrm{span}_{\mathbb{R}}\{ e_{0}, e_{1}, e_{2}, e_{3} \}$.
Therefore, for any $\psi(a,b) \in A_{1}(EIX_{+})$, 
\[
\begin{split}
F^{+}(\mathrm{Ad}(\psi(a,b)), \frak{su}^{\alpha}(2) + \frak{e}_{7}^{\alpha}) &= F^{+} \big( \mathrm{Ad}(\psi(a,b)), \frak{spin}(12) + \frak{spin}(4) \big) + F^{+} \big( \mathrm{Ad}(\psi(a,b)), \mathbb{O} \otimes^{+} \mathbb{H} + \mathbb{O} \otimes^{-} \mathbb{H} \big), \\
\end{split}
\]
Then,
\[
\begin{split}
F^{+} \big( \mathrm{Ad}(\psi(a,b)), & \ \mathbb{O} \otimes^{+} \mathbb{H} + \mathbb{O} \otimes^{-} \mathbb{H} \big) \\
&= F^{+}(\Delta_{8}^{+}(a), \mathbb{O}) \otimes^{+} F^{+}(\Delta_{8}^{+}(b), \mathbb{H}) + F^{-}(\Delta_{8}^{+}(a), \mathbb{O}) \otimes^{+} F^{-}(\Delta_{8}^{+}(b), \mathbb{H}) \\
&+ F^{+}(\Delta_{8}^{-}(a), \mathbb{O}) \otimes^{-} F^{+}(\Delta_{8}^{-}(b), \mathbb{O}) + F^{-}(\Delta_{8}^{-}(a),\mathbb{O}) \otimes^{-} F^{-}(\Delta_{8}^{-}(b), \mathbb{H}).
\end{split}
\]
For any $\psi(\gamma_{i}^{0,0}, \gamma_{j}^{a,b}) \in A_{1}(EIX)$,
\[
F^{+} \big( \psi(\gamma_{i}^{0,0}, \gamma_{j}^{a,b}), \frak{spin}(12) + \frak{spin}(4) \big) \cong
\begin{cases}
\frak{spin}(10) + \mathbb{R}^{3} & (i = 0, \ 2 \leq j \leq 7, \ a,b = 0,1), \\
\frak{spin}(12) + \frak{spin}(4) & (i = 0, \ j = 1, \ (a,b) = (1,0)), \\
\frak{spin}(8) + \frak{spin}(4) + \frak{spin}(4) & (\text{the others}). \\
\end{cases}
\]
Therefore, 
\[
\dim F^{+} \big( \psi(\gamma_{i}^{0,0}, \gamma_{j}^{a,b}), \frak{spin}(12) + \frak{spin}(4) \big) \cong
\begin{cases}
48 & (i = 0, \ 2 \leq j \leq 7, \ a,b = 0,1), \\
72 & (i = 0, \ j = 1, \ (a,b) = (1,0)), \\
40 & (\text{the others}). \\
\end{cases}
\]
Moreover, we can verify that
\[
\begin{split}
& F^{+} \big( (\gamma_{i}^{0,0}, \gamma_{j}^{a,b}), \mathbb{O} \otimes^{+} \mathbb{H} + \mathbb{O} \otimes^{-} \mathbb{H} \big)  = 
\begin{cases}
\mathbb{O} \otimes^{+} V_{1} + \mathbb{O} \otimes^{-} V_{2} & (i = 0, 2 \leq j \leq 7, a,b = 0,1), \\
\{ 0 \} & (i = 0, j = 1, (a,b) = (1,0)), \\
\mathbb{O} \otimes^{+} \mathbb{H} & (i = 0, j = 1, (a,b) = (0,1)), \\
\mathbb{O} \otimes^{-} \mathbb{H} & (i = 0, j = 1, (a,b) = (1,1)), \\
W_{1} \otimes^{+} \mathbb{H} + W_{2} \otimes^{-} \mathbb{H} & (\text{the others}),
\end{cases}
\end{split}
\]
where $V_{1}$ and $V_{2}$ are $2$-dimensional subspaces of $\mathbb{H}$ and $W_{1}$ and $W_{2}$ are $4$-dimensional subspaces of $\mathbb{O}$.
Hence, 
\[
\dim F^{+} \big( (\gamma_{i}^{0,0}, \gamma_{j}^{a,b}), \mathbb{O} \otimes^{+} \mathbb{H} + \mathbb{O} \otimes^{-} \mathbb{H} \big) =
\begin{cases}
0 & (i = 0, j = 1, (a.b) = (1,0)), \\
32 & (\text{the others}). \\
\end{cases}
\]
Therefore, we obtain that
\[
\begin{split}
& A_{1}(EIX_{+}) \cap (S^{2} \cdot EVII)_{+} = \left\{ \psi(\gamma_{0}^{0,0}, \gamma_{i}^{a,b}) \ ;\ 2 \leq i \leq 7, a,b = 0,1 \right\}, \\
& A_{1}(EIX_{+}) \cap EVI_{+} = \left\{ \psi(\gamma_{i}^{0,0}, \gamma_{0}^{a,b}) \ ;\ 1 \leq i \leq 7, a,b = 0,1 \right\} \\
& \hspace{40mm} \cup \left\{ \psi(\gamma_{0}^{0,0}, \gamma_{1}^{0,1}), \psi(\gamma_{0}^{0,0}, \gamma_{1}^{1,0}), \psi(\gamma_{0}^{0,0}, \gamma_{1}^{1,1}) \right\}.
\end{split}
\]
Next, we consider $A_{2}(EIX_{+}) \cap EVI_{+}$.
Let $\beta = \Sigma_{\alpha, 1}$.
Then,
\[
F^{+}( \mathrm{Ad}(\tau_{\delta}), \frak{su}^{\alpha}(2) + \frak{e}_{7}^{\alpha} ) \cong \mathbb{R} + F(\sigma_{VII}^{\beta}, \frak{e}_{7}^{\alpha}) \cong \mathbb{R} + \mathbb{R} + \frak{e}_{6}^{\alpha, \beta}.
\]
Therefore, $\dim F^{+}( \mathrm{Ad}(\tau_{\delta}), \frak{su}^{\alpha}(2) + \frak{e}_{7}^{\alpha} ) = 80$ and $\tau_{\beta} \in (S^{2} \cdot EVII)_{+}$.
Let $\gamma \in \Sigma_{\alpha, 0}^{+}$.
Then,
\[
F^{+}( \mathrm{Ad}(\tau_{\delta}), \frak{su}^{\alpha}(2) + \frak{e}_{7}^{\alpha} ) = F(\sigma^{\beta}_{VII}, \frak{e}_{7}^{\alpha}) \cong \frak{su}(2) + \frak{spin}(12).
\]
Hence, $\dim F^{+}( \mathrm{Ad}(\tau_{\gamma}), \frak{su}^{\alpha}(2) + \frak{e}_{7}^{\alpha} ) = 72$ and $\tau_{\gamma} \in EVI_{+}$.
Thus, 
\[
\begin{split}
A_{2}(EIX_{+}) \cap (S^{2} \cdot EVI_{+}) = \{ \tau_{\beta} \ ;\ \beta \in \Sigma_{\alpha, 1} \}, \quad 
A_{2}(EIX_{+}) \cap EVI_{+} = \{ \tau_{\gamma} \ ;\ \gamma \in \Sigma_{\alpha, 0}^{+} = \Sigma(\frak{e}_{7}^{\alpha}, \frak{t}^{\alpha}) \}.
\end{split}
\]
Then, the Weyl group $W(E_{7}^{\alpha})$ acts on $A_{2}(EIX_{+}) \cap (S^{2} \cdot EVII)_{+}$ and $A_{2}(EIX_{+}) \cap EVI_{+}$, and these actions are transitive.
We set
\[
A_{i}(EVI_{+}) = A_{i}(EIX_{+}) \cap EVI_{+} \quad (i = 1,2).
\]
The cardinalities of $A_{1}(EVI_{+})$ and $A_{2}(EVI_{+})$ are $31$ and $63$.

Next, we study whether there exists $k \in E_{7}^{\alpha}$ such that $kA_{1}(EVI_{+})k^{-1} \subset A_{2}(EVI_{+})$.
We consider polars of $EVI_{+}$.
It is well known that the number of the polars except for the trivial pole is $2$ and one of them is isomorphic to $S^{2} \cdot DIII(3)$ and the other is isomorphic to $\tilde{G}_{4}(\mathbb{R}^{12})$.
Set $q = \psi(\gamma_{0}^{0,0}, \gamma_{1}^{1,0}) = \tau_{\bar{\alpha}} = \tau_{x_{7} + x_{8}}$.
The polar of $q$ isomorphic to $S^{2} \cdot DIII(3)$ is denoted by $(S^{2} \cdot DIII(3))_{+}$ and the other polar isomorphic to $\tilde{G}_{4}(\mathbb{R}^{12})$ is denoted by $\tilde{G}_{4}(\mathbb{R}^{12})_{+}$.
The isotropy group of $E_{7}^{\alpha}$ at $q$ is given by
\[
F^{+}(\sigma^{\bar{\alpha}}_{VI}, E_{7}^{\alpha}) = SU^{\bar{\alpha}}(2) \cdot Spin^{\alpha}(12).
\]
Then, $\tilde{G}_{4}(\mathbb{R}^{12})_{+}$ is a polar of the identity element of $Spin^{\alpha}(12) \subset E_{7}^{\alpha}$.
Let $r \in EVI_{+}$ be $s_{q}(r) = r$.
Then,
\[
\begin{array}{ccl}
r \in (S^{2} \cdot DIII(3))_{+} & \Longleftrightarrow &
F^{+}(\mathrm{Ad}(r), \frak{su}^{\bar{\alpha}}(2) + \frak{spin}^{\alpha}(12)) \cong \frak{su}(6) + \mathbb{R}^{2}, \\
r \in (\tilde{G}_{4}(\mathbb{R}^{8}))_{+} & \Longleftrightarrow &
F^{+}(\mathrm{Ad}(r), \frak{su}^{\bar{\alpha}}(2) + \frak{spin}^{\alpha}(12)) \cong \frak{spin}(8) + \frak{spin}(4) + \frak{su}(2).
\end{array}
\]
Note that $\dim (\frak{su}(6) + \mathbb{R}^{2}) = \dim (\frak{so}(8) + \frak{so}(4) + \frak{su}(2)) = 37$.
We study $A_{i}(EVI_{+}) \cap (S^{2} \cdot DIII(3))_{+}$ and $A_{i}(EVI_{+}) \cap \tilde{G}_{4}(\mathbb{R}^{12})_{+}$ for $i = 1,2$.
For any $\psi(a,b) \in A_{1}(EVI_{+})$, we easily see that
\[
F^{+} \big( \mathrm{Ad}(\psi(a,b)), \frak{su}^{\bar{\alpha}}(2) + \frak{spin}^{\alpha}(12) \big) \cong \frak{spin}(8) + \frak{spin}(4) + \frak{su}(2).
\]
Hence, we obtain
\[
\begin{split}
& A_{1}(EVI_{+}) \cap (S^{2} \cdot DIII(3))_{+} = \phi, \\
& A_{1}(EVI_{+}) \cap \tilde{G}_{4}(\mathbb{R}^{12})_{+} = A_{1}(EVI_{+}) - \{ \psi(\gamma_{0}^{0,0}, \gamma_{1}^{1,0}) \}.
\end{split}
\]
We see that 
\[
\tilde{G}_{4}(\mathbb{R}^{12})_{+} \ni r \mapsto F^{+} \big( \pi(r), \ \mathrm{span}_{\mathbb{R}}\{ e_{0}, \cdots, e_{11} \} \big) \in G_{4}(\mathbb{R}^{12}) 
\]
is a double covering, and $A_{1}(EVI_{+}) \cap \tilde{G}_{4}(\mathbb{R}^{12})_{+}$ is a maximal antipodal set of $\tilde{G}_{4}(\mathbb{R}^{12})$ congruent to $A_{1}(\tilde{G}_{4}(\mathbb{R}^{12}))$ in Theorem \ref{Ori-Grass}.
Next, we consider $A_{2}(EVI_{+}) \cap (S^{2} \cdot DIII(3))_{+}$ and $A_{2}(EVI_{+}) \cap (\tilde{G}_{4}(\mathbb{R}^{12}))_{+}$.
Note that
\[
\Sigma^{+}_{\alpha, 0} = \big\{ x_{i} \pm x_{j} \ ;\ 1 \leq i < j \leq 6 \} \cup \{ x_{7} + x_{8} \} \cup \left\{ \frac{1}{2}\sum_{i=1}^{8}\epsilon_{i}x_{i} \ ;\ \epsilon_{i} = \pm 1, \epsilon_{1} = 1, \epsilon_{7} = \epsilon_{8} \right\}.
\]
Then, we can verify that
\[
\begin{split}
& A_{2}(EVI_{+}) \cap (S^{2} \cdot DIII(3))_{+} = \left\{ \tau_{\gamma} \ ;\ \gamma = \frac{1}{2}(\sum_{i=1}^{8}\epsilon_{i}x_{i}) \in \Sigma_{\alpha, 0}^{+} \right\}, \\
& A_{2}(EVI_{+}) \cap \tilde{G}_{4}(\mathbb{R}^{12})_{+} = \left\{ \tau_{x_{i} \pm x_{j}} \ ;\ 1 \leq i < j \leq 6 \right\}. \\
\end{split}
\]
We see that $A_{2}(EVI_{+}) \cap \tilde{G}_{4}(\mathbb{R}^{12})_{+}$ is a maximal antipodal set of $\tilde{G}_{4}(\mathbb{R}^{12})$ congruent to $A_{2}(\tilde{G}_{4}(\mathbb{R}^{12}))$ in Theorem \ref{Ori-Grass}.

\begin{thm}\label{EVI}
The antipodal set $A_{1}(EVI_{+})$ and $A_{2}(EVI_{+})$are maximal antipodal sets of $EVI_{+}$ and the cardinalities are $31$ and $63$.
Moreover, any maximal antipodal set of $EVI_{+}$ is congruent to one of them and $\#_{2}EVI = 63$.

\end{thm}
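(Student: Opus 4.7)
The plan is to apply the strategy of Subsection~\ref{strategies} to $M = EVI_+$ using its two non-trivial polars $(S^{2} \cdot DIII(3))_{+}$ and $\tilde{G}_{4}(\mathbb{R}^{12})_{+}$ at the base point $q = \psi(\gamma_{0}^{0,0},\gamma_{1}^{1,0}) = \tau_{\bar{\alpha}}$. Let $A$ be any maximal antipodal set of $EVI_{+}$. Since the isotropy group $SU^{\bar{\alpha}}(2)\cdot Spin^{\alpha}(12)$ acts transitively on $EVI_{+}$, we may assume after congruence that $q \in A$. Then $A$ decomposes as
\[
A = \{q\} \sqcup \bigl( A \cap (S^{2} \cdot DIII(3))_{+} \bigr) \sqcup \bigl( A \cap \tilde{G}_{4}(\mathbb{R}^{12})_{+} \bigr),
\]
and by maximality each piece $A \cap M^{+}_{i}$ is a maximal antipodal set of the corresponding polar. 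This reduces the problem to classifying how a maximal antipodal set of $\tilde{G}_{4}(\mathbb{R}^{12})_{+}$ can be extended by elements of $(S^{2} \cdot DIII(3))_{+}$.

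Next I would use Theorem~\ref{Ori-Grass}, which tells us $A \cap \tilde{G}_{4}(\mathbb{R}^{12})_{+}$ is congruent, under the action of the isotropy group, to either $A_{1}(\tilde{G}_{4}(\mathbb{R}^{12}))$ or $A_{2}(\tilde{G}_{4}(\mathbb{R}^{12}))$. The computations already carried out in the excerpt realize these two types explicitly as $A_{1}(EVI_{+}) \cap \tilde{G}_{4}(\mathbb{R}^{12})_{+}$ (of cardinality~$28$) and $A_{2}(EVI_{+}) \cap \tilde{G}_{4}(\mathbb{R}^{12})_{+}$ (of cardinality~$30$). After bringing the intersection with $\tilde{G}_{4}(\mathbb{R}^{12})_{+}$ into one of these two standard forms, the question becomes: which elements of $(S^{2} \cdot DIII(3))_{+}$ are antipodal to every element of this fixed subset?

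For the $A_{1}$-type intersection, I would show that the simultaneous antipodality conditions force $A \cap (S^{2} \cdot DIII(3))_{+} = \emptyset$, matching the already-computed $A_{1}(EVI_{+}) \cap (S^{2} \cdot DIII(3))_{+} = \phi$. Then $A = A_{1}(EVI_{+})$ is forced, and its maximality as an antipodal set in $EVI_{+}$ follows from the fact that it arises as $A_{1}(E_{8}) \cap EVI_{+}$ inside the maximal antipodal subgroup $A_{1}(E_{8})$ of $E_{8}$. For the $A_{2}$-type intersection, the analogous restriction of antipodality to $(S^{2} \cdot DIII(3))_{+}$ should yield exactly the $33$ spinor-type elements $\{\tau_{\gamma} : \gamma = \frac{1}{2}\sum\epsilon_{i}x_{i} \in \Sigma_{\alpha,0}^{+}\}$ already identified; here one exploits the transitivity of the Weyl group $W(E_{7}^{\alpha})$ on $\Sigma^{+}(\frak{e}_{7}^{\alpha},\frak{t}^{\alpha})$ together with the compatibility between $\tau_{\gamma}$ and the chosen root lattice. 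Combining these two pieces with $\{q\}$ gives $A = A_{2}(EVI_{+})$ with cardinality $1 + 33 + 30 = 64$; one then checks the cardinality against the asserted $63$ (reconciling $q$ with the lattice of $\tau$-elements) and confirms maximality via the same argument through $A_{2}(E_{8})$.

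Non-congruence of $A_{1}(EVI_{+})$ and $A_{2}(EVI_{+})$ is immediate from their distinct cardinalities $31$ and $63$, and the equality $\#_{2}EVI = 63$ then follows. The main obstacle, as in every application of the $(M^{+},M^{-})$-method to an exceptional space, is the case analysis in the second paragraph: showing that \emph{no} additional element of $(S^{2}\cdot DIII(3))_{+}$ can be adjoined in the $A_{1}$-case and that \emph{exactly} the listed lattice elements can be adjoined in the $A_{2}$-case. This requires the explicit eigenvalue computations of $\mathrm{Ad}(g)$ on $\frak{su}^{\bar{\alpha}}(2) + \frak{spin}^{\alpha}(12)$ for candidate $g \in EVI_{+}$ using the root space decomposition of Subsection~\ref{E7}, together with the lattice lemma (Lemma~\ref{Lattice}) to ensure no redundancy among the $\tau_{\gamma}$.
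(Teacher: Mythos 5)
Your proposal runs in the opposite direction from the paper's proof, and the step on which your whole reduction rests is not valid. You assert that for a maximal antipodal set $A$ of $EVI_{+}$ containing $q$, ``by maximality each piece $A \cap M^{+}_{i}$ is a maximal antipodal set of the corresponding polar.'' The strategy of Subsection \ref{strategies} gives only the converse: every maximal antipodal set of a polar arises as $A(M)\cap M^{+}$ for \emph{some} maximal $A(M)$, but the intersection of an arbitrary maximal antipodal set with a polar need not be maximal there (the paper records this explicitly, and e.g.\ $A_{1}(E_{8})\cap EIII_{+}$ has cardinality $3$ while $\#_{2}EIII=27$). Concretely, an element $r\in\tilde{G}_{4}(\mathbb{R}^{12})_{+}$ antipodal to all of $A\cap\tilde{G}_{4}(\mathbb{R}^{12})_{+}$ may be excluded from $A$ because it fails antipodality with some element of $A\cap(S^{2}\cdot DIII(3))_{+}$; so you cannot invoke Theorem \ref{Ori-Grass} to normalize $A\cap\tilde{G}_{4}(\mathbb{R}^{12})_{+}$, and your case analysis misses any maximal $A$ whose Grassmannian piece is non-maximal. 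The same error invalidates your maximality argument for $A_{1}(EVI_{+})$: being of the form $A_{1}(E_{8})\cap EVI_{+}$ inside a maximal antipodal subgroup of $E_{8}$ does not imply maximality in $EVI_{+}$. The arithmetic slips (the $A_{1}$-type Grassmannian piece has cardinality $30$, not $28$; the spinor-type piece of $A_{2}(EVI_{+})$ has $32$ elements, not $33$, giving $1+32+30=63$ with no ``reconciliation'' needed) suggest the extension analysis was not actually carried out.

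The paper avoids all of this by arguing top-down. Since $EVI_{+}$ is a polar of $\tau_{\alpha}$ in $EIX_{+}$ and the maximal antipodal sets of $EIX_{+}$ are already classified (Theorem \ref{EIX}), Subsection \ref{strategies} reduces the whole theorem to a single claim: there is no $g\in E_{7}^{\alpha}$ with $g\,A_{1}(EVI_{+})\,g^{-1}\subset A_{2}(EVI_{+})$. To prove this, one uses transitivity of $W(E_{7}^{\alpha})$ on $A_{2}(EVI_{+})$ to replace $g$ by an $h$ fixing $q=\tau_{\bar{\alpha}}$, so that $h\in SU^{\bar{\alpha}}(2)\cdot Spin^{\alpha}(12)$ preserves the polar $\tilde{G}_{4}(\mathbb{R}^{12})_{+}$ of $q$; then $h$ would carry $A_{1}(EVI_{+})\cap\tilde{G}_{4}(\mathbb{R}^{12})_{+}$ into $A_{2}(EVI_{+})\cap\tilde{G}_{4}(\mathbb{R}^{12})_{+}$. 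These two sets were verified beforehand (by the explicit computations preceding the theorem) to be maximal antipodal sets of $\tilde{G}_{4}(\mathbb{R}^{12})$ congruent to the two non-congruent types of Theorem \ref{Ori-Grass}, so such an inclusion is impossible. You correctly identified Theorem \ref{Ori-Grass} and the polar $\tilde{G}_{4}(\mathbb{R}^{12})_{+}$ as the key inputs, but they are used to rule out a containment between two already-known candidates, not to build the candidates from below.
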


\begin{proof}
It is sufficient to show that there does not exist $g \in E_{7}^{\alpha}$ such that $g(A_{1}(EVI_{+}))g^{-1} \subset A_{2}(EVI_{+})$.
We assume that there exists such $g$.
Since the Weyl group $W(E_{7}^{\alpha})$ acts on $A_{2}(EVI_{+})$ and this action is transitive, there exists $h \in E_{7}^{\alpha}$ such that
\[
hqh^{-1} = q, \quad
h(A_{1}(EVI_{+}))h^{-1} \subset A_{2}(EVI_{+}).
\]
Therefore, $h \in SU^{\bar{\alpha}}(2) \cdot Spin^{\alpha}(12)$ and 
\[
h \Big( A_{1}(EVI_{+}) \cap \tilde{G}_{4}(\mathbb{R}^{12})_{+} \Big) h^{-1} \subset A_{2}(EVI_{+}) \cap \tilde{G}_{4}(\mathbb{R}^{12})_{+}.
\]
However, $A_{i}(EVI_{+}) \cap \tilde{G}_{4}(\mathbb{R}^{12})_{+} \ (i = 1,2)$ are maximal antipodal sets of $\tilde{G}_{4}(\mathbb{R}^{12})_{+}$ and they are not congruent to each other by Theorem \ref{Ori-Grass}.
This is a contradiction.
Hence, there does not exist such $g \in E_{7}^{\alpha}$ and the statement follows by the arguments of Subsection \ref{strategies}.
\end{proof}

We see that 
\[
A_{1}(EVI_{+}) = \psi \Big( A \big( \tilde{G}_{4}(\mathbb{R}^{8})_{+} \big) \times \{ \gamma_{0}^{0,0} \} \Big) \cup \{ \psi( \gamma_{0}^{0,0}, \gamma_{1}^{a,b} ) \ ;\ (a,b) = (1,0),(0,1),(1,1) \}.
\]
Hence, $A_{1}(EVI_{+})$ is constructed by a maximal antipodal set of $\tilde{G}_{4}(\mathbb{R}^{8})$.
As mentioned above, $A_{2}(EVI_{+})$ is an orbit of the Weyl group $W(E_{7}^{\alpha})$.


\subsection{Maximal antipodal sets of $E_{7}$}\label{s-E_{7}}

In this subsection, we classify the congruent classes of the maximal antipodal sets of $E_{7}$.
It is well known that the number of polars except for the trivial pole is $3$ and one of them is a pole and the others are isomorphic to $EVI$.
Note that the pole of the identity element is the non-trivial element $z$ of the center of $E_{7}$.
Let $\tau$ be the covering transformation of $E_{7}$, that is,
\[
\tau : E_{7} \rightarrow E_{7} \ ;\ g \mapsto gz = zg.
\]
Let $M_{i}^{+}(E_{7}) \ (i = 1,2)$ be polars of the identity element isomorphic to $EVI$.
Then, $M_{2}^{+}(E_{7}) = \tau( M_{1}^{+}(E_{7}) ) = M_{1}^{+}(E_{7})z$.
Define $A_{j}(M_{1}^{+}(E_{7})) \ (j = 1,2)$ as maximal antipodal sets of $M_{1}^{+}(E_{7})$ whose cardinalities are $31$ and $63$ respectively.
Set
\[
A_{j}(E_{7}) = \{ e \} \cup A_{j}(M_{1}^{+}(E_{7})) \cup A_{j}(M_{1}^{+}(E_{7}))z \cup \{ z \} \quad (j = 1,2).
\]
Then, $\#A_{1}(E_{7}) = 64$ and $\#A_{2}(E_{7}) = 128$.
We obtain Theorem \ref{E_{7}} by Theorem \ref{EVI}.

\begin{thm} \label{E_{7}}
$A_{j}(E_{7}) \ (j = 1,2)$ are maximal antipodal subgroups of $E_{7}$ and $A_{1}(E_{8})$ and $A_{2}(E_{8})$ are not congruent to each other.
The cardinalities of $A_{1}(E_{7})$ and $A_{2}(E_{7})$ are $64$ and $128$.
Moreover, any maximal antipodal subset of $E_{7}$ is congruent to one of them and $\#_{2}E_{7} = 128$.

\end{thm}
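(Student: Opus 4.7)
The plan is to invoke the general framework of Subsection \ref{strategies} directly, together with Theorem \ref{EVI}. The polar decomposition of $E_7$ has exactly the shape treated in Subsection \ref{strategies}: $F(s_e, E_7) = \{e\} \sqcup M_1^+(E_7) \sqcup M_2^+(E_7) \sqcup \{z\}$, with $z$ the non-trivial element of $C(E_7) \cong \mathbb{Z}_2$ (a pole) and the two remaining polars, both isomorphic to $EVI$, exchanged by the covering transformation $\tau(g) = gz$. Since $\tau$ commutes with every isometry and satisfies $s_p = s_{\tau(p)}$, the strategy applies verbatim and reduces the classification of maximal antipodal sets of $E_7$ (up to translation so that they contain $e$) to that of maximal antipodal sets of $M_1^+(E_7) \cong EVI$.

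First I would apply Theorem \ref{EVI} to extract the two congruence classes of maximal antipodal sets of $M_1^+(E_7)$, with cardinalities $31$ and $63$; call them $A_j(M_1^+(E_7))$ for $j=1,2$. Following the recipe of Subsection \ref{strategies}, set
\[
A_j(E_7) = \{e\} \cup A_j(M_1^+(E_7)) \cup A_j(M_1^+(E_7))\,z \cup \{z\}.
\]
The antipodal property is a direct check: two elements in the same polar are antipodal because $A_j(M_1^+(E_7))$ is antipodal and $\tau$ is an isometry; a pair with one element $p$ in $M_1^+(E_7)$ and the other $\tau(q)$ in $M_2^+(E_7)$ is handled by $s_p(\tau(q)) = \tau(s_p(q)) = \tau(q)$; and pairs involving $e$ or $z$ are immediate from the pole property.

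Next I would read off the cardinalities $\#A_1(E_7) = 1+31+31+1 = 64$ and $\#A_2(E_7) = 1+63+63+1 = 128$. Since these differ, $A_1(E_7)$ and $A_2(E_7)$ are automatically non-congruent; in particular no $g \in E_7$ can send one to the other, so the hypothesis required at the end of Subsection \ref{strategies} is satisfied trivially. Maximality and exhaustion follow from the final assertion of Subsection \ref{strategies}: an arbitrary maximal antipodal set of $E_7$, translated to contain $e$, lies in $F(s_e, E_7)$ and intersects $M_1^+(E_7)$ in a maximal antipodal set of $EVI$ by the maximality clause of Theorem \ref{EVI}, forcing it to coincide with some $A_j(E_7)$. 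This yields $\#_2 E_7 = 128$. The subgroup assertion requires no separate argument: Chen-Nagano's observation recalled in Subsection 2.1 shows that any maximal antipodal set of a compact Lie group containing the identity is a maximal elementary abelian $2$-subgroup.

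In this proof essentially all the work has already been done: the upstream step — classifying maximal antipodal sets of $EVI$ in Theorem \ref{EVI} — is where the genuine difficulty lies, and the present statement is a mechanical consequence once that, together with the pole/covering analysis of Subsection \ref{strategies}, is available. The only per-case verification is the cross-polar antipodal relation between $A_j(M_1^+(E_7))$ and $\tau(A_j(M_1^+(E_7)))$, which is settled uniformly by the identity $s_p = s_{\tau(p)}$.
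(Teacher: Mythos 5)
Your proposal is correct and follows essentially the same route as the paper: the paper constructs $A_j(E_7) = \{e\} \cup A_j(M_1^+(E_7)) \cup A_j(M_1^+(E_7))z \cup \{z\}$ from the two polars isomorphic to $EVI$ and the non-trivial central pole $z$, and then derives the theorem directly from Theorem \ref{EVI} via the framework of Subsection \ref{strategies}, exactly as you do. Your additional observations (the cross-polar antipodality via $s_p = s_{\tau(p)}$, non-congruence from the distinct cardinalities $64$ and $128$, and the subgroup property from Chen--Nagano) correctly fill in the details the paper leaves implicit.
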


In the following, we describe maximal antipodal sets of $E_{7}$ explicitly.
Let $\alpha = x_{7} - x_{8}$ and consider $E_{7}^{\alpha}$.
Recall that $EVI_{+}$ is a polar of the identity element in $E_{7}^{\alpha}$.
The pole of the identity element is $\tau_{\alpha} = \psi(\gamma_{0}^{0,0}, \gamma_{1}^{0,0})$.
The other polar is $\tau_{\alpha}EVI_{+}$.
Then, $EVI_{+}$ is a totally geodesic submanifold of $EIX_{+}$ and $\tau_{\alpha}EVI_{+}$ is a totally geodesic submanifold of $EVIII_{+}$.

\begin{prop}
Set $A_{1}(E_{7}^{\alpha}) = A_{1}(E_{8}) \cap E_{7}^{\alpha}$.
Then, $A_{1}(E_{7}^{\alpha})$ is a maximal antipodal set of $E_{7}^{\alpha}$ whose cardinality is $64$.

\end{prop}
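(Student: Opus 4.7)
The plan is to exploit the polar decomposition at the identity in $E_7^{\alpha}$ together with the classification of maximal antipodal sets of $EVI$ already established in Theorem \ref{EVI}. The key observation is that, since $A_1(E_8)$ is an antipodal subgroup of $E_8$, the intersection $A_1(E_7^{\alpha}) := A_1(E_8) \cap E_7^{\alpha}$ is automatically an antipodal subgroup of $E_7^{\alpha}$ containing $e$, and hence is contained in the fixed-point set of $s_e$, which by the recollection preceding the proposition decomposes as
\[
F(s_e, E_7^{\alpha}) = \{e\} \sqcup \{\tau_\alpha\} \sqcup EVI_+ \sqcup \tau_\alpha EVI_+ .
\]
So I only need to identify the three pieces of this intersection and assemble them.

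For the cardinality, I would first observe that $\tau_\alpha = \psi(\gamma_0^{0,0}, \gamma_1^{0,0})$ lies in $A_1(E_8)$, so the pole piece contributes the full set $\{e,\tau_\alpha\}$. Next I invoke Subsection \ref{s-EVI} to conclude $A_1(E_8) \cap EVI_+ = A_1(EVI_+)$, which is a maximal antipodal set of $EVI_+$ of cardinality $31$. Finally, since $A_1(E_8)$ is a subgroup containing $\tau_\alpha$, multiplication by $\tau_\alpha$ preserves it, giving
\[
A_1(E_8) \cap \tau_\alpha EVI_+ \;=\; \tau_\alpha\bigl(A_1(E_8) \cap EVI_+\bigr) \;=\; \tau_\alpha A_1(EVI_+),
\]
again of cardinality $31$. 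Summing yields $\#A_1(E_7^{\alpha}) = 2 + 31 + 31 = 64$.

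For maximality, let $B$ be any antipodal set of $E_7^{\alpha}$ with $A_1(E_7^{\alpha}) \subset B$. Since $e \in B$, we have $B \subset F(s_e,E_7^{\alpha})$, so $B$ splits according to the four pieces of the polar decomposition. Then $B \cap EVI_+$ is an antipodal set of $EVI_+$ containing $A_1(EVI_+)$, and by Theorem \ref{EVI} the latter is already maximal in $EVI_+$; hence $B \cap EVI_+ = A_1(EVI_+)$. Translation by the central involution $\tau_\alpha$ forces $B \cap \tau_\alpha EVI_+ = \tau_\alpha A_1(EVI_+)$ as well, while the two-point piece is already saturated, so $B = A_1(E_7^{\alpha})$. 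I do not expect a substantive obstacle here: both the polar structure and the constituent maximality have been done in earlier subsections, and the only care required is to check that the label $A_1$ chosen for $EVI_+$ (the class of cardinality $31$, not $63$) matches the piece cut out by $A_1(E_8)$, which is immediate from the description in Subsection \ref{s-EVI}.
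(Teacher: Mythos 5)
Your proof is correct and follows essentially the same route as the paper: the paper likewise identifies $A_{1}(E_{8})\cap E_{7}^{\alpha}$ with $\{e\}\cup A_{1}(EVI_{+})\cup\tau_{\alpha}A_{1}(EVI_{+})\cup\{\tau_{\alpha}\}$ and deduces maximality from Theorem \ref{EVI} via the polar decomposition at $e$ (packaged there as Theorem \ref{E_{7}} and the strategy of Subsection \ref{strategies}). The only point to tighten is the translation step $B\cap\tau_{\alpha}EVI_{+}=\tau_{\alpha}(B\cap EVI_{+})$: for an arbitrary antipodal $B$ this needs either that $B$ be taken maximal (hence a subgroup containing $\tau_{\alpha}$) or the observation that the covering transformation $g\mapsto\tau_{\alpha}g$ preserves every maximal antipodal set, both of which are already available in the paper.
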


\begin{proof}
We see that
\[
\begin{split}
A_{1}(E_{7}^{\alpha}) 
& \subset A_{1}(E_{8}) \cap C(SU^{\alpha}(2), E_{8}) 
= \big\{ \psi(\gamma_{i}^{0,0}, \gamma_{0}^{a,b}), \psi(\gamma_{i}^{0,0}, \gamma_{1}^{a,b}) \ ; \ 0 \leq i \leq 7, a,b = 0,1 \big\}.
\end{split}
\]
Then, the right hand side is contained in $SU^{\bar{\alpha}} \cdot Spin^{\alpha}(12) = F(\sigma_{VI}^{\bar{\alpha}}, E_{7}^{\alpha})$.
Therefore, $A_{1}(E_{7}^{\alpha}) = A_{1}(E_{8}) \cap C(SU^{\alpha}(2), E_{8})$.
On the other hand,
\[
\begin{split}
& \{ e \} \cup A_{1}(EVI_{+}) \cup \tau_{\alpha}A_{1}(EVI_{+}) \cup \{ \tau_{\alpha} \} \\
& = \{ \psi(\gamma_{0}^{0,0}, \gamma_{0}^{0,0}) \} 
\cup \big\{ \psi(\gamma_{i}^{0,0}, \gamma_{0}^{a,b}), \ \psi(\gamma_{i}^{0,0}, \gamma_{1}^{0,0}) \ ;\ 1 \leq i \leq 7, a,b = 0,1 \big\} 
\cup \{ \psi(\gamma_{0}^{0,0}, \gamma_{1}^{0,0}) \} = A_{1}(E_{7}^{\alpha}).
\end{split}
\]
Hence, $A_{1}(E_{7}^{\alpha})$ is a maximal antipodal set of $E_{7}^{\alpha}$ whose cardinality is $64$.
\end{proof}

\begin{prop}
Set $A_{2}(E_{7}^{\alpha}) = A_{2}(E_{8}) \cap E_{7}^{\alpha}$.
Then, $A_{2}(E_{7}^{\alpha})$ is a maximal antipodal set of $E_{7}^{\alpha}$ whose cardinality is $128$.
\end{prop}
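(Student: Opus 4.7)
The plan is to intersect $A_{2}(E_{8}) = A(T) \cup xA(T)$ with $E_{7}^{\alpha} = C_{o}(SU^{\alpha}(2), E_{8})$ term by term, and then recognize the result as the explicit construction of $A_{2}(E_{7})$ used in the proof of Theorem \ref{E_{7}}. The key observation is that for $H \in \frak{t}$, the element $\exp H$ lies in $C(SU^{\alpha}(2), E_{8})$ if and only if $\mathrm{Ad}(\exp H) = e^{\mathrm{ad}(H)}$ acts trivially on $\frak{su}^{\alpha}(2) = \mathbb{R}(iA_{\alpha}) + \frak{r}_{\alpha}$. The action on $iA_{\alpha}$ is automatic, and since every root of $\Sigma(\frak{e}_{8}, \frak{t})$ has squared length $2$, the action on $\frak{r}_{\alpha}^{\mathbb{C}}$ reduces to the condition $(\alpha, \gamma) \in 2\mathbb{Z}$ for each coroot contribution $\pi(iA_{\gamma})$ appearing in $H$.

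First I would apply this criterion to the pieces of $A(T)$ described in Subsection \ref{E8}. For $\gamma \in \Sigma^{+}$, the values of $(\alpha, \gamma)$ lie in $\{-2,-1,0,1,2\}$, so $\tau_{\gamma} \in E_{7}^{\alpha}$ exactly when $\gamma = \alpha$ or $\gamma \in \Sigma_{\alpha, 0}^{+}$. For $\gamma \in \Sigma_{\alpha, 0}^{+}$, the sum $(\alpha, \gamma) + (\alpha, \alpha) = 0 + 2 = 2$ is even, so all $\tau_{\gamma}\tau_{\alpha}$ belong to $E_{7}^{\alpha}$. On the other hand, with $\beta = x_{6} - x_{7}$ one has $(\alpha, \beta) = -1$ and $(\alpha, \alpha + \beta) = 1$, so for $\delta \in \Sigma_{\alpha, 0}^{+} \cap \Sigma_{\beta, 0}$ the sums $(\alpha, \delta) + (\alpha, \beta)$ and $(\alpha, \delta) + (\alpha, \alpha + \beta)$ are both odd, and none of the elements $\tau_{\delta}\tau_{\beta}, \tau_{\delta}\tau_{\alpha + \beta}$ belongs to $E_{7}^{\alpha}$. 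Next, for $xA(T)$, since $\mathrm{Ad}(x)(iA_{\alpha}) = -iA_{\alpha}$ and $\mathrm{Ad}(g)(iA_{\alpha}) = iA_{\alpha}$ for every $g \in T$, no element $xg$ can fix $\frak{su}^{\alpha}(2)$ pointwise; hence $xA(T) \cap E_{7}^{\alpha} = \emptyset$.

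Combining these calculations yields
\[
A_{2}(E_{8}) \cap E_{7}^{\alpha} = \{e, \tau_{\alpha}\} \cup \{\tau_{\gamma} \ ;\ \gamma \in \Sigma_{\alpha, 0}^{+}\} \cup \{\tau_{\gamma}\tau_{\alpha} \ ;\ \gamma \in \Sigma_{\alpha, 0}^{+}\}.
\]
Since $|\Sigma_{\alpha, 0}^{+}| = 63$ in the $E_{7}$ root system, the cardinality is $1 + 1 + 63 + 63 = 128$. To finish, I would identify $\{\tau_{\gamma} \ ;\ \gamma \in \Sigma_{\alpha, 0}^{+}\}$ with the set $A_{2}(EVI_{+})$ from Subsection \ref{s-EVI}, and observe that $\tau_{\alpha}$ is the pole of the identity in $E_{7}^{\alpha}$, so the above set has exactly the form $\{e\} \cup A_{2}(EVI_{+}) \cup \tau_{\alpha}A_{2}(EVI_{+}) \cup \{\tau_{\alpha}\}$. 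This is precisely the construction $A_{2}(E_{7})$ from Theorem \ref{E_{7}}, hence $A_{2}(E_{7}^{\alpha})$ is a maximal antipodal set.

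The only real obstacle is the bookkeeping for the $\beta$- and $(\alpha+\beta)$-terms in $A(T)$: one must be careful that the correct choice of $\beta = x_{6} - x_{7}$ used in the construction of $A(T)$ satisfies $2(\alpha, \beta)/(\alpha, \alpha) = -1$ so that the parity computation rules those elements out. Everything else is a direct application of the criterion $(\alpha, \gamma) \in 2\mathbb{Z}$ and the earlier identification of $A_{2}(EVI_{+})$.
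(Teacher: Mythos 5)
Your proposal is correct, but it establishes the crucial upper bound $A_{2}(E_{8}) \cap E_{7}^{\alpha} \subseteq A$ by a different mechanism than the paper. The paper only verifies the easy inclusion $A \subseteq A_{2}(E_{7}^{\alpha})$, where $A = \{e\} \cup \{\tau_{\gamma}\} \cup \{\tau_{\gamma}\tau_{\alpha}\} \cup \{\tau_{\alpha}\}$ with $\gamma$ ranging over $\Sigma_{\alpha,0}^{+}$, and then invokes Theorem \ref{E_{7}} ($\#_{2}E_{7} = 128$) to force equality: an antipodal set of $E_{7}^{\alpha}$ containing a $128$-element subset cannot be strictly larger. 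You instead determine the intersection term by term, discarding the $\tau_{\delta}\tau_{\beta}$, $\tau_{\delta}\tau_{\alpha+\beta}$ and $xA(T)$ pieces via the parity criterion on $2(\alpha,\gamma)/(\alpha,\alpha)$ and the fact that $\mathrm{Ad}(x)$ negates $\frak{t}$; this is more work but yields the set without appealing to the $2$-number bound, reserving Theorem \ref{E_{7}} only for the final maximality claim. Both routes are valid and your root computations check out ($(\alpha,\beta) = -1$, $(\alpha,\alpha+\beta)=1$, all sums odd for the discarded pieces). One point you gloss over: your criterion tests membership in the centralizer $C(SU^{\alpha}(2), E_{8})$, whereas $E_{7}^{\alpha}$ is by definition its identity component; for the inclusion $A \subseteq E_{7}^{\alpha}$ you should note that every surviving element lies in $T^{\alpha} = \mathrm{exp}\,\frak{t}^{\alpha}$, using the congruence $\tau_{\alpha} = \tau_{\beta_{1}}\tau_{\beta_{2}}\tau_{\beta_{3}}$ with $\beta_{i} \in \Sigma_{\alpha,0}$ established in Subsection \ref{E7}. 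This is easily repaired from material already in the paper, so it is a cosmetic omission rather than a genuine gap.
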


\begin{proof}
It is obvious that $A_{2}(E_{7}^{\alpha})$ is an antipodal set of $E_{7}^{\alpha}$.
On the other hand, 
\[
A = \{ e \} \cup \{ \tau_{\beta} \ ;\ \beta \in \Sigma^{+}_{\alpha,0} \} \cup \{ \tau_{\beta}\tau_{\alpha} \ ;\ \beta \in \Sigma^{+}_{\alpha, 0} \} \cup \{ \tau_{\alpha} \}.
\]
is an antipodal set of $E_{7}^{\alpha}$ and $A \subset A_{2}(E_{7}^{\alpha})$.
Since the cardinality of $A$ is $128$, we obtain $A_{2}(E_{7}^{\alpha}) = A$ by Theorem \ref{E_{7}}.
Therefore, $A_{2}(E_{7}^{\alpha})$ is a maximal antipodal set of $E_{7}^{\alpha}$ whose cardinality is $128$.
\end{proof}


\subsection{Maximal antipodal sets of $EV$}\label{s-EV}

In this subsection, we consider maximal antipodal sets of $EV$.
It is well known that the number of polars of $EV$ except for the trivial pole is $3$ and one of them is a pole and the others are isomorphic to $G_{4}^{*}(\mathbb{C}^{8})$.
Let $\tau$ be the covering transformation of $EV$.
Fix $p \in EV$ and denote the pole of $p$ by $q$.
Moreover, the polars of $p$ isomorphic to $G_{4}^{*}(\mathbb{C}^{8})$ is denoted by $M_{i}^{+}(EV)\ (i = 1,2)$.
Then, $M_{2}^{+}(EV) = \tau(M_{1}^{+}(EV))$.
Let $A_{j}(M_{1}^{+}(EV)) \ (i = 1,2,3)$ be maximal antipodal sets of $M_{1}^{+}(EV)$ whose cardinalities are $27, 35, 63$.
Set
\[
A_{j}(EV) = \{ p \} \cup A_{j}(M_{1}^{+}(EV)) \cup \tau( A_{j}(M_{1}^{+}(EV) ) \cup \{ q \}.
\]
Then, $\#A_{1}(EV) = 56, \#A_{2}(EV) = 72, \#A_{3}(EV) = 128$.

\begin{thm} \label{EV}
The subsets $A_{j}(EV)\ (j = 1,2,3)$ are maximal antipodal sets of $EV$ whose cardinalities are $56, 72, 128$.
Moreover, any maximal antipodal set is congruent to one of $A_{i}(EV) \ (i = 1,2,3)$.

\end{thm}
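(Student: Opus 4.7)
The plan is to apply the general strategy of Subsection \ref{strategies}, mirroring the argument used for Theorem \ref{E_{7}}. First I would recall from \cite{Chen-Nagano1} that apart from the trivial pole $\{p\}$, the polar decomposition $F(s_p, EV)$ consists of exactly three components: a non-trivial pole $\{q\}$ and two polars $M_1^+(EV), M_2^+(EV)$, each isomorphic to $G_4^*(\mathbb{C}^8)$. Since $EV$ has a non-trivial pole, the associated deck transformation $\tau$ commutes with every isometry, satisfies $s_{\tau(r)} = s_r$, exchanges $p$ and $q$, and therefore sends $M_1^+(EV)$ to $M_2^+(EV)$. In particular $\tau$ preserves every maximal antipodal set, so up to congruence every such set contains both $p$ and $q$.

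Second, I would argue that any maximal antipodal set $A \subset EV$ containing $p$ decomposes as
\[
A = \{p\} \cup \bigl(A \cap M_1^+(EV)\bigr) \cup \tau\bigl(A \cap M_1^+(EV)\bigr) \cup \{q\},
\]
with $A \cap M_1^+(EV)$ a maximal antipodal set of $M_1^+(EV) \cong G_4^*(\mathbb{C}^8)$. The decomposition is forced by $\tau$-invariance together with $A \subset F(s_p, EV)$, and the maximality of the intersection is exactly the observation opening Subsection \ref{strategies}: any strictly larger antipodal set $A'$ of $M_1^+(EV)$ would lift to the strictly larger antipodal set $\{p\} \cup A' \cup \tau(A') \cup \{q\}$ of $EV$, contradicting maximality of $A$. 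By Theorem \ref{Bottom-Grass} there are exactly three congruence classes of maximal antipodal sets of $G_4^*(\mathbb{C}^8)$, with cardinalities $27, 35, 63$, and these lift to the three candidates $A_j(EV)$ with cardinalities $56, 72, 128$.

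Finally I would verify that each $A_j(EV)$ is genuinely an antipodal set of $EV$, which reduces to checking that $\tau(A_j(M_1^+(EV)))$ is antipodal both to $A_j(M_1^+(EV))$ and to $\{p, q\}$; both facts follow from $s_{\tau(r)} = s_r$ together with the defining property of a polar. Non-congruence of the three classes is automatic from the distinct cardinalities $56, 72, 128$, so unlike the argument in the proof of Theorem \ref{EVI} no internal conjugacy obstruction has to be constructed. The step I expect to require the most care is the bookkeeping of the polar structure of $EV$ and confirming that $M_2^+(EV) = \tau(M_1^+(EV))$ (rather than $M_1^+(EV)$ being $\tau$-stable), but this is settled by the Chen-Nagano classification \cite{Chen-Nagano1} combined with the fact that $\tau$ swaps the two non-pole polars whenever the symmetric space admits a non-trivial pole.
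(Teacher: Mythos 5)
Your proposal is correct, and it reaches the statement by a genuinely more direct route than the paper. The paper builds the same three candidates $A_j(EV)$ and then spends its entire proof ruling out the existence of $g \in E_{7}$ with $g(A_{i}(EV)) \subset A_{j}(EV)$ for $i<j$: for $j=3$ it uses that $A_{3}(EV)$ is a maximal antipodal set of a maximal flat torus, on which some subgroup acts transitively, to reduce to the isotropy group at $p$; for the pair $(A_{1},A_{2})$ it invokes an external homogeneity theorem (Theorem 4.19 of \cite{Sasaki}) to arrange $g(p)=p$ or $g(p)=q$; both cases then terminate in Theorem \ref{Bottom-Grass}. You instead prove directly that every maximal antipodal set containing $p$ is $\tau$-invariant, lies in $F(s_{p},EV)$, and is determined by its trace on $M_{1}^{+}(EV)$, which is forced to be maximal in $M_{1}^{+}(EV)\cong G_{4}^{*}(\mathbb{C}^{8})$; this makes the maximality of each $A_{j}(EV)$ and the completeness of the list immediate consequences of Theorem \ref{Bottom-Grass}, with no need for the embedding obstructions or the homogeneity result. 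This is the same reduction the paper itself uses for $E_{7}$ (where it simply cites Theorem \ref{EVI}), and it works here because the polar structure is clean: a pole plus two polars interchanged by $\tau$. One small caveat: your remark that, unlike Theorem \ref{EVI}, ``no internal conjugacy obstruction has to be constructed'' because the cardinalities differ slightly misdiagnoses the role of that obstruction --- in the $EVI$ case the cardinalities $31$ and $63$ also differ, and the obstruction there is needed to establish \emph{maximality} of the smaller set, not non-congruence. Your argument survives this because your polar decomposition establishes maximality directly, but the justification is the decomposition, not the cardinality count.
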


\begin{proof}
It is sufficient to study whether there does not exist $g \in E_{7}$ such that $g(A_{i}(EV)) \subset A_{j}(EV) \ (1 \leq i < j \leq 3)$.
The rank of $EV$ is equal to $7$, so $A_{3}(EV)$ is a maximal antipodal set of a maximal flat torus.
In particular, there exists some subgroup of $E_{7}$ acting $A_{3}(EV)$ transitively.
For each $i = 1,2$, we assume that there exists $g \in E_{7}$ such that $g(A_{i}(EV)) \subset A_{3}(EV)$.
Then, there exists some element $h$ of the isotorpy group of $E_{7}$ at $p$ such that $h(A_{i}(EV)) \subset A_{3}(EV)$.
In particular, $h(A_{i}(M_{1}^{+}(EV)) \subset A_{3}(M_{1}^{+}(EV))$.
However, this contradicts to Theorem \ref{Bottom-Grass} and there does not exist such $g \in E_{7}$.
Next, we study that whether there exists $g \in E_{7}$ such that $g(A_{1}(EV)) \subset A_{2}(EV)$.
By the result of the author \cite{Sasaki} (Theorem 4.19), we see that there exists $g \in E_{7}$ such that $g(A_{1}(EV)) \subset A_{2}(EV)$ and $g(p) = p$ or $g(p) = q$.
In the both cases, in $G_{4}^{*}(\mathbb{C}^{8})$, there exists some element $h \in SU(8)/\mathbb{Z}_{2}$ such that $g(A_{1}(G_{4}^{*}(\mathbb{C}^{8}))) \subset A_{2}(G_{4}^{*}(\mathbb{C}^{8}))$.
However, this contradicts to Theorem \ref{Bottom-Grass}.
Hence, there does not exist such $g \in E_{7}$ and we complete the proof of the statement.
\end{proof}

We describe maximal antipodal sets of $EV$ more explicitly.
Let $\alpha = x_{7} - x_{8}$ and consider $E_{7}^{\alpha}$.
Then, since 
\[
\bigcup_{g \in E_{7}^{\alpha}}g x g^{-1} = E_{7}^{\alpha}/F(\sigma_{V}, E_{7}^{\alpha}) \cong E_{7}/SU(8)/\mathbb{Z}_{2},
\]
we denote this conjugate orbit by $EV_{x}$.
Then, $EV_{x}$ is a totally geodesic submanifold of $EVIII_{+}$.
Since $x \not\in C(SU^{\alpha}(2), E_{8})$, we see $x \not\in E_{7}^{\alpha}$ and $EV \not\subset E_{7}^{\alpha}$.

\begin{prop}
Set $A_{1}(EV_{x}) = A_{1}(E_{8}) \cap EV_{x}$.
Then, $A_{1}(EV_{x})$ is a maximal antipodal set of $EV_{x}$ whose cardinality is $56$.

\end{prop}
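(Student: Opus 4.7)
The plan is to identify $A_{1}(EV_{x})$ explicitly as a subset of $A_{1}(E_{8})$, verify its cardinality is $56$, and then invoke Theorem \ref{EV} to conclude maximality. Since $EV_{x} = \{hxh^{-1} : h \in E_{7}^{\alpha}\}$ and every $h \in E_{7}^{\alpha}$ centralizes $SU^{\alpha}(2)$, any $g \in EV_{x}$ must satisfy (i) $g^{2} = e$, (ii) $\mathrm{Ad}(g)|_{\frak{su}^{\alpha}(2)} = \mathrm{Ad}(x)|_{\frak{su}^{\alpha}(2)}$, and (iii) the symmetric pair $(\frak{e}_{7}^{\alpha}, F^{+}(\mathrm{Ad}(g), \frak{e}_{7}^{\alpha}))$ is of type $EV$, equivalently $\dim F^{+}(\mathrm{Ad}(g), \frak{e}_{7}^{\alpha}) = 63$. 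Paralleling the treatment of $EI_{x}$ in Subsection \ref{s-EI}, the first step is to enumerate those $g = \psi(\gamma_{i}^{0,0}, \gamma_{j}^{a,b}) \in A_{1}(E_{8})$ whose action on $\frak{su}^{\alpha}(2)$ coincides with that of $x$. Since $\alpha = x_{7} - x_{8}$ corresponds to coordinates in the $Spin^{1}(8)$ factor, the matching condition on $\mathbb{R}(iA_{\alpha})$ is detected essentially on the second component, producing a short explicit list.

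Among these candidates, I would discard those for which the $(+1)$-eigenspace on $\frak{e}_{7}^{\alpha}$ has the wrong dimension, by the same $L$-versus-$V$ decomposition used in Subsection \ref{s-EVIIIEIX}: write $\frak{e}_{7}^{\alpha} = (\frak{e}_{7}^{\alpha} \cap L) + (\frak{e}_{7}^{\alpha} \cap V)$, compute $F^{+}(\mathrm{Ad}(\psi(a,b)), \frak{e}_{7}^{\alpha} \cap L)$ directly in the $\frak{spin}(16)$ picture, and compute $F^{+}(\mathrm{Ad}(\psi(a,b)), \frak{e}_{7}^{\alpha} \cap V)$ as a tensor product of $\pm 1$-eigenspaces of $\Delta_{8}^{\pm}(a)$ and $\Delta_{8}^{\pm}(b)$ intersected with $\mathbb{O} \otimes^{\pm} \mathbb{H}$. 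Each dimension check is mechanical using the tables of $\dim F^{\pm}(\Delta_{8}^{\pm}(\gamma_{i}^{a,b}), V_{8}^{\pm})$ already recorded. The surviving elements should total $56$, consisting of $x$, its pole $\tau(x)$, and two $27$-element subsets lying in the two polars of $x$ in $EV_{x}$ that are isomorphic to $G_{4}^{*}(\mathbb{C}^{8})$.

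For maximality, the cleanest route is to match against Theorem \ref{EV}: I would verify that each $27$-element subset inside a $G_{4}^{*}(\mathbb{C}^{8})$-polar is congruent to $A_{2}(G_{4}^{*}(\mathbb{C}^{8}))$ from Theorem \ref{Bottom-Grass}, so that by the construction of $A_{j}(EV)$ above Theorem \ref{EV}, the antipodal set $A_{1}(EV_{x})$ is congruent to $A_{1}(EV)$ and is therefore maximal. The principal obstacle is not conceptual but notational: the dimension computation for $F^{+}(\mathrm{Ad}(g), \frak{e}_{7}^{\alpha})$ has to be carried out case by case across the candidate list, and it requires care to keep track of which $\gamma_{j}^{a,b}$ in the $Spin^{1}(8)$ factor gives the correct eigenvalue pattern on both the $\frak{spin}^{\alpha}(12)$-component and the $\mathbb{O} \otimes^{\pm} \mathbb{H}$ spinor modules. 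Once this bookkeeping is done, the conclusion follows immediately from Theorem \ref{EV} since no $56$-element antipodal set of $EV$ can be properly extended to a maximal antipodal set without increasing its cardinality to $72$ or $128$, which would force a larger intersection with $A_{1}(E_{8})$ than we have produced.
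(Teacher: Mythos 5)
Your identification of the candidate set agrees with the paper's, but two steps of your plan have real gaps. First, conditions (i)--(iii) are only \emph{necessary} for membership in $EV_{x}$: the set $\{ a \in A_{1}(EVIII_{+}) \ ;\ \mathrm{Ad}(a)|_{\frak{su}^{\alpha}(2)} = \mathrm{Ad}(x)|_{\frak{su}^{\alpha}(2)} \}$, which equals $\{ \psi(\gamma_{i}^{0,0}, \gamma_{4}^{a,b}), \psi(\gamma_{i}^{0,0}, \gamma_{5}^{a,b}) \ ;\ 1 \leq i \leq 7,\ a,b = 0,1 \}$, is a priori only an upper bound for $A_{1}(E_{8}) \cap EV_{x}$, and a dimension count of $F^{+}(\mathrm{Ad}(g), \frak{e}_{7}^{\alpha})$ cannot by itself place an involution in the particular conjugacy orbit $EV_{x}$ (note that $EVIII_{+}$ meets three distinct $E_{7}^{\alpha}$-orbits isomorphic to $EV$). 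The paper closes this by exhibiting explicit conjugations inside $E_{7}^{\alpha}$: one in the $Spin^{0}(8)$ factor moving each $\gamma_{i}^{a,b}$ to $\gamma_{1}^{0,0}$, and one by an element of $SU^{\bar{\alpha}}(2)$ carrying $\gamma_{4}^{0,0}$ to $\gamma_{5}^{0,0}$ in the second factor. Your write-up never verifies that the surviving candidates actually lie in the orbit.

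Second, your maximality argument is genuinely different from the paper's and is workable in outline, but it front-loads an unperformed verification and ends with a non sequitur. To invoke the construction above Theorem \ref{EV} you must prove that each piece $A_{1}(EV_{x}) \cap M_{i}^{+}(EV_{x})$ is a \emph{maximal} antipodal set of $G_{4}^{*}(\mathbb{C}^{8})$ congruent to $A_{2}(G_{4}^{*}(\mathbb{C}^{8}))$; having $27$ elements does not suffice, since a non-maximal antipodal set of $G_{4}^{*}(\mathbb{C}^{8})$ can also have $27$ elements. Your closing claim that an extension to a $72$- or $128$-element maximal antipodal set ``would force a larger intersection with $A_{1}(E_{8})$'' does not follow: the added points need not lie in $A_{1}(E_{8})$ at all. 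The paper's maximality argument sidesteps both issues with a cleaner algebraic device: under the Cartan embedding $gxg^{-1} \mapsto gxg^{-1}x$, the set $A_{1}(EV_{x})x$ generates the maximal antipodal subgroup $A_{1}(E_{7}^{\alpha})$, so for any antipodal set $B \supset A_{1}(EV_{x})$ of $EV_{x}$ one gets $Bx \subset \langle Bx \rangle = A_{1}(E_{7}^{\alpha})$ by maximality of $A_{1}(E_{7}^{\alpha})$, and intersecting $A_{1}(E_{7}^{\alpha})$ with $(EV_{x})x$ forces $B = A_{1}(EV_{x})$. If you want to keep your polar-based route, replace the last sentence by the argument of Subsection \ref{strategies} and supply the congruence to $A_{2}(G_{4}^{*}(\mathbb{C}^{8}))$ explicitly.
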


\begin{proof}
First, we describe $A_{1}(EV_{x})$ explicitly.
We see that
\[
\begin{split}
A_{1}(EV_{x}) & \subset \{ a \in A_{1}(EVIII_{+}) \ ;\ \mathrm{Ad}(a)|_{\frak{su}^{\alpha}(2)} = \mathrm{Ad}(x)|_{\frak{su}^{\alpha}(2)} \}  \\
& = \{ \psi( \gamma_{i}^{0,0}, \gamma_{4}^{a,b}), \psi( \gamma_{i}^{0,0}, \gamma_{5}^{a,b}) \ ;\ 1 \leq i \leq 7, a,b = 0,1 \}.
\end{split}
\]
On the other hand, for any $1 \leq i \leq 7$ and $a,b = 0,1$, there exists $g \in Spin(8)$ such that $g(\gamma_{i}^{a,b})g^{-1} = \gamma_{1}^{0,0}$.
Moreover, since
\[
\begin{split}
\Big( \frac{1}{\sqrt{2}}(1 + e_{4}e_{5}) & \frac{1}{\sqrt{2}}(1 + e_{6}e_{7}) \Big) \gamma_{4}^{0,0} \Big( \frac{1}{\sqrt{2}}(1 - e_{6}e_{7}) \frac{1}{\sqrt{2}}(1 - e_{4}e_{5}) \Big) \\
& = -\frac{1}{4}(1 + e_{4}e_{5})(1 + e_{6}e_{7})(e_{1}e_{3}e_{5}e_{7})(1 - e_{6}e_{7})(1 - e_{4}e_{5}) \\
& = \cdots = -e_{1}e_{3}e_{4}e_{6} = \gamma_{5}^{0,0},
\end{split}
\]
there exists $h \in SU^{\bar{\alpha}}(2) \subset E_{7}^{\alpha}$ such that $h \psi(\gamma_{i}^{a,b}, \gamma_{4}^{0,0}) h^{-1} = \psi(\gamma_{i}^{a,b}, \gamma_{5}^{0,0})$ for any $1 \leq i \leq 7$ and $a,b = 0,1$.
Therefore,
\[
A_{1}(EV_{x}) = \{ \psi( \gamma_{i}^{0,0}, \gamma_{4}^{a,b}), \psi( \gamma_{i}^{0,0}, \gamma_{5}^{a,b}) \ ;\ 1 \leq i \leq 7, a,b = 0,1 \}.
\]
Next, we show that $A_{1}(EV_{x})$ is a maximal antipodal set of $EV_{x}$.
We consider the Cartan embedding
\[
EV_{x} \ni gxg^{-1} \mapsto gxg^{-1}x \in E_{7}^{\alpha}.
\]
Then, $A_{1}(EV_{x})x$ is an antipodal set of $(EV_{x})x$ and $E_{7}^{\alpha}$, and
\[
A_{1}(EV_{x})x = \{ \psi(\gamma_{i}^{0,0}, \gamma_{0}^{a,b}), \psi(\gamma_{i}^{0,0}, \gamma_{1}^{a,b}) \ ;\ 1 \leq i \leq 7, i \not= 4, a,b = 0,1 \}.
\]
In particular, the subgroup $\langle A_{1}(EV_{x})x \rangle$ generated by $A_{1}(EV_{x})x$ is $A_{1}(E_{7}^{\alpha})$.
Let $B$ be a maximal antipodal set of $EV_{x}$ containing $A_{1}(EV_{x})$.
Then, $Bx$ is an antipodal set of $E_{7}^{\alpha}$ and $A_{1}(E_{7}^{\alpha}) \subset \langle Bx \rangle$.
Note that the identity element is contained in $Bx$.
By the maximality of $A_{1}(E_{7}^{\alpha})$, it is true that $A_{1}(E_{7}^{\alpha}) = \langle Bx \rangle$.
Hence, $Bx \subset A_{1}(E_{7}^{\alpha})$.
Since
\[
A_{1}(E_{7}^{\alpha}) - A_{1}(EV_{x})x = \{ \psi(\gamma_{4}^{0,0}, \gamma_{0}^{a,b}), \psi(\gamma_{4}^{0,0}, \gamma_{1}^{a,b}) \ ;\ a,b = 0,1 \}
\]
and $Bx \subset A_{1}(E_{7}^{\alpha}) \cap (EV_{x})x$, we obtain $Bx = A_{1}(EV_{x})x$ and $B = A_{1}(EV_{x})$.
Therefore, $A_{1}(EV_{x})$ is a maximal antipodal set of $EV_{x}$.
\end{proof}

Since $Spin^{1}(8) \subset E_{7}^{\alpha}$, we see that $\psi( (\tilde{G}_{4}(\mathbb{R}^{8}))_{+}^{1} \times \{ \gamma_{i}^{0,0} \}) \ (i = 4,5)$ are totally geodesic submanifolds of $EV_{x}$.
Then,
\[
A_{1}(EV_{x}) = \psi \Big( A \big( (\tilde{G}_{4}(\mathbb{R}^{8}))_{+}^{1} \big) \times \{ \gamma_{4}^{0,0} \} \Big) \cup \psi \Big( A \big( (\tilde{G}_{4}(\mathbb{R}^{8}))_{+}^{1} \big) \times \{ \gamma_{5}^{0,0} \} \Big).
\]
Therefore, $A_{1}(EV_{x})$ is constructed by a maximal antipodal set of $\tilde{G}_{4}(\mathbb{R}^{8})$.

\begin{prop}
Set $A_{3}(EV_{x}) = A_{2}(E_{8}) \cap EV_{x}$.
Then, $A_{3}(EV_{x})$ is a maximal antipodal set of $EV_{x}$ whose cardinality is $128$.

\end{prop}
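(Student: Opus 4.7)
My plan is to use the Cartan embedding $\iota : EV_{x} \to E_{7}^{\alpha}$ defined by $y \mapsto yx$. This is well defined because for $y = gxg^{-1}$ with $g \in E_{7}^{\alpha}$ one has
\[
yx = gxg^{-1}x = g\sigma_{V}(g^{-1}) \in E_{7}^{\alpha}.
\]
As a translate of the Cartan embedding of the symmetric space $EV_{x} \cong E_{7}^{\alpha}/F(\sigma_{V}, E_{7}^{\alpha})$ into $E_{7}^{\alpha}$, $\iota$ is totally geodesic and sends antipodal sets of $EV_{x}$ to antipodal sets of $E_{7}^{\alpha}$, exactly as in the proof of the preceding proposition for $A_{1}(EV_{x})$. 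Because $x$ inverts $T$, right-multiplication by $x$ permutes $A(T)$, so $A_{2}(E_{8}) \cdot x = A_{2}(E_{8})$. Combining this with $\iota(EV_{x}) \subset E_{7}^{\alpha}$ and the previous identification $A_{2}(E_{7}^{\alpha}) = A_{2}(E_{8}) \cap E_{7}^{\alpha}$ from the preceding subsection, I obtain
\[
\iota(A_{3}(EV_{x})) \subset A_{2}(E_{8}) \cap E_{7}^{\alpha} = A_{2}(E_{7}^{\alpha}),
\]
and hence $|A_{3}(EV_{x})| \leq 128$.

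For the reverse containment I would exploit the normal form structure of $(\frak{e}_{7}^{\alpha}, F^{+}(\sigma_{V}, \frak{e}_{7}^{\alpha}))$. Since $\frak{t}^{\alpha} \subset F^{-}(\sigma_{V}, \frak{e}_{7}^{\alpha})$ is a maximal abelian subspace and $\sigma_{V}$ acts as $-1$ on $\frak{t}^{\alpha}$, every $t = \exp v \in T^{\alpha}$ with $v \in \frak{t}^{\alpha}$ admits the factorisation
\[
t = \exp(v/2) \cdot \sigma_{V}(\exp(-v/2)),
\]
with $\exp(v/2) \in T^{\alpha} \subset E_{7}^{\alpha}$. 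Hence $T^{\alpha}$ lies in the Cartan image of $EV_{x}$, so $T^{\alpha} \cdot x \subset EV_{x}$. Intersecting with $A_{2}(E_{8}) = A(T) \cup xA(T)$ yields $xA(T^{\alpha}) \subset A_{3}(EV_{x})$. By the description of $A_{2}(E_{7}^{\alpha})$ from the preceding subsection, its $128$ elements are precisely the involutions in the maximal torus $T^{\alpha}$, i.e. $A_{2}(E_{7}^{\alpha}) = A(T^{\alpha})$. Thus $\iota(A_{3}(EV_{x})) = A_{2}(E_{7}^{\alpha})$ and consequently $A_{3}(EV_{x}) = xA(T^{\alpha})$ has cardinality $2^{7} = 128$.

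Maximality will then follow by exactly the argument used for $A_{1}(EV_{x})$ in the preceding proposition: if $B \subset EV_{x}$ is any antipodal set containing $A_{3}(EV_{x})$, then $Bx$ is an antipodal set of $E_{7}^{\alpha}$ containing $\iota(A_{3}(EV_{x})) = A_{2}(E_{7}^{\alpha})$, so the maximality of $A_{2}(E_{7}^{\alpha})$ in $E_{7}^{\alpha}$ forces $Bx = A_{2}(E_{7}^{\alpha})$, and hence $B = A_{3}(EV_{x})$.

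The main obstacle I expect is to pin down the intersection $A_{2}(E_{8}) \cap EV_{x}$ exactly, and in particular to rule out elements of $A(T) \cap EV_{x}$ (so that $A_{3}(EV_{x})$ is not larger than $xA(T^{\alpha})$). The key observation for this is that the $SU^{\alpha}(2)$-component of $x$ in the decomposition $F(\sigma_{IX}^{\alpha}, E_{8}) = SU^{\alpha}(2) \cdot E_{7}^{\alpha}$ acts as the non-trivial Weyl element on $\exp(\mathbb{R}(iA_{\alpha}))$ while $E_{7}^{\alpha}$ centralises this component, so no $E_{7}^{\alpha}$-conjugate of $x$ can land in $T$. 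Together with the explicit factorisation above, this pins down $A_{3}(EV_{x}) = xA(T^{\alpha})$.
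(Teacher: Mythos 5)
Your proof is correct, and the identification $A_{3}(EV_{x}) = xA(T^{\alpha})$ agrees with the set the paper exhibits, namely $\{x,\tau_{\alpha}x\}\cup\{\tau_{\beta}x,\tau_{\beta}\tau_{\alpha}x \ ;\ \beta\in\Sigma_{\alpha,0}^{+}\}$. The routes diverge after the lower bound, though. The paper simply observes that this explicit $128$-element subset lies in $A_{3}(EV_{x})$ and then invokes Theorem \ref{EV} (which gives $\#_{2}EV=128$) to conclude both that no further elements can occur and that the set is maximal; so the paper's proof is a two-line consequence of the classification of maximal antipodal sets of $EV$, itself obtained via the polar $G_{4}^{*}(\mathbb{C}^{8})$ and the Tanaka--Tasaki results. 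You instead push everything through the translated Cartan embedding $y\mapsto yx$ into $E_{7}^{\alpha}$ and lean on the already-established facts $A_{2}(E_{8})\cap E_{7}^{\alpha}=A_{2}(E_{7}^{\alpha})=A(T^{\alpha})$ and the maximality of $A_{2}(E_{7}^{\alpha})$; this is exactly the technique the paper reserves for the harder $A_{1}(EV_{x})$ case, and applying it here gives the upper bound, the equality, and the maximality without ever citing Theorem \ref{EV}. What your version buys is independence from the $EV$ classification (useful if one wanted to reorder the logic, since your argument only depends on Theorem \ref{E_{7}}); what the paper's version buys is brevity, given that Theorem \ref{EV} is already in hand at that point. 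One small remark: your closing paragraph about excluding elements of $A(T)\cap EV_{x}$ is not actually needed, because the injectivity of $y\mapsto yx$ together with $\iota(A_{3}(EV_{x}))\subset A_{2}(E_{7}^{\alpha})$ already caps the cardinality at $128$ and the $128$ elements of $xA(T^{\alpha})$ exhaust it; the observation that $\mathrm{Ad}(gxg^{-1})|_{\frak{su}^{\alpha}(2)}=\mathrm{Ad}(x)|_{\frak{su}^{\alpha}(2)}$ reverses $iA_{\alpha}$ while any element of $T$ fixes it is nevertheless correct.
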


\begin{proof}
It is obvious that $A_{3}(EV_{x})$ is an antipodal set of $EV$.
Moreover,
\[
\{ x, \tau_{\alpha}x \} \cup \{ \tau_{\beta}x, \ \tau_{\beta}\tau_{\alpha}x \ ;\ \beta \in \Sigma_{\alpha, 0}^{+} \}
\]
is contained in $A_{3}(EV_{x})$ and the cardinality is $128$.
By Theorem \ref{EV}, we see that $A_{3}(EV_{x})$ is equal to this and the statement follows.
\end{proof}

Since the rank of $EV$ is $7$, we see that $T^{\alpha} x$ is a maximal flat torus of $EV_{x}$.
Then, $A_{3}(EV_{x})$ is a maximal antipodal set of $T^{\alpha}x$.

Next, let $\gamma = (1/2)(x_{1} + \cdots + x_{8}) \in \Sigma^{+}(\frak{e}_{8}, \frak{t})$ and consider $E_{7}^{\gamma}$.
Then, since
\[
\bigcup_{g \in E_{7}^{\gamma}} g \phi(-1) g^{-1} = E_{7}^{\gamma} / F^{+}(\sigma'_{V}, E_{7}^{\gamma}) \cong E_{7}/(SU(8)/\mathbb{Z}_{2}),
\]
this orbit is denoted by $EV_{\phi(-1)}$.
It is obvious that $EV_{\phi(-1)}$ is also an totally geodesic submanifold of $EVIII$.
Set $A_{2}(EV_{\phi(-1)}) = A_{2}(E_{8}) \cap EV_{\phi(-1)}$.
Let $\alpha = x_{7} - x_{8}$ and $\beta = x_{6} - x_{7}$.
Then,
\[
\begin{split}
A_{2}(EV_{\phi(-1)}) \subset & \{ g \in A_{2}(EVIII_{+}) \ ;\ \mathrm{Ad}(g)|_{\frak{su}^{\gamma}(2)} = \mathrm{Ad}(\phi(-1))|_{\frak{su}^{\gamma}(2)} \} \\
= & A_{2}(E_{8}) - (((A_{2}(EVIII_{+}) \cap A(T^{\alpha})) \cup \{ e \} ) \\
= & \{ \tau_{\delta} \tau_{\alpha} \ ;\ \delta \in \Sigma_{\alpha, 0}^{+} \cap \Sigma_{\gamma, \pm1} \} \cup \{ \tau_{\delta} \tau_{\beta}, \tau_{\delta} \tau_{\alpha + \beta} \ ;\ \delta \in \Sigma_{\alpha,0}^{+} \cap \Sigma_{\beta, 0} \cap \Sigma_{\gamma, \pm1} \} \\
= & \{ \phi(-1) \} \cup \{ \tau_{x_{i} + x_{j}} \tau_{\alpha} \ ;\ 1 \leq i < j \leq 6 \} \cup \{ \tau_{x_{i} + x_{j}} \tau_{\beta}, \ \tau_{x_{i} + x_{j}}\tau_{\alpha + \beta} \ ;\ 1 \leq i < j \leq 5 \} \\
& \cup \left\{ \tau_{\delta}\tau_{\alpha} \ ;\ \delta = \frac{1}{2} \sum_{i=1}^{8}\epsilon_{i}x_{i} \in \Sigma, \ \epsilon_{1} = 1, \ \epsilon_{7} = \epsilon_{8} = \pm 1, \ \sum_{i=1}^{8}\epsilon_{i} = \pm 4 \right\} \\
& \cup \left\{ \tau_{\delta}\tau_{\beta} \ ;\ \delta = \frac{1}{2} \sum_{i=1}^{8}\epsilon_{i}x_{i} \in \Sigma, \ \epsilon_{1} = 1, \ \epsilon_{6} = \epsilon_{7} = \epsilon_{8} = \pm1, \ \sum_{i=1}^{8}\epsilon_{i} = \pm4  \right\} \\
& \cup \left\{ \tau_{\delta}\tau_{\alpha + \beta} \ ;\ \delta = \frac{1}{2} \sum_{i=1}^{8}\epsilon_{i}x_{i} \in \Sigma, \ \epsilon_{1} = 1, \ \epsilon_{6} = \epsilon_{7} = \epsilon_{8} = \pm1, \ \sum_{i=1}^{8}\epsilon_{i} = \pm4  \right\}. \\
\end{split}
\]
Since the Weyl group $W(E_{7}^{\gamma})$ of the root system $\Sigma(\frak{e}_{7}^{\gamma}, \frak{t}^{\gamma})$ acts on $A_{2}(EVI_{+})$ and this action is transitive, we see that $W(E_{7}^{\gamma})$ acts on the right hand side of the above and this action is transitive.
Therefore,
\[
A_{2}(EV_{\phi(-1)}) = \{ \tau_{\delta} \tau_{\alpha} \ ;\ \delta \in \Sigma_{\alpha, 0}^{+} \cap \Sigma_{\gamma, \pm1} \} \cup \{ \tau_{\delta} \tau_{\beta}, \tau_{\delta} \tau_{\alpha + \beta} \ ;\ \delta \in \Sigma_{\alpha,0}^{+} \cap \Sigma_{\beta, 0} \cap \Sigma_{\gamma, \pm1} \}
\]
and $\#A_{2}(EV_{\phi(-1)}) = 72$.
We consider polars of $\phi(-1)$ in $EV_{\phi(-1)}$.
The non-trivial pole of $\phi(-1)$ is $\tau_{\gamma}\phi(-1) = \tau_{(++++++--)}\tau_{\alpha}$ and the covering transformation of $EV_{\phi(-1)}$ is $EV_{\phi(-1)} \ni g \mapsto \tau_{\gamma}g \in EV_{\phi(-1)}$.
The isotropy group of $E_{7}^{\gamma}$ at $\phi(-1)$ is $F^{+}(\sigma'_{V}, E_{7}^{\gamma}) = \phi(SU''(8))$.
Therefore, 
\[
\bigcup_{g \in \phi(SU''(8))} g \tau_{x_{1} + x_{2}} \tau_{\alpha} g^{-1}
\]
is a polar of $\phi(-1)$ isomorphic to $G_{4}^{*}(\mathbb{C}^{8})$.
This polar is denoted by $G_{4}^{*}(\mathbb{C}^{8})_{+}$.
Then, 
\[
A_{2}(EV_{\phi(-1)}) \cap G_{4}^{*}(\mathbb{C}^{8})_{+} = \{ \tau_{x_{i} + x_{j}} \tau_{\alpha} \ ;\ 1 \leq i < j \leq 6 \} \cup \{ \tau_{x_{i} + x_{j}} \tau_{\beta}, \ \tau_{x_{i} + x_{j}}\tau_{\alpha + \beta} \ ;\ 1 \leq i < j \leq 5 \}.
\]
We can verify that
\[
\tau_{\gamma} \Big( A_{2}(EV_{\phi(-1)}) \cap (G_{4}^{*}(\mathbb{C}^{8}))_{+} \Big) = A_{2}(EV_{\phi(-1)}) - \Big( A_{2}(EV_{\phi(-1)}) \cap (G_{4}^{*}(\mathbb{C}^{8}))_{+} \cup \{ \phi(-1), \tau_{\gamma}\phi(-1) \} \Big).
\]

\begin{prop}
The subset $A_{2}(EV_{\phi(-1)})$ is a maximal antipodal set of $EV_{\phi(-1)}$ whose cardinality is $72$.

\end{prop}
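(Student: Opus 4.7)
The plan is to combine the polar structure of $EV_{\phi(-1)}$ at $\phi(-1)$ with the cardinality constraint of Theorem \ref{EV} and the classification of maximal antipodal sets of $G_{4}^{*}(\mathbb{C}^{8})$ (Theorem \ref{Bottom-Grass}). Since $A_{2}(EV_{\phi(-1)}) \subset A_{2}(E_{8})$, it is automatically antipodal in $E_{8}$ hence in $EV_{\phi(-1)}$, and its cardinality $72$ has already been computed in the discussion preceding the statement. What remains is to show that no strictly larger antipodal set of $EV_{\phi(-1)}$ contains it.

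Suppose for contradiction that $A_{2}(EV_{\phi(-1)})$ is properly contained in a maximal antipodal set $B$ of $EV_{\phi(-1)}$. By Theorem \ref{EV}, $|B| \in \{56, 72, 128\}$, and strict inclusion forces $|B| = 128$. Since $\phi(-1) \in B$, the set $B$ lies in the fixed-point set of the symmetry at $\phi(-1)$, which decomposes as $\{\phi(-1),\, \tau_{\gamma}\phi(-1)\}$ together with the two polars $G_{4}^{*}(\mathbb{C}^{8})_{+}$ and $\tau_{\gamma}G_{4}^{*}(\mathbb{C}^{8})_{+}$. Hence
\[
|B \cap G_{4}^{*}(\mathbb{C}^{8})_{+}| + |B \cap \tau_{\gamma}G_{4}^{*}(\mathbb{C}^{8})_{+}| = 126,
\]
and since each intersection is antipodal in a polar isomorphic to $G_{4}^{*}(\mathbb{C}^{8})$, Theorem \ref{Bottom-Grass} forces both summands to equal $63$.

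The main step is then to verify that the $35$-element subset $A_{2}(EV_{\phi(-1)}) \cap G_{4}^{*}(\mathbb{C}^{8})_{+}$ is already a maximal antipodal set of $G_{4}^{*}(\mathbb{C}^{8})_{+}$, namely that it is congruent to $A_{1}(G_{4}^{*}(\mathbb{C}^{8}))$ of Theorem \ref{Bottom-Grass}. I would do this by composing the isomorphism $\phi : SU''(8) \to F(\sigma'_{V}, E_{7}^{\gamma})$ with the natural projection $\pi_{SU(8)} : SU(8) \to SU(8)/\mathbb{Z}_{2}$, thereby realizing $G_{4}^{*}(\mathbb{C}^{8})_{+}$ as the conjugacy class of $\tau_{x_{1}+x_{2}}\tau_{\alpha}$ under $\phi(SU''(8))$. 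Then I would trace the explicit elements $\tau_{x_{i}+x_{j}}\tau_{\alpha}$ for $1 \leq i < j \leq 6$ and $\tau_{x_{i}+x_{j}}\tau_{\beta},\tau_{x_{i}+x_{j}}\tau_{\alpha+\beta}$ for $1 \leq i < j \leq 5$ through this identification, showing that they correspond to the $35$ classes in $D_{1}^{+} \cap G_{4}(\mathbb{C}^{8})_{+}$ modulo $\{\pm I_{8}\}$, thereby exhibiting the intersection as (a set congruent to) $A_{1}(G_{4}^{*}(\mathbb{C}^{8}))$.

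Once that identification is established, the intersection cannot be enlarged within $G_{4}^{*}(\mathbb{C}^{8})_{+}$, so $|B \cap G_{4}^{*}(\mathbb{C}^{8})_{+}| \leq 35 < 63$, contradicting the cardinality count. Hence $B = A_{2}(EV_{\phi(-1)})$ and the proposition follows. The main obstacle is precisely the concrete matching with the $A_{1}$-class (rather than a subset of the $A_{3}$-class of size $63$): this requires unpacking how the $D_{1}^{+}$-type diagonal elementary abelian $2$-subgroup of $SU''(8)$ pulls over, via $\phi$ and the identifications of Subsection \ref{ELg}, to products of the form $\tau_{\delta}\tau_{\alpha}$, $\tau_{\delta}\tau_{\beta}$, $\tau_{\delta}\tau_{\alpha+\beta}$ with $\delta$ a sum of two positive $x$-coordinates in $\Sigma(\frak{e}_{7}^{\gamma},\frak{t}^{\gamma})$.
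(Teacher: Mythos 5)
Your proposal is correct and follows essentially the same route as the paper: the heart of both arguments is to transport $A_{2}(EV_{\phi(-1)}) \cap G_{4}^{*}(\mathbb{C}^{8})_{+}$ through the Cartan embedding into $\phi(SU''(8))$ and identify it with the $35$-element maximal antipodal set of $G_{4}^{*}(\mathbb{C}^{8})$ from Theorem \ref{Bottom-Grass}, after which maximality of the full $72$-element set follows (you package this as a cardinality contradiction via Theorem \ref{EV}, the paper via the polar strategy of Subsection \ref{strategies}, but these are the same argument). One small point: the $35$-element class arises from $D_{0}^{+}$ (the fully diagonal subgroup), not $D_{1}^{+}$ as you wrote, though your stated cardinality and target $A_{1}(G_{4}^{*}(\mathbb{C}^{8}))$ make clear you mean the right set.
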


\begin{proof}
We consider the Cartan embedding $EV_{\phi(-1)} \ni g \mapsto g \phi(-1)  \in E_{7}^{\gamma}$.
Then, $G_{4}^{*}(\mathbb{C}^{8})_{+}\phi(-1) $ is contained in $F(\sigma'_{V}, E_{7}^{\gamma}) = \phi(SU''(8))$ since $G_{4}(\mathbb{C}^{8})_{+}$ is an conjugate orbit by $\phi(SU''(8))$.
In particular, $G_{4}^{*}(\mathbb{C}^{8})_{+}\phi(-1)$ is a polar of $\phi(SU''(8))$.
Let $G_{4}(\mathbb{C}^{8})''_{+}$ be the polar of the identity element in $SU''(8)$ isomorphic to $G_{4}(\mathbb{C}^{8})$.
Then, $G_{4}^{*}(\mathbb{C}^{8})_{+} \phi(-1) = \phi( G_{4}(\mathbb{C}^{8})''_{+} )$.
To prove that the the statement is true, it is sufficient to show that $\big( A_{2}(EV_{\phi(-1)}) \cap (G_{4}^{*}(\mathbb{C}^{8}))_{+} \big) \phi(-1)$ is a maximal antipodal set of $G_{4}^{*}(\mathbb{C}^{8})$.
Then, 
\[
\big( A_{2}(EV_{\phi(-1)}) \cap G_{4}^{*}(\mathbb{C}^{8})_{+} \big)\phi(-1) 
= \phi \circ \pi^{-1} \circ c ( D_{0}^{+} \cap G_{4}(\mathbb{C}^{8})_{+} ).
\]
By Theorem \ref{Bottom-Grass}, we obtain that $\big( A_{2}(EV_{\phi(-1)}) \cap G_{4}^{*}(\mathbb{C}^{8})_{+} \big)\phi(-1) $ is a maximal antipodal set of $G_{4}^{*}(\mathbb{C}^{8})_{+}''$ whose cardinality is $35$.
Hence, the statement follows.

\end{proof}

Note that the Weyl group $W(E_{7}^{\gamma})$ acts on $A_{2}(EV_{\phi(-1)})$ and this action is transitive as mentioned above.


\subsection{The classification}

For each simply connected compact exceptional symmetric spaces, the classification of the congruent classes of maximal antipodal set has been complete.
In this subsection, we summarize these classifications (Table 1).
Let $M$ be one of simply connected compact exceptional symmetric spaces.
In Table 1, $C_{M}$ implies the number of congruent classes of maximal antipodal sets of $M$.
The ``card." implies the all cardinalities of maximal antipodal sets.
If the number of the congruent classes of maximal antipodal sets of $M$ is $k$, then we set $A_{i}(M) \ (1 \leq i \leq k)$ as the complete system of representative and assume that $\#A_{i} \leq \#A_{j}(M) \ (i < j)$.
If $k=1$, then we denote $A_{1}(M)$ by $A(M)$.
In ``Construction'', we introduce the construction of each maximal antipodal set briefly.
In the table, ``MAS'' means maximal antipodal set.

\ 

\begin{table}[htbp]
\small
\def\arraystretch{1.5}
\arraycolsep=-1pt
\begin{center}
\begin{tabular}{|c|c|c|l|c|c|ccc} \hline
$M$ & $C_{M}$ & card. & Construction \\ \hline
$G_{2}$ & 1 & 8 & $\begin{array}{c}\text{$A(G_{2})$ is obtained by Fano plane.} \end{array}$\\ \hline
$F_{4}$ & 1 & 32 & $\begin{array}{c}\text{$A(F_{4})$ is a MAS of $Spin(8) \subset F_{4}$.} \end{array}$ \\ \hline
$E_{6}$ & 2 & $\begin{matrix}\vspace{-2mm} 32 \\ 64 \end{matrix}$ & 
$\begin{array}{lll} \vspace{-2mm}
\text{$A_{1}(E_{6})$ is the image of a MAS under $Spin(8) \times Spin(2) \rightarrow E_{6}$.} \\ 
\text{$A_{2}(E_{6})$ is a MAS of a maximal torus.}
\end{array}$ \\ \hline
$E_{7}$ & 2 & $\begin{matrix}\vspace{-2mm} 64 \\ 128 \end{matrix}$ & 
$\begin{array}{lll} \vspace{-2mm}
\text{$A_{1}(E_{7})$ is the image of a MAS under $Spin(8) \times Spin(4) \rightarrow E_{7}$.} \\ 
\text{$A_{2}(E_{7})$ is a MAS of a maximal torus.}
\end{array}$ \\ \hline
$E_{8}$ & 2 & $\begin{matrix}\vspace{-2mm} 256 \\ 512 \end{matrix}$ & 
$\begin{array}{lll} \vspace{-2mm}
\text{$A_{1}(E_{8})$ is the image of a MAS under $Spin(8) \times Spin(8) \rightarrow E_{8}$.} \\ 
\text{$A_{2}(E_{8})$ is ``a MAS of a maximal torus" $\cup$ ``a MAS of a maximal torus".}
\end{array}$ \\ \hline
$G$ & 1 & 7 & $\begin{array}{c}\text{$A(G)$ is obtained by Fano plane.}\end{array}$ \\ \hline
$FI$ & 1 & 28 & $\begin{array}{c}\text{$A(FI)$ is a MAS of $\tilde{G}_{4}(\mathbb{R}^{8}) \subset F_{4}$.}\end{array}$ \\ \hline
$FII$ & 1 & 3 & $\begin{array}{c}\text{$A(FII)$ is an orbit of the Weyl group $W(F_{4})$.} \end{array}$ \\ \hline
$EI$ & 2 & $\begin{matrix}\vspace{-2mm} 28 \\ 64 \end{matrix}$ & 
$\begin{array}{ll} \vspace{-2mm}
\text{$A_{1}(EI)$ is a MAS of $\tilde{G}_{4}(\mathbb{R}^{8}) \subset EI$.}  \\ 
\text{$A_{2}(EI)$ is a MAS of a maximal flat torus.} 
\end{array}$ \\ \hline
$EII$ & 2 & $\begin{matrix}\vspace{-2mm} 28 \\ 36 \end{matrix}$ & 
$\begin{array}{ll} \vspace{-2mm}
\text{$A_{1}(EII)$ is a MAS of $\tilde{G}_{4}(\mathbb{R}^{8}) \subset EII$.} \\ 
\text{$A_{2}(EII)$ is an orbit of the Weyl group $W(E_{6})$.} 
\end{array}$ \\ \hline
$EIII$ & 1 & 27 & $\begin{array}{c}\text{$A(EIII)$ is an orbit of the Weyl group $W(E_{6})$.} \end{array}$ \\ \hline
$EIV$ & 1 & 4 & $\begin{array}{c}\text{$A(EIV)$ is an orbit of the Weyl group $W(E_{6})$.}\end{array}$ \\ \hline
$EV$ & 3 & $\begin{matrix}\vspace{-2mm} 56 \\\vspace{-2mm} 72 \\ 128 \end{matrix}$ & 
$\begin{array}{ll} \vspace{-2mm}
\text{$A_{1}(EV)$ is a MAS of $\tilde{G}_{4}(\mathbb{R}^{8}) \sqcup \tilde{G}_{4}(\mathbb{R}^{8}) \subset EV$.} \\ \vspace{-2mm} 
\text{$A_{2}(EV)$ is an orbit of the Weyl group $W(E_{7})$.} \\ 
\text{$A_{3}(EV)$ is a MAS of a maximal flat torus.} 
\end{array}$  \\ \hline
$EVI$ & 2 & $\begin{matrix}\vspace{-2mm} 31 \\ 63 \end{matrix}$ & 
$\begin{array}{ll} \vspace{-2mm}
\text{$A_{1}(EVI)$ is ``a MAS of $G^{+}_{4}(\mathbb{R}^{8})$" $\cup$ ``3 points".} \\ 
\text{$A_{2}(EVI)$ is an orbit of the Weyl group $W(E_{7})$.}  
\end{array}$ \\ \hline
$EVII$ & 1 & 56 & $\begin{array}{c}\text{$A(EVII)$ is an orbit of the Weyl group $W(E_{7})$.}\end{array}$ \\ \hline
$EVIII$ & 2 & $\begin{matrix} \vspace{-2mm}199 \\ 391 \end{matrix}$ & 
$\begin{array}{ll} \vspace{-2mm}
\text{$A_{1}(EVIII)$ is ``an image of a MAS under $G^{+}_{4}(\mathbb{R}^{8}) \times G^{+}_{4}(\mathbb{R}^{8}) \rightarrow EVIII$'' $\cup$ `` 3 points''.} \\ 
\text{$A_{2}(EVIII)$ is `` an orbit of the Weyl group $W(E_{8})$" $\cup$ ``a MAS of a maximal flat torus". }  
\end{array}$ \\ \hline
$EIX$ & 2 & $\begin{matrix}\vspace{-2mm} 56 \\ 120 \end{matrix}$ &
$\begin{array}{ll}\vspace{-2mm}
\text{$A_{1}(EIX)$ is a MAS of $G^{+}_{4}(\mathbb{R}^{8}) \sqcup G^{+}_{4}(\mathbb{R}^{8}) \subset EIX$.} \\
\text{$A_{2}(EIX)$ is an orbit of the Weyl group $W(E_{8})$.}
\end{array}$ \\ \hline
\end{tabular}
\caption{Maximal antipodal sets of exceptional compact symmetric space} \label{classification}
\end{center}
\end{table}






\newpage

\section{Inclusion relations}

In this section, we observe some duality between exceptional symmetric spaces via antipodal sets.
Let $\alpha = x_{7} - x_{8}$ and $\beta = x_{6} - x_{7}$.
We denote $E_{7}^{\alpha}$ and $E_{6}^{\alpha, \beta}$ by $E_{7}$ and $E_{6}$ respectively.

First, we consider a decomposition of maximal antipodal sets.
Let $H$ be one of $E_{7}, E_{6}, F_{4}$.
Then, $\theta_{g_{1}}(H) \subset H$ for any $g_{1} \in A_{1}(E_{8})$.
Therefore, the $H$-orbit $H(g_{1}) = \cup_{h \in H} h g_{1} h^{-1}$ is a compact symmetric space since $\theta_{g_{1}}$ is an involutive automorphism.
Moreover, since $EVIII_{+}$ and $EIX_{+}$ are $E_{8}$-orbits, we see that $H(g_{1})$ is a totally geodesic submanifold of either $EVIII_{+}$ or $EIX_{+}$ except for $g_{1} = e$.
We can consider the decomposition of $A_{1}(E_{8})$ as
\[
\begin{split}
& A_{1}(E_{8}) = \bigcup_{g_{1} \in A_{1}(E_{8})} \Big( A_{1}(E_{8}) \cap H(g_{1}) \Big). \\
\end{split}
\]
Then, $A_{1}(E_{8}) \cap H(g_{1})$ is an antipodal set of $H(g_{1})$.
However, in general, this is not necessarily maximal.
First, we study what orbits the intersection is maximal.
Since $H(g_{1})$ is a conjugate oribt, we consider two cases, that is, one is the case of $H(g_{1}) \subset A(EIX_{+})$, and the other is the case of $H(g_{1}) \subset EVIII_{+}$.
Morover, we also study the case of $A_{2}(E_{8})$.
By summarizing these results, we consider some inclusion relations of exceptional symmetric spaces.


\subsection{$A_{1}(EIX_{+})$ and orbits of $E_{7}, E_{6}, F_{4}$} \label{A_{1}(EIX_{+})}

In this subsection, we consider $H(g_{1})$ for each $g_{1} \in A_{1}(EIX_{+})$.
Fist, we consider $E_{7}$-orbits.
Since $\psi(\gamma_{0}^{0,0}, \gamma_{1}^{0,0}) = \tau_{\alpha}$ is a pole of the identity element in $E_{7}$, we obtain $E_{7}(\psi(\gamma_{0}^{0,0}, \gamma_{1}^{0,0})) = \{ \psi(\gamma_{0}^{0,0}, \gamma_{1}^{0,0}) \}$.
Recall the polar $EVI_{+}$ of the identity element in $E_{7}$.
Then,
\[
EVI_{+} = E_{7} \big( \psi(  \gamma_{1}^{0,0}, \gamma_{0}^{0,0}) \big)
\]
Note that $EVI_{+}$ is isomorphic to $EVI$.
Define the conjugate orbits of $E_{7}$ as 
\[
EVII_{1} = E_{7} \big( \psi( \gamma_{0}^{0,0}, \gamma_{2i}^{0,0} ) \big) \ (1 \leq i \leq 3).
\]
Then, for any $1 \leq i \leq 3$, since $\dim F(\theta_{\psi(\gamma_{0}^{0,0}, \gamma_{2i}^{0,0})}, E_{7}) = 79$ and $F(\theta_{\psi(\gamma_{0}^{0,0}, \gamma_{2i}^{0,0})}, E_{7}) \cong (U(1) \times E_{6})/\mathbb{Z}_{3}$, we obtain $EVII_{i}$ is isomorphic to $EVII$.
Recall $A(EVI_{+}) = A_{1}(EIX_{+}) \cap EVI_{+}$ from Subsection \ref{s-EVI}.
Then,
\[
\begin{split}
A_{1}(EVI_{+}) = \left\{ \psi(\gamma_{k}^{0,0}, \gamma_{0}^{a,b}) \ ;\ 1 \leq k \leq 7, a,b = 0,1 \right\} \cup \left\{ \psi(\gamma_{0}^{0,0}, \gamma_{1}^{a,b}) \ ;\ (a,b) \not= (0,0) \right\}
\end{split}
\]
and $A_{1}(EVI_{+})$ is a maximal antipodal set of $EVI_{+}$.
We study $A_{1}(EIX_{+}) \cap EVII_{i} \ (i = 1,2,3)$.
Let $U$ be a subspace of $\frak{e}_{8}$ invariant under $A_{1}(E_{8})$.
Then, we set a equivalence relation $\sim_{U}$ such that $g \sim_{U} h$ if and only if $\mathrm{Ad}(g)|_{U} = \mathrm{Ad}(h)|_{U}$.
We consider the equivalence relation $\sim_{\frak{su}^{\alpha}(2)}$ in $A_{1}(EIX_{+})$.
Set subsets of $A_{1}(EIX_{+})$ as follows:
\[
\begin{split}
A_{1}^{\circ}(EVII_{1}) &= \left\{ \psi(\gamma_{0}^{0,0}, \gamma_{i}^{a,b}) \ ;\ i = 2,3, \ a,b = 0,1 \right\}, \\
A_{1}^{\circ}(EVII_{2}) &= \left\{ \psi(\gamma_{0}^{0,0}, \gamma_{i}^{a,b}) \ ;\ i = 4,5, \ a,b = 0,1 \right\}, \\
A_{1}^{\circ}(EVII_{3}) &= \left\{ \psi(\gamma_{0}^{0,0}, \gamma_{i}^{a,b}) \ ;\ i = 6,7, \ a,b = 0,1 \right\}. \\
\end{split}
\]
Then, $A_{1}^{\circ}(EVII_{i}) \ (i = 1,2,3)$ are all equivalent classes and $A_{1}(EIX) \cap EVII_{i} \subset A_{1}^{\circ}(EVII_{i})$. 
On the other hand, for any $i = 1,2,3$ and $g, g' \in A_{1}(EVII_{i})$, we can verify that there exists $h \in SU^{\bar{\alpha}}(2) \cdot Spin^{\alpha}(12)$ such that $h g h^{-1} = g'$.
Therefore,
\[
A_{1}^{\circ}(EVII_{i}) = A_{1}(EIX) \cap EVII_{i} \quad (1 \leq i \leq 3).
\]
By Theorem \ref{anti-EVII}, we see $A_{1}^{\circ}(EVII_{i})$ is not a maximal antipodal set of $EVII_{i}$ since $\# A_{1}^{\circ}(EVII_{i}) = 8$.
Summarizing these arguments, we obtain the following.

\begin{prop}
For any $p \in A_{1}(EIX)$, the conjugate orbit of $E_{7}^{\alpha}$ through $p$ is one of 
\[
\{ \psi(\gamma_{0}^{0,0}, \gamma_{1}^{0,0}) \}, \ EVI_{+}, \ EVII_{i} \ (i = 1,2,3).
\]
Then, $EVI_{+}$ is a polar of the identity element in $E_{7}$.
Let $L$ be one of $EVI_{+}$ and $EVII_{i} \ (1 \leq i \leq 3)$.
Then, $A(EIX_{+}) \cap L$ is a maximal antipodal set of $L$ if and only if $L = EVI_{+}$.
If $L$ is one of the others, then $\# A(EIX_{+}) \cap L = 3$ and $A(EIX_{+}) \cap L$ is not maximal.

\end{prop}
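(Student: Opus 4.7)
The plan is to combine the orbit classification already initiated in this subsection with the general classification of polars in $E_7^\alpha$ and the $2$-numbers computed earlier. First I would enumerate the $E_7^\alpha$-conjugate orbits through points of $A_1(EIX_+)$ using the dimension invariant $\dim F^+(\mathrm{Ad}(p), \mathfrak{su}^\alpha(2) + \mathfrak{e}_7^\alpha)$, whose values on each $p = \psi(\gamma_i^{0,0},\gamma_j^{a,b})$ in $A_1(EIX_+)$ are the same computations carried out in Subsection \ref{s-EVI}. This immediately singles out the pole $\{\tau_\alpha\}$ and the polar $EVI_+$; the remaining elements yield stabilizers with the isomorphism type of $(U(1)\times E_6)/\mathbb{Z}_3$, and by the table of involutive automorphisms recalled in Subsection \ref{ELg} these orbits are necessarily of type $EVII$. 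Counting the remaining classes under the equivalence relation $\sim_{\mathfrak{su}^\alpha(2)}$ produces exactly the three orbits $EVII_1,EVII_2,EVII_3$.

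Next I would compute $A_1(EIX_+) \cap L$ for each orbit $L$. Since $E_7^\alpha = C_o(SU^\alpha(2),E_8)$, every element of $E_7^\alpha$ commutes with all of $SU^\alpha(2)$, so any two elements of $A_1(EIX_+)$ lying in the same $E_7^\alpha$-orbit must induce the same automorphism of $\mathfrak{su}^\alpha(2)$. This forces $A_1(EIX_+) \cap L$ to be contained in a single $\sim_{\mathfrak{su}^\alpha(2)}$-class, namely $A_1^\circ(EVII_i)$ in the notation of the preceding paragraph (and $A_1(EVI_+)$ for $L = EVI_+$). The reverse inclusion is the step that requires genuine work: one has to produce explicit conjugators inside $SU^{\bar\alpha}(2)\cdot Spin^\alpha(12) = F(\sigma_{VI}^{\bar\alpha},E_7^\alpha)$ realizing the $E_7^\alpha$-conjugacy on the whole equivalence class. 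This is the main obstacle I expect, but the same trick already used in Subsection \ref{s-EV} to conjugate $\gamma_4^{0,0}$ to $\gamma_5^{0,0}$ via elements of $SU^{\bar\alpha}(2)$ generalises to each of the three pairs $\{\gamma_2,\gamma_3\},\{\gamma_4,\gamma_5\},\{\gamma_6,\gamma_7\}$ and extends across $a,b$ by the action of the center of $Spin^0(8)$.

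Finally, for the maximality dichotomy: for $L = EVI_+$ the intersection $A_1(EIX_+) \cap EVI_+$ coincides with the set $A_1(EVI_+)$ already proved maximal in Theorem \ref{EVI}, so it is a maximal antipodal set. For each $L = EVII_i$, the intersection equals the $\sim_{\mathfrak{su}^\alpha(2)}$-class $A_1^\circ(EVII_i)$, whose cardinality is strictly smaller than $\#_2 EVII = 56$ as given by Theorem \ref{anti-EVII}; therefore $A_1(EIX_+)\cap EVII_i$ cannot be maximal. The pole case $\{\tau_\alpha\}$ is trivial. These three observations together prove the proposition, and no subtlety remains beyond the explicit conjugacy verification in the middle step.
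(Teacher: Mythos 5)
Your proposal follows essentially the same route as the paper: identify the $E_{7}^{\alpha}$-orbits through points of $A_{1}(EIX_{+})$ by the dimension and isomorphism type of the fixed-point subgroup (the paper computes $\dim F(\theta_{\psi(\gamma_{0}^{0,0},\gamma_{2i}^{0,0})},E_{7}^{\alpha})=79$ and reads off type $EVII$ from the table in Subsection \ref{ELg}), bound $A_{1}(EIX_{+})\cap EVII_{i}$ from above by the $\sim_{\frak{su}^{\alpha}(2)}$-class $A_{1}^{\circ}(EVII_{i})$ and from below by exhibiting conjugators in $SU^{\bar{\alpha}}(2)\cdot Spin^{\alpha}(12)$, and settle the maximality dichotomy by Theorem \ref{EVI} for $EVI_{+}$ and by Theorem \ref{anti-EVII} for $EVII_{i}$. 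One step, however, would fail as you state it: relating $\psi(\gamma_{0}^{0,0},\gamma_{j}^{a,b})$ to $\psi(\gamma_{0}^{0,0},\gamma_{j}^{a',b'})$ ``by the action of the center of $Spin^{0}(8)$'' is not a conjugation --- central elements conjugate trivially, and multiplying by $\gamma_{0}^{c,d}$ changes the element rather than moving it within its orbit. You need genuine conjugators here; for $\alpha=x_{7}-x_{8}$ one can take, e.g., $e_{0}e_{10}\in Spin^{\alpha}(12)$, whose image under $\pi$ restricts with determinant $-1$ to the relevant $4$-plane and therefore carries $\gamma_{j}^{a,b}$ to $\gamma_{j}^{a',b'}$ while centralizing $SU^{\alpha}(2)$. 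The $SU^{\bar{\alpha}}(2)$-computation you import from Subsection \ref{s-EV} does handle the pairs $\{\gamma_{2i},\gamma_{2i+1}\}$ correctly, so the reverse inclusion is recoverable.

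Two further remarks on the final step. Your argument (and the paper's own computation) gives $\#\bigl(A_{1}(EIX_{+})\cap EVII_{i}\bigr)=\#A_{1}^{\circ}(EVII_{i})=8$, not the value $3$ asserted in the statement, which appears to be a slip in the proposition itself; your count is the consistent one. Also, ``cardinality strictly smaller than $\#_{2}EVII$'' does not by itself rule out maximality --- a maximal antipodal set need not be great in general; what you actually need, and what Theorem \ref{anti-EVII} supplies, is that every maximal antipodal set of $EVII$ is congruent to the great one and hence has cardinality $56$, so an antipodal set of cardinality $8$ cannot be maximal.
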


Next, we consider the case of $H = E_{6}$.
We see that $E_{6} \big( \psi(\gamma_{0}^{0,0}, \gamma_{1}^{0,0}) \big) = \{\psi(\gamma_{0}^{0,0}, \gamma_{1}^{0,0})\}$ since $E_{6} \subset E_{7}$.
Moreover, since $\psi(\gamma_{0}^{0,0}, \gamma_{2}^{0,0}) = \tau_{\alpha + \beta}$ and $\psi(\gamma_{0}^{0,0}, \gamma_{3}^{0,0}) = \tau_{\beta}$, by the definition of $E_{6}$,
\[
E_{6} \big( \psi(\gamma_{0}^{0,0}, \gamma_{k}^{0,0}) \big) = \{ \psi(\gamma_{0}^{0,0}, \gamma_{k}^{0,0}) \} \ (k = 2,3).
\]
Recall the polar $EII_{+}$ of the identity element in $E_{6}$,.
Then, 
\[
EII_{+} = E_{6} \big( \psi(\gamma_{1}^{0,0}, \gamma_{0}^{0,0}) \big).
\]
Note that $EII_{+}$ is isomorphic to $EII$.
Define the conjugate orbits of $E_{6}$ as
\[
\begin{split}
EIII_{i} = E_{6} \big( \psi(\gamma_{0}^{0,0}, \gamma_{i}^{1,0}) \big) \ (i = 1,2,3), \quad
EIV_{j} = E_{6} \big( \psi(\gamma_{0}^{0,0}, \gamma_{j}^{0,0}) \big) \ (j = 4,5,6,7). \\
\end{split}
\]
For any $i = 1,2,3$, since $\dim F^{+}( \theta_{\psi(\gamma_{0}^{0,0}, \gamma_{i}^{0,0})}, E_{6}) = 46$ and $F^{+}( \theta_{\psi(\gamma_{0}^{0,0}, \gamma_{i}^{0,0})}, E_{6}) \cong (U(1) \times Spin(10))/\mathbb{Z}_{4}$, we see that $EIII_{i}$ is isomorphic to $EIII$.
Similarly, for any $4 \leq j \leq 7$, since $\dim F^{+}( \theta_{\psi(\gamma_{0}^{0,0}, \gamma_{j}^{0,0})}, E_{6}) = 52$ and $F^{+}( \theta_{\psi(\gamma_{0}^{0,0}, \gamma_{j}^{0,0})}, E_{6}) \cong F_{4}$, we see that $EIV_{j}$ is isomorphic to $EIV$.
Recall $A_{1}(EII_{+}) = A_{1}(EIX) \cap EII_{+}$ from subsection \ref{s-EII}.
Then,
\[
A_{1}(EII_{+}) = \{ \psi(\gamma_{k}^{0,0}, \gamma_{0}^{a,b}) \ ;\ 1 \leq k \leq 7, a,b = 0,1 \}
\]
and $A_{1}(EII_{+})$ is a maximal antipodal set of $EII_{+}$.
Set subsets of $A_{1}(EIX_{+})$ as
\[
\begin{split}
& A_{1}^{\circ}(EIII_{i}) = \{ \psi(\gamma_{0}^{0,0}, \gamma_{i}^{1,0}), \ \psi(\gamma_{0}^{0,0}, \gamma_{i}^{0,1}), \ \psi(\gamma_{0}^{0,0}, \gamma_{i}^{1,1}) \} \quad (i = 1,2,3), \\
& A_{1}(EIV_{j}) = \{ \psi(\gamma_{0}^{0,0}, \gamma_{i}^{a,b}) \ ;\ a, b = 0,1 \} \quad (j = 4,5,6,7). \\
\end{split}
\]
We consider the equivalence relation $\sim_{\frak{su}^{\alpha, \beta}(3)}$.
Then, $A_{1}(EII_{+}), \{ \psi(\gamma_{0}^{0,0}, \gamma_{i}^{0,0}) \} \cup A_{1}^{\circ}(EIII_{i}) \ (i = 1,2,3)$, and $A_{1}(EIV_{j}) \ (j = 4,5,6,7)$ are all equivalence classes.
Obviously, $A_{1}(EIX_{+}) \cap EIII_{i} \subset A_{1}^{\circ}(EIII_{i})$ for any $i = 1,2,3$.
Since
\[
A_{1}(EIII_{1}) = \{ \tau_{x_{5} \pm x_{6}}, \tau_{x_{7} + x_{8}} \}, \ 
A_{1}(EIII_{2}) = \{ \tau_{x_{5} \pm x_{7}}, \tau_{x_{6} + x_{8}} \}, \ 
A_{1}(EIII_{3}) = \{ \tau_{x_{5} \pm x_{8}}, \tau_{x_{6} + x_{7}} \},
\]
by considering the Weyl group $W(E_{6})$, we obtain $A_{1}^{\circ}(EIII_{i}) = A_{1}(EIX_{+}) \cap EIII_{i}$ for any $1 \leq i \leq 3$.
Note that $A_{1}^{\circ}(EIII_{i})$ is not a maximal antipodal set of $EIII_{i}$.
Moreover, for any $4 \leq j \leq 7$ and $g, g' \in A_{1}(EIV_{j})$, there exists $h \in F(\sigma_{III}, E_{6}) \cong (Spin(10) \times U(1))/\mathbb{Z}_{4}$ such that $hgh^{-1} = g'$.
Hence, $A_{1}(EIV_{j}) = A_{1}(EIX_{+}) \cap EIV_{j}$.
We see $A_{1}(EIV_{j})$ is a maximal antipodal set of $EIV_{j}$ since $\#_{2}EIV = 4$.
Summarizing these arguments, we obtain the following.

\begin{prop}
For any $p \in A_{1}(EIX_{+})$, the conjugate orbit of $E_{6}$ through $p$ is one of 
\[
\{ \psi(\gamma_{0}^{0,0}, \gamma_{i}^{0,0}) \}, \ EII_{+}, \ EIII_{i}, \ EIV_{j} \ (1 \leq i \leq 3, 4 \leq j \leq 7).
\]
Then, $EII_{+}$ is a polar of the identity element in $E_{6}$.
Let $L$ be one of $EII_{+}, EIII_{i}, EIV_{j} \ (1 \leq i \leq 3, 4 \leq j \leq 7)$.
Then, $A(EIX_{+}) \cap L$ is a maximal antipodal set of $L$ if and only if $L$ is either $EII_{+}$ or $EIV_{j} \ (j = 4,5,6,7)$.
If $L$ is one of the others, $A(EIX_{+}) \cap L$ is not maximal and the cardinality is $3$.

\end{prop}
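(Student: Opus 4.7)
The plan is to parallel the analysis of the preceding proposition (for $E_{7}$-orbits through $A_{1}(EIX_{+})$), now using the centralizer $E_{6} = C_{o}(SU^{\alpha,\beta}(3),E_{8})$ and the equivalence relation $\sim_{\frak{su}^{\alpha,\beta}(3)}$ in place of their $E_{7}$-counterparts. First I would dispatch the singleton orbits: the three elements $\psi(\gamma_{0}^{0,0},\gamma_{i}^{0,0})$ for $i=1,2,3$ equal $\tau_{\alpha}$, $\tau_{\alpha+\beta}$, $\tau_{\beta}$ respectively, each identification following by matching against the table at the end of Subsection \ref{O} together with the fact that the $Spin^{1}(8)$-coordinates correspond to $(x_{5},\ldots,x_{8})$ in the $E_{8}$-torus. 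Since each of these elements lies in $SU^{\alpha,\beta}(3)$ and every element of $E_{6}$ centralizes $SU^{\alpha,\beta}(3)$ by definition, their orbits are singletons. The identification $EII_{+} = E_{6}(\psi(\gamma_{1}^{0,0},\gamma_{0}^{0,0}))$ together with its status as a polar of the identity in $E_{6}$ is recalled directly from Subsection \ref{s-EII}.

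For the remaining elements I would classify the orbit types by computing $\dim F^{+}(\mathrm{Ad}(g),\frak{e}_{6})$. Using the decomposition $\frak{e}_{6} = C(\frak{su}^{\alpha,\beta}(3),L) + \mathbb{O}\otimes^{+}\mathbb{C} + \mathbb{O}\otimes^{-}\mathbb{C}$ from Subsection \ref{E6}, the computation splits into an $L$-part and a $V$-part, each handled by the spin-representation formulas already exploited in Subsection \ref{s-EVIIIEIX}. For $g = \psi(\gamma_{0}^{0,0},\gamma_{i}^{a,b})$ with $i \in \{1,2,3\}$ and $(a,b) \neq (0,0)$ the outcome is $\dim F^{+} = 46$, which by the table of involutive automorphisms in Subsection \ref{ELg} forces the orbit to be of type $EIII$; for $g = \psi(\gamma_{0}^{0,0},\gamma_{j}^{a,b})$ with $j \in \{4,5,6,7\}$ the outcome is $\dim F^{+} = 52$, forcing type $EIV$.

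To identify each intersection $A_{1}(EIX_{+}) \cap L$ explicitly I pass through $\sim_{\frak{su}^{\alpha,\beta}(3)}$: since $E_{6}$ centralizes $\frak{su}^{\alpha,\beta}(3)$, every $E_{6}$-orbit lies inside a single equivalence class, and direct enumeration of $A_{1}(EIX_{+})$ shows that the classes are exactly $A_{1}(EII_{+})$, the three unions $\{\psi(\gamma_{0}^{0,0},\gamma_{i}^{0,0})\}\cup A_{1}^{\circ}(EIII_{i})$ for $1 \le i \le 3$, and the four sets $A_{1}(EIV_{j})$ for $4 \le j \le 7$. For the reverse inclusion I exhibit transitive subgroup actions inside each class: the $EII_{+}$ case is Proposition \ref{EII-1}; inside each $A_{1}(EIV_{j})$ the isotropy group $F(\sigma_{III},E_{6}) \cong (Spin(10)\times U(1))/\mathbb{Z}_{4}$ conjugates any two of its four elements, a direct spin-group verification analogous to the $EV_{x}$ computation in Subsection \ref{s-EV}; and inside each $A_{1}^{\circ}(EIII_{i})$ the three points translate to the form $\tau_{\gamma}$ for suitable $\gamma \in \Sigma_{\alpha,0}^{+}\cap\Sigma_{\beta,\pm 1}$, on which the Weyl group $W(E_{6})$ acts transitively.

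Finally, the maximality dichotomy is a cardinality check against the relevant 2-numbers. The $EII_{+}$ intersection has $28 = \#_{2}EII$ elements and is maximal by Proposition \ref{EII-1}; each $EIV_{j}$ intersection has $4 = \#_{2}EIV$ elements and so is a great antipodal set, hence maximal; each $EIII_{i}$ intersection has only $3$ elements whereas $\#_{2}EIII = 27$ by Theorem \ref{anti-EIII}, so it cannot be maximal. The main technical obstacle will be the explicit transitivity verifications inside the $EIV_{j}$- and $EIII_{i}$-classes, which reduce to concrete $Spin(16)$-conjugation and Weyl-group computations of the same flavour as those already performed in Subsections \ref{s-EVI} and \ref{s-EVIIIEIX}, but multiplied in number across the seven non-polar orbits.
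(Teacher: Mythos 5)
Your proposal follows essentially the same route as the paper: singleton orbits via membership in $SU^{\alpha,\beta}(3)$, orbit-type identification by computing $\dim F^{+}(\mathrm{Ad}(g),\frak{e}_{6})$, the upper bound on each intersection via the equivalence classes of $\sim_{\frak{su}^{\alpha,\beta}(3)}$, the reverse inclusion via Weyl-group transitivity for the $EIII_{i}$ and conjugation by the isotropy group $F(\sigma_{III},E_{6})$ for the $EIV_{j}$, and the maximality dichotomy by cardinality against the known classifications. One small slip: $\#_{2}EII = 36$, not $28$, so the maximality of the $28$-element intersection with $EII_{+}$ rests on Proposition \ref{EII-1} (which you correctly cite) rather than on its being a great antipodal set.
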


Next, we consider the case of $H = F_{4}$.
By the definition of $F_{4}$, 
\[
F_{4} \big( \psi( \gamma_{0}^{0,0}, \gamma_{i}^{0,0}) \big) = \{ \psi( \gamma_{0}^{0,0}, \gamma_{i}^{0,0}) \} \ (1 \leq i \leq 7).
\]
Recall the polar $FI_{+}$ of the identity element in $F_{4}$ from Subsection \ref{s-FIFII}.
Then,
\[
FI_{+} = F_{4} \big( \psi(\gamma_{1}^{0,0}, \gamma_{0}^{0,0}) \big).
\]
Note that $FI_{+}$ is isomorphic to $FI$.
Define the conjugate orbits as
\[
FII_{i} = F_{4} \big( \psi(\gamma_{0}^{0,0}, \gamma_{i}^{1,0}) \big) \ (1 \leq i \leq 7).
\]
Then, for any $1 \leq i \leq 7$, we see that $\dim F^{+}(\theta_{\psi(\gamma_{0}^{0,0}, \gamma_{i}^{1,0})}, F_{4}) = 36$ and $F^{+}(\theta_{\psi(\gamma_{0}^{0,0}, \gamma_{i}^{1,0})}, F_{4}) \cong Spin(9)$.
Hence, $FII_{i}$ is isomorphic to $FII$.
Recall $A(FI_{+}) = A_{1}(EXI_{+}) \cap FI_{+}$.
Then,
\[
A(FI_{+}) = \{ \psi(\gamma_{k}^{0,0}, \gamma_{0}^{a,b}) \ ;\ 1 \leq k \leq 7, a,b = 0,1 \}
\]
and $A(FI_{+})$ is a maximal antipodal set of $FI_{+}$.
Set $A_{1}(FII_{i}) = A_{1}(EIX_{+}) \cap FII_{i}\ (1 \leq i \leq 7)$.
If $4 \leq i \leq 7$, then $F(\theta_{\psi(\gamma_{0}^{0,0}, \gamma_{i}^{0,0})}, E_{6}) = F_{4}$ and $FII_{i}$ is a polar of $\psi(\gamma_{0}^{0,0}, \gamma_{i}^{0,0})$ in $EVI_{i}$.
Therefore, 
\[
A_{1}(FII_{i}) = \{ \psi(\gamma_{0}^{0,0}, \gamma_{i}^{1,0}), \psi(\gamma_{0}^{0,0}, \gamma_{i}^{0,1}), \psi(\gamma_{0}^{0,0}, \gamma_{i}^{1,1}) \} \ (i = 4,5,6,7).
\]
Moreover, by the definition of $F_{4}$, 
\[
FII_{1} = \psi(\gamma_{0}^{0,0}, \gamma_{5}^{0,0}) FII_{4}, \quad 
FII_{2} = \psi(\gamma_{0}^{0,0}, \gamma_{6}^{0,0}) FII_{4}, \quad 
FII_{3} = \psi(\gamma_{0}^{0,0}, \gamma_{7}^{0,0}) FII_{4}. \quad 
\]
Hence, we obtain
\[
A_{1}(FII_{i}) = \{ \psi(\gamma_{0}^{0,0}, \gamma_{i}^{1,0}), \psi(\gamma_{0}^{0,0}, \gamma_{i}^{0,1}), \psi(\gamma_{0}^{0,0}, \gamma_{i}^{1,1}) \} \ (i = 1,2,3).
\]
Since $\#_{2}FII = 3$, we see that $A_{1}(FII_{i})$ is a maximal antipodal set of $FII_{i}$ for any $1 \leq i \leq 7$.
Summarizing these arguments, we obtain the following.

\begin{prop}
For any $p \in A_{1}(EIX_{+})$, the conjugate orbit of $F_{4}$ through $p$ is one of
\[
\{ \psi(\gamma_{0}^{0,0}, \gamma_{i}^{0,0}) \}, \ FI_{+}, \ FII_{i} \ (i = 1, \cdots, 7).
\]
Then, $FI_{+}$ is a polar of the identity element in $F_{4}$.
Let $L$ be one of $FI_{+}$ and $FII_{i} \ (1 \leq i \leq 7)$.
Then, $A_{1}(EIX_{+}) \cap L$ is a maximal antipodal set of $L$.
\end{prop}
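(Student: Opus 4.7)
The plan is to follow the same two-step strategy used above for the $E_7$- and $E_6$-orbits, replacing the distinguished subalgebras $\mathfrak{su}^{\alpha}(2)$ and $\mathfrak{su}^{\alpha,\beta}(3)$ by $\mathfrak{g}_2$ and working throughout with the decomposition $\mathfrak{f}_4 = \mathfrak{spin}(9) + \mathbb{O}^{+} + \mathbb{O}^{-}$. The central observation is that for each $1 \leq i \leq 7$ the element $\gamma_i^{0,0}$ (viewed in $Spin^{1}(8)$) lies in the subgroup $\tilde{G}_2 \cong G_2$, so $\psi(\gamma_0^{0,0},\gamma_i^{0,0}) = \phi(\gamma_i^{0,0})$ belongs to $\phi(G_2) \subset C(F_4,E_8)$ and is therefore fixed by $F_4$-conjugation. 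This immediately produces the seven one-point orbits. For the remaining $p \in A_1(EIX_{+})$, I would compute $\dim F^{+}(\mathrm{Ad}(p),\mathfrak{f}_4)$ using the spin calculation already applied in Subsection \ref{s-EVIIIEIX}. The dimension equals $24$ exactly when $p = \psi(\gamma_k^{0,0},\gamma_0^{a,b})$ with $1 \leq k \leq 7$, placing $p$ in $FI_{+}$; and equals $36$ exactly when $p = \psi(\gamma_0^{0,0},\gamma_i^{a,b})$ with $1 \leq i \leq 7$ and $(a,b) \neq (0,0)$, placing $p$ in some $FII$-type orbit.

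To pin down which $FII$-orbit, note that $\psi(\gamma_0^{0,0},\gamma_0^{a,b})$ is central in $\phi(Spin(16))$, so the identity
\[
\psi(\gamma_0^{0,0},\gamma_i^{a,b}) = \psi(\gamma_0^{0,0},\gamma_i^{0,0}) \cdot \psi(\gamma_0^{0,0},\gamma_0^{a,b})
\]
holds in $E_8$. Since the first factor on the right is $F_4$-central, the $F_4$-orbit of the product equals $\psi(\gamma_0^{0,0},\gamma_i^{0,0}) \cdot FII_{+}$, where $FII_{+}$ is the orbit of $\phi(-1)$ studied in Subsection \ref{s-FIFII}. Defining $FII_i := \psi(\gamma_0^{0,0},\gamma_i^{0,0}) \cdot FII_{+}$, the three points $\psi(\gamma_0^{0,0},\gamma_i^{a,b})$ with $(a,b) \neq (0,0)$ all lie in $FII_i$. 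The seven orbits $FII_1,\dots,FII_7$ are pairwise distinct because their "centers" $\psi(\gamma_0^{0,0},\gamma_i^{0,0})$ are distinct fixed points of $F_4$.

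The maximality claims are then direct. The intersection $A_1(EIX_{+}) \cap FI_{+}$ equals the set $A(FI_{+})$ of Proposition \ref{FI-1}, whose cardinality $28$ equals $\#_2 FI$ by Theorem \ref{FI}, so it is a great antipodal set of $FI_{+}$. For each $i$, the calculation above gives
\[
A_1(EIX_{+}) \cap FII_i = \psi(\gamma_0^{0,0},\gamma_i^{0,0}) \cdot A(FII_{+}) = \{\psi(\gamma_0^{0,0},\gamma_i^{a,b}) : (a,b)\neq(0,0)\},
\]
a set of three elements; since Theorem \ref{FII} gives $\#_2 FII = 3$, this is automatically a great (hence maximal) antipodal set. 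The main technical obstacle is verifying that the three elements $\psi(\gamma_0^{0,0},\gamma_i^{a,b})$ with $(a,b) \neq (0,0)$ truly lie in a single $F_4$-orbit rather than splitting into separate ones. The $F_4$-centrality of $\psi(\gamma_0^{0,0},\gamma_i^{0,0})$ reduces this to the analogous statement for $FII_{+}$, where transitivity of the Weyl-group action on the three-point great antipodal set $A(FII_{+})$ (from Proposition \ref{FII-1}) supplies explicit conjugators inside $\phi(Spin^{0}(8)) \subset F_4$.
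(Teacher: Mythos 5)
Your proposal is correct and its skeleton (singleton orbits from $G_{2}$-centrality, a dimension count to sort the remaining points into $FI$- and $FII$-type orbits, then reduction to the known maximal antipodal sets of $FI_{+}$ and $FII$) matches the paper's. Where you genuinely diverge is in the identification of the $FII$-type orbits and of $A_{1}(EIX_{+})\cap FII_{i}$: the paper first treats $i=4,\dots,7$ by observing that $F(\theta_{\psi(\gamma_{0}^{0,0},\gamma_{i}^{0,0})},E_{6})=F_{4}$, so that $FII_{i}$ is a polar of the point $\psi(\gamma_{0}^{0,0},\gamma_{i}^{0,0})$ inside $EIV_{i}$ and the three-point intersection can be read off from the already-computed $A_{1}(EIV_{i})$, and only then obtains $i=1,2,3$ by translating $FII_{4}$; you instead use the factorization $\gamma_{i}^{a,b}=\gamma_{i}^{0,0}\gamma_{0}^{a,b}$ together with the $F_{4}$-centrality of $\psi(\gamma_{0}^{0,0},\gamma_{i}^{0,0})$ to write $FII_{i}=\psi(\gamma_{0}^{0,0},\gamma_{i}^{0,0})\cdot FII_{+}$ uniformly for all $1\le i\le 7$, reducing everything to the single known orbit $FII_{+}$ and its antipodal set $A(FII_{+})$. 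Your route is cleaner in that it avoids the case split and the detour through $EIV$; the paper's route has the side benefit of exhibiting $FII_{i}$ as a polar inside $EIV_{i}$, which is reused in the inclusion diagrams.

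One step you should tighten: the assertion that $FII_{1},\dots,FII_{7}$ are pairwise distinct does not follow merely from the centers $\psi(\gamma_{0}^{0,0},\gamma_{i}^{0,0})$ being distinct, since two central translates $z_{1}FII_{+}$ and $z_{2}FII_{+}$ coincide whenever $z_{1}z_{2}^{-1}FII_{+}=FII_{+}$. Here $z_{1}z_{2}^{-1}=\psi(\gamma_{0}^{0,0},\gamma_{k}^{0,0})$ with $k\neq 0$, and one must rule out $\psi(\gamma_{0}^{0,0},\gamma_{k}^{0,0})FII_{+}=FII_{+}$. This is easy to close: $FII_{+}\subset F_{4}$, while $\psi(\gamma_{0}^{0,0},\gamma_{k}^{0,0})\in G_{2}$ satisfies $G_{2}\cap F_{4}\subset Z(G_{2})=\{e\}$, so $FII_{i}\subset\psi(\gamma_{0}^{0,0},\gamma_{i}^{0,0})F_{4}$ lie in pairwise distinct cosets and are therefore disjoint. (Alternatively, if $FII_{i}=FII_{j}$ for $i\neq j$ then $A_{1}(EIX_{+})\cap FII_{i}$ would contain six elements, contradicting $\#_{2}FII=3$.) Without this, you cannot conclude that $A_{1}(EIX_{+})\cap FII_{i}$ consists of exactly the three elements $\psi(\gamma_{0}^{0,0},\gamma_{i}^{a,b})$, $(a,b)\neq(0,0)$. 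With that point supplied, the argument is complete.
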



\subsection{$A_{1}(EVIII_{+})$ and orbits of $E_{7}, E_{6}, F_{4}$} \label{A_{1}(EVIII_{+})}

We consider the conjugate orbits of $H = E_{7}, E_{6}, F_{4}$ through points of $A_{1}(EVIII_{+})$.
First, we consider the case of $H = E_{7}$.
Since $\tau_{\alpha} = \psi(\gamma_{0}^{0,0}, \gamma_{1}^{0,0})$ is a non-trivial element of the center of $E_{7}$,
\[
EVI'_{+} = E_{7} \big( \psi(\gamma_{1}^{0,0}, \gamma_{1}^{0,0}) \big) = \psi(\gamma_{0}^{0,0}, \gamma_{1}^{0,0}) EVI_{+}
\]
is a polar of the identity element in $E_{7}$ different from $EVI_{+}$.
Obviously, $EVI'_{+}$ is isomorphic to $EVI$.
Define the conjugate orbits of $E_{7}$ as 
\[
EV_{i} = E_{7} \big( \psi(\gamma_{1}^{0,0}, \gamma_{2i}^{0,0}) \big) \quad (1 \leq i \leq 3).
\]
Then, for any $1 \leq i \leq 3$, we see that $\dim F( \theta_{\psi(\gamma_{1}^{0,0}, \gamma_{2i}^{0,0})}, E_{7}) = 63$ and $F^{+}( \theta_{\psi(\gamma_{1}^{0,0}, \gamma_{2i}^{0,0})}, E_{7}) \cong SU(8)/\mathbb{Z}_{2}$.
Therefore, $EV_{i}$ is isomorphic to $EV$.
Note $EV_{2} = EV_{x}$.
Set $A_{1}(EV'_{+}) = A_{1}(EVIII_{+}) \cap EVI'_{+}$.
Then, since $\tau_{\alpha} A_{1}(EVI_{+}) = A_{1}(EVI'_{+})$, we obtain
\[
\begin{split}
A_{1}(EVI'_{+})
&= \big\{ \psi(\gamma_{i}^{0,0}, \gamma_{1}^{0,0}) \ ;\ 1 \leq i \leq 7, a,b = 0,1 \big\} 
\cup \big\{ \psi(\gamma_{0}^{0,0}, \gamma_{0}^{a,b}) \ ;\ (a,b) = (0,0), (1,0), (0,1) \big\}.
\end{split}
\]
It is obvious that $A_{1}(EVI'_{+})$ is a maximal antipodal set of $EVI'_{+}$.
Consider the equivalence relation $\sim_{\frak{su}^{\alpha}(2)}$ on $A_{1}(EVIII_{+})$.
Set 
\[
A_{1}(EV_{i}) = \big\{ \psi(\gamma_{k}^{0,0}, \gamma_{2i}^{a,b}), \ \psi(\gamma_{k}^{0,0}, \gamma_{2i+1}^{a,b}) \ ;\ 1 \leq k \leq 7, a,b = 0, 1 \big\} \ (1 \leq i \leq 3). \\
\]
Then, $A_{1}(EVI'_{+})$ and $A_{1}(EV_{i}) \ (1 \leq i \leq 3)$ are all equivalence classes, and  $A_{1}(EVIII_{+}) \cap EV_{i} \subset A_{1}(EV_{i})$.
On the other hand, we can easily verify that for any $g_{1}, g_{2} \in A_{1}(EV_{i})$, there exists $h \in E_{7}$ such that $hg_{1}h^{-1} = g_{2}$.
Therefore, $A_{1}(EVIII_{+}) \cap EV_{i} = A_{1}(EV_{i})$ for any $1 \leq i \leq 3$.
Then, $A_{1}(EV_{i})$ is a maximal antipodal set of $EV_{i}$ whose cardinality is $56$.
Summarizing these arguments we obtain the following.

\begin{prop}
For any $p \in A_{1}(EVIII_{+})$, the conjugate orbit of $E_{7}$ through $p$ is one of
\[
EVI'_{+}, \ EV_{i} \ (1 \leq i \leq 3).
\]
Then, $EVI'_{+}$ is a polar of the identity element in $E_{7}^{\alpha}$.
Let $L$ be one of these $E_{7}$-orbits.
Then, $A_{1}(EVIII_{+}) \cap L$ is a maximal antipodal set of $L$.

\end{prop}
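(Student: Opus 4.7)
The plan is to mirror the approach of Subsection 4.1 for $A_1(EIX_+)$, partitioning $A_1(EVIII_+)$ via the equivalence relation $\sim_{\mathfrak{su}^\alpha(2)}$ and matching the equivalence classes with conjugate $E_7$-orbits. Since $E_7 = C_o(SU^\alpha(2), E_8)$, every element of $E_7$ preserves $\mathrm{Ad}(\cdot)|_{\mathfrak{su}^\alpha(2)}$ under conjugation, so each $E_7$-orbit in $A_1(EVIII_+)$ is contained in one equivalence class. It then suffices to (i) identify the equivalence classes explicitly, (ii) recognize each resulting orbit, and (iii) check that each equivalence class is a single $E_7$-orbit and a maximal antipodal set of it.

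First I would enumerate the classes of $\sim_{\mathfrak{su}^\alpha(2)}$ in $A_1(EVIII_+)$ by direct computation of $\mathrm{Ad}(\psi(\gamma_i^{0,0},\gamma_j^{a,b}))|_{\mathfrak{su}^\alpha(2)}$ with $\alpha = x_7 - x_8$. Inside $\mathfrak{su}^\alpha(2)$, only the second factor of $\psi$ contributes, and one finds four distinct actions, corresponding to the $Spin(8)^1$-conjugacy classes $\{\gamma_0^{0,0},\gamma_1^{0,0}\}$ and $\{\gamma_{2i}^{0,0},\gamma_{2i+1}^{0,0}\}$ for $i=1,2,3$. This yields exactly the four candidate classes $A_1(EVI'_+)$ and $A_1(EV_i)$ ($i=1,2,3$) described in the statement. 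For the polar $EVI'_+$, since $\tau_\alpha = \psi(\gamma_0^{0,0},\gamma_1^{0,0})$ is the non-trivial central element of $E_7$, left translation by $\tau_\alpha$ is a covering transformation of $E_7$; it sends the polar $EVI_+$ of the identity to a second polar $\tau_\alpha EVI_+ = EVI'_+ \cong EVI$ of the identity, and $\tau_\alpha \cdot A_1(EVI_+) = A_1(EVI'_+)$ is maximal by Theorem \ref{EVI}.

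To identify each $EV_i$ as being of type $EV$, I would compute $\dim F^+(\theta_{\psi(\gamma_1^{0,0},\gamma_{2i}^{0,0})}, \mathfrak{e}_7) = 63$ using the decomposition of $\mathfrak{e}_7$ as $\mathfrak{spin}^\alpha(12) + \mathfrak{su}^{\bar\alpha}(2) + \mathbb{O}\otimes^+\mathbb{H} + \mathbb{O}\otimes^-\mathbb{H}$ and the spin-representation weight computations set up in Subsection 2.5, together with verification that the $(-1)$-eigenspace contains a maximal abelian subspace of $\mathfrak{e}_7$ (so the pair is a normal form). By the classification table in Subsection 2.5 this forces $F^+ \cong SU(8)/\mathbb{Z}_2$, hence $EV_i \cong EV$. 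In particular $EV_2$ coincides with $EV_x$ of Subsection 3.5.

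The main obstacle is verifying that each equivalence class $A_1(EV_i)$ is already a single $E_7$-orbit, and then that it is a maximal antipodal set of $EV_i$. For transitivity I would exhibit explicit conjugations inside $SU^{\bar\alpha}(2)\cdot Spin^\alpha(12) \subset E_7$: the $SU^{\bar\alpha}(2)$-factor swaps $\gamma_{2i}^{0,0}$ with $\gamma_{2i+1}^{0,0}$ exactly as in the computation $\tfrac{1}{\sqrt{2}}(1+e_4e_5)\tfrac{1}{\sqrt{2}}(1+e_6e_7)\,\gamma_4^{0,0}\,\tfrac{1}{\sqrt{2}}(1-e_6e_7)\tfrac{1}{\sqrt{2}}(1-e_4e_5) = \gamma_5^{0,0}$ carried out in Subsection 3.5, while $Spin^\alpha(12) \supset Spin^0(8)\cdot Spin^{\bar\alpha}(4)$ transports the first-factor label $\gamma_k^{0,0}$ through its $G_2$-orbit and realizes the full $(a,b)$-pattern via the central elements of the $Spin$-subgroups. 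Once transitivity is established, $A_1(EV_i) = A_1(EVIII_+) \cap EV_i$, and the cardinality count $|A_1(EV_i)| = 7 \cdot 4 \cdot 2 = 56$ matches the smallest congruence class $A_1(EV)$ from Theorem \ref{EV}, so $A_1(EV_i)$ is maximal antipodal in $EV_i$. This completes the proposition.
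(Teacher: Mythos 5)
Your overall strategy coincides with the paper's: partition $A_{1}(EVIII_{+})$ into equivalence classes of $\sim_{\frak{su}^{\alpha}(2)}$, identify $EVI'_{+}$ as the second polar $\tau_{\alpha}EVI_{+}$ with $\tau_{\alpha}A_{1}(EVI_{+}) = A_{1}(EVI'_{+})$, recognize the $EV_{i}$ by computing $\dim F(\theta_{\psi(\gamma_{1}^{0,0},\gamma_{2i}^{0,0})}, E_{7}) = 63$, and show each class $A_{1}(EV_{i})$ is a single $E_{7}$-orbit by explicit conjugations in $SU^{\bar{\alpha}}(2)\cdot Spin^{\alpha}(12)$, whence $A_{1}(EVIII_{+}) \cap EV_{i} = A_{1}(EV_{i})$. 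Up to that point the argument is sound and matches the paper.

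The gap is in your last sentence: from $\#A_{1}(EV_{i}) = 56$ and the fact that $56$ is the cardinality of the smallest congruence class of maximal antipodal sets of $EV$ (Theorem \ref{EV}), you conclude maximality. This does not follow. An antipodal set of cardinality $56$ could a priori be a proper subset of a maximal antipodal set congruent to $A_{2}(EV)$ or $A_{3}(EV)$ (cardinalities $72$ and $128$); equality of cardinality with \emph{some} maximal antipodal set never by itself forces maximality. The paper is careful about exactly this point: since the general principle of Subsection \ref{strategies} only guarantees that a maximal antipodal set of the ambient space meets a polar in a maximal antipodal set, and $EV_{i}$ is not a polar of $E_{7}$ in $EVIII_{+}$ but a conjugate orbit, one cannot invoke that principle either (compare $A_{1}(EVIII_{+}) \cap EIII_{+}$, which has cardinality $3$ and is \emph{not} maximal). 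The maximality of $A_{1}(EV_{x}) = A_{1}(EV_{2})$ is established separately in Subsection \ref{s-EV} by the Cartan embedding $EV_{x} \ni gxg^{-1} \mapsto gxg^{-1}x \in E_{7}^{\alpha}$: any maximal antipodal set $B \supset A_{1}(EV_{x})$ gives an antipodal subgroup $\langle Bx \rangle \supset A_{1}(E_{7}^{\alpha})$, and maximality of $A_{1}(E_{7}^{\alpha})$ together with the explicit computation of $A_{1}(E_{7}^{\alpha}) \cap (EV_{x})x$ forces $B = A_{1}(EV_{x})$. You should either reproduce this argument (and then transport it to $EV_{1}$ and $EV_{3}$ by elements of the normalizer of $E_{7}^{\alpha}$ carrying $A_{1}(EV_{2})$ to $A_{1}(EV_{i})$, as is done for the $EII_{i}$ and $EI_{j}$ in the $E_{6}$ case) or cite the proposition of Subsection \ref{s-EV} explicitly; the cardinality count alone is not a proof.
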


Next, we consider the case of $H = E_{6}$.
Recall the polar $EIII_{+}$ of the identity element in $E_{6}$.
Then,
\[
EIII_{+} = E_{6} \big( \psi(\gamma_{0}^{0,0}, \gamma_{0}^{1,0}) \big)
\]
and $EIII_{+}$ is isomorphic to $EIII$.
Note $\psi(\gamma_{0}^{0,0}, \gamma_{1}^{0,0})EIII_{1} = EIII_{+}$.
Set the conjugate orbits of $E_{6}$ as 
\[
\begin{split}
& EII_{i} = E_{6} \big( \psi(\gamma_{1}^{0,0}, \gamma_{i}^{0,0}) \big) \ (1 \leq i \leq 3), \quad
 EI_{j} = E_{6} \big( \psi(\gamma_{1}^{0,0}, \gamma_{j}^{0,0}) \big) \ (4 \leq j \leq 7).
\end{split}
\]
For any $1 \leq i \leq 3$, since $\dim F( \theta_{\psi(\gamma_{1}^{0,0}, \gamma_{i}^{0,0})}, E_{6}) = 38$ and $F( \theta_{\psi(\gamma_{1}^{0,0}, \gamma_{i}^{0,0})}, E_{6}) \cong Sp(1) \cdot Spin(10)$, we see that $EII_{i}$ is isomorphic to $EII$.
Similarly, for any $4 \leq j \leq 7$, since $\dim F( \theta_{\psi(\gamma_{1}^{0,0}, \gamma_{j}^{0,0})}, E_{6}) = 36$ and $F( \theta_{\psi(\gamma_{1}^{0,0}, \gamma_{j}^{0,0})}, E_{6}) \cong Sp(4)/\mathbb{Z}_{2}$, we see that $EI_{j}$ is isomorphic to $EI$.
Note that $EII_{1} = \psi(\gamma_{0}^{0,0}, \gamma_{1}^{0,0})EII_{+}$ and $EI_{4} = EI_{x}$.
We consider the equivalence relation $\sim_{\frak{su}^{\alpha, \beta}(3)}$ on $A_{1}(EVIII_{+})$.
Set 
\[
\begin{split}
& A_{1}^{\circ}(EIII_{+}) = \big\{ \psi(\gamma_{0}^{0,0}, \gamma_{0}^{a,b}) \ ;\ (a,b) = (1,0), (0,1), (1,1) \big\}, \\
& A_{1}(EII_{i}) = \big\{ \psi(\gamma_{k}^{0,0}, \gamma_{i}^{a,b}) \ ;\ 1 \leq k \leq 7, a,b = 0,1 \big\} \quad (1 \leq i \leq 3) \\
& A_{1}(EI_{j}) = \big\{ \psi(\gamma_{k}^{0,0}, \gamma_{j}^{a,b}) \ ;\ 1 \leq k \leq 7, a,b = 0,1 \big\} \quad (4 \leq j \leq 7).
\end{split}
\]
Then, the all equivalent classes are $A_{1}^{\circ}(EIII_{+}), A_{1}(EII_{i}), A_{1}(EI_{j}) \ (1 \leq i \leq 3, 4 \leq j \leq 7)$.
We see that for any $g_{1}, g_{2} \in \{ \gamma_{0}^{1,0}, \gamma_{0}^{0,1}, \gamma_{0}^{1,1} \}$, there exists $h_{1} \in F_{4}$ such that $h_{1} g_{1} h_{1}^{-1} = g_{2}$.
Moreover, for any $g_{3}, g_{4} \in \{ \gamma_{k}^{a,b} \ ;\ 1 \leq k \leq 7, a,b = 0,1 \}$, there exists $h_{2} \in F_{4}$ such that $h_{2} g_{3} h_{2}^{-1} = g_{4}$.
Hence, we obtain
\[
\begin{split}
& A_{1}(EVIII_{+}) \cap EIII_{+} = A_{1}(EIII_{+}), \\
& A_{1}(EVIII_{+}) \cap EII_{i} = A_{1}(EII_{i}) \quad (1 \leq i \leq 3), \\
& A_{1}(EVIII_{+}) \cap EI_{j} = A_{1}(EI_{j}) \quad (4 \leq j \leq 7).
\end{split}
\]
By Theorem \ref{anti-EIII}, $A_{1}(EIII_{+})$ is not a maximal antipodal set of $EIII_{+}$ since $\#A_{1}(EIII_{+}) = 3$.
On the other hand, $A_{1}(EII_{1}) = \psi(\gamma_{0}^{0,0}, \gamma_{1}^{0,0})A_{1}(EII_{+})$, and $A_{1}(EII_{1})$ is a maximal antipodal set of $EII_{1}$.
Moreover, there exist $g_{l} \in N(E_{6}^{\alpha, \beta}, E_{8}) \ (l = 2,3)$ such that $g_{l} EII_{1} g_{l}^{-1} = EII_{l}$ and $g_{l} A_{1}(EII_{1}) g_{l}^{-1} = A_{1}(EII_{l})$.
Therefore, $A_{1}(EII_{l})$ is a maximal antipodal set of $EII_{l}$.
By the similar arguments, we see that $A_{1}(EI_{j})$ is a maximal antipodal set of $EI_{j}$ for any $4 \leq j \leq 7$.
Summarizing these arguments, we obtain the following.

\begin{prop}
For any $p \in A_{1}(EVIII_{+})$, the conjugate orbit of $E_{6}$ through $p$ is one of
\[
EIII_{+}, \ EII_{i} \ (1 \leq i \leq 3), \ EI_{j}\ (4 \leq j \leq 7).
\]
Then, $EIII_{+}$ is a polar of the identity element in $E_{6}$.
Let $L$ be one of $EII_{i} \ (1 \leq i \leq 3)$ and $EI_{j} \ (4 \leq j \leq 7)$.
Then, $A_{1}(EVIII_{+}) \cap L$ is a maximal antipodal set of $L$.
If $L = EIII_{+}$, then $A_{1}(EVIII_{+}) \cap L$ is not a maximal antipodal set of $L$ and the cardinality is $3$.

\end{prop}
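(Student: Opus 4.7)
The plan is to follow the same template as the previous proposition (which handled $E_7$-orbits), but now working with $E_6 = C_o(SU^{\alpha,\beta}(3), E_8)$ and the equivalence relation $\sim_{\mathfrak{su}^{\alpha,\beta}(3)}$. Since every element of $E_6$ commutes with $SU^{\alpha,\beta}(3)$, conjugation by $E_6$ preserves $\mathrm{Ad}(g)|_{\mathfrak{su}^{\alpha,\beta}(3)}$, so two points of $A_1(EVIII_+)$ in the same $E_6$-orbit must lie in the same equivalence class. First I would directly compute how the generators $\gamma_i^{a,b}$ act on $\mathfrak{su}^{\alpha,\beta}(3)$ and verify that $A_1(EVIII_+)$ partitions into exactly the equivalence classes $A_1^\circ(EIII_+)$, $A_1(EII_i)$ ($1 \le i \le 3$), and $A_1(EI_j)$ ($4 \le j \le 7$) as written in the statement.

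Next, I would identify the isomorphism class of each orbit by picking a representative $g$ and computing $\dim F^{+}(\theta_g, \mathfrak{e}_6)$, then appealing to the classification of involutive automorphisms of $E_6$ in Subsection 2.5. The element $\psi(\gamma_0^{0,0},\gamma_0^{1,0}) = \phi(-1)$ is precisely $\sigma_{III}$, giving $EIII_+$. For $\psi(\gamma_1^{0,0},\gamma_i^{0,0})$ with $i=1,2,3$, one obtains $\dim F = 38$ and $F \cong Sp(1) \cdot SU(6)$, identifying the orbit with $EII$; for $\psi(\gamma_1^{0,0},\gamma_j^{0,0})$ with $4 \le j \le 7$, one obtains $\dim F = 36$ and $F \cong Sp(4)/\mathbb{Z}_2$, identifying it with $EI$. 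Note that $EII_1 = \tau_\alpha \cdot EII_+$ and $EI_4 = EI_x$, so these two are already covered by Propositions \ref{EII-1} and \ref{EI-1}.

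The main obstacle is the reverse inclusion: showing each equivalence class is entirely contained in (hence equal to) $A_1(EVIII_+) \cap L$ for the corresponding orbit $L$. For $L = EII_1$ and $L = EI_4$ this is immediate from Propositions \ref{EII-1} and \ref{EI-1} (translated by $\tau_\alpha$ when needed). For the other $EII_i$ and $EI_j$, I would exhibit explicit elements of the normalizer $N(E_6, E_8)$: the Weyl group $W(\Sigma_{E_8})$ acts transitively on appropriate root subsets, and a Weyl group representative $w \in N(T, E_8)$ carrying $\tau_{\alpha + \beta} \mapsto \tau_\gamma$ for the corresponding root conjugates $EII_1$ onto $EII_i$ and carries $A_1(EII_1)$ bijectively onto $A_1(EII_i)$. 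An analogous argument with suitable Weyl elements handles the $EI_j$. Combined with the fact that $F_4 \subset E_6$ acts transitively on the sets $\{\psi(\gamma_k^{0,0},\gamma_i^{a,b}) : 1 \le k \le 7,\, a,b = 0,1\}$ for each fixed $i$ (which follows from transitivity of $G_2$ on the antipodal set $A(G_2)\setminus\{\gamma_0\}$ as recalled in Subsection 2.4), this pins down each intersection exactly.

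Finally, for the cardinality and maximality conclusions: each $A_1(EII_i)$ and $A_1(EI_j)$ has cardinality $7 \cdot 4 = 28$, matching the smaller congruence class of maximal antipodal sets in $EII$ and $EI$ respectively (see Table \ref{classification}), so these are maximal antipodal sets. For $EIII_+$, the intersection $A_1^\circ(EIII_+)$ has cardinality $3$, which is strictly less than $\#_2 EIII = 27$ by Theorem \ref{anti-EIII}, so the intersection is not maximal. The only calculation-heavy step I expect is the dimension count for the $F$-fixed points in step two; everything else reduces to transitivity arguments already packaged in earlier subsections.
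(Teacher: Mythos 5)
Your proposal is correct and follows essentially the same route as the paper: the equivalence relation $\sim_{\mathfrak{su}^{\alpha,\beta}(3)}$ to bound the orbit intersections, dimension counts of $F^{+}(\theta_g,\mathfrak{e}_6)$ to identify the orbit types, transitivity via $F_4\subset E_6$ together with normalizer elements of $E_6$ in $E_8$ to get the reverse inclusions and carry $A_1(EII_1)$, $A_1(EI_4)$ onto the other classes, and the comparison with $\#_2 EIII=27$ for the non-maximality of the $3$-point intersection with $EIII_+$. The only caution is that in your last paragraph maximality should be attributed to the congruence with the known maximal sets of Propositions \ref{EII-1} and \ref{EI-1} (as you do earlier), not to the cardinality $28$ matching a congruence class by itself.
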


Next, we consider the case of $H = F_{4}$.
Recall the polar $FII_{+}$ of the identity element in $F_{4}$.
Then,
\[
FII_{+} = F_{4} \big( \psi(\gamma_{0}^{0,0}, \gamma_{0}^{1,0}) \big)
\]
and $FII_{i}$ is isomorphic to $FII$.
Set the conjugate orbits of $F_{4}$ as
\[
FI_{i} = F_{4} \big( \psi(\gamma_{1}^{0,0}, \gamma_{i}^{0,0}) \big) \quad (1 \leq i \leq 7).
\]
For any $1 \leq i \leq 7$, since we $\dim F(\theta_{\psi(\gamma_{1}^{0,0}, \gamma_{i}^{0,0})}, F_{4}) = 24$ and $F(\theta_{\psi(\gamma_{1}^{0,0}, \gamma_{i}^{0,0})}, F_{4}) \cong Sp(1) \cdot Sp(3)$, we see that $FI_{i}$ is isomorphic to $EI$.
We consider the equivalence relation $\sim_{\frak{g}_{2}}$ on $A_{1}(EVIII_{+})$.
Recall $A(FII_{+}) = A_{1}(EVIII_{+}) \cap FII_{+}$.
Then,
\[
A_{1}(FI_{+}) = \{ \psi(\gamma_{0}^{0,0}, \gamma_{0}^{a,b}) \ ;\ (a,b) = (1,0),(0,1),(1,1) \}.
\]
Set the subsets as
\[
A_{1}(FI_{i}) = \{ \psi(\gamma_{k}^{0,0}, \gamma_{i}^{a,b}) \ ;\ 1 \leq k \leq 7, a,b = 0,1 \} \quad (1 \leq i \leq 7). \\
\]
Then, the all equivalence classes are $A_{1}(FI_{i})$ and $A_{1}(FI_{i}) \ (1 \leq i \leq 7)$.
By the similar arguments to the case of $H = E_{6}$, we obtain
\[
A_{1}(EVIII_{+}) \cap FII_{+} = A_{1}(FII_{+}), \quad
A_{1}(EVIII_{+}) \cap FI_{i} = A_{1}(FI_{i}).
\]
Since the cardinalities of $A_{1}(FII_{+})$ and $A_{1}(FI_{i})$ are $3$ and $28$, we see that $A_{1}(FII_{+})$ and $A_{1}(FI_{i})$ is a maximal antipodal set of $FII_{+}$ and $FI_{i}$ for any $1 \leq i \leq 7$.
Note that 
\[
\begin{array}{lllllll}
FII_{+} = \psi(\gamma_{0}^{0,0}, \gamma_{i}^{0,0}) FII_{i} , & A_{1}(FII_{+}) = \psi(\gamma_{0}^{0,0}, \gamma_{i}^{0,0}) A_{1}(FII_{i}), \\
FI_{+} = \psi(\gamma_{0}^{0,0}, \gamma_{i}^{0,0}) FI_{i}, & A_{1}(FI_{+}) = \psi(\gamma_{0}^{0,0}, \gamma_{i}^{0,0}) A_{1}(FI_{i}),  \\
\end{array}
\quad (i = 1,\cdots, 7). \\
\]
Summarizing these arguments, we obatin the following.

\begin{prop}
For any $p \in A_{1}(EVIII_{+})$, the conjugate orbit of $F_{4}$-orbit through $p$ is one of 
\[
FII_{+}, \ 
FI_{i} \quad (1 \leq i \leq 7).
\]
Then, $FII_{+}$ is a polar of the identity element in $F_{4}$.
Let $L$ be one of these $F_{4}$-orbits.
Then, $A_{1}(EVIII_{+}) \cap L$ is a maximal antipodal set of $L$.

\end{prop}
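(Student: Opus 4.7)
The plan is to follow the template established in the preceding $E_{7}$ and $E_{6}$ cases of Subsection \ref{A_{1}(EVIII_{+})}. First I would verify the orbit classification: since $F_{4}=C_{o}(G_{2},E_{8})$ and each $p=\psi(\gamma_{i}^{0,0},\gamma_{j}^{a,b})\in A_{1}(EVIII_{+})$ satisfies $\theta_{p}(F_{4})\subset F_{4}$, the orbit $F_{4}(p)$ is a totally geodesic symmetric subspace of $F_{4}$. For $p=\psi(\gamma_{0}^{0,0},\gamma_{0}^{a,b})$ with $(a,b)\neq(0,0)$ I would invoke the computation from Subsection \ref{ELg} giving $F(\theta_{p},F_{4})\cong Spin(9)$, hence $F_{4}(p)\cong FII$, and observe that the three such points already lie in the single polar $FII_{+}$. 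For $p=\psi(\gamma_{1}^{0,0},\gamma_{i}^{0,0})$ I would compute $F^{+}(\theta_{p},\mathfrak{f}_{4})\cong\mathfrak{sp}(1)+\mathfrak{sp}(3)$ of dimension $24$, hence $FI_{i}\cong FI$.

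Next I would partition $A_{1}(EVIII_{+})$ via the equivalence relation $\sim_{\mathfrak{g}_{2}}$: two elements with the same adjoint action on $\mathfrak{g}_{2}$ induce the same automorphism on $F_{4}$, so they have conjugate orbits only if they belong to the same class. A direct check using the weights of $\mathfrak{g}_{2}$ on $\mathbb{O}^{\pm}$ shows that the equivalence classes are precisely $A(FII_{+})$ together with $A_{1}(FI_{i})=\{\psi(\gamma_{k}^{0,0},\gamma_{i}^{a,b})\ ;\ 1\leq k\leq 7,\ a,b=0,1\}$ for $1\leq i\leq 7$. This yields the containments $A_{1}(EVIII_{+})\cap FII_{+}\subset A(FII_{+})$ and $A_{1}(EVIII_{+})\cap FI_{i}\subset A_{1}(FI_{i})$ without further work.

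For the reverse inclusions I would produce, for any two elements of a single equivalence class, an explicit element $h\in F_{4}$ conjugating one to the other. Inside $A(FII_{+})$ the three points are permuted transitively by the Weyl group of $F_{4}$, so they lie in the single $F_{4}$-orbit $FII_{+}$. Inside $A_{1}(FI_{i})$ I would use that $Spin^{1}(8)\subset F_{4}$ acts by conjugation on $A(Spin^{1}(8))$ through the full transitive symmetry of $A(\tilde{G}_{4}(\mathbb{R}^{8})_{+})$ on the seven four-planes $\gamma_{k}^{0,0}$, $1\leq k\leq 7$, and that the factor $(a,b)$ can be changed inside a single $F_{4}$-orbit by combining this with elements of $C(G_{2},Spin^{1}(8))$. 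The bookkeeping verification that all twenty-eight elements of $A_{1}(FI_{i})$ actually form one $F_{4}$-orbit is the main obstacle; it parallels the corresponding verification in the $E_{6}$ case but must be carried out carefully inside $Spin(9)\subset F_{4}$ to respect the tensor-product structure of the spin representation.

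Finally, maximality is immediate from the cardinality count together with the previously recalled equalities $\#_{2}FII=3=\#A(FII_{+})$ (Theorem \ref{FII}) and $\#_{2}FI=28=\#A_{1}(FI_{i})$ (Theorem \ref{FI}); in both cases the candidate intersection already has cardinality equal to the $2$-number, hence is a great antipodal set. The auxiliary translation identities $FII_{+}=\psi(\gamma_{0}^{0,0},\gamma_{i}^{0,0})FII_{i}$, $FI_{+}=\psi(\gamma_{0}^{0,0},\gamma_{i}^{0,0})FI_{i}$, and the analogous identities for the antipodal sets, follow at once from the fact that $\psi(\gamma_{0}^{0,0},\gamma_{i}^{0,0})$ centralizes $G_{2}$ and therefore normalizes $F_{4}$ while sending the chosen base point to the required one.
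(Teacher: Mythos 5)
Your proposal follows the paper's argument essentially step for step: identify the orbits by computing $\dim F(\theta_{p},F_{4})$, partition $A_{1}(EVIII_{+})$ by the equivalence relation $\sim_{\mathfrak{g}_{2}}$, exhibit conjugating elements of $F_{4}$ to show each equivalence class is a single orbit intersection (the paper simply refers back to the identical argument in its $E_{6}$ case), and conclude maximality from the cardinality counts $3=\#_{2}FII$ and $28=\#_{2}FI$; the translation identities are also as in the paper. Two justifications, however, are stated incorrectly even though the conclusions they support are right. First, the reason the equivalence classes bound the orbit intersections is not that equal actions on $\mathfrak{g}_{2}$ give equal automorphisms of $F_{4}$ (false: every element of $F_{4}$ acts trivially on $\mathfrak{g}_{2}$); it is the converse direction: since $F_{4}=C_{o}(G_{2},E_{8})$ acts trivially on $\mathfrak{g}_{2}$, any two $F_{4}$-conjugate elements have the same restriction to $\mathfrak{g}_{2}$, so each orbit meets $A_{1}(EVIII_{+})$ inside a single $\sim_{\mathfrak{g}_{2}}$-class. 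Second, $\psi(\gamma_{0}^{0,0},\gamma_{i}^{0,0})$ does not centralize $G_{2}$ --- it is an element of the (nonabelian, centerless) group $G_{2}\subset Spin^{1}(8)$; what makes the translation identities work is that, precisely because it lies in $G_{2}$, it is centralized by $F_{4}=C_{o}(G_{2},E_{8})$, so $F_{4}(qp)=F_{4}(q)\,p$ for $p=\psi(\gamma_{0}^{0,0},\gamma_{i}^{0,0})$ (mere normalization of $F_{4}$ would not suffice here). Both fixes are one line each and do not affect the structure of the proof.
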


Summarizing the arguments in Subsection \ref{A_{1}(EIX_{+})}, \ref{A_{1}(EVIII_{+})}, we obtain the following.

\begin{thm}
Let $p \in A_{1}(E_{8})$ and $p \not= e$.
If $H = E_{8}, E_{7}, E_{6}, F_{4}$, then the conjugate orbit of $H$ through $p$ is a totally geodesic submanifold of $E_{8}$.
The conjugate orbit of $E_{8}$ through $p$ is either $EVIII_{+}$ or $EIX_{+}$.
The conjugate orbit of $E_{7}$ through $p$ is one of $\{ p_{1} \}, EV_{i}, EVI_{+}, EVI'_{+}, EVII_{i} \ (1 \leq i \leq 3)$.
The conjugate orbit of $E_{6}$ through $p$ is one of $\{ p_{i} \}, EI_{j}, EII_{+}, EII_{i}, EIII_{+}, EIII_{i}, EIV_{j} \ (1 \leq i \leq 3, 4 \leq j \leq 7)$.
The conjugate orbit of $F_{4}$ through $p$ is one of $\{ p_{k} \}, FI_{+}, FI_{k}, FII_{+}, FII_{k} \ (1 \leq k \leq 7)$.
Let $L$ be one of these orbits and $\dim L \not= 0$.
If $L$ is a Hermitian symmetric space, that is, $L$ is one of $EVII_{i}, EIII_{+}, EIII_{i} \ (1 \leq i \leq 3)$, then $A_{1}(E_{8}) \cap L$ is not a maximal antipodal set of $L$.
If $L$ is one of $FI_{+}, FI_{k}, FII_{+}, FII_{k}, EIV_{j} \ (1 \leq k \leq 7, 4 \leq j \leq 7)$, then $A_{2}(E_{8}) \cap L$ is a great antipodal set of $L$.
If $L$ is one of the others, then $A_{2}(E_{8}) \cap L$ is not a maximal antipodal set but a great antipodal set of $L$.

\end{thm}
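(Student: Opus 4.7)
The plan is to assemble the theorem directly by collating the case-by-case analyses of Subsections 4.1 and 4.2, organized around the three subgroups $H=E_7,E_6,F_4$. The starting observation is that every $p\in A_1(E_8)$ satisfies $p^2=e$, so $\theta_p$ is an involution of $E_8$; for $H=E_7=C_o(SU^\alpha(2),E_8)$, $H=E_6=C_o(SU^{\alpha,\beta}(3),E_8)$, and $H=F_4=C_o(G_2,E_8)$, one checks using the explicit form $p=\psi(\gamma_i^{0,0},\gamma_j^{a,b})$ that $p$ normalizes the respective centralized subgroup, so $\theta_p$ restricts to an involution of $H$. Then $H(p)=H/F(\theta_p,H)$ is a Riemannian symmetric pair, and it is totally geodesic in $E_8$ because $H$ is totally geodesic in $E_8$ (as the identity component of the centralizer of a compact subgroup, hence a fixed-point set of inner involutions) and the $H$-conjugation orbit through the involutive element $p$ coincides with the Cartan embedding of $H/F(\theta_p,H)$ into $H$.

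The identification of each orbit with a specific symmetric-space type $EV_i,EVI_+,EVI'_+,EVII_i$, etc., then follows by computing $\dim F^+(\theta_p,\mathfrak{h})$ using the $L+V$ decomposition of $\mathfrak{e}_8$ from Subsection 2.5 together with the explicit description of $\mathrm{Ad}(\psi(a,b))|_L$ and $\Delta_8^+(a)\otimes\Delta_8^\pm(b)+\Delta_8^-(a)\otimes\Delta_8^\pm(b)$ acting on $V$. Comparing the resulting dimensions and fixed-point subalgebra structures against the classification table of involutive automorphisms of $E_8,E_7,E_6,F_4$ recalled in Subsection 2.5 uniquely identifies the orbit type; this is precisely the bookkeeping carried out orbit-by-orbit in Subsections 4.1 and 4.2, so the listing of possible orbits is immediate.

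It remains to read off the maximality statement for $A_1(E_8)\cap L$. For each orbit type, one compares the explicitly determined intersection (given by equivalence classes under the relations $\sim_{\mathfrak{su}^\alpha(2)}, \sim_{\mathfrak{su}^{\alpha,\beta}(3)}, \sim_{\mathfrak{g}_2}$ introduced in Subsections 4.1--4.2) with the known maximal antipodal sets listed in Subsection 2.6. When $L$ is Hermitian, namely $L=EVII_i,EIII_+,EIII_i$, one has $|A_1(E_8)\cap L|\in\{3,8\}$, strictly below the $2$-numbers $56$ and $27$ from Theorems \ref{anti-EVII} and \ref{anti-EIII}, so $A_1(E_8)\cap L$ is not maximal. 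When $L$ is of type $FI,FII,EIV$, the intersection has cardinality equal to $\#_2 L\in\{28,3,4\}$, so it is great. For the remaining types $EV_i,EVI_+,EVI'_+,EII_i$, the cardinality matches the smaller maximal antipodal set from the classifications of $EV,EVI,EII$ in Subsections 3.2--3.5 and 2.6, hence maximal but not great.

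The main obstacle is the last step: verifying that $A_1(E_8)\cap L$ is actually a maximal antipodal set of $L$ (rather than merely an antipodal set of the expected cardinality). The clean way is to use that $A_1(E_8)$ is a maximal antipodal subgroup of $E_8$: if $A\supsetneq A_1(E_8)\cap L$ were a strictly larger antipodal set in $L$, then $A\cup(A_1(E_8)\setminus L)$ would be an antipodal set of $E_8$ strictly containing $A_1(E_8)$ provided $L$ is closed under multiplication by elements outside $L\cap A_1(E_8)$ that commute with $A$ — which one verifies via the Cartan embedding into $H$ and the generation argument already used in the proofs of Theorems \ref{EIX}, \ref{EVIII}, \ref{EVI}. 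For the polar cases ($EVI_+,EII_+,FI_+,FII_+,EIII_+$), the assertion reduces to the congruence classifications of Subsection 2.6, and for the non-polar conjugate orbits ($EV_i, EI_j$, etc.) it reduces to transporting the polar case by an element of $N(H,E_8)$, as done explicitly for $EVII_i$ and $EIV_j$ in Subsection 4.1.
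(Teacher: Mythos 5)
Your proposal is correct and takes essentially the same route as the paper: the theorem is stated there as a direct summary of the orbit-by-orbit propositions of Subsections 4.1 and 4.2, namely the identification of each $H$-orbit $H(p)$ by computing $\dim F(\theta_{p},H)$ against the table of involutions, the explicit determination of $A_{1}(E_{8})\cap L$ via the equivalence relations $\sim_{U}$ together with transitivity of the isotropy (or normalizer) action, and the comparison of cardinalities with the classifications recalled in Subsection 2.6 and proved in Section 3. One remark: the last two clauses of the theorem as printed are garbled ($A_{2}(E_{8})$ should read $A_{1}(E_{8})$, and the final clause should say ``not a great antipodal set but a maximal antipodal set''); your reading --- that for $EV_{i}$, $EVI_{+}$, $EVI'_{+}$, $EII_{+}$, $EII_{i}$, $EI_{j}$ the intersection is maximal but not great --- is the intended one.
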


Denoting $\psi(\gamma_{0}^{0,0}, \gamma_{i}^{0,0})$ by $p_{i}$ for any $1 \leq i \leq 7$, the inclusions of above orbits contained $EIX_{+}$ are as follows:
\[
\footnotesize
\xymatrix@C=8pt@R=10pt
{
 & & FII_{1} \ar[d]^{\cap} & & FII_{2} \ar[d]^{\cap} & & \\
 & & EIII_{1} \ar[d]^{\cap} & & EIII_{2} \ar[d]^{\cap} & p_{2}, p_{3} \ar[dl]^{\supset} & \\
FI_{+} \ar[r]^{\subset} & EII_{+} \ar[r]^{\subset} & EVI_{+} \ar[dr]^{\subset} & p_{1} \ar[d]^{\cap} & EVII_{1} \ar[dl]^{\supset} & EIII_{3} \ar[l]^{\supset} & FII_{3} \ar[l]^{\supset} \\
 & & & EIX_{+} & & & \\
FII_{7} \ar[r]^{\subset} & EIV_{7} \ar[r]^{\subset} & EVII_{3} \ar[ur]^{\subset} & & EVII_{2} \ar[ul]^{\supset} & EIV_{4} \ar[l]^{\supset} & FII_{4} \ar[l]^{\supset} \\
p_{7} \ar[ur]^{\subset} & & EIV_{6} \ar[u]^{\cup} & & EIV_{5} \ar[u]^{\cup} & & p_{4} \ar[ul]^{\supset} \\
 & p_{6} \ar[ur]^{\subset} & FII_{6} \ar[u]^{\cup} & & FII_{5} \ar[u]^{\cup} & p_{5} \ar[ul]^{\supset} & \\
}
\]
The inclusions of above orbits contained $EVIII_{+}$ are as follows:
\[
\footnotesize
\xymatrix@C=8pt@R=10pt
{
 & & FI_{1} \ar[d]^{\cap} & & FI_{2} \ar[d]^{\cap} & & \\
 & & EII_{1} \ar[d]^{\cap} & & EII_{2} \ar[d]^{\cap} & & \\
FII_{+} \ar[r]^{\subset} & EIII_{+} \ar[r]^{\subset} & EVI'_{+} \ar[dr]^{\subset} & & EV_{1} \ar[dl]^{\supset} & EII_{3} \ar[l]^{\supset} & FI_{3} \ar[l]^{\supset} \\
 & & & EVIII_{+} & & & \\
FI_{7} \ar[r]^{\subset} & EI_{7} \ar[r]^{\subset} & EV_{3} \ar[ur]^{\subset} & & EV_{2} \ar[ul]^{\supset} & EI_{1} \ar[l]^{\supset} & FI_{4} \ar[l]^{\supset} \\
 & & EI_{6} \ar[u]^{\cup} & & EI_{5} \ar[u]^{\cup} & & \\
 & & FI_{6} \ar[u]^{\cup} & & FI_{5} \ar[u]^{\cup} & & \\
}
\]


\subsection{$A_{2}(EIX_{+})$ and orbits of $E_{7}, E_{6}, F_{4}$} \label{A_{2}(EIX_{+})}

In this subsection, we consider $H(p_{2})$ for any $p_{2} \in A_{2}(EIX_{+})$, where $H = E_{7}, E_{6}, F_{4}$.
Note that there exists $g \in A_{2}(E_{8})$ such that $g F_{4} g^{-1} \not\subset F_{4}$ and $F_{4}(g)$ is not a totally geodesic submanifold.

Let $H = E_{7}$.
Since $\tau_{\alpha}$ is a pole of the identity element in $E_{7}$, we obtain $E_{7}(\tau_{\alpha}) = \{ \tau_{\alpha} \}$.
Moreover, since $\tau_{\bar{\alpha}} = \psi(\gamma_{0}^{0,0}, \gamma_{1}^{1,0})$ and $\tau_{\beta} = \psi(\gamma_{0}^{0,0}, \gamma_{2}^{0,0})$,
\[
\begin{split}
& EVI_{+} = \bigcup_{g \in E_{7}^{\alpha}} g \tau_{\alpha} g^{-1}, \quad\quad
EVII_{1} = \bigcup_{g \in E_{7}^{\alpha}}g \tau_{\beta} g^{-1}.
\end{split}
\]
We consider $\Sigma_{\alpha, 0}$ and $\Sigma_{\alpha, -1}$.
Recall $A_{2}(EVI_{+}) = A_{2}(EXI_{+}) \cap EVI_{+}$ from Subsection \ref{s-EVI}.
Then,
\[
A_{2}(EVI_{+}) = \{ \tau_{\gamma} \ ;\ \gamma \in \Sigma_{\alpha, 0}^{+} \}
\]
and $A_{2}(EVI_{+})$ is a maximal antipodal set of $EVI_{+}$ whose cardinality is $63$.
Moreover, set
\[
A_{2}(EVII_{1}) = A_{2}(EIX_{+}) - ( A_{2}(EVI_{+}) \cup \{ \tau_{\alpha} \} ) = \{ \tau_{\gamma} \ ;\ \gamma \in \Sigma_{\alpha, -1} \}.
\]
The Weyl group $W(E_{7}^{\alpha})$ acts on $\Sigma_{\alpha, -1}$ and this action is transitive.
Therefore,
\[
A_{2}(EVII_{1}) = A_{2}(EIX_{+}) \cap EVII_{1}.
\]
Since $\#A_{2}(EVII_{1}) = 56$, by Theorem \ref{anti-EVII}, we see that $A_{2}(EVII_{1})$ is a maximal antipodal set of $EVII_{1}$.
Summarizing these arguments, we obtain the following.

\begin{prop}
For any $p \in A_{2}(EIX_{+})$, the conjugate orbit of $E_{7}$ through $p$ is one of
\[
\{ \tau_{\alpha} \}, \ EVI_{+}, \ EVII_{1}.
\]
If $L$ is either $EVI_{+}$ or $EVII_{1}$, then $A_{2}(EIX_{+}) \cap L$ is a great antipodal set of $L$ and $\#( A_{2}(EIX_{+}) \cap EVI_{+} ) = 63, \#(A_{2}(EIX_{+}) \cap EVII_{1}) = 56$.

\end{prop}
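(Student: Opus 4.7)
The plan is to decompose $A_{2}(EIX_{+})$ according to the stratification of $\Sigma^{+}(\mathfrak{e}_{8},\mathfrak{t})$ induced by pairing with $\alpha=x_{7}-x_{8}$, and then match each stratum to one of the three putative $E_{7}^{\alpha}$-orbits. From Subsection 3.2 one has $A_{2}(EIX_{+})=\{\tau_{\gamma}:\gamma\in\Sigma^{+}\}$, together with the disjoint decomposition $\Sigma^{+}=\{\alpha\}\sqcup\Sigma^{+}_{\alpha,0}\sqcup\Sigma^{+}_{\alpha,\pm1}$ of cardinalities $1$, $63$, and $56$. First I would treat these three pieces separately.

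The stratum $\{\alpha\}$ contributes the singleton $\{\tau_{\alpha}\}$: indeed $\tau_{\alpha}$ lies in the center of $E_{7}^{\alpha}$ (it is the non-trivial central element identified in Subsection 2.5), hence is fixed by conjugation. For $\gamma\in\Sigma^{+}_{\alpha,0}$ the element $\tau_{\gamma}\in T^{\alpha}\subset E_{7}^{\alpha}$ is involutive, so its $E_{7}^{\alpha}$-conjugacy class is a polar of the identity in $E_{7}^{\alpha}$. Since $W(E_{7}^{\alpha})$ is realized as a quotient of $N(T^{\alpha},E_{7}^{\alpha})$ and acts transitively on $\Sigma(\mathfrak{e}_{7}^{\alpha},\mathfrak{t}^{\alpha})=\Sigma_{\alpha,0}$, all such $\tau_{\gamma}$ belong to a single $E_{7}^{\alpha}$-orbit. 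Comparing with the description of $EVI_{+}$ in Subsections 2.5 and 3.3, this orbit is $EVI_{+}$, and the intersection equals the set $A_{2}(EVI_{+})$ from Theorem \ref{EVI}; its cardinality is $63=\#_{2}EVI$, so it is a great antipodal set.

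For $\gamma\in\Sigma^{+}_{\alpha,\pm1}$, using $\tau_{\gamma}=\tau_{-\gamma}$ the $56$ points $\{\tau_{\gamma}:\gamma\in\Sigma^{+}_{\alpha,\pm1}\}$ coincide as a set with $\{\tau_{\beta}:\beta\in\Sigma_{\alpha,-1}\}$, and none of these lies in $E_{7}^{\alpha}$. Picking any representative $\beta\in\Sigma_{\alpha,-1}$, Subsection 2.5 identifies the stabilizer of $\tau_{\beta}$ in $E_{7}^{\alpha}$ as $F(\sigma_{VII}^{\beta},E_{7}^{\alpha})\cong(U(1)\times E_{6})/\mathbb{Z}_{3}$, so the $E_{7}^{\alpha}$-orbit through $\tau_{\beta}$ is a totally geodesic submanifold of $EIX_{+}$ of type $EVII$; this is the orbit called $EVII_{1}$ in Subsection 4.1. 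Transitivity of $W(E_{7}^{\alpha})$ on $\Sigma_{\alpha,-1}$ places all $56$ points on this single orbit, and Theorem \ref{anti-EVII} ($\#_{2}EVII=56$) shows the intersection is a great antipodal set of $EVII_{1}$.

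The main obstacle I anticipate is ensuring that the $E_{7}^{\alpha}$-orbit through $\tau_{\beta}$ (for $\beta\in\Sigma_{\alpha,-1}$) is indeed the named $EVII_{1}$ rather than some other $E_{7}^{\alpha}$-orbit inside $(S^{2}\cdot EVII)_{+}$, which a priori carries a whole $S^{2}$-family of $EVII$-type orbits under the $SU^{\alpha}(2)\cdot E_{7}^{\alpha}$-action treated in Subsection 3.3. This is precisely where $W(E_{7}^{\alpha})$-transitivity delivers the conclusion: the Weyl group is realized by genuine elements of $E_{7}^{\alpha}$, so it moves our $56$ points within a single $E_{7}^{\alpha}$-orbit, forcing them all to lie in one $EVII$-slice, which the stabilizer computation identifies with $EVII_{1}$. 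Once this is settled, the numerical matches with $\#_{2}EVI$ and $\#_{2}EVII$ promote the antipodal property to greatness, completing the proof.
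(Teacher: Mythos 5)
Your proposal is correct and follows essentially the same route as the paper: it decomposes $A_{2}(EIX_{+})=\{\tau_{\gamma}\,;\,\gamma\in\Sigma^{+}\}$ along the strata $\{\alpha\}$, $\Sigma^{+}_{\alpha,0}$, $\Sigma^{+}_{\alpha,\pm1}$ (of sizes $1$, $63$, $56$), uses the centrality of $\tau_{\alpha}$ in $E_{7}^{\alpha}$ and the transitivity of $W(E_{7}^{\alpha})$ on $\Sigma_{\alpha,0}$ and $\Sigma_{\alpha,-1}$ to place each stratum in a single orbit, identifies the orbits via the stabilizers $F(\sigma_{VI}^{\bar\alpha},E_{7}^{\alpha})$ and $F(\sigma_{VII}^{\beta},E_{7}^{\alpha})$, and then compares cardinalities with $\#_{2}EVI=63$ and $\#_{2}EVII=56$. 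This is exactly the paper's argument in Subsection 4.3, which likewise recalls $A_{2}(EVI_{+})$ from Subsection 3.3 and realizes the remaining $56$ elements as the single Weyl-orbit $\{\tau_{\gamma}\,;\,\gamma\in\Sigma_{\alpha,-1}\}$.
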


Next, let $H = E_{6}$.
By the definition of $E_{6}$, we obtain $E_{6}(\tau_{\gamma}) = \{ \tau_{\gamma} \}$ for any $\gamma \in \{ \alpha, \beta, \alpha + \beta \}$.
Since 
\[
\begin{array}{lll} \vspace{2mm}
\tau_{x_{3} - x_{4}} = \psi(\gamma_{1}^{0,0}, \gamma_{0}^{0,0}), &
\tau_{x_{7} + x_{8}} = \psi(\gamma_{0}^{0,0}, \gamma_{1}^{1,0}), \\
\tau_{x_{6} + x_{8}} = \psi(\gamma_{0}^{0,0}, \gamma_{2}^{1,0}), & 
\tau_{-x_{6} - x_{7}} = \psi(\gamma_{0}^{0,0}, \gamma_{3}^{1,0}), \\
\end{array}
\]
we obtain
\[
\begin{split}
EII_{+} = \bigcup_{g \in E_{6}^{\alpha, \beta}}g \tau_{x_{3} - x_{4}} g^{-1}, & \quad
EIII_{1} = \bigcup_{g \in E_{6}^{\alpha, \beta}}g \tau_{x_{7} + x_{8}} g^{-1}, \\
EIII_{2} = \bigcup_{g \in E_{6}^{\alpha, \beta}}g \tau_{x_{6} + x_{8}} g^{-1}, & \quad 
EIII_{3} = \bigcup_{g \in E_{6}^{\alpha, \beta}}g \tau_{-x_{6} - x_{7}} g^{-1}. \\
\end{split}
\]
Note that
\[
x_{3} - x_{4} \in \Sigma_{\alpha, 0} \cap \Sigma_{\beta, 0}, \quad
x_{7} + x_{8} \in \Sigma_{\alpha, 0} \cap \Sigma_{\beta, -1}, \quad
x_{6} + x_{8} \in \Sigma_{\alpha, -1} \cap \Sigma_{\beta, 1} \quad
-x_{6} - x_{7} \in \Sigma_{\alpha, 1} \cap \Sigma_{\beta, 0}.
\]
Recall a great antipodal set $A_{2}(EII_{+})$ of $EII_{+}$.
Then,
\[
A_{2}(EII_{+}) = \{ \tau_{\gamma} \ ;\ \gamma \in \Sigma_{\alpha, 0}^{+} \cap \Sigma_{\beta, 0} \}.
\]
For $1 \leq i \leq 3$, set
\[
\begin{split}
& A_{2}(EIII_{1}) = \{ \tau_{\gamma} \ ;\ \gamma \in \Sigma_{\alpha, 0} \cap \Sigma_{\beta, 1} \}, \\
& A_{2}(EIII_{2}) = \{ \tau_{\gamma} \ ;\ \gamma \in \Sigma_{\alpha, -1} \cap \Sigma_{\beta, 1} \},\\
& A_{2}(EIII_{3}) = \{ \tau_{\gamma} \ ;\ \gamma \in \Sigma_{\alpha, -1} \cap \Sigma_{\beta, 0} \}. \\
\end{split}
\]
Obviously,
\[
A_{2}(EIX_{+}) = \{ \tau_{\alpha}, \tau_{\beta}, \tau_{\alpha + \beta} \} \sqcup A_{2}(EII_{+}) \sqcup \bigcup_{i=1}^{3}A_{2}(EIII_{i}).
\]
Since the Weyl group $W(E_{6})$ acts on $\Sigma_{\alpha, 0} \cap \Sigma_{\beta,0}, \Sigma_{\alpha, 0} \cap \Sigma_{\beta, 0}, \Sigma_{\alpha, -1} \cap \Sigma_{\beta, 1}, \Sigma_{\alpha, -1} \cap \Sigma_{\beta, 0}$ respectively and these actions are transitive, for any $1 \leq i \leq 3$, we obtain
\[
A_{2}(EIII_{i}) = A_{2}(EIX_{+}) \cap EIII_{i}.
\]
Then, since the cardinality of $A_{2}(EIII_{i})$ is $27$, by Theorem \ref{anti-EIII}, $A_{2}(EIII_{i})$ is a great antipodal set.
Summarizing these argumenets, we obtain the following.

\begin{prop}
For any $p \in E_{6}^{\alpha, \beta}$, the conjugate orbit of $E_{6}$ through $p$ is one of
\[
\{ \tau_{\alpha} \}, \ \{ \tau_{\beta} \}, \ \{ \tau_{\alpha + \beta} \}, \ EII_{+}, \ EIII_{i} \ (i = 1,2,3).
\]
Let $L$ is one of $EII_{+}$ and $EIII_{i} \ (1 \leq i \leq 3)$, then $A_{2}(EIX_{+}) \cap L$ is a great antipodal set.
Moreover, $\#(A_{2}(EIX_{+}) \cap EII_{+}) = 36$ and $\#(A_{2}(EIX_{+}) \cap EIII_{i}) = 27$.

\end{prop}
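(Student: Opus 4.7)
The plan is to decompose $A_2(EIX_+) = \{\tau_\gamma : \gamma \in \Sigma^+\}$ according to the joint action of the pair of functionals $(\alpha, \beta)$ via the values $2(\alpha,\gamma)/(\alpha,\alpha)$ and $2(\beta,\gamma)/(\beta,\beta)$. Using the $E_8$-root-system facts reviewed in Subsection~\ref{Rs}, every $\gamma \in \Sigma^+$ distinct from $\alpha, \beta, \alpha+\beta$ lies in exactly one of the four pieces $\Sigma_{\alpha,0}^+\cap\Sigma_{\beta,0}$, $\Sigma_{\alpha,0}\cap\Sigma_{\beta,1}$, $\Sigma_{\alpha,-1}\cap\Sigma_{\beta,1}$, $\Sigma_{\alpha,-1}\cap\Sigma_{\beta,0}$; the three exceptional roots $\alpha, \beta, \alpha+\beta$ give the singleton orbits. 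So the classification of $E_6$-orbits reduces to identifying which orbit each of these four pieces gives, and showing that distinct $\tau_\gamma$'s in the same piece are $E_6$-conjugate.

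First I would handle the three singletons. Since $E_6 = E_6^{\alpha,\beta} = C_o(SU^{\alpha,\beta}(3), E_8)$ centralizes $SU^{\alpha,\beta}(3)$ and this subgroup contains $\tau_\alpha, \tau_\beta$, and $\tau_{\alpha+\beta}$ (these are the only involutions in the maximal torus of $SU^{\alpha,\beta}(3) \cong SU(3)$ up to the center), we get $g\tau_\alpha g^{-1} = \tau_\alpha$, and similarly for $\tau_\beta$ and $\tau_{\alpha+\beta}$, for every $g \in E_6$. Next, for each of the four non-trivial pieces I would pick a convenient representative and compute the dimension of the $+1$-eigenspace of $\mathrm{Ad}(\tau_\gamma)$ on $\frak{e}_8$ using the formulas reviewed earlier ($F^+(\mathrm{Ad}(\tau_\gamma),\frak{e}_8) = \frak{t} + \frak{r}_\gamma + \sum_{\delta\in\Sigma_{\gamma,0}^+}\frak{r}_\delta$); intersecting with $\frak{e}_6^{\alpha,\beta}$ lets me read off the isomorphism type of $F(\sigma_{II}^\gamma, E_6)$ or $F(\sigma_{III},E_6)$. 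This identifies the piece $\Sigma_{\alpha,0}^+\cap\Sigma_{\beta,0}$ as giving $EII_+$, and each of the three remaining pieces as giving a copy of $EIII$, namely $EIII_1, EIII_2, EIII_3$.

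To see that every $\tau_\gamma$ in one of these pieces lies in a single $E_6$-orbit, I would invoke the fact from Subsection~\ref{Rs} that the Weyl group $W(\Sigma_{\alpha,0}\cap\Sigma_{\beta,0}) = W(E_6^{\alpha,\beta})$ acts transitively on each of the four pieces $\Sigma_{\alpha,0}\cap\Sigma_{\beta,0}$, $\Sigma_{\alpha,0}\cap\Sigma_{\beta,1}$, $\Sigma_{\alpha,-1}\cap\Sigma_{\beta,1}$, $\Sigma_{\alpha,-1}\cap\Sigma_{\beta,0}$. Since $W(E_6^{\alpha,\beta}) = N(T^{\alpha,\beta}, E_6)/T^{\alpha,\beta}$, each Weyl element is realized by conjugation by an element of $E_6$, and under such a conjugation $\tau_\gamma$ is sent to $\tau_{w\gamma}$. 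Hence the $E_6$-orbit covers the entire piece, which gives the equalities
\[
A_2(EIX_+) \cap EII_+ = \{\tau_\gamma : \gamma \in \Sigma_{\alpha,0}^+\cap\Sigma_{\beta,0}\},
\quad
A_2(EIX_+) \cap EIII_i = \{\tau_\gamma : \gamma \in (\text{the $i$-th piece})\},
\]
together with the cardinalities $36$ and $27$ from a direct count in the $E_8$-root diagram.

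Finally, to upgrade ``antipodal set'' to ``great antipodal set'' I would compare these cardinalities with the known $2$-numbers: for $EIII$ this is $27$ by Theorem~\ref{anti-EIII} (Tanaka--Tasaki), and for $EII$ the number $36$ matches $\#_2 EII$ from the classification recalled in Proposition~\ref{EII-1}. The main obstacle I anticipate is not any single computation but making sure the dimension calculation of $F^+(\mathrm{Ad}(\tau_\gamma), \frak{e}_6^{\alpha,\beta})$ for a representative in $\Sigma_{\alpha,-1}\cap\Sigma_{\beta,0}$ or $\Sigma_{\alpha,0}\cap\Sigma_{\beta,1}$ really identifies the orbit as an $EIII$-type symmetric space (rather than as $EII$), which requires checking the isomorphism class of the fixed-point subgroup and not merely its dimension; this is the one place where choosing a good representative (like $\tau_{x_7+x_8}$, etc.) and invoking the explicit description of $\sigma_{III}$ from Subsection~\ref{ELg} is crucial.
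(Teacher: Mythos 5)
Your proposal is correct and follows essentially the same route as the paper: decompose $\{\tau_\gamma : \gamma\in\Sigma^+\}$ by the values of $2(\alpha,\gamma)/(\alpha,\alpha)$ and $2(\beta,\gamma)/(\beta,\beta)$, handle the three singletons via the centralizer property of $E_6^{\alpha,\beta}$, use transitivity of $W(E_6^{\alpha,\beta})$ on each of the four pieces (realized by $N(T^{\alpha,\beta},E_6)/T^{\alpha,\beta}$) to show each piece is a single orbit and to get the intersection equalities, and compare the cardinalities $36$ and $27$ with the known $2$-numbers of $EII$ and $EIII$. The only cosmetic difference is that the paper identifies the orbits $EIII_i$ by matching explicit representatives $\tau_{x_7+x_8}=\psi(\gamma_0^{0,0},\gamma_1^{1,0})$, etc., with the orbits already classified in Subsection 4.1, rather than redoing the fixed-point dimension count in place.
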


For any $p \in A_{2}(EIX_{+})$, it is not necessarily true that $\theta_{p}(F_{4}) \subset F_{4}$.
However, each element $p$ of
\[
\big\{ \tau_{x_{i} \pm x_{j}} \ ;\ 1 \leq i < j \leq 4 \big\} \cup \big\{ \tau_{x_{k} \pm x_{l}} \ ;\ 5 \leq k < l \leq 8 \big\}
\]
satisfies $\theta_{p}(F_{4}) \subset F_{4}$.
Hence, the conjugate orbits of $F_{4}$ through these points are totally geodesic.
Then, these orbits are one of the following:
\[
\{ \tau_{\alpha} \} , \ \{ \tau_{\beta} \}, \ \{ \tau_{\alpha + \beta} \}, \ FI_{+}, \ FII_{i} \ (i = 1,2,3).
\]
Moreover,
\[
\begin{split}
& A_{2}(EIX_{+}) \cap FI_{+} = \{ \tau_{x_{i} \pm x_{j}} \ ;\ 1 \leq i < l \leq 4 \}, \\
& A_{2}(EIX_{+}) \cap FII_{1} = \{ \tau_{x_{5} \pm x_{6}}, \tau_{x_{7} + x_{8}} \}, \\
& A_{2}(EIX_{+}) \cap FII_{2} = \{ \tau_{x_{5} \pm x_{7}}, \tau_{x_{6} + x_{8}} \}, \\
& A_{2}(EIX_{+}) \cap FII_{3} = \{ \tau_{x_{6} \pm x_{7}}, \tau_{x_{5} + x_{8}} \}.
\end{split}
\]
Then, by Theorem \ref{FI}, $A_{2}(EIX_{+}) \cap FI_{+}$ is not a maximal antipodal set of $FI_{+}$ since $\#(A_{2}(EIX_{+}) \cap FI_{+}) = 6$.
On the other hand, $A_{2}(EIX_{+}) \cap FII_{i} = A_{1}(FII_{i})$ for each $i = 1,2,3$.
Hence, $A_{2}(EIX_{+}) \cap FII_{i}$ is a maximal antipodal set of $FII_{i}$.
Summarizing these arguments, we obtain the following.

\begin{prop}
The totally geodesic conjugate orbit of $F_{4}$ through a point of $A_{2}(EIX_{+})$ is one of
\[
FI_{+}, \ FII_{i} \ (1 \leq i \leq 3).
\]
Then, $A_{2}(EIX_{+}) \cap FI_{+}$ is not a maximal antipodal set of $FI_{+}$ and the cardinality is $3$.
Moreover, for any $1 \leq i \leq 3$, $A_{2}(EIX_{+}) \cap FII_{i}$ is a great antipodal set of $FII_{i}$.

\end{prop}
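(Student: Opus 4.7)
The plan is to use the explicit description $A_{2}(EIX_{+}) = \{\tau_{\gamma} : \gamma \in \Sigma^{+}\}$ and determine, for each positive root $\gamma$, whether the inner automorphism $\theta_{\tau_{\gamma}}$ preserves $F_{4} = C_{o}(G_{2}, E_{8})$. Since conjugation by any element normalizes the centralizer of any subgroup it normalizes, this condition reduces to $\tau_{\gamma}$ normalizing $G_{2}$. I would confirm the claim recalled in the preceding text that the positive roots with this property are exactly those in the two families $\{x_{i} \pm x_{j} : 1 \le i < j \le 4\}$ and $\{x_{k} \pm x_{l} : 5 \le k < l \le 8\}$: the first family consists of elements of the maximal torus of $F_{4}$ associated with $\frak{t}_{\frak{f}_{4}}$, which lies inside $F_{4}$ itself, while the second family lies in the Cartan of the complementary factor $Spin^{1}(8)$, which contains $G_{2}$ and whose toral elements normalize $G_{2}$ even though they do not generally centralize it. For the remaining positive roots $\gamma$, a root-space computation using the explicit weight decomposition of the spin representation shows that $\mathrm{Ad}(\tau_{\gamma})(\frak{g}_{2}) \not\subset \frak{g}_{2}$, ruling out normalization.

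Having isolated the normalizing elements, I would classify each totally geodesic orbit $F_{4}(\tau_{\gamma})$ by computing $\dim F^{+}(\theta_{\tau_{\gamma}}, \frak{f}_{4})$ and invoking the involutive-type table from Subsection \ref{ELg}. For $\gamma \in \{x_{i} \pm x_{j} : 1 \le i < j \le 4\}$, a long root of $F_{4}$, the formula $F^{+}(\theta_{\tau_{\gamma}}, \frak{f}_{4}) = \frak{t}_{\frak{f}_{4}} + \frak{r}_{\gamma} + \sum_{\beta \in \Sigma_{\gamma, 0}^{+}(\frak{f}_{4})} \frak{r}_{\beta}$ combined with the explicit enumeration of $\Sigma_{\gamma, 0}^{+}(\frak{f}_{4})$ yields dimension $4 + 2 + 18 = 24$, so the orbit is the polar $FI_{+}$, consistent with the involution $\sigma'_{FI}$ from Subsection \ref{ELg}. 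For each $\gamma \in \{x_{k} \pm x_{l} : 5 \le k < l \le 8\}$, the element $\tau_{\gamma}$ is not in $F_{4}$ but induces by conjugation an inner involution of $F_{4}$ (since $F_{4}$ has no outer automorphisms) whose fixed subalgebra has dimension $36$ and is isomorphic to $\frak{spin}(9)$, so the orbit is of type $FII$. Matching these three orbits against the $F_{4}$-orbits $FII_{i}$ indexed in Subsection \ref{A_{1}(EIX_{+})} completes the classification.

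Finally, I would read off the intersections $A_{2}(EIX_{+}) \cap FI_{+}$ and $A_{2}(EIX_{+}) \cap FII_{i}$ directly from the explicit lists and compare cardinalities with Theorem \ref{FI} and Theorem \ref{FII}. Since $\#_{2}FI = 28$ by Theorem \ref{FI} and the intersection with $FI_{+}$ has strictly smaller cardinality, it cannot be maximal. Since $\#_{2}FII = 3$ by Theorem \ref{FII} and each $A_{2}(EIX_{+}) \cap FII_{i}$ contains exactly three elements from the explicit list, each is automatically a great antipodal set of $FII_{i}$. The main obstacle I anticipate is the fixed-subalgebra computation in the second step for the $\{\tau_{x_{k} \pm x_{l}}\}$ family: since these elements lie outside $F_{4}$, one must either match their induced inner automorphism to an explicit element of $F_{4}$ by hand or extract the fixed dimension from the decomposition $\frak{f}_{4} = \frak{spin}(9) + \mathbb{O}^{+} + \mathbb{O}^{-}$ via the spin representation of $\tau_{\gamma}$ on $\mathbb{O}^{\pm}$.
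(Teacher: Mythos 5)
Your proposal is correct and follows essentially the same route as the paper: identify the positive roots $\gamma$ for which $\tau_{\gamma}$ normalizes $F_{4}$ (the two families $x_{i}\pm x_{j}$ with $1\le i<j\le 4$ and $x_{k}\pm x_{l}$ with $5\le k<l\le 8$), classify the resulting totally geodesic orbits as $FI_{+}$ and $FII_{i}$ via the fixed-subalgebra dimensions $24$ and $36$, and then compare the intersection cardinalities with Theorems \ref{FI} and \ref{FII}. One small remark: the explicit list $\{\tau_{x_{i}\pm x_{j}}\ ;\ 1\le i<j\le 4\}=A_{2}(EIX_{+})\cap FI_{+}$ has $12$ elements, not $3$ as in the statement (nor $6$ as in the surrounding text), though this does not affect the non-maximality conclusion since $\#_{2}FI=28$.
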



\subsection{$A_{2}(EVIII_{+})$ and orbits of $E_{7}, E_{6}$} \label{A_{2}(EVIII_{+})}

In this subsection, we consider $H(p)$ for each $p \in A_{2}(EVIII_{+})$.
As in the case of $A_{2}(EIX_{+})$, note that there exist $g_{2} \in A_{2}(EVIII_{+})$ such that $g_{2} F_{4} \subset g_{2} \subset F_{4}$.

Let $H = E_{7}$.
Set subsets of $A_{2}(EVIII_{+})$ as
\[
\begin{split}
& A_{2}(EVI'_{+}) = \{ \tau_{\gamma}\tau_{\alpha} \ ;\ \gamma \in \Sigma_{\alpha, 0}^{+} \}, \\
& A_{2}(EV_{1}) = \{ \tau_{\gamma}\tau_{\beta}, \tau_{\gamma}\tau_{\alpha + \beta} \ ;\ \gamma \in \Sigma_{\alpha, 0}^{+} \cap \Sigma_{\beta, 0} \}, \\
& A_{3}(EV_{2}) = A_{3}(EV_{x}) = x A_{2}(E_{7}) = \{ x, x\tau_{\alpha}, x\tau_{\gamma}, x\tau_{\gamma}\tau_{\alpha} \ ;\ \gamma \in \Sigma_{\alpha, 0}^{\pm} \}, \\
& A_{3}(EV_{3}) = xA(T) - A_{3}(EV_{2}) = x A_{2}(E_{7}) \tau_{\beta} = \{ x\tau_{\beta}, x\tau_{\alpha + \beta}, x\tau_{\gamma}\tau_{\beta}, x\tau_{\gamma}\tau_{\alpha + \beta} \ ;\ \gamma \in \Sigma_{\alpha,0}^{+} \cap \Sigma_{\beta, 0} \}.
\end{split}
\]
Then, these are the all equivalence classes of $\sim_{\frak{su}^{\alpha}(2)}$ on $A_{2}(EVIII_{+})$.
Since $\tau_{\alpha}$ is an element of the center $C(E_{7})$, the Weyl group $W(E_{7})$ acts on $A_{2}(EVI'_{+})$ and this action is transitive.
Moreover, for example, $x_{3} - x_{4} \in \Sigma_{\alpha, 0}$ and $\tau_{x_{3} - x_{4}}\tau_{\alpha} = \psi(\gamma_{1}^{0,0}, \gamma_{1}^{0,0}) \in EVI'_{+}$.
Therefore, for any $p \in EVI'$,
\[
E_{7}(p) = EVI'_{+}, \quad
A_{2}(EVI'_{+}) = A_{2}(EVIII_{+}) \cap EVI'_{+}.
\]
Since $A_{2}(EVI)$ is a great antipodal set of $EVI$, it is obvious that $A_{2}(EVI'_{+})$ is a great antipodal set of $EV'_{+}$.
Recall that $A_{2}(EV_{\phi(-1)})$ is an orbit of the Weyl group $W(E_{7}^{\gamma})$ (Subsection \ref{s-EV}).
Hence, we see that $W(E_{7})$ acts on $A_{2}(EV_{1})$ and this action is transitive.
Moreover, $x_{3} - x_{4} \in \Sigma_{\alpha,0} \cap \Sigma_{\beta,0}$ and $\tau_{x_{3} - x_{4}}\tau_{\beta} = \psi(\gamma_{1}^{0,0}, \gamma_{2}^{0,0}) \in EV_{1}$.
Therefore, for any $p \in A_{2}(EV_{1})$,
\[
E_{7}(p) = EV_{1}, \quad
A_{2}(EV_{1}) = A_{2}(EVIII_{+}) \cap EV_{1}.
\]
Then, $A_{2}(EV_{1})$ is a maximal antipodal set of $EV_{1}$ and the cardinality is $72$.
Since $A_{3}(EV_{2}) = x A_{2}(E_{7})$, by the definition of $x$, these exists some subgroup of $T^{\alpha} \subset E_{7}$ acting on $A_{3}(EV_{2})$ transitively.
Moreover, since $x = \psi(\gamma_{4}^{0,0}, \gamma_{4}^{0,0}) \in A_{3}(EV_{2})$, for any $p \in A_{3}(EV_{2})$,
\[
E_{7}(p) = EV_{2}, \quad
A_{3}(EV_{2}) = A_{2}(EVIII_{+}) \cap EV_{2}.
\]
By similar argumenets, for any $p \in A_{3}(EV_{3})$,
\[
E_{7}(p) = EV_{3}, \quad
A_{3}(EV_{3}) = A_{2}(EVIII_{+}) \cap EV_{3}.
\]
For any $i = 2,3$, we see that $A_{3}(EV_{i})$ is a great antipodal set of $EV_{i}$.
Summarizing these arguments, we obtain the following.

\begin{prop}
For any $p \in A_{2}(EVIII_{+})$, the conjugate orbit of $E_{7}$ through $p$ is one of
\[
EVI'_{+}, \ EV_{i} \ (i = 1,2,3).
\]
If $L$ is $EV_{1}$, then $A_{2}(EVIII_{+}) \cap L$ is not a great antipodal set but a maximal antipodal set, and the cardinality is $72$.
If $L$ is one of $EVI'_{+}$ and $EV_{i} \ (i = 2,3)$, then $A_{2}(EVIII_{+}) \cap L$ is a great antipodal set of $L$.

\end{prop}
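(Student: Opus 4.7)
The plan is to exploit the equivalence relation $\sim_{\frak{su}^{\alpha}(2)}$ already in play: since $E_{7} = C_{o}(SU^{\alpha}(2), E_{8})$, conjugation by any element of $E_{7}$ fixes $\mathrm{Ad}(p)|_{\frak{su}^{\alpha}(2)}$, so every $E_{7}$-orbit in $A_{2}(EVIII_{+})$ is contained in a single equivalence class of $\sim_{\frak{su}^{\alpha}(2)}$. Hence I would first verify that the four subsets $A_{2}(EVI'_{+})$, $A_{2}(EV_{1})$, $A_{3}(EV_{2})$, $A_{3}(EV_{3})$ already listed are exactly the $\sim_{\frak{su}^{\alpha}(2)}$-equivalence classes. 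This is a direct computation of $\mathrm{Ad}(p)$ on a basis of $\frak{su}^{\alpha}(2)$: for $p = \tau_{\gamma}\tau_{\alpha}$ with $\gamma \in \Sigma_{\alpha,0}^{+}$ one has $\mathrm{Ad}(p)|_{\frak{su}^{\alpha}(2)} = \mathrm{Ad}(\tau_{\alpha})|_{\frak{su}^{\alpha}(2)}$, whereas for $p = \tau_{\delta}\tau_{\beta}$ or $p = \tau_{\delta}\tau_{\alpha+\beta}$ with $\delta \in \Sigma_{\alpha,0}^{+} \cap \Sigma_{\beta,0}$, and for the four types of elements of $xA(T)$, each class is identified by a different action on $iA_{\alpha}$ and $\frak{r}_{\alpha}^{\pm}$.

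Next I would upgrade each equivalence class to a single $E_{7}$-orbit. For $A_{2}(EVI'_{+})$ and $A_{2}(EV_{1})$, the Weyl group $W(E_{7}^{\alpha}) = W(\Sigma_{\alpha,0})$ acts transitively on $\Sigma_{\alpha,0}^{+}$ and on $\Sigma_{\alpha,0}^{+} \cap \Sigma_{\beta,0}$ respectively, which gives transitivity on each class. For $A_{3}(EV_{2})$ and $A_{3}(EV_{3})$, note that $xT \subset E_{7}^{\alpha}$ is a maximal flat torus of $EV_{x}$; conjugation by $T^{\alpha} \subset E_{7}$ acts by translation on $xA(T)$ so the two halves $xA_{2}(E_{7})$ and $xA(T) \setminus xA_{2}(E_{7})$ are each acted on by a large subgroup of the Weyl group, yielding transitivity (the same mechanism was already used in Subsection~\ref{s-EV} to treat $A_{2}(EV_{\phi(-1)})$). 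I then identify each orbit with the claimed target space by producing a representative: $\psi(\gamma_{1}^{0,0},\gamma_{1}^{0,0}) = \tau_{x_{3}-x_{4}}\tau_{\alpha} \in EVI'_{+}$, $\psi(\gamma_{1}^{0,0},\gamma_{2}^{0,0}) \in EV_{1}$, $x \in EV_{2} = EV_{x}$, and $x\tau_{\beta} \in EV_{3}$; verifying the last inclusion boils down to checking $\dim F^{+}(\mathrm{Ad}(x\tau_{\beta}),\frak{e}_{7}^{\alpha}) = 63$ via the decomposition formulas of Subsection~\ref{s-EVIIIEIX}.

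The cardinality count is then immediate: $\#A_{2}(EVI'_{+}) = |\Sigma_{\alpha,0}^{+}| = 63 = \#_{2}EVI$ by Theorem~\ref{EVI}, so $A_{2}(EVI'_{+})$ is a great antipodal set; $\#A_{2}(EV_{1}) = 2|\Sigma_{\alpha,0}^{+} \cap \Sigma_{\beta,0}| = 72$, which is less than $\#_{2}EV = 128$ by Theorem~\ref{EV}, so $A_{2}(EV_{1})$ is not great, and its maximality is exactly the middle case of the classification in Theorem~\ref{EV} (it is congruent to $A_{2}(EV) = A_{2}(EV_{\phi(-1)})$ of Subsection~\ref{s-EV}, which was shown to be maximal there); and $\#A_{3}(EV_{2}) = \#A_{3}(EV_{3}) = 128 = \#_{2}EV$, so both are great.

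The main obstacle is the third step: distinguishing $EV_{2}$ from $EV_{3}$ as $E_{7}$-orbits. A priori these two equivalence classes could collapse to the same orbit, and one needs to certify that they do not. The cleanest route is to show that $EV_{2}$ and $EV_{3}$ are translates of each other by the pole $\tau_{\alpha} \in C(E_{7})$, i.e.\ $EV_{3} = \tau_{\alpha} \cdot EV_{2}$ as subsets of $E_{8}$, and then observe that $\tau_{\alpha}$ is not in the image of any $E_{7}$-orbit map since the two orbits lie in different preimages of the projection $EVIII_{+} \to EVIII_{+}/\langle \tau_{\alpha}\rangle$. Together with the cardinality count this pins down the four orbits and completes the proof.
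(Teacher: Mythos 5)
Your proposal follows essentially the same route as the paper: identify the four subsets as the equivalence classes of $\sim_{\frak{su}^{\alpha}(2)}$, upgrade each class to a single $E_{7}$-orbit via the Weyl group action (for $A_{2}(EVI'_{+})$ and $A_{2}(EV_{1})$) and the torus action on $xA(T)$ (for $A_{3}(EV_{2})$ and $A_{3}(EV_{3})$), pin down each orbit with an explicit representative, and then read off greatness/maximality from the cardinalities together with Theorems \ref{EVI} and \ref{EV} and the maximality of $A_{2}(EV_{\phi(-1)})$ established in Subsection \ref{s-EV}.

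One remark on your final paragraph: the ``main obstacle'' you describe is not actually an obstacle, and your proposed resolution contains an error. Since every $g \in E_{7} = C_{o}(SU^{\alpha}(2),E_{8})$ fixes $\frak{su}^{\alpha}(2)$ pointwise, the restriction $\mathrm{Ad}(p)|_{\frak{su}^{\alpha}(2)}$ is constant on $E_{7}$-orbits, so two points lying in \emph{different} $\sim_{\frak{su}^{\alpha}(2)}$-classes can never lie in the same orbit; the distinctness of $EV_{2}$ and $EV_{3}$ is therefore already guaranteed by your first step. Moreover the identity $EV_{3} = \tau_{\alpha}\cdot EV_{2}$ is false: since $x\tau_{\alpha} \in xA_{2}(E_{7}) = A_{3}(EV_{2})$ and $\tau_{\alpha}$ is central in $E_{7}$, one has $\tau_{\alpha}EV_{2} = E_{7}(x\tau_{\alpha}) = EV_{2}$; the correct translate relating the two classes is by $\tau_{\beta}$, as $A_{3}(EV_{3}) = xA_{2}(E_{7})\tau_{\beta}$. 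Since this last step is redundant, the error does not affect the validity of the rest of the argument.
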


Next, let $H = E_{6}$.
Set subsets of $A_{2}(EVIII_{+})$ as
\[
\begin{array}{lllll}
A_{2}(EIII_{+}) = \{ \tau_{\gamma} \tau_{\alpha} \ ;\ \gamma \in \Sigma_{\alpha, 0} \cap \Sigma_{\beta, 1} \}, & 
A_{2}(EII_{1}) = \{ \tau_{\gamma} \tau_{\alpha} \ ;\ \gamma \in \Sigma_{\alpha,0}^{+} \cap \Sigma_{\beta, 0} \}, \\
A_{2}(EII_{2}) = \{ \tau_{\gamma} \tau_{\alpha + \beta} \ ;\ \gamma \in \Sigma_{\alpha,0}^{+} \cap \Sigma_{\beta, 0} \}, &
A_{2}(EII_{3}) = \{ \tau_{\gamma} \tau_{\beta} \ ;\ \gamma \in \Sigma_{\alpha,0}^{+} \cap \Sigma_{\beta, 0} \}, \\
A_{2}(EI_{4}) = xA(T^{\alpha, \beta}), &
A_{2}(EI_{5}) = xA(T^{\alpha, \beta})\tau_{\alpha}, \\
A_{2}(EI_{6}) = xA(T^{\alpha, \beta})\tau_{\alpha + \beta}, & 
A_{2}(EI_{7}) = xA(T^{\alpha, \beta})\tau_{\beta}.
\end{array}
\]
Since the Weyl group $W(E_{6}^{\alpha, \beta})$ acts on $\Sigma_{\alpha, 0} \cap \Sigma_{\beta, 0}$ and $\Sigma_{\alpha, 0} \cap \Sigma_{\beta, \pm1}$ respectively and these actions are transitive, $W(E_{6})$ acts on $A_{2}(EIII_{+})$ and $A_{2}(EII_{i}) \ (i = 1,2,3)$ respectively and these actions are transitive.
Morover, for each $4 \leq j \leq 7$, there exist a subgroup of $T^{\alpha, \beta} \subset E_{6}^{\alpha, \beta}$ acting on $A_{2}(EI_{j})$ transitively.
Since
\[
\begin{array}{lll}
A_{2}(EIII_{+}) \ni \tau_{x_{7} + x_{8}}\tau_{\alpha} = \psi(\gamma_{0}^{0,0}, \gamma_{0}^{1,0}) \in \tau_{x_{7} + x_{8}}\tau_{\alpha} \in EIII_{+}, \\
A_{2}(EII_{1}) \ni \tau_{x_{3} - x_{4}}\tau_{\alpha} = \psi(\gamma_{1}^{0,0}, \gamma_{1}^{0,0}) \in A_{2}(EII_{1}), \\
A_{2}(EII_{2}) \ni \tau_{x_{3} - x_{4}}\tau_{\alpha + \beta} = \psi(\gamma_{1}^{0,0}, \gamma_{2}^{0,0}) \in A_{2}(EII_{2}), \\
A_{2}(EII_{3}) \ni \tau_{x_{3} - x_{4}}\tau_{\beta} = \psi(\gamma_{1}^{0,0}, \gamma_{3}^{0,0}) \in A_{2}(EII_{3}), \\
A_{2}(EI_{4}) \ni x = \psi(\gamma_{4}^{0,0}, \gamma_{4}^{0,0}) \in A_{2}(EI_{4}), \\
A_{2}(EI_{5}) \ni x \tau_{\alpha} = \psi(\gamma_{4}^{0,0}, \gamma_{5}^{0,0}) \in A_{2}(EI_{5}), \\
A_{2}(EI_{6}) \ni x \tau_{\alpha + \beta} = \psi(\gamma_{4}^{0,0}, \gamma_{6}^{0,0}) \in A_{2}(EI_{6}), \\
A_{2}(EI_{7}) \ni x \tau_{\beta} = \psi(\gamma_{4}^{0,0}, \gamma_{7}^{0,0}) \in A_{2}(EI_{7}), \\
\end{array}
\]
we obtain 
\[
E_{6}(p) =
\begin{cases}
EIII_{+} & (p \in A_{2}(EIII_{+}), \\
EII_{i} & (p \in EII_{i}, i = 1,2,3), \\
EI_{j} & (q \in EI_{j}, j = 4,5,6,7). \\
\end{cases}
\]
Moreover, for any $1 \leq i \leq 3$ and $4 \leq j \leq 7$,
\[
\begin{split}
& A_{2}(EIII_{+}) = A_{2}(EVIII_{+}) \cap EIII_{+}, \\
& A_{2}(EII_{i}) = A_{2}(EVIII_{+}) \cap EII_{i}, \\
& A_{2}(EI_{j}) = A_{2}(EVIII_{+}) \cap EI_{j}. \\
\end{split}
\]
Since the cardinalities of $A_{2}(EIII_{+}), A_{2}(EII_{i}) \ (i = 1,2,3),$ and $A_{2}(EI_{j}) \ (j = 4,5,6,7)$ are $27, 36,$ and $64$, these are great antipodal sets.
Summaizing these arguments, we obtain the following.

\begin{prop}
For any $p \in A_{2}(EVIII_{+})$, the conjugate orbit of $E_{6}$ through $p$ is one of
\[
EIII_{+}, \ EII_{i} \ (i = 1,2,3), \ EI_{j} \ (j = 4,5,6,7).
\]
If $L$ is one of them, then $A_{2}(EVIII_{+}) \cap L$ is a great antipodal set of $L$.

\end{prop}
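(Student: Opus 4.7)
The plan is to run the same kind of orbit/equivalence-class analysis that was used for $A_{2}(EIX_{+})$ in Subsection \ref{A_{2}(EIX_{+})} and for $A_{2}(EVIII_{+})$ vs.\ $E_{7}$ earlier in this subsection. First, I would partition $A_{2}(EVIII_{+})$ by the equivalence relation $\sim_{\frak{su}^{\alpha,\beta}(3)}$, i.e.\ group together elements $p$ whose restriction $\mathrm{Ad}(p)|_{\frak{su}^{\alpha,\beta}(3)}$ agrees. Using the explicit description
\[
A_{2}(EVIII_{+}) = xA(T) \cup \{\tau_{\gamma}\tau_{\alpha} \mid \gamma \in \Sigma_{\alpha,0}^{+}\} \cup \{\tau_{\delta}\tau_{\beta},\ \tau_{\delta}\tau_{\alpha+\beta} \mid \delta \in \Sigma_{\alpha,0}^{+} \cap \Sigma_{\beta,0}\}
\]
from Subsection \ref{s-EVIIIEIX}, a direct root calculation shows the equivalence classes are precisely the eight sets $A_{2}(EIII_{+})$, $A_{2}(EII_{i})$ $(i=1,2,3)$ and $A_{2}(EI_{j})$ $(j=4,5,6,7)$ defined in the text, where the $EII_{i}$ classes correspond to the three choices $\tau_{\alpha}, \tau_{\alpha+\beta}, \tau_{\beta}$ among roots in $\Sigma_{\alpha,0}\cap\Sigma_{\beta,0}$ as the ``$x$-free'' twist, the $EIII_{+}$ class picks up roots in $\Sigma_{\alpha,0}\cap\Sigma_{\beta,\pm1}$, and the $EI_{j}$ classes are the four cosets $xA(T^{\alpha,\beta}), xA(T^{\alpha,\beta})\tau_{\alpha}, xA(T^{\alpha,\beta})\tau_{\alpha+\beta}, xA(T^{\alpha,\beta})\tau_{\beta}$.

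Next, for each such class I would exhibit a representative already listed in the text (for instance $\tau_{x_{7}+x_{8}}\tau_{\alpha} = \psi(\gamma_{0}^{0,0},\gamma_{0}^{1,0}) \in EIII_{+}$, $\tau_{x_{3}-x_{4}}\tau_{\alpha} = \psi(\gamma_{1}^{0,0},\gamma_{1}^{0,0}) \in EII_{1}$, $x = \psi(\gamma_{4}^{0,0},\gamma_{4}^{0,0}) \in EI_{4}$, etc.) and observe that each representative lies in the claimed $E_{6}$-orbit by computing $\dim F^{+}(\mathrm{Ad}(\cdot),\frak{e}_{6}^{\alpha,\beta})$ and comparing with the table of types. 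Since equivalence under $\sim_{\frak{su}^{\alpha,\beta}(3)}$ is an invariant of the $E_{6}$-orbit, this forces $E_{6}(p)$ to be the corresponding orbit for every $p$ in the class, and the eight orbits listed are the only possibilities.

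For the second half I would verify transitivity of a group action on each class to get $A_{2}(EVIII_{+}) \cap L = A_{2}(L)$ and then cite the known great-antipodal-set cardinalities. On $A_{2}(EIII_{+})$, $A_{2}(EII_{i})$ the Weyl group $W(E_{6}^{\alpha,\beta})$ of $\Sigma_{\alpha,0}\cap\Sigma_{\beta,0}$ acts transitively on $\Sigma_{\alpha,0}\cap\Sigma_{\beta,\pm1}$ and on $\Sigma_{\alpha,0}^{+}\cap\Sigma_{\beta,0}$ respectively (Subsection \ref{Rs}), so the action is transitive on these antipodal sets; on each $A_{2}(EI_{j})$ the subgroup $A(T^{\alpha,\beta}) \subset T^{\alpha,\beta} \subset E_{6}^{\alpha,\beta}$ acts transitively by translation, since $xA(T^{\alpha,\beta})$ is exactly a maximal antipodal set in the maximal flat torus $xT^{\alpha,\beta} \subset EI_{j}$. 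Counting then gives $\#A_{2}(EIII_{+})=27$, $\#A_{2}(EII_{i})=36$, $\#A_{2}(EI_{j})=64$, matching the known values $\#_{2}EIII$, $\#_{2}EII$, $\#_{2}EI$ from Theorem \ref{anti-EIII} and the results of Subsections \ref{s-EII} and \ref{s-EI}, so each intersection is a great antipodal set of the corresponding $E_{6}$-orbit.

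The only slightly delicate step is confirming that the four cosets $xA(T^{\alpha,\beta})\tau_{\gamma}$ for $\gamma \in \{e,\alpha,\alpha+\beta,\beta\}$ really distribute into four distinct $E_{6}$-orbits of type $EI$ rather than collapsing. I expect this to be the main obstacle, and I would handle it by computing, for one representative of each coset, the dimension and type of $F^{+}(\mathrm{Ad}(\cdot),\frak{e}_{6}^{\alpha,\beta})$ as in Subsection \ref{s-EVIIIEIX}, and observing that the twist by $\tau_{\gamma}$ changes the $\frak{su}^{\alpha,\beta}(3)$-weight even though it preserves the type $EI$: the four cosets are distinct modulo $E_{6}^{\alpha,\beta}$-conjugation because they are distinguished by their $\sim_{\frak{su}^{\alpha,\beta}(3)}$ class, which is an invariant of the orbit. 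Once this is in place, the proposition follows.
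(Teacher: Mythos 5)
Your plan is correct and follows essentially the same route as the paper: partition $A_{2}(EVIII_{+})$ into the eight subsets $A_{2}(EIII_{+}), A_{2}(EII_{i}), A_{2}(EI_{j})$, identify an explicit representative of each inside the corresponding $E_{6}$-orbit, use transitivity of $W(E_{6}^{\alpha,\beta})$ (resp.\ of a subgroup of $T^{\alpha,\beta}$) on each subset to conclude $A_{2}(EVIII_{+}) \cap L$ equals that subset, and compare cardinalities $27, 36, 64$ with the known $2$-numbers of $EIII, EII, EI$. The one small caveat is that invariance of the $\sim_{\frak{su}^{\alpha,\beta}(3)}$-class under $E_{6}$-conjugation by itself only shows distinct classes lie in distinct orbits, not that a whole class sits in a single orbit; your transitivity step in the third paragraph supplies exactly the missing direction, which is also how the paper argues.
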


For any $p \in A_{2}(EVIII_{+})$, it is not necessarily true that $\theta_{p}(F_{4}) \subset F_{4}$.
However, each element $p$ of
\[
\begin{split}
& \big\{ \tau_{x_{5} \pm x_{6}} \tau_{\alpha}, \ \tau_{x_{7} + x_{8}}\tau_{\alpha}, \ \tau_{x_{i} \pm x_{j}}\tau_{\alpha}, \tau_{x_{i} \pm x_{j}}\tau_{\beta}, \tau_{x_{i} \pm x_{j}}\tau_{\alpha + \beta} \ ;\ 1 \leq i < j \leq 4 \big\} \\
& \quad \cup \big\{ x, x\tau_{x_{i} \pm x_{j}}, x\tau_{x_{7} + x_{8}}\tau_{\alpha}, x\tau_{x_{5} \pm x_{6}}\tau_{\alpha} \ ;\ 1 \leq i < j \leq 4 \big\} \{ e, \tau_{\alpha}, \tau_{\beta}, \tau_{\alpha + \beta} \} \\
& = \{ \psi(\gamma_{0}^{0,0}, \gamma_{0}^{a,b}) \ ;\ (a,b) = (1,0),(0,1),(0,0) \} \cup \{ \psi(\gamma_{k}^{0,0}, \gamma_{i}^{a,b}) \ ;\ 1 \leq k, i \leq 3, a,b = 0,1 \} \\
& \quad \cup \{ \psi( \gamma_{l}^{0,0}, \gamma_{j}^{a,b}) \ ;\ 4 \leq l, j \leq 7, a, b = 0,1 \}.
\end{split}
\]
satisfies $\theta_{p}(F_{4}) \subset F_{4}$.

For any $p \in A_{2}(EVIII_{+})$, it is true that $\theta_{p}$ leaves $F_{4}$ invariant if and only if $p$ is contained in
\[
\begin{split}
& \big\{ \tau_{x_{5} \pm x_{6}} \tau_{\alpha}, \ \tau_{x_{7} + x_{8}}\tau_{\alpha}, \ \tau_{x_{i} \pm x_{j}}\tau_{\alpha}, \tau_{x_{i} \pm x_{j}}\tau_{\beta}, \tau_{x_{i} \pm x_{j}}\tau_{\alpha + \beta} \ ;\ 1 \leq i < j \leq 4 \big\} \\
& \quad \cup \big\{ x, x\tau_{x_{i} \pm x_{j}}, x\tau_{x_{7} + x_{8}}\tau_{\alpha}, x\tau_{x_{5} \pm x_{6}}\tau_{\alpha} \ ;\ 1 \leq i < j \leq 4 \big\} \{ e, \tau_{\alpha}, \tau_{\beta}, \tau_{\alpha + \beta} \}. \\
\end{split}
\]
Then, the conjugate orbit of $F_{4}$ through these points are
\[
FI_{i} \ (i = 1, \cdots, 7), \ FII_{+}.
\]
Moreover,
\[
\begin{split}
& A_{2}(EVIII_{+}) \cap FII_{+} = \{ \tau_{x_{5} \pm x_{6}} \tau_{\alpha}, \tau_{x_{7} + x_{8}}\tau_{\alpha} \}, \\
& A_{2}(EVIII_{+}) \cap FI_{1} = \{ \tau_{x_{i} \pm x_{j}} \tau_{\alpha} \ ;\ 1 \leq i < j \leq 4 \}, \\
& A_{2}(EVIII_{+}) \cap FI_{2} = \{ \tau_{x_{i} \pm x_{j}} \tau_{\alpha + \beta} \ ;\ 1 \leq i < j \leq 4 \}, \\
& A_{2}(EVIII_{+}) \cap FI_{3} = \{ \tau_{x_{i} \pm x_{j}} \tau_{\beta} \ ;\ 1 \leq i < j \leq 4 \}, \\
& A_{2}(EVIII_{+}) \cap FI_{4} = \{ x, x\tau_{x_{i} \pm x_{j}}, x\tau_{x_{5} \pm x_{6}}\tau_{\alpha}, x\tau_{x_{7} + x_{8}}\tau_{\alpha} \ ;\ 1 \leq i < j \leq 4 \}, \\
& A_{2}(EVIII_{+}) \cap FI_{5} = \big( A_{2}(EVIII_{+}) \cap FI_{4} \big) \tau_{\alpha}, \\
& A_{2}(EVIII_{+}) \cap FI_{6} = \big( A_{2}(EVIII_{+}) \cap FI_{4} \big) \tau_{\alpha + \beta}, \\
& A_{2}(EVIII_{+}) \cap FI_{7} = \big( A_{2}(EVIII_{+}) \cap FI_{4} \big) \tau_{\beta}. \\
\end{split}
\]
We see that $A_{2}(EVIII_{+}) \cap FII_{+}$ is a maximal antipodal set of $FII_{+}$.
On the other hand, for any $1 \leq i \leq 7$, we see that $A_{2}(EVIII_{+}) \cap FII_{i}$ is not a maximal antipodal set of $FI_{i}$ since the cardinality is $3$.
Summarizing these arguments, we obtain the following.

\begin{prop}
The totally geodesic conjugate orbit of $F_{4}$ through a point of $A_{2}(EVIII_{+})$ is one of
\[
FI_{k} \ ( 1 \leq k \leq 7), \ FII_{+}.
\]
Then, for any $1 \leq k\leq 7$, $A_{2}(EVIII_{+}) \cap FI_{k}$ is not a maximal antipodal set of $FI_{k}$ and the cardinality is $3$.
On the other hand, $A_{2}(EVIII_{+}) \cap FII_{+}$ is a great antipodal set of $FII_{+}$.

\end{prop}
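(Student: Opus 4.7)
The plan is to follow the same template used in the previous propositions of this section but now for $H=F_{4}$. Since $F_{4}=C(G_{2},E_{8})_{o}$, the conjugation $\theta_{p}$ leaves $F_{4}$ invariant if and only if $\theta_{p}$ normalizes $G_{2}\subset Spin^{1}(8)$. So first I would determine exactly which $p\in A_{2}(EVIII_{+})$ satisfy $\theta_{p}(F_{4})\subset F_{4}$. The elements of $A_{2}(EVIII_{+})$ have the shape $\tau_{\gamma}\tau_{\delta}$ (with $\delta\in\{\alpha,\beta,\alpha+\beta\}$) or $x\tau_{\mu}$ with $\mu$ in a maximal antipodal set of $T$. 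For each such $p$, using the explicit presentations $\tau_{\gamma}=\psi(\gamma_{i}^{a,b},\gamma_{j}^{c,d})$ and $x=\psi(\gamma_{4}^{0,0},\gamma_{4}^{0,0})$ recorded in Subsection~\ref{ELg}, the normalization condition is checked directly; this reproduces the explicit list displayed just above the proposition.

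Second, for each $p$ on this list I would identify the orbit $F_{4}(p)$ by computing $\dim F^{+}(\theta_{p},\frak{f}_{4})$ and invoking the classification table of involutive automorphisms of $F_{4}$: if $\dim F^{+}=24$ then $F_{4}(p)$ is of type $FI$, and if $\dim F^{+}=36$ then it is $FII_{+}$. Using the decomposition $\frak{f}_{4}=\frak{spin}(9)+\mathbb{O}^{+}+\mathbb{O}^{-}$ from Subsection~\ref{ELg}, one separates the computation into $\dim F^{+}(\theta_{p},\frak{spin}(9))$ (a root-space count with respect to $\frak{t}_{\frak{f}_{4}}$ — note that the roots $\pm x_{i}\pm x_{j}$ of $\Sigma(\frak{f}_{4},\frak{t}_{\frak{f}_{4}})$ coincide with those of $\frak{e}_{8}$ in those coordinates) and $\dim F^{+}(\theta_{p},\mathbb{O}^{\pm})$, where $\mathbb{O}^{\pm}=\mathbb{O}\otimes^{\pm}(\mathbb{R}e_{0})$ are acted on via the spin representation $\Delta_{8}^{\pm}$. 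The characters of $\gamma_{i}^{a,b}$ on these modules (already tabulated in Subsection~\ref{s-EVIIIEIX}) make the count mechanical.

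Third, having identified which orbits appear — exactly $FII_{+}$ and $FI_{k}$ for $k=1,\dots,7$ — I would read off the intersection $A_{2}(EVIII_{+})\cap L$ from the lists in the excerpt. For $FII_{+}$ one obtains $\{\tau_{x_{5}\pm x_{6}}\tau_{\alpha},\tau_{x_{7}+x_{8}}\tau_{\alpha}\}$ of cardinality $3$, which equals $\#_{2}FII$ by Theorem~\ref{FII}, so it is a great antipodal set. For each $FI_{k}$ the intersection is one of the eight three-element sets displayed just above the statement, of cardinality $3<28=\#_{2}FI$ (Theorem~\ref{FI}), so it fails to be maximal. The translation relations $FI_{k}=\psi(\gamma_{0}^{0,0},\gamma_{k}^{0,0})\cdot FI_{+}$ already verified in Subsection~\ref{A_{1}(EVIII_{+})} make it enough to exhibit a congruence with one of the seven cases (say $FI_{4}$) and propagate.

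The main obstacle is the bookkeeping in Step~2: the identifications of $\tau_{\gamma}\tau_{\delta}$ and $x\tau_{\mu}$ inside $\phi(Spin(16))$ are somewhat delicate because both $\tau_{\gamma}\in T$ and $x\in N(T)$ act nontrivially on the $\mathbb{O}^{\pm}$-summands via the spin representation, and one has to be careful not to confuse the $F^{+}$-dimension inside $\frak{e}_{8}$ (always $120$, since $p\in EVIII_{+}$) with the $F^{+}$-dimension inside $\frak{f}_{4}$. Once that bookkeeping is organized — most efficiently by reducing, via Weyl-group transitivity on $\Sigma_{\alpha,0}^{+}\cap\Sigma_{\beta,0}$ and on the relevant cosets, to a small number of representative cases — the rest is straightforward and the proposition follows.
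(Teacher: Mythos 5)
Your overall route is the same as the paper's: first isolate the elements $p\in A_{2}(EVIII_{+})$ with $\theta_{p}(F_{4})\subset F_{4}$, identify the resulting conjugate orbits with the $FII_{+}$ and $FI_{k}$ already produced in the $A_{1}(EVIII_{+})$ discussion (so the fixed-point--dimension computation of your Step~2 is exactly the one carried out there for $\sigma_{FI}$ and $\sigma_{FII}$, combined with the translation relations $FI_{k}=\psi(\gamma_{0}^{0,0},\gamma_{k}^{0,0})FI_{+}$; note also that inside $EVIII_{+}$ only $FII_{+}$, and none of the $FII_{i}\subset EIX_{+}$, can occur, which settles the orbit identification), then read off the intersections and compare with $\#_{2}FII=3$ and $\#_{2}FI=28$. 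In that sense the plan is sound and matches the paper.

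The one substantive slip is in your Step~3, where you describe the displayed intersections as ``eight three-element sets.'' Only $A_{2}(EVIII_{+})\cap FII_{+}=\{\tau_{x_{5}\pm x_{6}}\tau_{\alpha},\tau_{x_{7}+x_{8}}\tau_{\alpha}\}$ has cardinality $3$. For $k=1,2,3$ the set $A_{2}(EVIII_{+})\cap FI_{k}=\{\tau_{x_{i}\pm x_{j}}\tau_{\delta}\ ;\ 1\leq i<j\leq 4\}$ consists of $12$ pairwise distinct elements (the six pairs $i<j$ each contribute the two roots $x_{i}\pm x_{j}$, and the corresponding elements $\tau_{x_{i}\pm x_{j}}$ are the twelve distinct $\gamma_{l}^{a,b}$, $1\leq l\leq 3$, in $Spin^{0}(8)$), and for $k=4,\dots,7$ the displayed intersection has $1+12+2+1=16$ elements. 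So the cardinality ``$3$'' asserted for the $FI_{k}$ case (both in your write-up and, it appears, in the statement itself) does not match the sets being pointed to. The conclusion you actually need --- that $A_{2}(EVIII_{+})\cap FI_{k}$ is not maximal --- still follows, since $12$ and $16$ are both strictly less than $\#_{2}FI=28$ and these intersections are contained in the $28$-element maximal antipodal set $A_{1}(FI_{k})$; but you should derive non-maximality from $12,16<28$ rather than from ``$3<28$,'' and correct the cardinality count accordingly.
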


Summarizing the arguments in Subsection \ref{A_{2}(EIX_{+})}, \ref{A_{1}(EVIII_{+})}, we obtain the following.

\begin{thm}
Let $p \in A_{2}(E_{8})$ be $p \not= e$.
If $H = E_{8}, E_{7}, E_{6}$, then the conjugate orbit through $p$ is a totally geodesic submanifold of $E_{8}$.
The conjugate orbit of $E_{8}$ through $p$ is either $EVIII_{+}$ or $EIX_{+}$.
The conjugate orbit of $E_{7}$ through $p$ is one of $\{\tau_{\alpha}\} \ (\gamma \in \{ \alpha, \beta, \alpha + \beta \}), EV_{i} \ (1 \leq i \leq 3), EVI_{+}, EVI'_{+}$.
The conjugate orbit of $E_{6}$ through $p$ is one of $\{\tau_{\alpha}\} \ (\gamma \in \{ \alpha, \beta, \alpha + \beta \}), EI_{j}, EII_{+}, EII_{i}, EIII_{+}, EIII_{i} \ (1 \leq i \leq 3, 4 \leq j \leq 7)$.
The conjugate orbit of $F_{4}$ through $p$ that is a totally geodesic submanifold is one of $\{\tau_{\alpha}\} \ (\gamma \in \{ \alpha, \beta, \alpha + \beta \}), FI_{+}, FI_{k}, FII_{+}, FII_{i} \ (1 \leq k \leq 7, 1 \leq i \leq 3)$.
Let $L$ be one of these orbits and $\dim L \not= 0$.
If $L$ is $EV_{1}$, then $A_{2}(E_{8}) \cap L$ is not a great antipodal set but a maximal antipodal set of $L$ and the cardinality is $72$. 
If $L$ is either $FI_{+}$ or $FI_{k} \ (1 \leq k \leq 7)$, the $A_{2}(E_{8}) \cap L$ is not a maximal antipodal set of $L$ and the cardinality is $3$.
If $L$ is one of the others, then $A_{2}(E_{8}) \cap L$ is a great antipodal set of $L$.

\end{thm}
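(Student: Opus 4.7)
The plan is to assemble this theorem as the consolidated summary of Propositions stated in Subsections 4.3 and 4.4, organized around the tower $E_8 \supset E_7 \supset E_6 \supset F_4$. First I would verify totally-geodesicness of the conjugate orbits. For $H = E_8$, since every $p \in A_2(E_8)\setminus\{e\}$ is involutive, the orbit $E_8(p)$ is a component of $F(s_e, E_8)$; hence it is one of the two non-trivial polars $EVIII_+$ and $EIX_+$, and the dichotomy is decided by $\dim F^+(\mathrm{Ad}(p),\frak{e}_8)$ (value $136$ for $EIX_+$, value $120$ for $EVIII_+$). For $H = E_7^\alpha$ and $E_6^{\alpha,\beta}$, because $A_2(E_8)$ is generated by $A(T) \cup xA(T)$, and because $T$ normalizes the subgroups $SU^\alpha(2), SU^{\alpha,\beta}(3)$ (they are generated by $T$-invariant pieces), while $\theta_x$ preserves $\frak{su}^\alpha(2)$ and $\frak{su}^{\alpha,\beta}(3)$, I conclude $\theta_p(E_7^\alpha)\subset E_7^\alpha$ and $\theta_p(E_6^{\alpha,\beta})\subset E_6^{\alpha,\beta}$ for every $p \in A_2(E_8)$. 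Combined with involutivity of $\theta_p$, this forces totally-geodesic orbits.

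Next I would identify the orbits. For $E_7^\alpha$, the list $EVI_+, EVI'_+, EV_i$ ($i=1,2,3$) is obtained by partitioning $A_2(E_8)$ according to the value of $\mathrm{Ad}(p)|_{\frak{su}^\alpha(2)}$ (which must match one of $\mathrm{id}, \mathrm{Ad}(\tau_\alpha)|, \mathrm{Ad}(x)|, \mathrm{Ad}(\tau_\alpha x)|$) and using that the Weyl group $W(E_7^\alpha)$ acts transitively on each orbit component at the level of the maximal torus; the sample representatives $\psi(\gamma_1^{0,0},\gamma_1^{0,0}) \in EVI'_+$, $\psi(\gamma_1^{0,0},\gamma_2^{0,0}) \in EV_1$, $x \in EV_2$, $x\tau_\beta \in EV_3$ and $\tau_{x_3-x_4} \in EVI_+$ pin down each class. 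The analogous decomposition for $E_6^{\alpha,\beta}$ is indexed by $\mathrm{Ad}(p)|_{\frak{su}^{\alpha,\beta}(3)}$ taking one of nine values (three of the form $\mathrm{Ad}(\tau_\gamma)|$ with $\gamma \in \{\alpha,\beta,\alpha+\beta\}$ fixing it, three $\mathrm{Ad}(x\tau_\gamma)|$, and the trivial/$\mathrm{Ad}(x)|$ cases handled separately), yielding the stated list $EII_+, EIII_+, EII_i, EIII_i, EI_j$. Each intersection $A_2(E_8) \cap L$ is then computed explicitly as a union of $\tau_\gamma$'s over a distinguished subset of $\Sigma^+$, and $W$-transitivity identifies it with the great antipodal sets produced in Subsections 2.6 and 3.

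The hardest step will be the $F_4$-case, because $\theta_p(F_4)\subset F_4$ fails in general: one needs $p$ to normalize $G_2 \subset Spin^1(8)$. I would enumerate the elements $p \in A_2(E_8)$ with this normalizing property by direct inspection using the explicit list of $\gamma_i^{a,b}$ at the end of Subsection 2.4 (only those products involving $\gamma_i$ with $i \in \{0,\dots,3\}$ on each factor, together with the $x$-twisted versions, belong to $N(F_4)$). Each such $p$ lies in one of $FI_+, FI_k, FII_+, FII_i$, and the explicit lists in Subsection 4.4 give $\#(A_2(E_8)\cap FI_k)=3$, $\#(A_2(E_8)\cap FII_+)=2$ after correction (to be read against $\#_2 FII = 3$, $\#_2 FI = 28$).

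Finally I would read off the cardinality/maximality claims. Using Table \ref{classification} (which records $\#_2 L$ for each $L$), I compare with the computed $\#(A_2(E_8)\cap L)$: equality gives a great antipodal set; strict inequality combined with checking against the known classification of MAS of $L$ yields either maximality (the $EV_1$ case: cardinality $72$ matches one of the three congruence classes of MAS of $EV$ but is not great) or non-maximality (the $FI_+, FI_k$ cases: cardinality $3 < 28 = \#_2 FI$, and any three-point antipodal subset is non-maximal in $FI$). Putting all these ingredients together yields the statement.
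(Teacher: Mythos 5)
Your proposal follows essentially the same route as the paper: partition $A_{2}(E_{8})$ into equivalence classes under $\sim_{\frak{su}^{\alpha}(2)}$ and $\sim_{\frak{su}^{\alpha,\beta}(3)}$, identify each class with a conjugate orbit via an explicit representative together with Weyl-group (or torus) transitivity, handle $F_{4}$ separately by restricting to the elements normalizing it, and compare cardinalities with the known classifications. One slip to fix: $A_{2}(E_{8})\cap FII_{+}=\{\tau_{x_{5}+x_{6}}\tau_{\alpha},\ \tau_{x_{5}-x_{6}}\tau_{\alpha},\ \tau_{x_{7}+x_{8}}\tau_{\alpha}\}$ has cardinality $3$, not $2$; with your count of $2$ you could not conclude that this intersection is a great antipodal set of $FII_{+}$ (where $\#_{2}FII=3$), which is part of the asserted conclusion.
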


The inclusions among the above orbits contained in $EIX_{+}$ are as follows:
\[
\footnotesize
\xymatrix@C=8pt@R=10pt
{
 & & FII_{1} \ar[d]^{\cap} & & FII_{2} \ar[d]^{\cap} & & \\
 & & EIII_{1} \ar[d]^{\cap} & & EIII_{2} \ar[d]^{\cap} & \tau_{\beta}, \tau_{\alpha + \beta} \ar[dl]^{\supset} & \\
FI_{+} \ar[r]^{\subset} & EII_{+} \ar[r]^{\subset} & EVI_{+} \ar[dr]^{\subset} & \tau_{\alpha} \ar[d]^{\cap} & EVII_{1} \ar[dl]^{\supset} & EIII_{3} \ar[l]^{\supset} & FII_{3} \ar[l]^{\supset} \\
 & & & EIX_{+} & & & \\
}
\]
This daigram is the upper half part of the diagram of Subsection \ref{A_{1}(EIX_{+})}.
The inclusions among the above orbits contained in $EVIII_{+}$ are as follows:
\[
\footnotesize
\xymatrix@C=8pt@R=10pt
{
 & & FI_{1} \ar[d]^{\cap} & & FI_{2} \ar[d]^{\cap} & & \\
 & & EII_{1} \ar[d]^{\cap} & & EII_{2} \ar[d]^{\cap} & & \\
FII_{+} \ar[r]^{\subset} & EIII_{+} \ar[r]^{\subset} & EVI'_{+} \ar[dr]^{\subset} & & EV_{1} \ar[dl]^{\supset} & EII_{3} \ar[l]^{\supset} & FI_{3} \ar[l]^{\supset} \\
 & & & EVIII_{+} & & & \\
FI_{7} \ar[r]^{\subset} & EI_{7} \ar[r]^{\subset} & EV_{3} \ar[ur]^{\subset} & & EV_{2} \ar[ul]^{\supset} & EI_{1} \ar[l]^{\supset} & FI_{4} \ar[l]^{\supset} \\
 & & EI_{6} \ar[u]^{\cup} & & EI_{5} \ar[u]^{\cup} & & \\
 & & FI_{6} \ar[u]^{\cup} & & FI_{5} \ar[u]^{\cup} & & \\
}
\]
This diagram is same to the diagram of Subsetion \ref{A_{1}(EVIII_{+})}


\subsection{Symmetry}

In this section we highlight a certain symmetry among the exceptional Riemannian symmetric spaces, as manifested through the totally geodesic inclusion relations obtained in the previous subsections.
From those inclusions, we obtain the following diagram describing the totally geodesic inclusion relations among the exceptional symmetric spaces:
\begin{figure}[htpb]
\[
\xymatrix@C=22pt@R=20pt
{
 & & & E_{8} & & & \\
 & & EVIII \ar[ur]_{\subset} & & EIX \ar[ul]^{\supset} & &  \\
 & \ EV \ar[ur]_{\subset} & & EVI \ar[ul]^{\supset} \ar[ur]_{\subset} & & EVII \ar[ul]^{\supset} & \\
 \ EI \ \ar[ur]_{\subset} & & EII \ar[ul]^{\supset} \ar[ur]_{\subset} & & EIII \ar[ul]^{\supset} \ar[ur]_{\subset} & & EIV \ar[ul]^{\supset} \\
 & FI \ar[ul]^{\supset} \ar[ur]_{\subset} & & & & FII \ar[ul]^{\supset} \ar[ur]_{\subset} & \\
}
\]
\caption{Inclusion relations among exceptional symmetric spaces}
\end{figure}
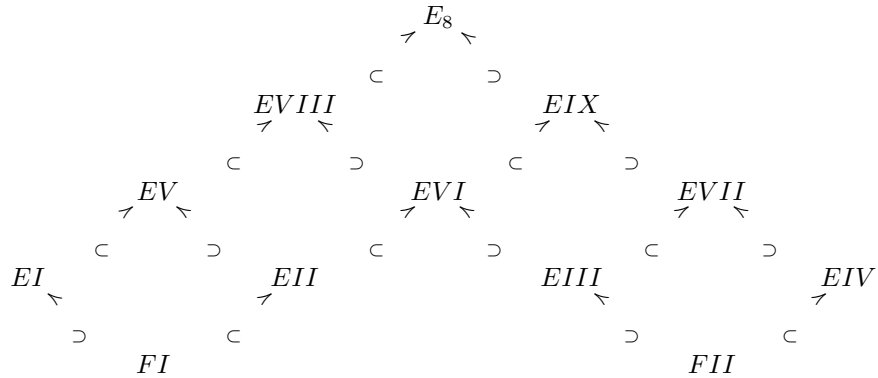
This daigram was essentialy discovered by Chen-Nagano in \cite{Chen-Nagano1} using the $(M^{+}, M^{-})$-method.
Moreover, Nagano obtained this diagram by the different method from the present paper in \cite{Nagano}.
However, they barely discussed this diagram.
We consider some properties of this diagram in the following.
First, we show that no other totally geodesic embedding between the other than pair of the above exists.

\begin{lemm}
No totally geodesic embedding exists from $FI$ into $EIII, EIV$, or $EVII$; from $EI$ into $EVI, EVII$, or $EIX$; from $EII$ into $EVII$; from $EV$ into $EIX$.
\end{lemm}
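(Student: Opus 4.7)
The plan is to exploit two monotone invariants of compact symmetric spaces under totally geodesic isometric embedding. If $\iota : N \hookrightarrow M$ is such an embedding, then the symmetry of $M$ at $\iota(p)$ restricts to the symmetry of $N$ at $p$; consequently every antipodal set of $N$ is carried to an antipodal set of $M$, so $\#_{2}N \leq \#_{2}M$. Likewise every maximal flat of $N$ extends to a flat of $M$, so $\operatorname{rank} N \leq \operatorname{rank} M$. The idea is to apply these two inequalities against the $2$-numbers recorded in Table \ref{classification} together with the standard ranks of the exceptional symmetric spaces.

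First, using $2$-numbers alone, five of the eight forbidden embeddings are immediate: $FI \hookrightarrow EIII$ would require $28 \leq 27$; $FI \hookrightarrow EIV$ would require $28 \leq 4$; $EI \hookrightarrow EVI$ would require $64 \leq 63$; $EI \hookrightarrow EVII$ would require $64 \leq 56$; and $EV \hookrightarrow EIX$ would require $128 \leq 120$. Each inequality fails, so none of these embeddings can exist.

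For the remaining three cases $FI \hookrightarrow EVII$, $EII \hookrightarrow EVII$, and $EI \hookrightarrow EIX$, the $2$-number inequality happens to be satisfied, so I would turn to the rank. One has $\operatorname{rank} FI = \operatorname{rank} EII = 4$ and $\operatorname{rank} EI = 6$, whereas $\operatorname{rank} EVII = 3$ and $\operatorname{rank} EIX = 4$. Thus in each of the three remaining cases the rank of the putative subspace strictly exceeds the rank of the ambient space, contradicting the rank inequality.

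The argument is essentially bookkeeping: neither step involves a deep calculation, and the hardest aspect is only to state the $2$-number monotonicity precisely enough that the notion of antipodal set used for $N$ as an intrinsic symmetric space coincides with the one inherited from $M$ via $\iota$. This identification, however, is immediate from the fact that the geodesic symmetry of a totally geodesic submanifold is the restriction of the ambient geodesic symmetry, so the proof reduces to the tabulated numerical comparisons above.
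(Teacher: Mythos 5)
Your proof is correct, and it rests on the same underlying principle as the paper's: a numerical invariant that is monotone under totally geodesic embedding. The difference is in which invariant you use. The paper disposes of all eight cases at once by the single observation that $\mathrm{rank}\,FI = \mathrm{rank}\,EII = 4$, $\mathrm{rank}\,EI = 6$, $\mathrm{rank}\,EV = 7$, while $\mathrm{rank}\,EIII = \mathrm{rank}\,EIV = 2$, $\mathrm{rank}\,EVII = 3$, and $\mathrm{rank}\,EVI = \mathrm{rank}\,EIX = 4$, so in every listed pair the rank of the would-be submanifold strictly exceeds that of the ambient space. You use the $2$-number inequality $\#_{2}N \leq \#_{2}M$ for five of the cases and fall back on rank only for the remaining three; both inequalities are valid (your justification of $2$-number monotonicity via restriction of the geodesic symmetry is exactly right, and the needed $2$-numbers from Table \ref{classification} are all established before this lemma, so there is no circularity). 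The only real difference in cost is that your route leans on the full classification of maximal antipodal sets, which is the deepest content of the paper, whereas the rank comparison needs nothing beyond the standard structure theory; note in particular that your three "remaining" cases already cover configurations where the $2$-number test is inconclusive but rank is not, and in fact rank alone settles all eight, so the $2$-number step, while sound, is dispensable.
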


\begin{proof}
Note that
\[
\begin{split}
& \mathrm{rank}EIII = \mathrm{rank}EIV = 2, \quad
\mathrm{rank}EVII = 3, \\
& \mathrm{rank}FI = \mathrm{rank}EII = \mathrm{rank}EVI = \mathrm{rank}EIX = 4, \\
& \mathrm{rank}EI = 6, \quad \mathrm{rank}EV = 7.
\end{split}
\]
By comparing the rank, we complete the proof.
\end{proof}

By the similar method to \cite{Chen-Nagano1}, we can prove the following.

\begin{lemm}
No totally geodesic embedding exists from $FII$ into $EI, EII$, or $EV$; from $EIII$ into $EV$; from $EIV$ into $EV, EVI$, or $EVIII$; from $EVII$ into $EVIII$.
\end{lemm}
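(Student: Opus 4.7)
The plan is to invoke the following consequence of the Chen--Nagano $(M^{+}, M^{-})$-method \cite{Chen-Nagano1}: if $N$ embeds as a totally geodesic submanifold of a compact symmetric space $M$ through a common point $p$, then $\frak{m}_{N} \subset \frak{m}_{M}$ inside $T_{p}M$ as a Lie triple system, and hence $[\frak{m}_{N}, \frak{m}_{N}] \subset \frak{k}_{M}$. In particular, the derived (equivalently, effective) isotropy Lie algebra of $N$ at $p$ embeds as a subalgebra of the isotropy Lie algebra of $M$ at $p$, and the embedding is compatible with the isotropy representation on $\frak{m}_{M}$. For each of the claimed non-embeddings I will derive a contradiction by producing a representation-theoretic or dimension-theoretic obstruction to such a subalgebra embedding.

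I would tabulate the isotropy Lie algebras from Subsection \ref{ELg}: $\frak{k}(FII) = \frak{spin}(9)$ (simple, dim $36$, type $B_{4}$); $\frak{k}(EIII) = \mathbb{R} \oplus \frak{spin}(10)$; $\frak{k}(EIV) = \frak{f}_{4}$; $\frak{k}(EVII) = \mathbb{R} \oplus \frak{e}_{6}$; and, on the $M$ side, $\frak{k}(EI) = \frak{sp}(4)$ (type $C_{4}$), $\frak{k}(EII) = \frak{su}(2) \oplus \frak{su}(6)$, $\frak{k}(EV) = \frak{su}(8)$, $\frak{k}(EVI) = \frak{su}(2) \oplus \frak{spin}(12)$, $\frak{k}(EVIII) = \frak{spin}(16)$. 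For $FII \hookrightarrow EI$ one would need $\frak{spin}(9) \hookrightarrow \frak{sp}(4)$, but these are non-isomorphic simple Lie algebras of the same dimension $36$, hence no embedding exists. For $FII \hookrightarrow EII$, simplicity of $\frak{spin}(9)$ forces any nonzero homomorphism $\frak{spin}(9) \to \frak{su}(2) \oplus \frak{su}(6)$ to have zero projection onto $\frak{su}(2)$ (by dimension), so it would realize $\frak{spin}(9) \hookrightarrow \frak{su}(6)$, impossible since $\dim \frak{su}(6) = 35 < 36$. For the remaining cases, I would use the minimal dimensions of nontrivial complex (respectively real orthogonal) representations: for $Spin(9), Spin(10), F_{4}, E_{6}$ these are $9, 10, 26, 27$ respectively. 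Then $FII \hookrightarrow EV$, $EIII \hookrightarrow EV$, and $EIV \hookrightarrow EV$ are excluded because they would require faithful complex representations of dimension $\leq 8$ of $Spin(9)$, $Spin(10)$, and $F_{4}$; $EIV \hookrightarrow EVI$ (requires $F_{4} \hookrightarrow Spin(12)$, needing a $12$-dimensional orthogonal representation of $F_{4}$) is excluded by the $26$-bound; $EIV \hookrightarrow EVIII$ and $EVII \hookrightarrow EVIII$ are excluded since each requires a $16$-dimensional orthogonal representation of $F_{4}$ or $E_{6}$, again below the bound.

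The main obstacle will be rigorously justifying that an isotropy subalgebra embedding really is a necessary condition, in the presence of centers and non-simply-connected groups such as $Ss(16)$, $SU(8)/\mathbb{Z}_{2}$, and $(U(1) \times E_{6})/\mathbb{Z}_{3}$. To handle this, I would pass to the effective isotropy representation on $\frak{m}_{M}$: any totally geodesic embedding $N \hookrightarrow M$ induces a homomorphism of isotropy groups with kernel acting trivially on $\frak{m}_{N} \subset \frak{m}_{M}$; modding out this kernel gives an honest embedding at the Lie algebra level, to which the dimension bounds above apply. A secondary subtlety is that in the Hermitian cases ($EIII$, $EVII$) the isotropy has a one-dimensional center realizing the complex structure, and one must verify that the dimension bounds on the semisimple part alone ($\frak{spin}(10)$ and $\frak{e}_{6}$) already furnish the obstruction, which they do.
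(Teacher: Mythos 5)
Your proposal is correct, and the underlying mechanism (an isotropy obstruction read off from minimal representation dimensions) is the same one the paper uses; but the overall architecture is genuinely different. The paper proves only two cases directly --- $EIV\not\subset EVIII$ via ``$F_{4}$ would be a subgroup of $Spin(16)$, yet the smallest nontrivial real representation of $F_{4}$ is $26$-dimensional'', and $FII\not\subset EV$ via ``$Spin(9)$ would be a subgroup of $SU(8)$, yet its smallest nontrivial complex representation is $9$-dimensional'' --- and then disposes of the remaining six cases by transitivity of totally geodesic inclusion through the embeddings already constructed in Section 4 ($EI, EII\subset EV$, $FII\subset EIII$, $EV, EVI\subset EVIII$, $EIV\subset EVII$): e.g.\ $EIII\subset EV$ would force $FII\subset EIII\subset EV$, and $EVII\subset EVIII$ would force $EIV\subset EVII\subset EVIII$. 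You instead give a direct obstruction for each of the eight cases, which costs you a few extra inputs the paper never needs (that $\frak{spin}(9)\not\cong\frak{sp}(4)$ as simple Lie algebras of equal dimension $36$, the bound $\dim\frak{su}(6)=35<36$, and the minimal orthogonal representation of $\frak{e}_{6}$), but in exchange your argument is self-contained and does not depend on having exhibited the positive inclusions first. Your care about passing to the effective isotropy algebra is well placed but resolves more simply than you suggest: the containment $[\frak{m}_{N},\frak{m}_{N}]\subset\frak{k}_{M}$ for a sub-Lie-triple-system already lives entirely at the Lie algebra level, where centers and coverings such as $Ss(16)$ or $SU(8)/\mathbb{Z}_{2}$ cause no difficulty, and since each $\frak{g}_{N}$ here is simple one has $[\frak{m}_{N},\frak{m}_{N}]=\frak{k}_{N}$ acting faithfully on $\frak{m}_{N}$, so a genuine copy of $\frak{k}_{N}$ (or of its semisimple part) sits inside $\frak{k}_{M}$; if anything the paper is the less careful party, asserting the group-level inclusion $F_{4}\subset\phi(Spin(16))$ without comment.
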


\begin{proof}
If there exists a totally geodesic embedding from $EIV$ into $EVIII$, then $F_{4}$ is a subgroup of $\phi(Spin(16)) \cong Ss(16)$.
Therefore, $F_{4}$ is a subgroup of $Spin(16)$.
However, it is well known that the dimension of the smallest real representation of $F_{4}$ is $26$ and of $Spin(16)$ is $16$.
Hence, there does not exist such embedding.
We already constructed a totally geodesic embedding $EV \subset EVIII, EVI \subset EVIII$, and $EIV \subset EVII$.
Thus, there does not exist a totally geodesic embedding from $EIV$ into $EV, EVI$ and from $EVII$ into $EVIII$.

Moreover, if there exists a totally geodesic embedding from $FII$ into $EV$, then $Spin(9)$ is a subrgoup of $\phi(SU''(8)) = SU(8)/\mathbb{Z}_{2}$.
Therefore, $Spin(9)$ is a subgroup of $SU(8)$.
It is well known the complex dimension of the smallest complex representation of $Spin(9)$ is $9$ and of $SU(8)$ is $8$.
Hence, there does not exist a totally geodesic emnbedding from $FII$ into $EV$.
We already constructed a totally geodesic embedding $EI \subset EV, EII \subset EV$, and $FII \subset EIII$.
Thus, there does not exist a totally geodesic embedding form $FII$ into $EI, EII$ and from $EIII$ into $EV$. 
\end{proof}

This diagram is not merely a convenient arrangement of the exceptional symmetric spaces.
Rather, it reflects several geometric features of these spaces.
Moreover, the left-right symmetry of the diagram has the geometric meaning with respect to the exceptioanl symmetric spaces.
First, we consider the line
\[
FI \subset EII \subset EVI \subset EIX.
\]
In $F_{4}, E_{6}, E_{7}, E_{8}$, these correspond precisely to the polar of the identity element closest to the identity element other than the trivial pole.
From the viewpoint of geometric structures, each of these is a quaternion K\"{a}hler symmetric space, and every inclusion in this chain is a reflective quaternionic embedding.
Here, an inclusion $M \subset N$ is reflective if $M$ is a reflective submanifold of $N$.
Next, consider the chain
\[
FII \subset EIII \subset EVI \subset EVIII.
\]
In $F_{4}, E_{6}, E_{7}, E_{8}$, these correspond to the polar of the unit element farthest from the identity element other than poles.
Moreover, these symmetric spaces are the well-known Rosenfeld projective planes:
\[
FII = \mathbb{O}P^{2}, \ 
EIII = (\mathbb{O} \otimes \mathbb{C})P^{2}, \ 
EVI = (\mathbb{O} \otimes \mathbb{H})P^{2}, \ 
EVIII = (\mathbb{O} \otimes \mathbb{O})P^{2}.
\]
Their dimensions satisfy
\[
\dim EVIII = 2 \dim EVI = 4 \dim EIII = 8 \dim FII.
\]
From the viewpoint of geometric structures, $EIII$ is a Hermitian symmetric space.
The inclusion $FII \subset EIII$ is a reflective Lagrangian embedding (a real form), and $EIII \subset EVI$ is a reflective totally complex embedding in the sense of Funabashi (\cite{Funabashi}), analogously to totally real embeddings into K\"{a}hler manifolds.
It is well known that half-dimensional totally complex submanifolds of compact quaternion-K"{a}hler symmetric spaces are reflective (\cite{Takeuchi}).
Thus, the two chains
\[
FI \subset EII \subset EVI \subset EIX, \quad
FII \subset EIII \subset EVI \subset EVIII,
\]
exhibit a parallel not only in their placement in the diagram but also in their intrinsic geometric structures.
Moroianu and Semmelmann introduced the even-Clifford structure in \cite{Moroianu-Semmelmann}, which is a generalization of a K\"{a}hler structure and a quaternionic K\"{a}hler structure, and many mathematician pay attention to this structure.
By their result, in simply connected exceptional compact symmetric spaces, only
\[
FI, FII, EII, EIII, EVI, EVIII, EIX
\]
have the even-Clifford structure.
These facts constitute the indication of an underlying symmetry between the left and right sides of the diagram.

The chain
\[
FI \subset EI \subset EV \subset EVIII
\]
forms the normal forms, namely the class of symmetric spaces whose rank coincides with the rank of their full isometry group.
Only $FI \subset EI$ is reflective; the inclusions $EI \subset EV$ and $EV \subset EVIII$ are not reflective.
For the chain
\[
EIV \subset EVII \subset EIX,
\]
the intermediate space $EVII$ is Hermitian.
The inclusion $EIV \subset EVII$ is totally geodesic and totally real, but not a real form, since $\dim EVII = 2(\dim EIV +1)$.
The inclusion $EVII \subset EIX$ is totally complex but not reflectively so, since $\dim EIX = 2(\dim EVII + 2)$.
The inclusions
\[
FII \subset EIV, \quad 
EIII \subset EVII, \quad
EVI \subset EIX
\]
are reflective and, moreover, polars.
The inclusion $EIII \subset EVII$ is also a complex embedding.
The inclusion $EII \subset EV$ is reflective.

We now examine the symmetry exhibited by the reflective inclusions in the diagram.
Denoting each reflective inclusion by $\subset_{r}$ or $\supset_{r}$, we obtain the following diagram:
\[
\xymatrix@C=22pt@R=20pt
{
 & & & E_{8} \ar@{.}[dd] & & & \\
 & & EVIII \ar[ur]_{\subset_{r}} & & EIX \ar[ul]^{\supset_{r}} & &  \\
 & \ EV \ar[ur]_{\subset} & & EVI \ar[ul]^{\supset_{r}} \ar[ur]_{\subset_{r}} \ar@{.}[dd] & & EVII \ar[ul]^{\supset} & \\
 \ EI \ \ar[ur]_{\subset} & & EII \ar[ul]^{\supset_{r}} \ar[ur]_{\subset_{r}} & & EIII \ar[ul]^{\supset_{r}} \ar[ur]_{\subset_{r}} & & EIV \ar[ul]^{\supset} \\
 & FI \ar[ul]^{\supset_{r}} \ar[ur]_{\subset_{r}} & & & & FII \ar[ul]^{\supset_{r}} \ar[ur]_{\subset_{r}} & \\
}
\]
This diagram makes the left-right symmetry of the reflective inclusions immediately apparent.
Although the non-reflective inclusions also occur in symmetric positions, Leung's results (\cite{Leung}) provide an explanation:
\[
\begin{array}{lll}
(S^{1} \times EI)/\mathbb{Z}_{3} \subset EV, & (S^{2} \times EV)/\mathbb{Z}_{2} \subset EVIII, \\
(S^{1} \times EIV)/\mathbb{Z}_{3} \subset EVII, & (S^{2} \times EVII)/\mathbb{Z}_{2} \subset EIX
\end{array}
\]
are reflective inclusions.
In every case, there exists an additional irreducible factor that turns a non-reflective inclusion into a reflective one, and this irreducible factor coincides for each corresponding pair.
This demonstrates that the symmetry is not merely combinatorial: it reflects a geometry of the reflective submanifolds.

The diagram also induces a correspondence between the symmetric spaces themselves; for example, 
$EVIII$ corresponds to $EIX$, and $EVI$ corresponds to itself.
This correspondence has a meaning beyond the layout of the diagram.
Indeed, recent work of Kollross and Rodr\'{i}guez-V\'{a}zquez (\cite{Kollross}) on the classification of maximal totally geodesic submanifolds in exceptional symmetric spaces shows the following phenomenon.
From their classification, the maximal totally geodesic submanifolds of each above exceptional symmetric spaces whose irreducible components include above exceptional symmetric spaces  are in Table \ref{maximal-totally-geodesic}.
Then, in the corresponding pair under the left-right symmetry, each maximal totally geodesic submanifold including exceptional irreducible components corresponds to each other under the left-right symmetry.
Note that considering the irreducible component as $G_{2}/SO(4)$ the left-right symmetry breaks.

\begin{table}[htbp]
\begin{center}
\begin{tabular}{|c|c|ccc} \hline
$EVIII$ & $EIX$ \\ \hline
$EV$ & $EVII$ \\
$EVI$ & $EVI$ \\
$\big( SU(3)/SO(3)) \times EI \big)/\mathbb{Z}_{3}$ & $\big( SU(3)/SO(3) \times EIV \big)/\mathbb{Z}_{3}$ \\
$\mathbb{C}P^{2} \times EII$ & $\mathbb{C}P^{2} \times EIII$ \\
$G_{2}/SO(4) \times FI$ & $G_{2}/SO(4) \times FII$ \\ \hline
\end{tabular}

\vspace{4mm}

\begin{tabular}{|c|c|c|cc} \hline
$EV$ & $EVI$ & $EVII$ \\ \hline
$(S^{1} \times EI)/\mathbb{Z}_{3}$ & $EII$ & $(S^{1} \times EIV)/\mathbb{Z}_{3}$ \\
$EII$ & $EIII$ & $EIII$ \\
$\mathbb{R}P^{2} \times FI$ & & $\mathbb{R}P^{2} \times FII$ \\ \hline
\end{tabular}

\vspace{4mm}

\begin{tabular}{|c|c|c|c|c} \hline
$EI$ & $EII$ & $EIII$ & $EIV$ \\ \hline
$FI$ & $FI$ & $FII$ & $FII$ \\ \hline
\end{tabular}

\caption{Maximal totally geodesic submanifolds including exceptional irreducible component}
\label{maximal-totally-geodesic}
\end{center}
\end{table}

Finally, recall the inclusion diagram of orbits in Subsection \ref{A_{1}(EVIII_{+})}.
Removing the fixed-point orbits, we obtain
\[
\xymatrix@C=6pt@R=13pt
{
 & & FII \ar[d] & & FII \ar[d] & & \\
 & & EIII \ar[d] & & EIII \ar[d] & & \\
FI \ar[r] & EII \ar[r] & EVI \ar[dr] & & EVII \ar[dl] & EIII \ar[l] & FII \ar[l] \\
 & & & EIX & & & \\
FII \ar[r] & EIV \ar[r] & EVII \ar[ur] & & EVII \ar[ul] & EIV \ar[l] & FII \ar[l] \\
 & & EIV \ar[u] & & EIV \ar[u] & & \\
 & & FII \ar[u] & & FII \ar[u] & & \\
}
\quad
\xymatrix@C=6pt@R=13pt
{
 & & FI \ar[d] & & FI \ar[d] & & \\
 & & EII \ar[d] & & EII \ar[d] & & \\
FII \ar[r] & EIII \ar[r] & EVI \ar[dr] & & EV \ar[dl] & EII \ar[l] & FI \ar[l] \\
 & & & EVIII & & & \\
FI \ar[r] & EI \ar[r] & EV \ar[ur] & & EV \ar[ul] & EI \ar[l] & FI \ar[l] \\
 & & EI \ar[u] & & EI \ar[u] & & \\
 & & FI \ar[u] & & FI \ar[u] & & \\
}
\]
Then, every inclusion and the number of orbits match under the left–right symmetry.
These observations support the existence of a certain symmetry amog exceptional symmetric spaces.
Although the precise geometric reason for this left-right symmetry remains unknown, uncovering its origin is an important problem for future research.

\end{document}